\documentclass[12pt,centertags,oneside]{amsart}
\usepackage{amsmath,amstext,amsthm,a4,amssymb,amscd}
\usepackage{mathrsfs,dsfont}
\usepackage{fancyhdr}
\usepackage{epsf}
\usepackage{mathtools}
\usepackage%[backref=page]
{hyperref}
\hypersetup{
    colorlinks = true,
    linkcolor={magenta},
    citecolor={magenta},
     }
\usepackage{charter}
\usepackage{typearea}
\usepackage{xcolor}
\usepackage{listings}
\usepackage{enumitem}
\usepackage[utf8]{inputenc}

\usepackage[a4paper,width=16.2cm,top=2.5cm,bottom=2cm,
footskip=1cm]{geometry}

\numberwithin{equation}{section}
\newtheorem{theorem}{Theorem}[section]
\newtheorem{lemma}[theorem]{Lemma}
\newtheorem{proposition}[theorem]{Proposition}
\newtheorem{corollary}[theorem]{Corollary}

\theoremstyle{definition}
\newtheorem{definition}[theorem]{Definition}
\newtheorem{example}[theorem] {Example}

\newtheorem{remark}[theorem]{Remark}
\newtheorem{notation}[theorem]{Notation}
\newtheorem{condition}[theorem]{Condition}
\newtheorem{assumption}[theorem]{Assumption}

\newtheorem*{examples}{Examples}
\numberwithin{equation}{section}

\newcommand{\field}[1]{\mathbb{#1}}
\newcommand{\Z}{\field{Z}}
\newcommand{\R}{\field{R}}
\newcommand{\C}{\field{C}}
\newcommand{\N}{\field{N}}

\newcommand{\cali}[1]{\mathscr{#1}}
\newcommand{\cC}{\cali{C}} \newcommand{\cA}{\cali{A}}
\newcommand{\cO}{\cali{O}} 
 
 \newcommand{\cL}{\cali{L}}
 
\newcommand{\cK}{\cali{K}} \newcommand{\cT}{\cali{T}}
\newcommand{\cF}{\cali{F}}
\newcommand{\calig}[1]{\mathcal{#1}}

 \newcommand\mO{\calig{O}}
\newcommand\mQ{\calig{Q}} 
 \newcommand{\cP}{\cali{P}}
 \newcommand\mT{\calig{T}} 

\newcommand{\boldsym}[1]{\boldsymbol{#1}}
\newcommand\bb{\boldsym{b}}

\newcommand\bbf{\boldsym{f}}

\newcommand{\imat}{\sqrt{-1}}
\DeclareMathOperator{\End}{End}

\DeclareMathOperator{\Ker}{Ker}

\DeclareMathOperator{\Dom}{Dom}

\DeclareMathOperator{\rank}{rk}
\DeclareMathOperator{\Id}{Id}
\DeclareMathOperator{\supp}{supp}

\DeclareMathOperator{\Ric}{Ric}
\DeclareMathOperator{\spec}{Spec}

\newcommand{\db}{\overline\partial}
\newcommand{\spin}{$\text{spin}^c$ }
\newcommand{\norm}[1]{\lVert#1\rVert}
\newcommand{\abs}[1]{\lvert#1\rvert}
\newcommand{\om}{\omega}
\newcommand{\ov}{\overline}
\newcommand{\var}{\varepsilon}
\newcommand{\wi}{\widetilde}

\newcommand{\comment}[1]{}

\allowdisplaybreaks

\begin{document}
%%%%%%%%%%%%%%%%%%%%%%%%%%%%%%%%%%%%%%%

%%%%    Title
\title{Berezin-Toeplitz Quantization of non-compact manifolds}

\date{\today}

%%%   Author
%    Information for first author
\author{Louis Ioos}
\address{CY Cergy Paris Université, 95300 Pontoise,
France}
\email{louis.ioos@cyu.fr}
\thanks{L.\ I.\ was partially 
supported by DIM-R\'egion Ile-de-France, by
the European Research Council Starting grant 757585
and by the ANR-23-CE40-0021-01 JCJC
project QCM}
%-------
\author{Wen Lu}
\address{School of Mathematics and Statistics,
\& Hubei Key Laboratory of Engineering Modeling 
 and Scientific Computing,
% Huazhong University of Science and Technology, 
% \mbox{\quad}\,Wuhan 430074, China
% \newline
%  \mbox{\quad}\,Hubei Key Laboratory of Engineering Modeling 
%  and Scientific Computing,
%  \newline  \mbox{\quad}\,
 Huazhong University of Science and Technology, 
 Wuhan 430074, China}
\email{wlu@hust.edu.cn}
\thanks{W.\ L.\ supported by National Natural Science Foundation
of China (Grant Nos. 11401232, 11871233)}
%-------
\author{Xiaonan Ma}
\address{Chern Institute of Mathematics and LPMC, Nankai University, Tianjin 30071, P.R. China}
\email{xiaonan.ma@nankai.edu.cn}
\thanks{X.\ M.\ was partially supported by
Nankai Zhide Foundation, ANR-14-CE25-0012-01,  and
funded through the Institutional Strategy of
the University of Cologne within the German Excellence Initiative.}
%-------
\author{George Marinescu}
\address{Universit{\"a}t zu K{\"o}ln,  Mathematisches Institut,
    Weyertal 86-90,   50931 K{\"o}ln, Germany
    \newline
    \mbox{\quad}\,Institute of Mathematics `Simion Stoilow',
	Romanian Academy,
Bucharest, Romania}
\email{gmarines@math.uni-koeln.de}
\thanks{G.\ M.\ partially supported by DFG funded
project SFB TRR 191}

\subjclass[2020]{53D50, 53C21, 32Q15}

\begin{abstract}
We develop Berezin-Toeplitz quantization in a non-compact complex geometric
setting.  Let $(X,\Theta)$ be a Hermitian manifold, $(L,h^L)$ a positive
holomorphic line bundle, and $(E,h^E)$ a holomorphic Hermitian vector bundle.
Assuming that the Kodaira Laplacian on $(0,1)$-forms with values in
$L^p\!\otimes E$ has a spectral gap growing linearly in $p$, we prove that the
Bergman projection onto the $L^2$-holomorphic space
$H^0_{(2)}(X,L^p\!\otimes E)$ enjoys the usual off-diagonal decay and admits a
full asymptotic expansion on compact subsets as $p\to\infty$.  As a consequence,
for every smooth symbol $f\in \mathcal C^\infty_{\mathrm{const}}(X,\End(E))$
(constant outside a compact set), the associated Toeplitz operators
$T_{f,p}=P_p f P_p$ form a closed algebra and satisfy a complete composition
expansion, yielding a star-product on $\mathcal C^\infty_{\mathrm{const}}(X,\End(E))$
and the expected semiclassical commutator formula.
We also give intrinsic criteria characterizing Toeplitz families with compactly
supported kernels.

We then provide geometric conditions guaranteeing the spectral gap on large
classes of non-compact manifolds, via fundamental $L^2$-estimates for $\bar\partial$
on complete Hermitian manifolds (including bounded-geometry 
complete K\"ahler manifolds, K\"ahler-Einstein manifolds,
pseudoconvex/weakly $1$-complete, and quasi-projective manifolds).
Finally, for compactly supported bounded symbols, we prove a Szeg\H{o}-type theorem
describing the eigenvalue distribution of the compact Toeplitz operators $T_{f,p}$
as $p\to\infty$.
\end{abstract}

\maketitle

\tableofcontents

\section{Introduction}

Berezin-Toeplitz quantization is one of the most concrete and flexible
quantization procedures. On a compact K\"ahler manifold $(X,\omega)$ 
endowed with a prequantum line bundle
$(L,h^L)$ such that $\omega=\frac{\sqrt{-1}}{2\pi}R^L$, and 
a holomorphic Hermitian vector bundle $(E,h^E)$, one considers the
quantum spaces $H^0(X,L^p\!\otimes E)$, $p\in\N^*$, and the orthogonal
Bergman projections $P_p$ onto them.  To every observable
$f\in\cC^\infty(X,\End(E))$, one associates the Toeplitz operator
\[
T_{f,p}=P_p\, f\, P_p,
\]
whose semiclassical behavior as $p\to+\infty$ encodes both the 
underlying geometry
and the deformation of the commutative algebra of observables into a star-product.
In the compact setting, this circle of ideas has a long history, from the
microlocal approach of Boutet de Monvel–Guillemin \cite{BdMG81} to the
algebro-geometric and analytic developments of Bordemann–Meinrenken–Schlichenmaier,
Schlichenmaier, and many others
(see e.g. \cite{BMS94,Guill:95,Schlich:00} and the references therein).

We have introduced in \cite{MM07,MM08b} a different approach based on
the existence of a full off-diagonal asymptotic expansion,
of the Bergman kernel and its refinements,
which, in turn, yield the Toeplitz calculus and the associated star-product.
This method found several applications, 
cf.\ \cite{BMMP14,Fine_Quant_Duke_2012,Fin22b,Ioo18b,IKPS20,MM12,MS24a}.

The goal of the present paper is to exhibit large classes of \emph{non-compact}
complex manifolds for which the Berezin–Toeplitz quantization package continues
to hold, in a form suitable for geometric applications.
Going beyond compactness raises two intertwined issues.
First, the quantum spaces of $L^2$-holomorphic sections may be infinite-dimensional, and the Bergman projection need not enjoy global smoothing properties.
Second, even when $L$ is positive, the Bergman kernel asymptotics may fail
without additional control at infinity.
Our guiding principle is that, on non-compact manifolds, the Toeplitz calculus is
available as soon as one can ensure a \emph{spectral gap} for the Kodaira Laplacian
in the high tensor power limit.
This point of view was used in the analytic localization method
developed in \cite{MM07,MM08b} and underlies many 
vanishing theorems.

\smallskip
\noindent\textbf{A Toeplitz package under a spectral gap.}
Let $(X,\Theta)$ be a Hermitian manifold, $(L,h^L)$ a positive holomorphic line bundle,
$(E,h^E)$ a holomorphic Hermitian vector bundle, and set
$\omega=\frac{\sqrt{-1}}{2\pi}R^L$.
Assuming that the Kodaira Laplacian on $(0,1)$-forms with values in $L^p\!\otimes E$
has a \emph{spectral gap} growing linearly in $p$ (cf.\ \eqref{bk1.4}),
we prove that the Bergman kernels of the $L^2$-holomorphic spaces
$H^0_{(2)}(X,L^p\otimes E)$ admit a full asymptotic expansion on compact subsets,
together with the usual off-diagonal exponential decay.
Moreover, the full Berezin–Toeplitz calculus holds for the natural algebra
$\cC^\infty_{\rm const}(X,\End(E))$ of smooth endomorphism-valued functions
that are constant outside a compact set.
This is summarized in Theorem~\ref{t2.1} (and its variants, including the compact case
Theorem~\ref{t2.1comp}).
In particular, the product and commutator expansions
\[
T_{f,p}T_{g,p}\sim \sum_{r\ge0} p^{-r}T_{C_r(f,g),p},
\qquad
[T_{f,p},T_{g,p}]\sim \frac{\sqrt{-1}}{p}\,T_{\{f,g\},p}+\cdots
\]
hold in the appropriate sense, yielding a star-product on
$\cC^\infty_{\rm const}(X,\End(E))$; see Theorem~\ref{T:BTstar}.
An essential result is an intrinsic criterion characterizing Toeplitz families with compactly
supported kernels (Theorem~\ref{toet3.1}), which also
provides an algorithmic way to compute the coefficients $C_r(f,g)$.
%
%\smallskip
%\noindent\textbf{Geometric conditions implying the spectral gap.}
%A substantial part of the paper is devoted to verifying the spectral gap hypothesis
%in a wide range of non-compact situations.
%We first recall the \emph{fundamental estimate} approach of \cite{MM07,MM08a},
%which gives spectral gaps (and $L^2$-cohomology vanishing) under explicit curvature
%and torsion bounds on complete Hermitian manifolds.
%This is formulated here as Conditions~\ref{C:specgapgeom1} and \ref{C:specgapgeom2},
%leading to Theorems~\ref{noncompact0}, \ref{noncompact1} and the resulting Toeplitz
%package Theorem~\ref{t2.12}.
%We then discuss several geometric sources of these estimates, including:
%complete K\"ahler--Einstein manifolds, weakly $1$-complete (in particular Stein)
%manifolds, quasi-projective manifolds equipped with big line bundles and singular
%metrics, manifolds of bounded geometry, and pseudoconvex domains.
%These examples are meant to demonstrate that Berezin--Toeplitz quantization is not
%confined to the compact manifolds, but applies naturally in analytic and arithmetic
%settings where $L^2$-holomorphic sections are the correct quantum spaces.
%

\smallskip
\noindent\textbf{Geometric conditions implying the spectral gap.}
%The abstract spectral gap assumption is verified in practice by $L^2$-techniques
%for the $\bar\partial$-Laplacian, in the spirit of the fundamental estimate
%of \cite{MM07,MM08a}.  In Section~\ref{S:noncomp}, we formulate several concrete geometric
%hypotheses (completeness and uniform curvature positivity) 
%under which the Kodaira Laplacian on
%$L^p\otimes E$ has a spectral gap of order $p$.  
A substantial part of the paper is devoted to verifying the spectral gap hypothesis
in a wide range of non-compact situations.
We first recall the \emph{fundamental estimate} approach of \cite{MM07,MM08a},
which gives spectral gaps (and $L^2$-cohomology vanishing) under explicit curvature
and torsion bounds on complete Hermitian manifolds.
This is formulated here as Conditions~\ref{C:specgapgeom1} and \ref{C:specgapgeom2},
leading to Theorems~\ref{noncompact0}, \ref{noncompact1}, and the resulting Toeplitz
package Theorems~\ref{t2.12}, \ref{t2.11} for compactly
supported (or constant outside a compact set) symbols.
We then illustrate these criteria through a
collection of standard non-compact geometries, showing that the Toeplitz package
is applicable far beyond the compact case.  Typical examples include:

\emph{Complete K\"ahler manifolds of bounded geometry.}
If $(X,\Theta)$ is complete with bounded geometry (uniform lower bound on the
injectivity radius and uniform bounds on the curvature and its derivatives),
and if $(L,h^L)$ and $(E,h^E)$ have bounded geometry, then
the Berezin-Toeplitz quantization holds for observables
whose derivatives are all bounded; cf. Theorem \ref{thm:3.2new23b}.
%is uniformly positive outside a compact set (or globally),
%then the fundamental estimate yields a spectral gap on $(0,1)$-forms for all
%large $p$.  This covers, for instance, complete K\"ahler manifolds with
%uniformly controlled curvature at infinity, and produces Toeplitz quantization
%on the corresponding $L^2$-holomorphic spaces.
In this setting, we mention the recent work \cite{ILMM26a}, which investigates the exponential decay of the Bergman kernel and applies these results to the existence of Poincar\'e series.

\emph{K\"ahler--Einstein and negatively curved geometries.}
Many canonical complete metrics arising in complex geometry fall into the
previous framework, such as complete K\"ahler--Einstein manifolds of negative
Ricci curvature.  In these settings, the positivity of the canonical line bundle combines
with the global control provided by the metric to yield the desired
spectral gap and hence the Bergman kernel expansion on compact subsets.

\emph{Pseudoconvex domains and weighted Bergman spaces.}
For pseudoconvex domains (in particular, strongly pseudoconvex domains) endowed
with a positive line bundle (for example, the trivial line bundle endowed
with a strictly plurisubharmonic weight), 
we obtain the spectral gap for the Kodaira Laplacian
with $\db$-Neumann boundary conditions.
Our results then give a Toeplitz calculus, recovering and extending classical
Toeplitz operators on Bergman-type spaces to a geometric line bundle setting.

\emph{Stein, $1$-convex and weakly $1$-complete manifolds.}
By using a smooth plurisubharmonic exhaustion that is strictly
plurisubharmonic outside a compact set, one can build Hermitian metrics
and weights with coercivity at infinity.  This yields a spectral gap for the
high-power Kodaira Laplacians and, consequently, Toeplitz quantization.  
This provides a flexible class of examples where
the ``quantum spaces'' are naturally infinite-dimensional.

\emph{Quasi-projective manifolds and Poincar\'e-type ends.}
For $X=\overline X\setminus D$ with $D$ being a normal crossings divisor in 
a compact manifold $\overline X$, one can work
with complete Poincar\'e-type (or cusp-type) metrics near $D$ and with line
bundles whose curvature dominates the metric.
We again derive a spectral gap and obtain Toeplitz asymptotics on compact subsets of
$X$.  This is particularly suited to arithmetic and locally symmetric
situations, where finite-volume quotients naturally carry cusp geometries.

Let us mention the related work \cite{HM17a}, where
the authors develop Berezin–Toeplitz quantization with quantum spaces 
being the spectral spaces of the Kodaira Laplacian on the set where 
the curvature is positive; see also \cite{ILMM20} for the symplectic case.

% \smallskip
% \noindent\textbf{Coverings, Poincar\'e series, and isotropic states.}
% Another theme of the paper concerns the interaction between Toeplitz quantization
% and covering geometry.
% Given a covering $\pi:\widetilde X\to X=\widetilde X/\Gamma$ of finite volume and a
% $\Gamma$-equivariant positive line bundle $\widetilde L$, we prove that the
% high-power Kodaira Laplacians on $\widetilde X$ and on $X$ enjoy uniform spectral gaps,
% under bounded geometry and curvature positivity assumptions
% (Theorem~\ref{th-cov}).
% This allows us to compare the corresponding Bergman kernels and to build holomorphic
% sections on quotients via Poincar\'e series.
% In particular, we revisit the construction of isotropic states associated with
% Bohr--Sommerfeld submanifolds and show how to periodize them on coverings, leading to
% $\Gamma$-automorphic holomorphic sections with controlled $L^2$-behavior
% (Theorem~\ref{t5.6}).
% We also present concrete families of examples on Hermitian symmetric spaces and
% their finite-volume quotients, where these constructions produce explicit and
% geometrically meaningful holomorphic sections.

\smallskip
\noindent\textbf{A Szeg\H{o}-type theorem for compactly supported symbols.}
Finally, we prove a spectral asymptotics result for compact Toeplitz operators
associated with bounded symbols of compact support.
Under the hypotheses of Theorem~\ref{t2.1}, the operators $T_{f,p}$ are compact when
$f$ has compact support, and their eigenvalue distribution satisfies a
Szeg\H{o}-type law (Theorems~\ref{t6.1} %and \ref{t6.2}
), extending our non-compact
framework to classical phenomena for Toeplitz quantization and Bergman spaces.

\smallskip
\noindent\textbf{Organization of the paper.}
In Section~\ref{BTsec}, we set up the analytic framework and recall the Bergman kernel
expansion and Toeplitz calculus under a spectral gap, proving, in particular, Theorem~\ref{t2.1}, as well as the Toeplitz criteria and product expansions.
Section~\ref{S:noncomp} is devoted to geometric conditions ensuring the spectral gap on non-compact
manifolds and to a collection of examples.
Section~\ref{S:Szego} establishes the Szeg\H{o}-type eigenvalue asymptotics for compactly supported symbols.

%--------
%%%
\section{The Berezin-Toeplitz package and the spectral gap}
\label{BTsec}

\subsection{An abstract setting}
Let $(X,J)$ be a complex manifold of complex dimension $n$, and let $g^{TX}$ be a
$J$-invariant Riemannian metric. Let $\Theta$ be the associated real $(1,1)$-form
$\Theta(X, Y)=g^{TX}(JX, Y)$.
The Riemannian volume form is $dv_X=\Theta^n/n!$.
Let $(F,h^F)$ be a holomorphic Hermitian vector bundle on $X$. On
$\cC^{\infty}_0(X,F)$, we consider the $L^2$ inner product
\begin{equation}\label{lm2.0}
\big\langle  s_1,s_2 \big\rangle	
	:=\int_X\big\langle  s_1(x),s_2(x)\big\rangle_{F}\,dv_X(x)\,,
\end{equation}
where $\langle\cdot,\cdot\rangle_{F}$ is induced by $h^F$.
The completion of $\cC^{\infty}_0(X,F)$
with respect to \eqref{lm2.0} is denoted by $L^{2}(X,F)=L^{2}(X,F,dv_X,h^F).$
We consider the space of holomorphic $L^{2}$ sections:
\begin{equation}\label{lm2.02a}		
H^{0}_{(2)}(X,F)\coloneqq H^{0}_{(2)}(X,F,dv_X,h^F)=		
\big\{s\in L^{2}(X,F,dv_X,h^F) : \text{$s$ is holomorphic}\big\}\,.		
\end{equation}		
We deduce from the Cauchy estimates for holomorphic functions that
for every compact set
$K\subset X$ there exists $C_K>0$ such that for all 
$s\in H^{0}_{(2)}(X,F)$,
\begin{equation}\label{b1.1}
\sup_{x\in K}|s(x)|\leqslant C_K\|s\|_{L^2}\,.
\quad\text{for all $s\in H^{0}_{(2)}(X,L^p\otimes E)$\,.}
\end{equation}
This implies that $H^{0}_{(2)}(X,F)$ is a closed subspace of
$L^2(X,F)$. Moreover,
$H^{0}_{(2)}(X,F)$ is separable (cf.\ \cite[p.\,60]{Weil:58}).
\begin{definition} \label{almt2.1b}
The \emph{Bergman projection} is the orthogonal projection
\(
P:L^{2}(X,F)\to H^{0}_{(2)}(X,F)\,.
\)
\end{definition}
%--------

\noindent
By \eqref{b1.1}, for a fixed $x\in X$, the evaluation functional $s\mapsto s(x)$ on
$H^{0}_{(2)}(X,F)$
is continuous.
By the Riesz representation theorem, there exists
$P(x,\cdot)\in L^2(X,F_x\otimes F^{*})$
such that
\begin{equation}\label{lm2.01a}
s(x)=\int_{X}P(x,x') s(x') dv_{X}(x')\,,
\quad\text{for all $s \in H^{0}_{(2)}(X,F)$\,.}
\end{equation}
%-----------
\begin{definition} \label{almt2.1c}
The section $P(\cdot,\cdot)$ of
$F\boxtimes F^{*}$ over
$X\times X$ is called the \emph{Bergman kernel} of
$H^{0}_{(2)}(X,F)$.
\end{definition}
%-----------

\noindent
Set $d:= \dim H^{0}_{(2)}(X,F)\in\N\cup\{\infty\}$.
Let $\{s_i\}_{i=1}^{d}$ be any orthonormal basis of
$H^{0}_{(2)}(X,F)$ with respect to the inner
product \eqref{lm2.0}. Using the estimate \eqref{b1.1} we can show that
\begin{equation} \label{bk2.4}
P(x,x')= \sum_{i=1}^{d} s_i (x) \otimes (s_i(x'))^*
\in F_x\otimes F_{x'}^*\,,
\end{equation}
where the right-hand side converges on every compact subset of $X$, together with all its derivatives (see e.g.\ \cite[p.\,62]{Weil:58}).
Thus $P(\cdot,\cdot)\in \cC^{\infty}(X\times X,F\boxtimes F^{*})$.
It follows from \eqref{bk2.4} that
%\begin{equation}\label{lm2.01}
$(Ps)(x)=\int_{X}P(x,x') s(x') dv_{X}(x')$,
for all $s\in L^2(X,F)$\,,
%\end{equation}
that is, $P(\cdot,\cdot)$ is the Schwartz kernel
of the Bergman projection $P$.

Let $L$ and $E$ be two holomorphic vector bundles on $X$.
We assume that $L$ is a line bundle.
The bundle $E$ is an auxiliary twisting bundle.
It is interesting to work with a twisting vector bundle $E$ for several reasons.
For example, when one has to deal with $(n,0)$-forms with values
in $L^{p}:=L^{\otimes p}$ for $p\in\N^*$,
one sets $E=\Lambda^{n} (T^{*(1,0)}X)$.
From a physical point of view, the presence of $E$ means a quantization 
of a system with several degrees of internal freedom.
We fix Hermitian metrics $h^L$, $h^E$ on $L$, $E$. 
We denote by $\cC^{\infty}_b(X,\End(E))$ the space of smooth 
sections of $\End(E)$ whose all derivatives are bounded, cf.\ \eqref{eq:C_b}.
 %We denote the space of smooth bounded sections of $\End(E)$ by 
%\[\cC^{\infty}_b(X,\End(E))=\Big\{s\in\cC^{\infty}(X,\End(E)): 
%\sup_{x\in X}|s(x)|_{\End(E)}<\infty\Big\},\]
The following definition introduces one of the main notions of this paper.
%===
\begin{definition}\label{Toepdef}
For a bounded section $f\in\cC^{\infty}_b(X,\End(E))$, set %we denote
%--------------------------------------------------------------------
\begin{equation}\label{toe2.4}
T_{f,\,p}:L^2(X,L^p\otimes E)\longrightarrow L^2(X,L^p\otimes E)\,,
\quad T_{f,\,p}=P_p\,f\,P_p\,,
\end{equation}
%----------------------------------------------------------------------------
where the action of $f$ is the fiberwise action of $f$.
The map that associates $f\in\cC^{\infty}_b(X,\End(E))$
with the family of bounded operators $\{T_{f,\,p}\}_p$ on 
$L^2(X,L^p\otimes E)$ is called
the  {\em Berezin-Toeplitz quantization}.
\end{definition}
%===
Note that $T_{f,\,p}$ is a Carleman operator with a smooth integral kernel given by
\begin{equation}\label{toe2.5}
T_{f,\,p}(x,x')=\int_X P_p(x,x'')f(x'')P_p(x'',x')\,dv_X(x'')\,.
\end{equation}

For two arbitrary bounded sections
$f,g\in \cC^{\infty}_b(X,\End(E))$, it is easy to see that
the composition $T_{f,\,p}T_{g,\,p}$ is not of the form
$T_{fg,\,p}$ in general. But we shall show that we have
$T_{f,\,p}T_{g,\,p}\sim T_{fg,\,p}$ asymptotically for $p\to\infty$.
In order to explain this, we introduce the following
more general notion of a Toeplitz operator.
%---------------------------------------------------------------------------
\begin{definition}\label{toe-def}
A {\em Toeplitz operator}\index{Toeplitz operator}
is a sequence $\{T_p\}_{p\in\N}$ of linear operators
%---------------------------------------------------------------------------
\begin{equation}\label{toe2.1}
T_{p}:L^2(X,L^p\otimes E)\longrightarrow L^2(X,L^p\otimes E)
\end{equation}
Verifying $T_{p}=P_p\,T_p\,P_p$, 
such that there exists a 
sequence $g_\ell\in\cC^\infty_b(X,\End(E))$ 
such that for any $k\geqslant0$, there exists $C_k>0$
%for any $p\in \N$, we have $T_{p}=P_p\,T_p\,P_p$, and
 with
%----------------------------------------------------------------------------
\begin{equation}\label{toe2.3}
\Big\|T_p-\sum_{\ell=0}^kT_{g_\ell,\,p}\, p^{-\ell}\Big\|
\leqslant C_k\,\, p^{-k-1}\quad \text{ for any } p\in \N^*,
\end{equation}
where $\norm{\,\cdot\,}$ denotes the operator norm on the space of
bounded operators.
The section $g_0$ is called the {\em principal symbol} of $\{T_p\}$.
%The full symbol of $\{T_p\}$ is the formal series
%$\sum_{l=0}^\infty \hbar^{l}g_l\in\cC^\infty(X,\End(E))[[\hbar]]$
%and the {\em principal symbol}\index{principal symbol}
%of $\{T_p\}$ is $g_0$.
%If each $T_p$ is self-adjoint, $\{T_p\}$ is called self-adjoint.
\end{definition}
%---------------------------------------------------------------------
\noindent
We express \eqref{toe2.3} symbolically by
\begin{equation}\label{atoe2.1}
T_p= \sum_{\ell=0}^k T_{g_\ell,p}\, p^{-\ell} 
+ \mO(p^{-k-1})\,,\:\:p\to\infty.
\end{equation}
If \eqref{toe2.3} holds for any $k\in \N$, then we write
\eqref{atoe2.1} with $k=+\infty$.
%\begin{equation}\label{atoe2.2}
%T_p= \sum_{l=0}^\infty T_{g_l,p}\, p^{-l} + \mO(p^{-\infty}).
%\end{equation}
The Poisson bracket $\{f, g\}$ on $(X, 2\pi \omega)$ is defined by: 
for $f, g\in\cC^{\infty}(X)$,
if $\xi_{f}$ denotes the Hamiltonian vector field generated by 
$f$, which is defined by
$2\pi i_{\xi_{f}}\omega=df$, then
\begin{align}\label{2.17}
\{f, g\}=\xi_{f}(dg).
\end{align}
Endowed with the Poisson bracket, the algebra $\cC^{\infty}(X)$
becomes a Lie algebra.

One of our goals is to show that $T_{f,\,p}T_{g,\,p}$ is 
a Toeplitz operator in the sense of Definition
\ref{toe-def}. This will be achieved by using the asymptotic expansions 
of the Bergman kernel and the kernels of the Toeplitz operators.
%-----
\begin{definition}\label{D:btp}
Let $(X,\Theta)$ be a Hermitian manifold,
and let $(L,h^L)$ and $(E,h^E)$ be holomorphic Hermitian
vector bundles on $X$ of rank one and $r$, respectively.
%-----
We assume that $(L,h^L)$ is positive and denote
by $\omega=\frac{\sqrt{-1}}{2\pi}R^L$ the K\"ahler metric induced by
the curvature of $(L,h^L)$.
%-----
Let $\cA$ be a $\C$-subalgebra of $\cC^\infty(X, \End(E))$ such that
the subalgebra 
$$\cA_\C:=\{f\in\cA:\text{there exists $\widetilde{f}\in\cC^\infty(X)$
with $f=\widetilde{f}\Id_E$}\}$$ 
is a Lie subalgebra of $(\cC^\infty(X),\{\cdot,\cdot\})$,
where $\{\cdot, \cdot\}$ is the Poisson bracket on $(X, 2\pi \omega)$.

We say that the Berezin-Toeplitz package holds 
for the K\"ahler manifold $(X,\omega)$ and algebra $\cA$ with
quantum spaces $H^{0}_{(2)}(X,L^p\otimes E)$
%-----
if the following
statements hold:
\\[2pt]
(i) For any $f,g\in\cA$, the composition $T_{f,\,p}T_{g,\,p}$ 
%of the Toeplitz operators $T_{f,p}$ and $T_{g,p}$ 
admits
the asymptotic expansion
\begin{equation}\label{toe4.2}
T_{f,\,p}T_{g,\,p}=\sum^\infty_{r=0}p^{-r}T_{C_r(f,\,g),\,p}
+\mO(p^{-\infty})\,,\:\:p\to\infty
\end{equation}
in the sense of \eqref{atoe2.1}, where $C_r$ are bi-differential operators,
$C_0(f,g)=fg$ and $C_r(f,g)\in\cC^\infty(X,\End(E))$,
 especially, $\supp (C_r(f,g))\subset \supp(f)$ $\cap \supp(g)$.
% and $C_0(f,g)=fg$.
\\[2pt]
(ii) If $f,g\in\cA_\C$, then %$f,g\in\cC^{\infty}_0(X,\C)$, then
we have
\begin{equation}\label{toe4.4}
\big[T_{f,\,p}\,,T_{g,\,p}\big]=
\frac{\sqrt{-1}}{\, p}T_{\{f,\,g\},\,p}+\mO(p^{-2})\,,\:\:p\to\infty.
\end{equation}
%-----
\\[2pt]
(iii) For every $f\in\cA$, let us denote by 
${\norm f}_\infty\coloneqq\sup\{|f(x)(u)|_{h^{E}}/ |u|_{h^E}:
x\in X, 0\neq u\in E_x\}$
%\begin{equation}\label{toe4.17sup}
%{\norm f}_\infty
%:=\sup_{\stackrel{ x\in X}{0\neq u\in E_x}} |f(x)(u)|_{h^{E}}/ |u|_{h^{E}}.
%\end{equation}
Then for any $f\in\cA$, there exists $C>0$ such that
the norm of $T_{f,\,p}$ satisfies
\begin{equation}\label{toe4.17a}
{\norm f}_\infty-\frac{C}{\sqrt{p}}\leq\norm{T_{f,\,p}}\leq{\norm f}_\infty\,,
\qquad\lim_{p\to\infty}\norm{T_{f,\,p}}={\norm f}_\infty.
\end{equation}
%thus
%\begin{equation}\label{toe4.17a}
%\lim_{p\to\infty}\norm{T_{f,\,p}}={\norm f}_\infty.
%%:=\sup_{\stackrel{ x\in X}{0\neq u\in E_x}} |f(x)(u)|_{h^{E}}/ |u|_{h^{E}}.
%\end{equation}
\\[2pt]
(iv) The coefficients $C_{r}(f,g)$ can be algorithmically computed
in terms of the geometric data $\Theta, h^L, h^E$.
%(iv) The coefficients $C_{r}(f,g)$ are given by
%$C_{r}(f,g)=C_{r,\,\om}(f,g)$, where $\om=\frac{\imat}{2\pi}R^{L}$
%{\rm(}compare \cite[(0.30)]{MM12} %\eqref{bk2.951}
%{\rm)}.
\end{definition}

Let $\cC_{\rm const}^{\infty}(X, {\rm End}(E))$ denote the algebra 
of smooth sections of $X$ that are constant outside a compact set;
that is, $f\in\cC_{\rm const}^{\infty}(X, {\rm End}(E))$ if there exists
a compact set $K\subset X$ such that $f=C\Id_{E_x}$ for all
$x\in X\setminus K$.
For any $f\in \cC_{\rm const}^{\infty}(X, {\rm End}(E))$, we
consider the Toeplitz operator $T_{f, p}$ as in (\ref{toe2.4}):
\begin{align}
T_{f,\,p}:L^2(X,L^p\otimes E)\longrightarrow L^2(X,L^p\otimes E)\,,
\quad T_{f,\,p}=P_p\,f\,P_p\,.
\end{align}
%===
Let $\Omega^{0,{\bullet}}(X,F)$ be the space of $(0,q)$-forms
over $X$ with values in $F$.
We denote by $\Omega_0^{0,{\bullet}}(X,F)$		
the subspace of $\Omega^{0,{\bullet}}(X,F)$ consisting of elements		
with compact support.		
%-------

The Dolbeault operator acting on sections of the holomorphic vector
bundle $F$ gives rise to the Dolbeault complex
\(\big(\Omega^{0,\bullet}(X,F), 
\overline{\partial}^{\smash{\scriptscriptstyle{F}}}\big)\,.\)
We denote by $\overline{\partial}^{\smash{\scriptscriptstyle{F}},*}$ the formal adjoint 
of $\overline{\partial}^{\smash{\scriptscriptstyle F}}$
with respect to the $L^2$-inner product \eqref{lm2.0}.
Set
\begin{align}\label{lm2.1}
%\begin{split}
D = \sqrt{2}\big(\, \overline{\partial}^{\smash{\scriptscriptstyle F}}
+ \,\overline{\partial}^{\smash{\scriptscriptstyle{F}},*}\,\big)
\,,\quad
\square^{F} = \tfrac{1}{2}D^2 =\overline{\partial}^{\smash{\scriptscriptstyle F}}\,
\overline{\partial}^{\smash{\scriptscriptstyle{F}},*}
+\,\overline{\partial}^{\smash{\scriptscriptstyle{F}},*}\,
\overline{\partial}^{\smash{\scriptscriptstyle F}}.
%\end{split}
\end{align}
The operator $\square^{F}$ is called the \emph{Kodaira Laplacian}.
It acts on
$\Omega ^{0,\bullet}(X,F)$ and preserves its $\Z$-grading.
%write
Let us denote by $\Omega^{0,q}_{(2)}(X,F)
:=L^2(X,\Lambda^{q}(T^{*(0,1)}X)\otimes F)$.

In the following, we consider the maximal extensions of the operator 
$\overline{\partial}^{\smash{\scriptscriptstyle F}}$
in the $L^2$-spaces and denote by 
$\overline{\partial}^{\smash{\scriptscriptstyle{F}},*}$ its Hilbert-space adjoint.
The operator defined by
\begin{equation}\label{ell-}
\begin{split}
\Dom(\square^{F})=\big\lbrace u\in\Dom(\overline{\partial}^{\smash{\scriptscriptstyle F}})
\cap\Dom(\overline{\partial}^{\smash{\scriptscriptstyle{F}},*})\!:
\overline{\partial}^{\smash{\scriptscriptstyle F}}u\in\Dom(\overline{\partial}^{\smash{\scriptscriptstyle{F}},*})\,,\;\overline{\partial}^{\smash{\scriptscriptstyle{F}},*} u\in 
\Dom(\overline{\partial}^{\smash{\scriptscriptstyle F}})\big\rbrace\,,\\
\square^{F} u=\overline{\partial}^{\smash{\scriptscriptstyle F}} 
\overline{\partial}^{\smash{\scriptscriptstyle{F}},*} u+
\overline{\partial}^{\smash{\scriptscriptstyle{F}},*}
\overline{\partial}^{\smash{\scriptscriptstyle F}} u\;\;\text{for}\;u\in\Dom(\square^{F})\,.
\qquad\qquad\qquad\qquad
\end{split}
\end{equation}
is a self-adjoint extension of the Kodaira Laplacian, 
called the Gaffney extension (see \cite[Proposition 3.1.2]{MM07}).
The quadratic form associated with $\square^{F}$ is the form $Q$ given by
\begin{equation}\label{ell2,1}
\begin{split}
&\Dom(Q):=\Dom(\overline{\partial}^{\smash{\scriptscriptstyle F}})\cap
\Dom(\overline{\partial}^{\smash{\scriptscriptstyle{F}},*}), \\
Q(s_1,s_2)=&\langle  \overline{\partial}^{\smash{\scriptscriptstyle F}} s_1,
\overline{\partial}^{\smash{\scriptscriptstyle F}} s_2\rangle
+\langle  \overline{\partial}^{\smash{\scriptscriptstyle{F}},*}s_1,
\overline{\partial}^{\smash{\scriptscriptstyle{F}},*}s_2\rangle\,,
\quad\text{for  } s_1,s_2\in \Dom(Q).
\end{split}
\end{equation}
In our situation, we consider $F=L^p\otimes E$
the operators from \eqref{lm2.1} and their extensions by
$D_p$ and $\Box_p$. 
%===
\begin{definition}[spectral gap]\label{specgapdef}
Let $(X,\Theta)$ be a Hermitian manifold,		
let $(L,h^L)$ and $(E,h^E)$ be holomorphic Hermitian		
vector bundles on $X$ of rank one and $r$, respectively.		
We say that the Kodaira-Laplacian has a \emph{spectral gap} if
there exist $C_0$, $C_L>0$ such that for any $p\in \N^*$,
\begin{equation}\label{bk1.4}
\begin{split}
&\norm{D_{p}s}^2_{L^2}\geqslant(2C_0 p -C_L)\norm{s}^2_{L^2}\,,\\[2pt]
&\qquad s\in \Dom\Big(\overline{\partial}^{\smash{L^p\otimes E}}\Big)\cap\,
\Dom\Big(\overline{\partial}^{\smash{L^p\otimes E,*}}\Big)\cap\, 
\Omega^{0,1}_{(2)}(X,L^p\otimes E).
\end{split}
\end{equation}
\end{definition}
%-----------------------------------------------------------
The following result generalizes the expansion of the 
Bergman kernel \cite[Theorem 4.1.1]{MM07}
and the Berezin-Toeplitz package \cite[Theorem 7.4.1]{MM07}, 
\cite[Theorem 1.1]{MM08b} 
from the case of compact
manifolds 
in the situation where a spectral gap exists.
%-----------------------------------------------------------
\begin{theorem}\label{t2.1}
Let $(X,\Theta)$ be a Hermitian manifold,		
let $(L,h^L)$ and $(E,h^E)$ be holomorphic Hermitian		
vector bundles on $X$ of rank one and $r$, respectively.		
We assume that $(L,h^L)$ is positive and denote		
by $\omega=\frac{\sqrt{-1}}{2\pi}R^L$.		
%-----		
Assume that the Kodaira-Laplacian possesses a spectral gap, 
as stated in \eqref{bk1.4}.
% i.\,e., that
%there exist $C_0$, $C_L>0$ such that for any $p\in \N^*$,
%\begin{equation}\label{bk1.4}
%\norm{D_{p}s}^2_{L^2}\geqslant(2C_0 p -C_L)\norm{s}^2_{L^2}\,,
%\quad s\in \Dom(\overline{\partial}^{L^p\otimes E})\cap\,
%\Dom(\overline{\partial}^{L^p\otimes E,*})\cap\, 
%\Omega^{0,1}_{(2)}(X,L^p\otimes E).
%\end{equation}
Then we have the following two statements:

\noindent
(1) The Bergman kernel asymptotics for 
$H^0_{(2)}(X,L^p\otimes E)$
holds on compact sets of $X$. More precisely,
there exist coefficients ${\bb}_{r}\in \cC^{\infty}(X, {\rm End}(E))$, 
$r\in \N$, such that for any
compact set $K\subset X$ and any $k, l\in \N$, there exists 
$C_{k, l, K}>0$ such that
\begin{align}\label{2.21}
\Big|\frac{1}{p^{n}}P_{p}(x, x)-\sum^{k}_{r=0}{\bb}_{r}(x)p^{-r}
\Big|_{\cC^{l}(K)}
\leqslant
C_{k, l, K} p^{-k-1},
\end{align}
where ${\bb}_{0}={\rm det}(\frac{\dot{R}^{L}}{2\pi})\Id_E$ and 
$\dot{R}^{L}\in {\rm End}(T^{(1, 0)}X)$
is defined by for $W, Y\in T^{(1, 0)}X$,
\begin{align}
R^{L}(W, \overline{Y})=\langle \dot{R}^{L}W, \overline{Y}\rangle.
\end{align}

\noindent
(2) The Berezin-Toeplitz quantization package holds for the K\"ahler manifold
$(X,\omega)$, the algebra
$\cC^\infty_{\rm const}(X,\End(E))$, and quantum spaces
$H^0_{(2)}(X,L^p\otimes E)$ in the sense of Definition \ref{D:btp} with $C_0(f,g)=fg$.
\end{theorem}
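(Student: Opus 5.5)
The plan is to follow the analytic localization method of \cite[\S 4.1, \S 7.4]{MM07} and \cite{MM08b}: the spectral gap gives the needed global control, and all remaining work is local. First, from \eqref{bk1.4} and the Bochner–Kodaira–Nakano formula we deduce that $\spec(\square_p)\subset\{0\}\cup[2C_0p-C_L,\infty)$ on the whole Dolbeault complex. On $(0,0)$-forms, a nonzero eigenvalue $\lambda$ with eigensection $s$ gives $\db s\neq0$ with $\square_p\db s=\lambda\db s$ on $(0,1)$-forms, so $\lambda\ge 2C_0p-C_L$. Hence $P_p=\mathbf 1_{\{0\}}(D_p^2)$ restricted to degree $0$ equals $F(D_p)$ for any even rapidly decreasing $F$ with $F(0)=1$ supported in $(-\sqrt{C_0p},\sqrt{C_0p})$ away from $0$, for $p$ large. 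We take $F(a)=\int \phi(t)\cos(ta/\sqrt p)\,dt$ with $\hat\phi$ of compact support so that $F(D_p/\sqrt p)$ has finite propagation speed; this is $\phi$ even, smooth, supported in $[-\var,\var]$, and normalized so $F(0)=1$. Then $P_p-\wi F(D_p/\sqrt p)$, where $\wi F$ is the finitely propagating part, is $\mO(p^{-\infty})$ in every $\cC^m$ norm on $K\times K$. This is proved using Sobolev estimates for $D_p^{2k}$ localized on a neighborhood of $K$: elliptic estimates for the Gaffney extension are local, so only interior regularity is needed and no completeness is used. The main consequence is that $P_p(x,x')$ on compact sets depends, modulo $\mO(p^{-\infty})$, only on the geometry in an $\var$-neighborhood, and that for $d(x,x')>\var$ the kernel is $\mO(p^{-\infty})$. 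Refining with weighted estimates yields the exponential decay $|P_p(x,x')|\le Cp^n e^{-c\sqrt p\, d(x,x')}$ for $x$ in compacts.

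Second, we transplant the problem to $\R^{2n}=T_{x_0}X$ as in \cite[\S 4.1]{MM07}. We extend the data outside a small ball so that the model operator $\mathscr L_p$ on $\R^{2n}$ has a spectral gap uniformly (it is a perturbation of the harmonic-oscillator/Bargmann model). After rescaling $Z\mapsto Z/\sqrt p$, the operator becomes $\mathscr L_t=\mathscr L_0+\sum t^r\mO_r$ with $t=1/\sqrt p$. Resolvent expansions with Sobolev bounds uniform in $x_0\in K$ then give the off-diagonal expansion $p^{-n}P_p(\exp_{x_0}(Z),\exp_{x_0}(Z'))\sim\sum_r(\mathscr P^{(r)}\kappa^{-1/2})(\sqrt pZ,\sqrt pZ')p^{-r/2}$ with Gaussian decay, odd terms vanishing on the diagonal. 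Setting $Z=Z'=0$ gives \eqref{2.21} with $\bb_0=\det(\dot R^L/2\pi)\Id_E$ from the Bargmann kernel. Uniformity of constants over $x_0\in K$ follows because all geometric data are bounded on a neighborhood of $K$.

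Third, for the Toeplitz package with $f,g\in\cC^\infty_{\rm const}$, we write $f=c\Id_E+f_0$ with $f_0$ compactly supported. Since $T_{c,p}=cP_p$, everything reduces to compactly supported symbols, where the kernel of $T_{f_0,p}$ is $\mO(p^{-\infty})$ away from a compact set by the decay estimate. On that compact set, the local model expansion together with the criterion of \cite[Theorem 7.4.1]{MM07}/\cite{MM08b}, in its compactly supported form (Theorem~\ref{toet3.1}), gives \eqref{toe4.2}, the supports of the $C_r$, (ii) via $C_1(f,g)-C_1(g,f)=\sqrt{-1}\{f,g\}$, and (iv). Operator-norm remainders are controlled by Schur's test, using exponential decay and the uniformity of the local estimates. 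The bound $\norm{T_{f,p}}\le\norm f_\infty$ is trivial. The lower bound in (iii) follows by testing on peak sections $P_p(\cdot,x_0)u/\abs{P_p(x_0,x_0)}^{1/2}$ at a point $x_0$ nearly achieving the sup. If the sup is attained only at infinity, it equals $|c|$, and we take $x_0$ on the compact region just outside $\supp f_0$, where the expansion is still available.

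The main obstacle is the first step. We must prove the localization $P_p\approx\wi F(D_p/\sqrt p)$ with $\mO(p^{-\infty})$ errors in $\cC^m(K\times K)$, together with uniform operator-norm remainders for the Toeplitz compositions, on a possibly incomplete non-compact manifold. The only global input is the $L^2$ spectral gap on the Gaffney extension, so every Sobolev argument must be local and combined with $L^2$ functional calculus rather than global elliptic theory.
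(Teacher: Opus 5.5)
Your architecture matches the paper's: spectral gap on functions, localization of $P_p$ by a finitely propagating function of $D_p$, the rescaled model operator, the compactly supported criterion Theorem~\ref{toet3.1}, the splitting $f=c\Id_E+f_0$, and peak sections. However, two steps do not work as written.

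\emph{Localization.} Take your $F(a)=\int\phi(t)\cos(ta/\sqrt p)\,dt$ with $\phi$ supported in $[-\var,\var]$. Then $F(D_p)=G(D_p/\sqrt p)$ for the fixed function $G(b)=\int\phi(t)\cos(tb)\,dt$. Since $F$ is even, on degree $0$ the operator $F(D_p)$ is a function of $D_p^2=2\square_p$, whose nonzero spectrum there lies in $[2C_0p-C_L,\infty[$. At such points $a/\sqrt p$ stays of order one. So the only bound on $\|F(D_p)-P_p\|$ is $\sup\{|G(b)|: b^2\geq 2C_0-C_L/p\}$, which does not decay in $p$, and the claimed $\mO(p^{-\infty})$ fails for this choice. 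Starting instead from a compactly supported $F$ that reproduces $P_p$ exactly and cutting its Fourier transform at fixed $|t|\leq\var$ has the same defect; it works only if you cut at $|t|\leq\var\sqrt p$. The $\sqrt p$-rescaling belongs to the second step only. The paper uses the fixed Schwartz function $F_\var$ of \eqref{0c3}. Its Fourier transform lies in $[-\var,\var]$, so $F_\var(D_p)$ has propagation radius $\var$. Meanwhile $\sup_{a^2\geq 2C_0p-C_L}|F_\var(a)|\leq C_m p^{-m/2}$, which yields \eqref{bk2.11}, \eqref{toe5.3b} and, with interior elliptic estimates, \eqref{0c7}. Relatedly, your eigenvector argument only excludes point spectrum in $]0,2C_0p-C_L[$. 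On non-compact $X$ you need the form inequality $\|\square_p f\|^2\geq (C_0p-C_L/2)\langle\square_p f,f\rangle$ for all $f\in\Dom(\square_p)$, obtained by applying \eqref{bk1.4} to $s=\db f$, which has $\db s=0$. The spectral theorem then gives \eqref{2.23}. The appeal to Bochner--Kodaira--Nakano in higher degrees is not supported by the hypotheses and is not needed.

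\emph{Operator-norm control of Toeplitz compositions.} You rely on a bound $|P_p(x,x')|\leq Cp^ne^{-c\sqrt p\,d(x,x')}$ for $x\in K$ and arbitrary $x'\in X$, and on Schur's test over the non-compact part of $X$. Under \eqref{bk1.4} alone (no completeness, no bounded geometry), nothing controls the kernel pointwise as $x'$ leaves every compact set, because local Sobolev constants degenerate there. Theorem~\ref{tue16} only gives estimates on $K\times K$. Moreover, Theorem~\ref{toet3.1} does not accept ``kernel small outside a compact set'' as input; it requires an exact factorization $T_p=P_pS_pP_p$ with $\supp S_p(\cdot,\cdot)$ compact. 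The missing idea is the identity \eqref{toe5.6}. Because $P_pF_\var(D_p)=F_\var(D_p)P_p=P_p$, for compactly supported $f,g$ one has $T_{f,p}T_{g,p}=P_p\,\big(F_\var(D_p)\,f\,P_p\,g\,F_\var(D_p)\big)\,P_p$. The middle kernel is supported in $U\times U$ with $U\Subset X$, satisfies \eqref{toe3.1}, and has the local expansion \eqref{toe5.7}. All Schur-test estimates in the proof of Theorem~\ref{toet3.1} then run over a compact set, the outer projections have norm one, and no decay at infinity is ever used. With these two repairs your plan coincides with the paper's proof, including the reduction $T_{f,p}=T_{f_0,p}+cP_p$ and the peak-section argument for (iii).
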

%-----------------------------------------------------------

\noindent
The abstract setting above holds for compact Hermitian
manifolds; see \cite[Section 2.7]{MM11}.
\begin{theorem}\label{t2.1comp}
Let $(X,\Theta)$ be a compact Hermitian manifold of dimension $n$, and $(L,h^L)$ 
be a positive line bundle. We set 
$\omega=\frac{\sqrt{-1}}{2\pi}R^L$.
Let $(E,h^E)$ be a holomorphic Hermitian vector bundle on $X$.
Then we have the following two statements:

(1) The Bergman kernel asymptotics for
$H^0(X,L^p\otimes E)$
hold on $X$. 

(2) The Berezin-Toeplitz quantization package holds for the 
K\"ahler manifold $(X,\omega)$, the algebra 
$\cC^\infty(X,\End(E))$, and quantum spaces
$H^0(X,L^p\otimes E)$.
\end{theorem}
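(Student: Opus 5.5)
The plan is to deduce Theorem~\ref{t2.1comp} from Theorem~\ref{t2.1}. The only hypothesis of Theorem~\ref{t2.1} to verify is the spectral gap \eqref{bk1.4}; the conclusions then have to be read in the compact setting.

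\emph{Identifying the objects.} Since $X$ is compact, every smooth section is in $L^2$, so $H^0_{(2)}(X,L^p\otimes E)=H^0(X,L^p\otimes E)$, which is finite dimensional. Likewise $\cC^\infty_{\rm const}(X,\End(E))=\cC^\infty(X,\End(E))=\cC^\infty_b(X,\End(E))$, because we may take $K=X$ as the compact set and every smooth section has bounded derivatives. Moreover, the maximal extension of $\db$ on a compact manifold coincides with the minimal one, so the Gaffney extension is the unique self-adjoint extension of $\square_p$. Hence $\Dom(Q)$ on $(0,1)$-forms is the Sobolev space $H^1$, and $\cC^\infty$ forms are dense in it for the graph norm.

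\emph{Spectral gap.} The main step is the Bochner--Kodaira--Nakano formula with torsion, as in \cite[Theorem 1.4.12]{MM07} and \cite[Section 2.7]{MM11}. For $s\in\Omega^{0,1}(X,L^p\otimes E)$ it yields
\[
\norm{D_ps}^2_{L^2}\geqslant 2\big\langle R^{L^p}(w_j,\ov w_k)\ov w^k\wedge i_{\ov w_j}s,s\big\rangle-C\norm{s}^2_{L^2}-\tfrac12\norm{D_ps}^2_{L^2}.
\]
In the non-Kähler case the term $\tfrac12\norm{D_ps}^2$ together with the constant $C$ absorbs the torsion contributions. This is where compactness is used: all curvature and torsion tensors of $\Theta$, $h^E$ and $\det$ are bounded. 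Positivity of $R^L$ on compact $X$ gives $R^L(w,\ov w)\geqslant 2\mu_0|w|^2$ for some $\mu_0>0$. For a $(0,1)$-form the curvature term is therefore at least $2\mu_0 p\norm{s}^2$. This gives $\norm{D_ps}^2\geqslant (2C_0p-C_L)\norm{s}^2$ on smooth forms, and by density it holds on the whole domain. This step is essentially \cite[Theorem 1.5.5]{MM07}, and I expect it to be the main point requiring care. The difficulty is purely bookkeeping of the torsion terms for a non-Kähler $\Theta$; it can be avoided by citing the known estimate.

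\emph{Conclusion.} Theorem~\ref{t2.1}(1) with $K=X$ gives the Bergman kernel expansion uniformly on all of $X$. Theorem~\ref{t2.1}(2), applied with the identification $\cC^\infty_{\rm const}=\cC^\infty$ above, gives the Berezin--Toeplitz package for $\cC^\infty(X,\End(E))$ with quantum spaces $H^0(X,L^p\otimes E)$. For the Lie subalgebra requirement in Definition~\ref{D:btp}, note that $\cA_\C=\cC^\infty(X)\Id_E$, which is trivially closed under the Poisson bracket.
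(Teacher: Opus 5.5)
Your proposal is correct and follows the paper's route. The paper also just records that the spectral gap \eqref{bk1.4} holds on compact Hermitian manifolds, citing the known compact estimate \cite[Section 2.7]{MM11}. It then reads off Theorem~\ref{t2.1} with $K=X$, $\cC^\infty_{\rm const}(X,\End(E))=\cC^\infty(X,\End(E))$ and $H^0_{(2)}=H^0$. Your Bochner--Kodaira--Nakano derivation of the gap, with bounded torsion absorbed into the constant, is exactly the content of that cited estimate (cf.\ \cite[Theorem 1.5.5]{MM07}).
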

%-----------------------------------------------------------

%-----------------------------------------------------------------------------
\begin{remark}\label{toer1}
%\textbf{(i)} 
Relations \eqref{toe4.4} and \eqref{toe4.17a} were first proved in some special cases:
in \cite{KliLe:92} for Riemann surfaces, 
in \cite{Cob:92} for $\C^n$, and in \cite{BLU:93} for bounded symmetric domains in $\C^n$,
by using explicit calculations.
Then Bordemann, Meinrenken, and Schlichenmaier \cite{BMS94}
%(cf. also Guillemin \cite{Guill:95})
treated the case of a compact K{\"a}hler manifold (with $E=\C$)
using the theory of Toeplitz structures (generalized Szeg\H{o} operators) by
Boutet de Monvel and Guillemin \cite{BdMG81}.
%Guillemin \cite{Guill:95} notices that the method is implicit in \cite{BoG}.
Moreover, Schlichenmaier \cite{Schlich:00}
(cf. also  \cite{Cha03,KS01})
continued this train of thought and showed that for any $f,g\in \cC^\infty(X)$,
 the product $T_{f,\,p}\,T_{g,\,p}$ has an asymptotic expansion
 \eqref{toe4.2} and constructed an associative star product geometrically.
\end{remark}
%-------------------------

%---------------------------------------------------------------------------
\subsection{Model situation: Bergman kernel on \texorpdfstring{$\C^n$}{Cn}}\label{toes1}
In this section, we introduce the model operator $\cL$, a Kodaira-Laplace operator acting on 
$\C^n$, and describe its spectrum. We formulate the expansion of Bergman and Toeplitz kernels 
in terms of the Schwartz kernel associated with the projection onto the kernel of $\cL$. 
Our analysis is based on the Fourier expansion with respect to the eigenfunctions of 
this model operator.

Write $\C^n\simeq\R^{2n}$ with real coordinates $Z=(Z_1,\dots,Z_{2n})$ and
complex coordinates $z_j=Z_{2j-1}+\imat Z_{2j}$.
Equip $\C^n$ with the Euclidean metric and the K\"ahler form
\[
\om=\frac{\imat}{2}\sum_{j=1}^n dz_j\wedge d\ov z_j\,.
\]
Its volume form is the Euclidean volume form $dZ=dZ_1\cdots dZ_{2n}$.
Let $(L^2(\R^{2n}),\norm{\,\cdot\,}_{L^2})$ be the space 
of square integrable functions on $\R^{2n}$ with respect to the Lebesgue measure.

Let $0<a_1\leq a_2\leq\ldots\leq a_n$.  
Let $L=\C$ be the trivial holomorphic line bundle on $\C^n$ with canonical section
$\mathbf 1$, endowed with the Hermitian metric
\begin{equation}\label{abk2.65}
|\mathbf 1|_{h^L}(z)=\exp\!\Big(-\frac{1}{4}\sum_{j=1}^na_j|z_j|^2\Big)=:\rho(Z).
%\exp\!\Big(-\frac{\pi}{2}\sum_{j=1}^n|z_j|^2\Big)=:\rho(Z).
\end{equation}
Thus $H^0_{(2)}(\C^n,L)$ identifies with the Segal--Bargmann space
of holomorphic functions square-integrable with respect to $\rho\,dZ$.
It is well-known that $\{z^\beta: \beta\in \N^n\}$
forms an orthogonal basis of this space.

 We introduce the operators of creation 
\[b_i=-2\frac{\partial}{\partial z_i}+\frac{a_i}{2}\,\ov z_i,\]
and annihilation
%\begin{equation}\label{bk2.67a}
%\qquad
\[b_i^{+}=2\frac{\partial}{\partial\ov z_i}+\frac{a_i}{2}\,z_i,\]
%\end{equation}
and the model operator (complex harmonic oscillator)
\begin{equation}\label{bk2.67}
\cL=\sum_{i=1}^n b_i\,b_i^{+}.
\end{equation}
The operator
$\cL$ is the $(0,0)$-part of the Kodaira Laplacian on $(\C^n,L)$
after conjugation by $\rho$, cf.\ \cite{MM07,MM08a}.
It acts as a densely defined self-adjoint operator on 
$(L^2(\R^{2n}),\norm{\,\cdot\,}_{L^2})$.
%===
\begin{theorem}[{\cite[Theorem 4.1.20]{MM07}, \cite[Theorem 1.15]{MM08a}}]\label{bkt2.17}
The spectrum of $\cL$ on $L^2(\R^{2n})$ is given by
\begin{equation}\label{bk2.68}
{\spec}(\cL)=\Big\{ 2\sum_{i=1}^n a_i\alpha_i \,:\, \alpha\in\N ^n\Big\}\,.
%\Big\{ 4\pi |\alpha| \,:\, \alpha\in\N ^n\Big\}\,.
\end{equation}
Each $\lambda\in\spec(\cL)$ is an eigenvalue of infinite multiplicity and
an orthogonal basis of the eigenspace  
of $\lambda=2\sum_{i=1}^n\alpha_i a_i$
is given by
\begin{equation}\label{bk2.69}
B_\lambda=\Big\{b^{\alpha}\big(z^{\beta}\exp\big({-\frac{1}{4}\sum_{i=1}^n
a_i|z_i|^2}\big)\big)\,,\quad\text{with $\beta\in\N^n$}\Big\}\,.
\end{equation}
%\begin{equation}\label{bk2.69}
%b^{\alpha}\big(z^{\beta}\exp\big({-\frac{1}{4}\sum_{i=1}^n
%a_i|z_i|^2}\big)\big)\,,\quad\text{with $\beta\in\N^n$}\,.
%\end{equation}
%an orthogonal basis of the corresponding eigenspace is given by
%\begin{equation}\label{bk2.69}
%B_\lambda=\Big\{b^{\alpha}\big(z^{\beta} e^{-\pi\sum_i |z_i|^2/2}\big):
%\text{$\alpha\in\N^n$ with $4\pi|\alpha|=\lambda$, $\beta\in\N^n$}\Big\}
%\end{equation}
where $b^{\alpha}:=b^{\alpha_1}_1\cdots b^{\alpha_n}_n$. Moreover,
$\bigcup\{B_\lambda:\lambda\in\spec(\cL)\}$
forms a complete orthogonal basis of $L^2(\R^{2n})$.
In particular, an orthonormal basis of
\begin{equation}\label{bk2.70}
\varphi_\beta(z)=\left(\frac{a ^\beta}{(2\pi)^n 2 ^{|\beta|} \beta!}
\prod_{i=1}^n a_i\right)^{1/2}z^\beta
\exp\Big (-\frac{1}{4} \sum_{j=1}^n a_j |z_j|^2\Big )\,,\quad \beta\in\N^n\,.
\end{equation}
%$\Ker (\cL)$ is
%\begin{equation}\label{bk2.70}
%\Big\{\varphi_\beta(z)=\big(\tfrac{\pi ^{|\beta|}}{\beta!}\big)^{1/2}z^\beta
%e^{-\pi\sum_i |z_i|^2/2}\,:\beta\in\N^n\Big\}\,.
%\end{equation}
\end{theorem}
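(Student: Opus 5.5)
The statement to prove is Theorem~\ref{bkt2.17}, the spectral decomposition of the model operator $\cL$. I would prove it with the standard Fock-space argument, treating $b_i,b_i^+$ as creation and annihilation operators.

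\textbf{Commutation relations.} First compute, on the Schwartz space $\mathcal S(\R^{2n})$ (which is dense and invariant under $b_i,b_i^+$):
\begin{equation*}
[b_i^+,b_j]=2a_i\delta_{ij},\qquad [b_i,b_j]=[b_i^+,b_j^+]=0 .
\end{equation*}
Only $[b_i^+,b_i]$ needs computation; it gives
\begin{equation*}
2\tfrac{\partial}{\partial\ov z_i}\big(\tfrac{a_i}{2}\ov z_i\big)+2\tfrac{\partial}{\partial z_i}\big(\tfrac{a_i}{2}z_i\big)=2a_i .
\end{equation*}
Next, $b_i$ is the formal adjoint of $b_i^+$ on $L^2(\R^{2n})$, since $(\partial/\partial\ov z_i)^*=-\partial/\partial z_i$. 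Hence $\cL=\sum_i b_ib_i^+$ is symmetric and nonnegative, with
\begin{equation*}
\langle\cL u,u\rangle=\sum_i\norm{b_i^+u}^2 .
\end{equation*}
These relations give $[\cL,b_j]=2a_jb_j$, so each $b_j$ raises the $\cL$-eigenvalue by $2a_j$.

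\textbf{Eigenvectors.} The kernel of $\cL$ on $\mathcal S$ is $\bigcap_i\Ker b_i^+$. Write $u=f\rho$ with $\rho$ as in \eqref{abk2.65}. Since $b_i^+\rho=0$ (because $2\partial_{\ov z_i}\rho=-\tfrac{a_i}{2}z_i\rho$), we get $b_i^+(f\rho)=2(\partial_{\ov z_i}f)\rho$. So $u\in\Ker\cL$ iff $f$ is holomorphic with $f\rho\in L^2$. By Taylor expansion together with rotation invariance of $\rho$ in each variable, the monomials $z^\beta\rho$ are pairwise orthogonal and span this space. Their norms follow from Gaussian integrals in polar coordinates:
\begin{equation*}
\int_\C|z|^{2k}e^{-a|z|^2/2}\,dZ=\pi k!\,(2/a)^{k+1}.
\end{equation*}
This yields the normalization constants in \eqref{bk2.70}.

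\textbf{Orthogonality of the excited states.} Applying $b^\alpha$ gives eigenvectors $b^\alpha(z^\beta\rho)$ with eigenvalue $2\sum a_i\alpha_i$. For orthogonality, move the $b$'s across using the adjoint relation and commute $(b^+)^{\alpha'}$ past $b^\alpha$ via the commutation relations. This gives
\begin{equation*}
\langle b^\alpha u,b^{\alpha'}v\rangle=\delta_{\alpha\alpha'}\prod_i(2a_i)^{\alpha_i}\alpha_i!\,\langle u,v\rangle\quad\text{for }u,v\in\Ker\cL .
\end{equation*}
In particular these vectors are orthogonal across distinct $\alpha$ even when the eigenvalues coincide, and orthogonal in $\beta$ for fixed $\alpha$.

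\textbf{Completeness (main step).} It remains to show that $\bigcup_\lambda B_\lambda$ is complete. Each $b^\alpha(z^\beta\rho)$ equals $q(z,\ov z)\rho$ with $q$ a polynomial whose top-degree part in $\ov z$ is $c\,\ov z^\alpha z^\beta$, $c\neq0$. By induction on degree, the span of the $B_\lambda$ therefore equals $\{q\rho: q\in\C[z,\ov z]\}$. This space is dense in $L^2(\R^{2n})$, because $\{\text{polynomial}\times\text{Gaussian}\}$ is dense: if $g\perp q\rho$ for all $q$, then the Fourier transform of $g\rho$ is entire with all derivatives vanishing at $0$, so $g=0$.

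\textbf{Conclusion.} With an orthonormal eigenbasis in hand, the self-adjoint extension of $\cL$ is diagonal in this basis. So its spectrum is exactly the closure of the eigenvalue set, which is already the closed discrete set $\{2\sum a_i\alpha_i\}$. Each eigenspace contains $B_\lambda$ indexed by all $\beta\in\N^n$, hence has infinite multiplicity.
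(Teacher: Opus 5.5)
Your proof is correct and complete. The paper gives no proof of its own; it cites Theorem 4.1.20 of [MM07] (and Theorem 1.15 of [MM08a]), which rests on the same creation/annihilation-operator argument you use: the relations $[b_i^{+},b_j]=2a_i\delta_{ij}$, $\bigcap_i\Ker b_i^{+}$ identified with holomorphic functions times the Gaussian, orthogonality by moving each $b_i$ across as $b_i^{+}$, and completeness from the leading term $a^{\alpha}\ov z^{\alpha}z^{\beta}$ of $b^{\alpha}(z^{\beta}\rho)$ together with the density of polynomials times the Gaussian.
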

%===
%
%\begin{thm}\label{bkt2.17}
%The spectrum of $\cL$ on $L^2(\R^{2n})$ is given by
%\begin{equation}\label{bk2.68}
%\spec(\cL)=
%\Big\lbrace2\sum_{i=1}^n\alpha_i a_i\,:\, 
% \alpha =(\alpha_1,\cdots,\alpha_n)\in\N ^n\Big\rbrace
%\end{equation}
%and an orthogonal basis of the eigenspace of $2\sum_{i=1}^n\alpha_i a_i$
%is given by
%\begin{equation}\label{bk2.69}
%b^{\alpha}\big(z^{\beta}\exp\big({-\frac{1}{4}\sum_{i=1}^n
%a_i|z_i|^2}\big)\big)\,,\quad\text{with $\beta\in\N^n$}\,.
%\end{equation}
%In particular, an orthonormal basis of 
%$\ke (\cL)$ is
%\begin{equation}\label{bk2.70}
%\varphi_\beta(z)=\left(\frac{a ^\beta}{(2\pi)^n 2 ^{|\beta|} \beta!}
%\prod_{i=1}^n a_i\right)^{1/2}z^\beta
%\exp\Big (-\frac{1}{4} \sum_{j=1}^n a_j |z_j|^2\Big )\,,\quad \beta\in\N^n\,.
%\end{equation}
%
%\end{thm}
%---------------------------------------------------------------------------

Let $\cP:L^2(\R^{2n})\to\Ker(\cL)$ be the orthogonal projection and $\cP(Z,Z')$
its Schwartz kernel (with respect to $dZ'$). Summing \eqref{bk2.70} yields
\begin{equation}\label{toe1.3}
\cP(Z,Z') =\prod_{i=1}^n
\frac{a_i}{2\pi}\:\:\exp\Big(-\frac{1}{4}\sum_{i=1}^n
a_i\big(|z_i|^2+|z^{\prime}_i|^2 -2z_i\overline{z}_i'\big)\Big)\,.
\end{equation}
%\begin{equation}\label{toe1.3}
%\cP(Z,Z')=\exp\!\Big(-\frac{\pi}{2}\sum_{i=1}^n\big(|z_i|^2+|z_i'|^2-2z_i\ov z_i'\big)\Big).
%\end{equation}

%---------------------------------------------------------------------------
%{\color{blue}
\subsection{Asymptotic expansion of Bergman kernel}\label{s3.2}
Let $(X,\Theta)$ be a Hermitian manifold, and let $(L,h^L)$ and $(E,h^E)$ be
holomorphic Hermitian vector bundles on $X$ of rank one and $r$, respectively.
We use the identifications and notations to state the asymptotics.

\noindent
\textbf{\emph{Normal coordinates.\/}}
For $x\in X$, let $a^X_x$ be the injectivity radius of $(X,g^{TX})$ at $x$. 
Denote by $B^{X}(x,\var)$ and $B^{T_xX}(0,\var)$ the open balls in $X$
and $T_xX$, respectively. We identify them via the exponential map 
$Z\mapsto\exp^X_x(Z)$ for $\var\le a^X_x$. For any subset $Y\subset X$,
we set $a^Y=\inf_{x\in X}a^X_x$. 
Throughout the paper, $\varepsilon\in\,]0,a^X_x/4[$.
Let $d(\cdot,\cdot)$ denote the Riemannian distance function associated with the Riemannian manifold $(X, g^{TX})$.

\noindent
\textbf{\emph{Basic trivialization.\/}}
Fix $x_0\in X$. For $Z\in B^{T_{x_0}X}(0,\var)$, we identify
$(L_Z,h^L_Z)$ and $(E_Z,h^E_Z)$ with $(L_{x_0},h^L_{x_0})$ and $(E_{x_0},h^E_{x_0})$
by parallel transport along
\(
\gamma_Z:[0,1]\ni u\longmapsto \exp^X_{x_0}(uZ),
\)
and similarly for $L^p\otimes E$ using $\nabla^{L^p\otimes E}$.
With this identification, a function $f\in \cC^\infty(X,\End(E))$ corresponds to
\(
f_{x_0}:B^{T_{x_0}X}(0,\var)\to \End(E_{x_0})\), %\qquad
\(f_{x_0}(Z)=f\circ\exp^X_{x_0}(Z)\),
and the Bergman kernel $P_p(x,x')$ induces a family of smooth sections
\(
(Z,Z')\longmapsto P_{p,x_0}(Z,Z')\in \End(E_{x_0})\),
\(|Z|,|Z'|<\var,\)
depending smoothly on $x_0$.

\noindent
\textbf{\emph{Coordinates on $T_{x_0}X$.\/}}
Let $\{w_i\}_{i=1}^n$ be an orthonormal basis of $T^{(1,0)}_{x_0}X$ and set
$e_{2j-1}=\tfrac{1}{\sqrt2}(w_j+\ov w_j)$,
$e_{2j}=\tfrac{\imat}{\sqrt2}(w_j-\ov w_j)$.
We use real coordinates $Z=(Z_1,\dots,Z_{2n})$ on $T_{x_0}X\simeq\R^{2n}$ via
%\begin{equation}\label{n11}
\((Z_1,\ldots,Z_{2n}) \longmapsto \sum_i Z_i e_i\in T_{x_0}X\),
%\end{equation}
and complex coordinates $z=(z_1,\ldots,z_n)$ on $\C^n\simeq\R^{2n}$.

\noindent
\textbf{\emph{Volume form on $T_{x_0}X$.\/}}
Let $dv_{TX}$ be the Riemannian volume form on $(T_{x_0}X,g^{T_{x_0}X})$.
Then there exists a smooth positive function $\kappa_{x_0}$ such that
\begin{equation}\label{atoe2.7}
dv_X(Z)=\kappa_{x_0}(Z)\,dv_{TX}(Z),\qquad \kappa_{x_0}(0)=1.
\end{equation}

\noindent
\textbf{\emph{Sequences of operators.\/}}
Let $\Theta_p:L^2(X,L^p\otimes E)\to L^2(X,L^p\otimes E)$ be a sequence of
continuous operators with smooth kernel $\Theta_p(\cdot,\cdot)$ with respect to
$dv_X$ (e.g.\ $\Theta_p=T_{f,p}$). In the basic trivialization we write the
corresponding kernels as $\Theta_{p,x_0}(Z,Z')$. We say that
$\Theta_{p,x_0}(Z,Z')=\mO(p^{-\infty})$ if for any $l,m\in\N$ there exists
$C_{l,m}>0$ such that
$|\Theta_{p,x_0}(Z,Z')|_{\cC^m(X)}\le C_{l,m}p^{-l}$.
The asymptotics will be expressed in terms of the model Bergman kernel
$\cP_{x_0}$ of the operator $\cL$ on $T_{x_0}X\simeq\R^{2n}$ (cf.\ \eqref{toe1.3}).

%===
\begin{notation}\label{noe2.7}
Fix $k\in\N$ and let $\{Q_{r,x_0}\in \End(E)_{x_0}[Z,Z']:\ 0\le r\le k,\ x_0\in X\}$
be a family of polynomials in $Z,Z'$, smooth in $x_0$.
Let $K\subset X$ be compact and $\var'\in\,]0,a^K[$.
We write
\begin{equation}\label{toe2.7}
p^{-n} \Theta_{p,x_0}(Z,Z')\cong \sum_{r=0}^k
(Q_{r,x_0}\cP_{x_0})(\sqrt{p}Z,\sqrt{p}Z')\,p^{-r/2}
+\mO(p^{-(k+1)/2})
\end{equation}
on $\{(Z,Z')\in TX\times_K TX:\ |Z|,|Z'|<\var'\}$ if there exist $C_0>0$ and a
decomposition
\begin{equation}\label{toe2.71}
\begin{split}
p^{-n}\Theta_{p,x_0}(Z,Z')&-\sum_{r=0}^k
(Q_{r,x_0}\cP_{x_0})(\sqrt{p}Z,\sqrt{p}Z')\,\kappa_{x_0}^{-1/2}(Z)
\kappa_{x_0}^{-1/2}(Z')\,p^{-r/2}\\
&=\Psi_{p,k,x_0}(Z,Z')+\mO(p^{-\infty}),
\end{split}
\end{equation}
where for every $l\in\N$ there exist $C_{k,l}>0$, $M>0$ such that for all $p\in\N^*$,
\[
|\Psi_{p,k,x_0}(Z,Z')|_{\cC^l(X)}
\le C_{k,l}\,p^{-(k+1)/2}\,(1+\sqrt{p}|Z|+\sqrt{p}|Z'|)^M\,
e^{-C_0\sqrt{p}\,|Z-Z'|},
\]
on $\{(Z,Z')\in TX\times_K TX:\ |Z|,|Z'|<\var'\}$.
\end{notation}

\noindent
\textbf{\emph{The sequence $P_p$.\/}}
Let $K\subset X$ be compact and $\varepsilon\in]0,a^K[$.
Choose $\bbf_{\!\varepsilon}:\R\to[0,1]$ smooth, even, with
$\bbf_{\!\varepsilon}(v)=1$ for $|v|\le \varepsilon/2$ and
$\bbf_{\!\varepsilon}(v)=0$ for $|v|\ge \varepsilon$, and set
\begin{equation}\label{0c3}
F_\varepsilon(a)=\Big(\int_{-\infty}^{+\infty}\bbf_{\!\varepsilon}(v)\,dv\Big)^{-1}
\int_{-\infty}^{+\infty} e^{iva}\,\bbf_{\!\varepsilon}(v)\,dv.
\end{equation}
Then $F_\varepsilon\in\mathcal S(\R)$ is even and $F_\varepsilon(0)=1$.
We first record the far off-diagonal decay.
%------------------------------------------------------------------------------
\begin{theorem}[Off-diagonal expansion]\label{tue16}
Assume that the spectral gap condition \eqref{bk1.4} holds.
Then for any compact set $K\subset X$,
for any $\ell,m\in\N$ and $\var\in\,]0,a^K[$\,, there exists a positive constant
$C_{K,\ell,m,\var}>0$ such that for any $p\geqslant 1$ and $x,x'\in K$, the following
estimate holds:
\begin{equation}\label{0c7}
\left|F_\varepsilon(D_p)(x,x') - P_{p}(x,x')\right|_{\cC ^m(K\times K)}
\leqslant C_{K,\ell,m,\var}\, p^{-\ell}.
\end{equation}
Especially, by setting $D_\varepsilon=\{(x,x')\in X\times X:d(x,x')>\varepsilon\}$, where $d(\cdot,\cdot)$ is the Riemannian distance on $(X,g^{TX})$,
we have
\begin{equation}\label{toe2.6a}
|P_p(x,x')|_{\cC^m(K\times K\setminus D_\varepsilon)}
\leqslant C_{K,l,m,\var}\, p^{-\ell}\,.
\end{equation}
The $\cC ^m$ norm in \eqref{0c7} and \eqref{toe2.6a} is induced by
$\nabla^L$, $\nabla^E$, $h^L$, $h^E$, and $g^{TX}$.
\end{theorem}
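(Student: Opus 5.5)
We follow the strategy of \cite[Theorem 1.5.5, Proposition 4.1.5]{MM07}, which transfers to the non-compact setting once the spectral gap \eqref{bk1.4} is available. The argument has three steps.

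\emph{Step 1: a spectral gap for $D_p^2$ on all of $\Omega^{0,\bullet}$.} For $s\in\Omega^{0,0}$ we have $D_p s=\sqrt2\,\db s$. For $s\in\Omega^{0,0}_{(2)}\cap(\Ker D_p)^\perp$, the form $\db s$ lies in $\Omega^{0,1}_{(2)}$ and $\db(\db s)=0$. Hence
\[
\norm{D_p^2 s}^2=\norm{D_p(D_ps)}^2\geqslant(2C_0p-C_L)\norm{D_ps}^2 ,
\]
so by the spectral theorem $\spec(D_p^2)\subset\{0\}\cup[2C_0p-C_L,\infty)$ on degree $0$. On degree $1$ the same bound is \eqref{bk1.4} itself. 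Since we only need the kernel in degree $0$, we work with $D_p$ acting on $\Omega^{0,\le1}$, or equivalently restrict $F_\varepsilon(D_p)$ to degree $0$ by parity. Indeed $F_\varepsilon$ is even, so $F_\varepsilon(D_p)$ is a function of $D_p^2$, which preserves degree. On degree $0$ its kernel $\Ker D_p^2=\Ker\db=H^0_{(2)}$ is exactly the range of $P_p$. Therefore
\[
F_\varepsilon(D_p)-P_p=\widetilde F_\varepsilon(D_p^2)\,(1-P_p),
\]
where $\widetilde F_\varepsilon(a^2)=F_\varepsilon(a)$ and only the spectral region $D_p^2\ge 2C_0p-C_L$ enters. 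Since $F_\varepsilon\in\mathcal S(\R)$, for every $k,N$ we get
\[
\sup_{a^2\geqslant 2C_0p-C_L}|a|^k|F_\varepsilon(a)|\le C_{k,N}\,p^{-N}.
\]
Consequently $\norm{D_p^k(F_\varepsilon(D_p)-P_p)D_p^{k'}}_{L^2\to L^2}=\mO(p^{-\infty})$ for all $k,k'$.

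\emph{Step 2: from operator bounds to $\cC^m$ kernel bounds on $K$.} This is where non-compactness must be handled, and it is the main obstacle. We use interior elliptic estimates only. Take a relatively compact neighborhood $U\supset K$. On $U$ the operator $D_p^2$ is elliptic with coefficients polynomial in $p$, so the Sobolev estimate
\[
\norm{\chi s}_{H^{k}}\le C p^{k}\sum_{j\le k}\norm{D_p^{j}s}_{L^2(U)}
\]
holds for a cutoff $\chi$ (cf.\ \cite[Lemma 1.6.2]{MM07}). Its constants depend only on $U$, never on global geometry, and no completeness is needed because the estimate is local. Apply it in both variables, using $(D_p^{j}A)^*=A^*D_p^{j}$ and the self-adjointness of $F_\varepsilon(D_p)-P_p$. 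Combined with Sobolev embedding on $U\times U$, this turns the $\mO(p^{-\infty})$ operator bounds of Step 1 into \eqref{0c7}. The only care needed is that the $D_p^j$ are applied to elements of $\Dom$ of the self-adjoint (Gaffney) extension, which is where self-adjointness from \eqref{ell-} is used.

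\emph{Step 3: finite propagation speed.} We have
\[
F_\varepsilon(D_p)=\Big(\int\bbf_\varepsilon\Big)^{-1}\int \bbf_\varepsilon(v)\cos(vD_p)\,dv .
\]
The wave operator $\cos(vD_p)$ has finite propagation speed, and $\bbf_\varepsilon$ is supported in $|v|\le\varepsilon$; hence $F_\varepsilon(D_p)(x,x')=0$ when $d(x,x')>\varepsilon$. This needs essential self-adjointness of $D_p$ near $K$, or we can use the wave equation for the Gaffney extension, whose local uniqueness holds by energy estimates within distance $\varepsilon<a^K$. Combining this vanishing with \eqref{0c7} gives \eqref{toe2.6a}.
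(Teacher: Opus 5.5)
Your proposal is correct and follows essentially the same route as the paper. The paper's argument has the same structure: the spectral gap gives $F_\varepsilon(D_p)-P_p=\phi_p(D_p)$ with $\phi_p$ supported where $a^2\geqslant 2C_0p-C_L$; Schwartz decay of $F_\varepsilon$, combined with local elliptic estimates and Sobolev embedding near $K$, yields \eqref{0c7}; and finite propagation speed then yields \eqref{toe2.6a}. Two points of yours are slightly more careful than the paper's sketch: you restrict explicitly to degree $0$, where the gap for $D_p^2$ follows from \eqref{bk1.4} on $(0,1)$-forms, and you give a local energy argument for finite propagation speed of the Gaffney extension that does not require completeness.
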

%-------------------------------------
\begin{proof}
For $a\in \R$, set
%\begin{equation}\label{bk2.10}
\(\phi_p(a)=\mathds{1}_{[\sqrt{p\mu_0},+\infty[}(|a|)\,F_\varepsilon(a)\).
%\end{equation}
By \eqref{bk1.4}, for $p$ large enough,
\begin{equation}\label{bk2.11}
F_\varepsilon(D_p)-P_p=\phi_p(D_p).
\end{equation}
By \eqref{0c3}, for any $m\in\N$ there exists $C_m>0$ such that
%\begin{equation}\label{bk2.12}
\(\sup_{a\in\R} |a|^m|F_\varepsilon(a)|\le C_m\).
%\end{equation}
Using elliptic estimates for $D_p^2$ as in \cite{MM07} (cf.\ \cite[Lemma 1.6.2]{MM07})
and Sobolev norms defined on a finite cover of $K$ by geodesic balls, one obtains:
for $l,m'\in\N$ there exists $C_{l,m'}>0$ such that for $p\ge1$,
%\begin{equation}\label{bk2.13}
\(\|D_p^{m'}\phi_p(D_p)Qs\|_{L^2}\le C_{l,m'}\,p^{-l+2m}\,\|s\|_{L^2}\),
%\end{equation}
for any differential operator $Q$ of order $m$ with compact support in a chart and scalar principal symbol.
Combining this with the elliptic estimates (as in \cite{MM07}) yields, for
differential operators $P,Q$ of orders $m',m$ with compact supports in charts,
%\begin{equation}\label{bk2.14}
\(\|P\phi_p(D_p)Qs\|_{L^2}\le C_l\,p^{-l}\,\|s\|_{L^2}\).
%\end{equation}
By the Sobolev inequality and \eqref{bk2.11} we obtain \eqref{0c7}.
Finally, by finite propagation speed \cite[Theorem D.2.1]{MM07},
$F_\varepsilon(D_p)(x,x')$ depends only on the restriction of $D_p$ to
$B^X(x,\varepsilon)$ and vanishes if $d(x,x')\ge\varepsilon$, hence \eqref{toe2.6a}.
\end{proof}
%-------------------------------------

%Next we formulate the near off-diagonal expansion.

%-------------------------------------
\begin{theorem}[Near off-diagonal expansion]\label{tue17}
There exists a family of polynomials
in $Z,Z'$ with the same parity as $r$,
\[\{J_{r,\,x_{0}}\in \End(E)_{x_0}[Z,Z']:r\in\N,x_0\in X\},\]
with the following property.
Assuming that the spectral gap condition \eqref{bk1.4} holds,
then for any compact set $K\subset X$, for any $k\in \N$, and any
$\varepsilon\in\,]0, a^K/4[$\,,
we have
\begin{equation} \label{toe2.9}
p^{-n} P_{p,\,x_0}(Z,Z^\prime)\cong \sum_{r=0}^k
(J_{r,\,x_0} \cP_{x_0})(\sqrt{p}Z,\sqrt{p}Z^{\prime})p^{-\frac{r}{2}}
+\mO(p^{-\frac{k+1}{2}})\,,
\end{equation}
on the set $\{(Z,Z^\prime)\in TX\times_K TX:\abs{Z},\abs{Z^{\prime}}<2\var\}$,
in the sense of Notation \ref{noe2.7}.
\end{theorem}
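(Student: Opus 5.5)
The strategy is the analytic localization of \cite[\S4.1]{MM07}, \cite{MM08a}. By Theorem~\ref{tue16}, on $K\times K$ the Bergman kernel $P_p(x,x')$ agrees with $F_\varepsilon(D_p)(x,x')$ up to $\mO(p^{-\infty})$ in every $\cC^m$ norm. By finite propagation speed, $F_\varepsilon(D_p)(x,x')$ vanishes for $d(x,x')\ge\varepsilon$, and it depends only on $D_p$ restricted to $B^X(x_0,\varepsilon)$. So it suffices to study $F_\varepsilon(D_p)$ near $x_0\in K$, and there we may replace $X$ by the model space $T_{x_0}X\simeq\R^{2n}$.

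\emph{Step 1 (model extension).} I would extend the data. Using the basic trivialization, transplant $g^{TX}$, $h^L$, $h^E$ and the connections from $B^{T_{x_0}X}(0,4\varepsilon)$ to all of $T_{x_0}X$, gluing with a cutoff to the flat metric and to the constant curvature $R^L_{x_0}$ at infinity. This produces an operator $D^{X_0}_p$ on $\R^{2n}$ that coincides with $D_p$ on $B(0,2\varepsilon)$. For this extended operator the spectral gap holds uniformly in $x_0\in K$: the extended curvature is uniformly positive and the geometry is bounded, so the Bochner--Kodaira--Nakano formula gives it, exactly as in \cite[Theorem 1.5.8]{MM07}. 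Let $P^0_p$ be the spectral projection of $D^{X_0}_p$ onto $\Ker(D^{X_0}_p)$. Finite propagation speed gives $F_\varepsilon(D_p)(x,x')=F_\varepsilon(D^{X_0}_p)(x,x')$ for $|Z|,|Z'|<2\varepsilon$, with $\varepsilon$ small. Applying Theorem~\ref{tue16} on both sides then yields $P_{p,x_0}(Z,Z')=P^0_p(Z,Z')+\mO(p^{-\infty})$ on that set. This is the only place where the hypothesis \eqref{bk1.4} enters.

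\emph{Step 2 (rescaling and Taylor expansion).} Set $t=1/\sqrt p$ and conjugate $\frac{1}{p}(D^{X_0}_p)^2$ by $\kappa^{1/2}$ and by the dilation $Z\mapsto tZ$. This gives a family $\cL_t=\cL_0+\sum_{r\ge1}t^r\mO_r+\mO(t^{k+1})$ whose coefficients are polynomial in $Z$. The leading term $\cL_0$ is $\cL$ from \eqref{bk2.67}, where the $a_i$ are the eigenvalues of $\dot R^L_{x_0}$, plus a nonnegative term that vanishes on functions (degree $0$). Its spectrum is described by Theorem~\ref{bkt2.17}: $0$ is isolated, with kernel projection $\cP_{x_0}$. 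Now write the projection via the contour integral
\[
P_{0,t}=\frac{1}{2\pi\sqrt{-1}}\int_{\delta}(\lambda-\cL_t)^{-1}d\lambda .
\]
Expanding the resolvent in $t$ gives coefficients $\frac{1}{2\pi\sqrt{-1}}\int_\delta\sum(\lambda-\cL_0)^{-1}\mO_{j_1}(\lambda-\cL_0)^{-1}\cdots d\lambda$. Their kernels are of the form $(J_{r,x_0}\cP_{x_0})(Z,Z')$ with $J_r$ polynomials. The parity of $J_r$ equals that of $r$, because $\mO_r$ changes parity by $r$ under $Z\mapsto -Z$.

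\emph{Step 3 (uniform remainder estimates, the main obstacle).} The hard part is to control the remainder in $\cC^l$ with the weight $(1+|Z|+|Z'|)^M e^{-C_0|Z-Z'|}$, uniformly in $x_0\in K$ and $t$. I would follow \cite[Theorems 4.1.9--4.1.18]{MM07}. First establish weighted Sobolev estimates for $(\lambda-\cL_t)^{-1}$ and its derivatives in $t$, with polynomial dependence on $\lambda$. Then use Sobolev embedding to pass to kernel bounds. The exponential off-diagonal factor comes from conjugating $\cL_t$ by $e^{\alpha\langle Z-Z'\rangle}$ for small $\alpha$, which perturbs $\cL_t$ boundedly and keeps the resolvent bounds valid (Agmon-type estimates), as in \cite{MM08a}. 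Undoing the rescaling converts $t^{k+1}$ into $p^{-(k+1)/2}$ and produces the factors $\kappa^{-1/2}(Z)\kappa^{-1/2}(Z')$, which gives \eqref{toe2.9} in the sense of Notation~\ref{noe2.7}. Uniformity in $x_0$ follows because $K$ is compact and all constructions depend smoothly on $x_0$.
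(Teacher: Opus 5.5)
Your proposal is correct and follows essentially the same route as the paper. The paper's proof is only a sketch deferring to \cite{DLM06} and \cite[\S 4.1]{MM07}: localization via $F_\varepsilon(D_p)$ and finite propagation speed (the sole use of \eqref{bk1.4}), extension of the data to $T_{x_0}X$, rescaling with $t=1/\sqrt p$, and the contour-integral expansion of the resolvent of $\cL_t$ around $\cL_0=\cL$ with uniform weighted Sobolev estimates. One small correction: after gluing $J$ to $J_{x_0}$ the almost complex structure on $T_{x_0}X$ is in general not integrable, so the uniform gap for the extended operator comes from the Lichnerowicz formula for the spin$^c$ Dirac operator (which is what the result of \cite{MM07} you cite uses), not from the Bochner--Kodaira--Nakano formula.
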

%-------------------------------------
\begin{proof}
We only sketch the proof; details are in \cite[Proposition 4.1, Theorem 4.18$^\prime$]{DLM06}
and \cite[Proposition 4.1.1, Theorem 4.1.24]{MM07}.
Although $X$ is assumed compact in \cite{DLM06}, the arguments use only the spectral gap,
so the localization applies verbatim here. One pulls back the geometric data by the
exponential map to $T_{x_0}X\simeq\R^{2n}$, extends them suitably, and compares with the
model kernel of $\cL$ from Section~\ref{toes1}. 
The conclusion follows from the spectral gap, the rescaling procedure, and functional analytic techniques inspired 
by Bismut–Lebeau \cite[\S 11]{BL91}.
\end{proof}

%Until the end of Section~\ref{s3.2}, we assume $\Theta=\om$.
Setting $\bb_r(x_0)=(J_{2r,x_0}\cP_{x_0})(0,0)$, \eqref{toe2.9} implies the diagonal
expansion:

%-------------------------------------
\begin{theorem}[On-diagonal expansion]\label{bkt2.18}
Assume that the spectral gap condition \eqref{bk1.4} holds.
Then, for any compact set $K\subset X$,
and for any $k,\,\ell\in\N$, there exists a positive constant
$C_{K,k,\ell}>0$ such that for any $p\geqslant 1$, 
the following
estimate holds:
\begin{equation}\label{bk2.6}
\Big |P_p(x,x)- \sum_{r=0}^{k} \bb_r(x) p^{n-r}
\Big |_{\cC^\ell(K)} \leqslant C_{K,k,\ell}\, p^{n-k-1}\,,
\end{equation}
where ${\bb}_{0}=\det(\frac{\dot{R}^{L}}{2\pi})\Id_E$
and
\begin{equation} \label{toe4.131}%{abk2.7}
\bb_1 = \frac{1}{8\pi}\det\Big(\frac{\dot{R}^L}{2\pi}\Big)
\Big[r^X_\om -2 \Delta_\om \Big(\log({\det} (\dot{R}^L))\Big)
+ 4  \sqrt{-1} \Lambda_\om(R^E) \Big],
\end{equation}
where  $r^X_\om$, $\Delta_\om$ are the scalar curvature and 
the Bochner Laplacian associated with $g^{TX}_{\om}$.
\end{theorem}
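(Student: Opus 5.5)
The expansion \eqref{bk2.6} will come from restricting the near off-diagonal expansion of Theorem~\ref{tue17} to the diagonal $Z=Z'=0$. The coefficients $\bb_0$ and $\bb_1$ will then be identified by a local computation, which the spectral gap reduces to the compact-case computation.

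\emph{Step 1 (existence of the expansion).} Fix $K$ compact and $\var\in\,]0,a^K/4[$. Apply \eqref{toe2.9} with $k$ replaced by $2k+1$ and evaluate at $Z=Z'=0$. Since $\kappa_{x_0}(0)=1$, the remainder $\Psi_{p,2k+1,x_0}(0,0)$ is $\mO(p^{-k-1})$ in every $\cC^\ell$ norm in $x_0$. Here the $\cC^\ell(X)$ norm in Notation~\ref{noe2.7} is taken in the parameter $x_0$, and the factor $(1+\sqrt p|Z|+\sqrt p|Z'|)^M$ equals $1$ at the origin. The polynomials $J_{r,x_0}$ have the parity of $r$, so $(J_{r,x_0}\cP_{x_0})(0,0)=0$ for odd $r$ and only half-integer-free powers survive. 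In the basic trivialization, $P_{p,x_0}(0,0)=P_p(x_0,x_0)$ as an element of $\End(E_{x_0})$. Multiplying by $p^n$ and setting $\bb_r(x_0)=(J_{2r,x_0}\cP_{x_0})(0,0)$ gives \eqref{bk2.6}. The coefficients are smooth in $x_0$ because the $J_r$ are. The uniformity of the $\cC^\ell(K)$ bound in $x_0\in K$ is exactly what Theorem~\ref{tue17} provides.

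\emph{Step 2 (identifying $\bb_0,\bb_1$).} The $J_{r,x_0}$ are produced by a purely local algorithm: Taylor-expand the rescaled operator $\frac1p D_p^2$ around the model operator $\cL$ and apply the resolvent/contour formula. These polynomials are therefore determined by the jets at $x_0$ of $g^{TX},h^L,h^E$ and do not depend on compactness. Consequently $\bb_0,\bb_1$ are given by the same universal formulas as in the compact case, \cite[Theorem 4.1.1, Theorem 4.1.3]{MM07} and \cite{DLM06}, \cite{MM08a} for general $\Theta$.

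For $\bb_0$, we have $J_0=\Id_E$. Choose $w_i$ diagonalizing $\dot R^L$ with eigenvalues $a_i$. Then \eqref{toe1.3} gives $\cP_{x_0}(0,0)=\prod a_i/2\pi=\det(\dot R^L/2\pi)$.

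For $\bb_1$, write \eqref{toe4.131} as computed with respect to $g^{TX}_\om$. We first reduce to $\Theta=\om$ via the standard change-of-volume trick: the Bergman kernel for $(dv_X,h^E)$ equals that for $(dv_\om, h^E\otimes\det(\dot R^L/2\pi)^{-1})$, after accounting for the factor $\det(\dot R^L/2\pi)$. We then apply the Kähler formula $\frac1{8\pi}[r^X+4\imat\Lambda(R^{E'})]$, using $R^{\det}$ to produce the $-2\Delta_\om\log\det\dot R^L$ term.

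\emph{Main obstacle.} The delicate point is the second-order computation for $\bb_1$. It requires the expansion of $\kappa$, of the connection and of the curvature terms to second order, together with several model-operator identities ($\cP b^\alpha=0$, etc.). I would not redo this computation but invoke the compact-case result, justifying locality explicitly: two geometries agreeing near $x_0$ produce identical $J_{r,x_0}$, since the algorithm in Theorem~\ref{tue17} only uses jets at $x_0$. A compact manifold with matching jets exists locally, e.g.\ a torus modification after cutting off with the spectral gap preserved by the localization of \cite{MM07}. Alternatively, one can simply note that the algorithm is literally the same.
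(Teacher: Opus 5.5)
Your proposal is correct and follows the paper's route. You evaluate the near off-diagonal expansion \eqref{toe2.9} (with $2k+1$ in place of $k$) at $Z=Z'=0$, discard the odd-order terms by parity, set $\bb_r(x_0)=(J_{2r,x_0}\cP_{x_0})(0,0)$, and identify $\bb_0,\bb_1$ through the locality of the coefficients and the compact-case computations. One point to keep straight: the change-of-volume reduction to $\Theta=\om$ has to be carried out inside the compact (universal local formula) setting, as you ultimately do, because the spectral gap is assumed only for $\Theta$ and so Theorem~\ref{tue17} is not directly available on $X$ for the data $(dv_\om,h^{E'})$.
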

%-------------------------------------
% The existence of \eqref{bk2.6} and the leading term were proved in \cite{Tia:90,Catlin,Zel98}. 
The coefficients $\bb_r$ can be computed from
$J_{r,x_0}$ and equivalently from the operators $\cF_{r,x_0}$ with kernels
\begin{equation}\label{bk2.24}
\cF_{r,\,x_{0}}(Z,Z')= J_{r,\,x_{0}}(Z,Z')\,\cP(Z,Z')
\end{equation}
with respect to $dZ'$. Following \cite{MM07,MM08a}, one rescales the Kodaira-Laplacian and performs resolvent analysis. For $s\in\cC^\infty(\R^{2n},E_{x_0})$,
$|Z|\le 2\var$, and $t=1/\sqrt p$, set
\begin{equation}\label{bk2.21}
(S_t s)(Z):=s(Z/t),\qquad
\cL_t:=S_t^{-1}\kappa^{1/2}\,t^2(2\Box_p)\kappa^{-1/2}S_t.
\end{equation}
Then \cite[Theorem 4.1.7]{MM07} gives second order operators $\mO_r$ such that,
as $t\to0$,
\begin{equation}\label{bk2.22}
\cL_t=\cL_0+\sum_{r=1}^m t^r\mO_r+\cO(t^{m+1}),
\end{equation}
and by \cite[Theorems 4.1.21, 4.1.25]{MM07},
%\begin{equation}\label{bk2.30}
$\cL_0=\sum_j b_j b_j^+=\cL$. 
%\qquad $\mO_1=0$.
%\end{equation}

\noindent
\textbf{\emph{Resolvent analysis.\/}}
Define recursively $f_r(\lambda)\in\End(L^2(\R^{2n},E_{x_0}))$ by
\begin{equation}\label{bk2.23}
f_0(\lambda)=(\lambda-\cL_0)^{-1},\qquad
f_r(\lambda)=(\lambda-\cL_0)^{-1}\sum_{j=1}^r \mO_j\,f_{r-j}(\lambda).
\end{equation}
Let $\delta$ denote the positively oriented circle centered at 
the origin with a sufficiently small radius.
Then \cite[(1.110)]{MM08a} (cf.\ also \cite[(4.1.91)]{MM07}) gives
\begin{equation}\label{bk2.77}
\cF_{r,\,x_{0}}=\frac{1}{2\pi\sqrt{-1}}\int_\delta f_r(\lambda)\,d\lambda.
\end{equation}
Assume now that $\omega=\Theta$. Since $\spec(\cL)$ is explicit, one obtains (with $\cP^\bot=\Id-\cP$)
\begin{equation}\label{bk2.31}
\begin{split}
&\cF_{0,\,x_{0}}=\cP,\quad \cF_{1,\,x_{0}}=0, \qquad
\cF_{2,\,x_{0}}=-\cL^{-1}\cP^\bot\mO_2\cP-\cP\mO_2\cL^{-1}\cP^\bot,
%\\&\cF_{3,\,x_{0}}=-\cL^{-1}\cP^\bot\mO_3\cP-\cP\mO_3\cL^{-1}\cP^\bot.
\end{split}
\end{equation}
In particular,
\begin{equation}\label{bk2.33}
J_{0,\,x_{0}}=1,\qquad 
J_{1,\,x_{0}}=0.
\end{equation}
% To state $\bb_1$ we recall that $\nabla^{TX}$ denotes the Levi--Civita connection,
% with curvature $R^{TX}=(\nabla^{TX})^2$, and $r^X$ its scalar curvature.
%--------------------------------
%\begin{theorem}\label{toet4.5}
% We have
%\begin{equation}\label{toe4.131}
% \bb_{1} = \frac{1}{8\pi}r^X + \frac{\imat}{2\pi}R^E_{\Lambda}\,.
% \end{equation}
%\end{theorem}
%--------------------------------

The coefficients $\bb_1$ and $\bb_2$ were computed by Lu \cite{Lu:00} (for $E=\C$),
X.~Wang \cite{Wang05}, and L.~Wang \cite{Wangl03}, in various generalities, using peak
sections and H\"ormander's $L^2$ $\ov\partial$-method as in \cite{Tia:90}.
Dai--Liu--Ma computed $\bb_1$ via the heat kernel \cite[\S 5.1]{DLM06}; see also
\cite[\S 2]{MM08a}, \cite[\S 2]{MM06} and \cite[\S 4.1.8, \S 8.3.4]{MM07} for the
symplectic case. A new method for $\bb_2$ was given in \cite{MM12}.

\subsection{Calculus and expansion of Toeplitz kernels} \label{s3.3}
We derive the calculus of Toeplitz kernels from the Bergman kernel expansion
\eqref{toe2.9} together with the Taylor expansion of the symbol. This reduces the
problem to a kernel calculus on $\C^n$ for kernels of the form $F\cP$, where $F$ is
a polynomial and $\cP$ is the model Bergman kernel \eqref{toe1.3}.
For $F\in\C[Z,Z']$ we denote by $F\cP$ the operator on $L^2(\R^{2n})$ with kernel
$F(Z,Z')\cP(Z,Z')$ with respect to the volume form $dZ$ (cf.\ \eqref{toe2.5}).
The following lemma \cite[Lemma 7.1.1]{MM07} summarizes the corresponding kernel
calculus.
%---------------------------------------------------------------------
\begin{lemma}\label{toet1.1}
For any
$F,G\in\C[Z, Z^{\prime}]$ there exists a polynomial
$\cK[F,G]\in\C[Z, Z^{\prime}]$
with degree $\deg\cK[F,G]$ of the same parity as
$\deg F+\deg G$, such that
\begin{equation}\label{toe1.6}
((F\cP) \circ (G\cP))(Z, Z^{\prime}) =
\cK[F,G](Z, Z^{\prime}) \cP( Z, Z^{\prime}).
\end{equation}
\end{lemma}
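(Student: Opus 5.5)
The plan is to compute the composition integral $\int_{\R^{2n}} F(Z,Z'')\cP(Z,Z'')G(Z'',Z')\cP(Z'',Z')\,dZ''$ explicitly, using the Gaussian structure of $\cP$ from \eqref{toe1.3}. By linearity in $F$ and $G$, it suffices to take monomials $F=Z^{\alpha}Z'^{\alpha'}$ and $G=Z^{\beta}Z'^{\beta'}$, or equivalently monomials in the complex variables $z,\ov z,z',\ov z'$. The factors that depend only on the outer variables, $Z^{\alpha}$ and $Z'^{\beta'}$, come out of the integral. What remains is the operator with kernel $(Z'')^{\gamma}\cP(Z,Z'')\cP(Z'',Z')$ integrated over $Z''$, where $\gamma$ is a multi-index in the inner variables $z'',\ov z''$. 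The product structure of $\cP$ over $i=1,\dots,n$ lets us reduce everything to $n=1$ with a single parameter $a=a_i$.

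First, the $\gamma=0$ case is simply $\cP\circ\cP=\cP$, since $\cP$ is a projection. For general $\gamma$, combine the exponents into
\[
-\tfrac a4\bigl(|z|^2+2|z''|^2+|z'|^2-2z\ov z''-2z''\ov z'\bigr).
\]
Completing the square in $z''$ gives
\[
-\tfrac a2\,|z''-w|^2 \;-\;\tfrac a4\bigl(|z|^2+|z'|^2-2z\ov z'\bigr)+(\text{correction}),
\]
where $w$ should be chosen so that the cross terms match. Writing $z''=u$, the terms $-\tfrac a2|u|^2+\tfrac a2 z\ov u+\tfrac a2 u\ov z'$ equal $-\tfrac a2(u-z)(\ov u-\ov z')+\tfrac a2 z\ov z'$. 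So the natural shift is holomorphic in $z$ and antiholomorphic in $z'$, and the Gaussian is centred at the complex point $(u,\ov u)=(z,\ov z')$. The correction term recombines into exactly the exponent of $\cP(Z,Z')$.

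Then $(z'')^{\gamma_1}(\ov z'')^{\gamma_2}$ is rewritten as $\bigl((u-z)+z\bigr)^{\gamma_1}\bigl((\ov u-\ov z')+\ov z'\bigr)^{\gamma_2}$ and expanded binomially. Each resulting moment $\int (u-z)^{j}(\ov u-\ov z')^{k}e^{-\frac a2(u-z)(\ov u-\ov z')}\,du$ is a constant independent of $z,z'$. One sees this either by a contour shift of the real integration variables, which is justified by analytic continuation in the parameters $z,\ov z'$ treated independently, or by differentiating the identity $\cP\circ\cP=\cP$ with respect to auxiliary parameters. These moments vanish unless $j=k$, where the $j$-th moment equals $j!\,(2/a)^j$ times the normalisation that reproduces $\cP$. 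Hence the composition is $\cK[F,G]\cP$ with $\cK[F,G]$ a polynomial in $z,\ov z'$ multiplied by the outer monomials.

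The main obstacle is justifying the complex shift of the integration contour rigorously. I would handle it by writing $u=x+iy$ and noting that the integrand is entire in the shift parameters with Gaussian decay uniform on compact sets, so the integral is holomorphic in $(z,\ov z')$ viewed as independent variables. One then computes it at real values and concludes by the identity theorem. Alternatively, one can use the ladder relations $b_i^{+}\cP=0$ and $\cP b_i=0$, so that $\ov z_i\cP$ and $z_i\cP$ can be converted to derivatives. In either approach, each surviving term has degree $|\alpha|+|\beta'|+\gamma_1+\gamma_2-2j$, where $j=k$. Reducing the degree by an even number gives the parity claim: every monomial of $\cK[F,G]$ has degree congruent to $\deg F+\deg G$ modulo $2$.
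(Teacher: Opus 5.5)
Your proof is correct, but it takes a different route from the one the paper relies on. The paper gives no argument of its own and refers to \cite[Lemma 7.1.1]{MM07}, where the lemma is obtained algebraically from the harmonic-oscillator structure of Section~\ref{toes1}.

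\textbf{The cited route.} By \eqref{toe1.3} one has $b_{i,Z}\cP(Z,Z')=a_i(\ov z_i-\ov z_i')\cP(Z,Z')$ and $b^+_{i,Z}\cP(Z,Z')=0$. Dually, the kernel of $\cP\, b_i^+$ is $a_i(z_i'-z_i)\cP(Z,Z')$, while $\cP\, b_i=0$. One then uses the relations $[b_i,b_j^+]=-2a_i\delta_{ij}$, $[g,b_j]=2\partial_{z_j}g$ and $[g,b_j^+]=-2\partial_{\ov z_j}g$ for polynomials $g$. Moving every $b_i$ to the left and every $b_i^+$ to the right until a $\cP$ annihilates it brings $(F\cP)\circ(G\cP)$ into the form $\cK[F,G]\cP$. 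Each commutator lowers the degree by $2$, which gives the parity.

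\textbf{Your route.} You compute the composition integral directly, completing the square about the complex centre $(u,\ov u)=(z,\ov z')$. Your analytic-continuation justification is sound, provided you evaluate the moments on the slice $\{w=\ov z\}$, i.e.\ $z'=z$. There the shift is an honest translation of $\R^2$, and this slice is a maximal totally real subspace of $\C^2$, so the identity theorem applies.

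\textbf{What each buys.} Your approach yields a closed Wick-type formula; for $n=1$,
\[
\int_{\C}u^{m}\,\ov u^{\,l}\,\cP(z,u)\,\cP(u,z')\,du
=\sum_{j\ge 0}\binom{m}{j}\binom{l}{j}\,j!\,\Big(\frac{2}{a}\Big)^{j}\,z^{m-j}\,(\ov{z'})^{\,l-j}\,\cP(z,z'),
\]
from which existence and parity can be read off at once. The operator route needs no contour argument and runs in the same $b_i,b_i^+$ calculus used to compute $\cF_{r,x_0}$ in \eqref{bk2.31}, which is why it is the natural choice in that framework.

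\textbf{A caveat on parity.} Linearity transfers the existence of $\cK[F,G]$ from monomials to all $F,G$. It transfers the parity conclusion only to $F,G$ of definite parity, which is how the lemma is actually used (for $J_{r,x_0}$ and $Q_{r,x_0}(f)$). For mixed-parity inputs, both your final sentence and the literal degree statement fail. For example, your own moment computation gives $\cK[z_1'-z_1,1]=0$, hence $\cK[1+z_1'-z_1,1]=1$, even though $\deg F+\deg G=1$. You should restrict your concluding sentence to $F,G$ of definite parity.
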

%---------------------------------------------------------------------
\noindent
For $F\in\C[Z,Z']$ we denote by $(F\cP)_p$ the operator with kernel
$p^{n}(F\cP)(\sqrt{p}Z,\sqrt{p}Z^{\prime})$, i.e.
\[
((F\cP)_p \varphi)(Z)=\int_{\R^{2n}}p^{n}
(F\cP)(\sqrt{p}Z, \sqrt{p}Z^{\prime})\varphi(Z^{\prime})\,dZ^{\prime},\qquad
\varphi\in L^2(\R^{2n}).
\]
Then a change of variables gives, for $F,G\in\C[Z,Z']$,
\begin{align}\label{toe1.15}
((F\cP)_p\circ (G\cP)_p)(Z,Z^\prime)
= p^{n}((F\cP)\circ (G\cP))(\sqrt{p}Z, \sqrt{p}Z^{\prime}).
\end{align}

\medskip
\noindent
We now turn to Toeplitz operators $T_{f,p}=P_p f P_p$ with
$f\in\cC^\infty_{\rm const}(X,\End(E))$. As a first step, we show that
their kernels decay
rapidly away from the diagonal.
%---------------------------------------------------------------------
\begin{lemma}[{\cite[Lemma\,4.2]{MM08b}}] \label{toet2.1}
Assume that the spectral gap condition \eqref{bk1.4} holds
and let $f\in\cC^\infty_{\rm const}(X,\End(E))$.
Then for any compact set $K\subset X$,
for any $l,m\in\N$ and $\var\in\,]0,a^K[$, there exists
$C_{K,l,m,\var}>0$ such that for all $p\geqslant 1$,
%-------------------------------------------------------------------------
\begin{equation} \label{toe2.6b}
|T_{f,\,p}(x,x')|_{\cC^m(K\times K\setminus D_\varepsilon)}\leqslant C_{K,l,m,\varepsilon}p^{-l}
\end{equation}
%---------------------------------------------------------------------
where $D_\varepsilon=\{(x,x')\in X\times X:d(x,x')<\varepsilon\}$ and the $\cC^m$-norm is induced by $\nabla^L,\nabla^E$ and $h^L,h^E,g^{TX}$.
\end{lemma}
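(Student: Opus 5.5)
We would prove Lemma~\ref{toet2.1} by reducing it to the off-diagonal decay of the Bergman kernel in Theorem~\ref{tue16}. The point is to handle the fact that $f$ is not compactly supported. We take this to be the intended reading of the statement: the estimate is meant away from the diagonal, i.e.\ for $d(x,x')\geq\varepsilon$, as in \eqref{toe2.6a}.

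Write $f=c\,\Id_E+g$, where $c\in\C$ is the constant value of $f$ outside a compact set and $g\in\cC^\infty_0(X,\End(E))$ has compact support $K_g$. Then
\[
T_{f,p}=c\,P_p+P_p\,g\,P_p .
\]
The term $c\,P_p$ is already controlled by \eqref{toe2.6a}. It remains to treat $P_pgP_p$, whose kernel is
\[
(P_pgP_p)(x,x')=\int_{K_g}P_p(x,x'')\,g(x'')\,P_p(x'',x')\,dv_X(x'').
\]
This integral runs only over the compact set $K_g$. Set $K'=K\cup K_g$; it is compact, so Theorem~\ref{tue16} applies on $K'\times K'$.

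For $x,x'\in K$ with $d(x,x')\geq\varepsilon$ and any $x''\in K_g$, the triangle inequality gives either $d(x,x'')\geq\varepsilon/2$ or $d(x'',x')\geq\varepsilon/2$. Accordingly, split $K_g$ into two regions via a partition of unity subordinate to $\{d(x,\cdot)>\varepsilon/3\}$ and $\{d(\cdot,x')>\varepsilon/3\}$; alternatively, split into the two measurable pieces directly, since we only need sup bounds. On the first region, Theorem~\ref{tue16} (applied with $\varepsilon/3$ on $K'$) gives $|P_p(x,x'')|_{\cC^m}\leq Cp^{-l}$. On the same region we need a crude polynomial bound $|P_p(x'',x')|_{\cC^m(K'\times K')}\leq Cp^{N}$. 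This bound follows from \eqref{0c7}, since $F_\varepsilon(D_p)$ has kernel bounded polynomially in $p$ by the elliptic and Sobolev estimates used there. It also follows from the near-diagonal expansion \eqref{toe2.9} combined with \eqref{toe2.6a}. The second region is handled symmetrically. Derivatives in $x$ and $x'$ fall on only one factor each, so the $\cC^m$ norms are controlled in the same way. Integrating over the compact set $K_g$ then yields $Cp^{N-l}$, and since $l$ is arbitrary this proves the lemma.

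The main point needing care is the polynomial bound on $P_p(x,x')$ and its derivatives uniformly on $K'\times K'$. This is where the spectral gap enters, through Theorem~\ref{tue16}. Alternatively, one can use the Cauchy-type estimate \eqref{b1.1} applied to $P_p(\cdot,x')$: its $L^2$ norm equals $P_p(x',x')^{1/2}$, and the local estimates on a ball of radius $\sim p^{-1/2}$ give growth at most $p^{n+m/2}$. Everything else is the triangle inequality and compactness of $\supp g$.
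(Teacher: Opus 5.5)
Your proposal is correct and follows the paper's route: the kernel formula \eqref{toe2.5}, the far off-diagonal decay \eqref{toe2.6a} of $P_p$ for one factor, and a polynomial $\cC^m$ bound for $P_p$ on compact sets for the other. Your splitting $f=c\,\Id_E+g$, which gives $T_{f,p}=c\,P_p+P_p\,g\,P_p$ with the remaining integral over the compact set $\supp g$, makes explicit a step that the paper's two-line sketch leaves implicit: for $f$ that is only constant outside a compact set, the integral in \eqref{toe2.5} runs over all of $X$, and estimates on $K\times K$ alone do not control it.
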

%---
\begin{proof}
By \eqref{toe2.5} it suffices to combine the far off-diagonal estimate
\eqref{toe2.6a} for $P_p$ with the fact that on $K\times K$ the kernel
$P_p(x,x')$ has at most polynomial growth in $p$ in $\cC^m$ (which follows from
\eqref{toe2.9}). This yields \eqref{toe2.6b}.
\end{proof}

The near off-diagonal expansion \eqref{toe2.9} and Lemma~\ref{toet1.1} imply the
corresponding expansion for Toeplitz kernels (cf.\ \cite[Lemma\,4.6]{MM08b},
\cite[Lemma\,7.2.4]{MM07}).
%===
\begin{theorem} \label{toet2.3}
Assume that the spectral gap condition \eqref{bk1.4} holds.
Let $f\in\cC^\infty_{\rm const}(X,\End(E))$.
There exists a family \[\{Q_{r,\,x_0}(f)\in\End(E)_{x_0}[Z,Z^{\prime}]:r\in\N,\,x_0\in X\}\,,\]
depending smoothly on the parameter $x_0\in X$, where
$Q_{r,\,x_0}(f)$
are polynomials with the same parity as $r$ with the following property. For any compact set $K\subset X$, for any $k\in \N$, and any
$\varepsilon\in\,]0, a^K/4[$\,,
we have
%===
\begin{equation} \label{toe2.13}
p^{-n}T_{f,\,p,\,x_0}(Z,Z^{\prime})
\cong \sum^k_{r=0}(Q_{r,\,x_0}(f)\cP_{x_0})(\sqrt{p}Z,\sqrt{p}Z^{\prime})
p^{-r/2} + \mO(p^{-(k+1)/2})\,,
\end{equation}
%===
on the set $\{(Z,Z^\prime)\in TX\times_K TX:\abs{Z},\abs{Z^{\prime}}<2\var\}$,
in the sense of Notation \ref{noe2.7}. Moreover,
  $Q_{r,\,x_0}(f)$ are expressed by
  \begin{equation} \label{toe2.14}
  Q_{r,\,x_0}(f) = \sum_{r_1+r_2+|\alpha|=r}
    \cK\Big[J_{r_1,\,x_0}\;,\;
  \frac{\partial ^\alpha f_{\,x_0}}{\partial Z^\alpha}(0)
  \frac{Z^\alpha}{\alpha !} J_{r_2,\,x_0}\Big]\,.
  \end{equation}
%===
where $\cK[\cdot,\cdot]$ was introduced in \eqref{toe1.6}. We have for any $x_0\in X$,
  \begin{align} \label{toe2.15}
  Q_{0,\,x_0}(f)= f(x_0)\in\End(E_{x_0}) .
  \end{align}
  \end{theorem}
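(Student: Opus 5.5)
We follow the compact-case argument of \cite[Lemma\,4.6]{MM08b}, \cite[Lemma\,7.2.4]{MM07}. The plan is to write the kernel \eqref{toe2.5} in the basic trivialization, cut it into a near-diagonal part and a remainder, and then combine the near off-diagonal expansion \eqref{toe2.9} of $P_p$ with the Taylor expansion of $f_{x_0}$ at $0$. Lemma~\ref{toet1.1} then reduces the resulting compositions to polynomials times $\cP$.

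\emph{Step 1: localization.} Fix $x_0\in K$ and $|Z|,|Z'|<2\var$. We split the integral \eqref{toe2.5} over $x''$ into the region $d(x_0,x'')<4\var$ and its complement. On the complement, $x''$ lies at distance at least $2\var$ from both $x=\exp_{x_0}(Z)$ and $x'=\exp_{x_0}(Z')$. A subtle point is that $x''$ ranges over all of the non-compact $X$, not over a compact set, so \eqref{toe2.6a} cannot be applied directly there. Instead we argue as follows. We have $|f|\le \|f\|_\infty$, and $P_p(x,\cdot)$ has $L^2$-norm $|P_p(x,x)|^{1/2}=\mO(p^{n/2})$ by the reproducing property. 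We then write $P_p=F_\var(D_p)-\phi_p(D_p)$ as in \eqref{bk2.11}. The term $F_\var(D_p)(x,x'')$ vanishes for $d(x,x'')\ge\var$ by finite propagation speed. The term $\phi_p(D_p)$ is $\mO(p^{-\infty})$ with derivatives as a map from $L^2$ to $\cC^m$ near $K$, by the proof of Theorem~\ref{tue16}. Applying Cauchy--Schwarz in $x''$ therefore shows that the far contribution is $\mO(p^{-\infty})$ in $\cC^m$; the same argument yields Lemma~\ref{toet2.1}. On the near region, which lies inside the chart $B^{T_{x_0}X}(0,4\var)$ provided $\var<a^K/4$ (shrinking $\var$ if necessary), we write $dv_X=\kappa_{x_0}\,dv_{TX}$.

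\emph{Step 2: substitution.} On the near region we insert the expansion \eqref{toe2.9} for both factors $P_{p,x_0}(Z,Z'')$ and $P_{p,x_0}(Z'',Z')$. The factors $\kappa^{-1/2}(Z'')$ coming from these two expansions cancel against the $\kappa(Z'')$ in the volume form, leaving $\kappa^{-1/2}(Z)\kappa^{-1/2}(Z')$ as required by Notation~\ref{noe2.7}. We also Taylor expand $f_{x_0}(Z'')=\sum_{|\alpha|\le k}\partial^\alpha f_{x_0}(0)Z''^\alpha/\alpha!+\mO(|Z''|^{k+1})$, and rewrite $Z''^\alpha=p^{-|\alpha|/2}(\sqrt p Z'')^\alpha$. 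After the change of variables $\sqrt pZ''\mapsto Z''$, the main terms become $p^{n}(J_{r_1}\cP)\circ(\tfrac{\partial^\alpha f}{\alpha!}Z^\alpha J_{r_2}\cP)$ evaluated at $(\sqrt pZ,\sqrt pZ')$, weighted by $p^{-(r_1+r_2+|\alpha|)/2}$. We then extend the $Z''$ integration from the ball to all of $\R^{2n}$, which costs only $\mO(p^{-\infty})$ because of the Gaussian decay of $\cP$. By \eqref{toe1.15} and Lemma~\ref{toet1.1}, each main term equals $\cK[\cdot,\cdot]\cP$. Collecting the terms with $r_1+r_2+|\alpha|=r$ gives \eqref{toe2.14}. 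The parity claim follows from Lemma~\ref{toet1.1}, since $J_r$ has the parity of $r$. Finally, \eqref{bk2.33} gives $Q_0=\cK[1,f(x_0)]=f(x_0)$, because $\cP\circ\cP=\cP$, which is \eqref{toe2.15}.

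\emph{Step 3: remainders, the main technical point.} We must show that the cross terms involving $\Psi_{p,k}$ and the Taylor remainder satisfy the bound of Notation~\ref{noe2.7}. To do this we use the elementary convolution estimate
\[
\int_{\R^{2n}}(1+|a|+|b|)^{M}e^{-C_0|a-b|}(1+|b|+|c|)^{M}e^{-C_0|b-c|}\,db\le C(1+|a|+|c|)^{M'}e^{-C_0'|a-c|},
\]
which holds for any $C_0'<C_0$. The same estimate applies when one factor is the Gaussian $\cP$, which is dominated by such an exponential. The Taylor remainder $\mO(|Z''|^{k+1})=p^{-(k+1)/2}\mO(|\sqrt pZ''|^{k+1})$ is absorbed into the polynomial weight. $\cC^l$ derivatives in $(x_0,Z,Z')$ fall on the kernels and only increase the polynomial weight, since $f$ is smooth. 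Uniformity in $x_0\in K$ comes from the uniformity of \eqref{toe2.9} on $K$. Since $f$ is constant outside a compact set, it belongs to $\cC^\infty_b$, and no growth issue arises at infinity.
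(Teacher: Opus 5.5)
Your proposal is correct and follows essentially the same route as the paper's proof. Both localize the integral \eqref{toe2.5} near $x_0$, Taylor expand $f_{x_0}$ and rescale, insert \eqref{toe2.9} for both Bergman factors (the $\kappa_{x_0}$-factors cancel), change variables $\sqrt{p}\,Z''\mapsto W$, and reduce to the model calculus of Lemma~\ref{toet1.1}; this gives \eqref{toe2.14} and $Q_{0,x_0}(f)=\cK[1,f(x_0)]=f(x_0)$.

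The main difference is in the localization. You justify it directly on the non-compact $X$ via $P_p=F_\var(D_p)-\phi_p(D_p)$, finite propagation speed and Cauchy--Schwarz, and you also spell out the remainder convolution bound. The paper instead invokes Lemma~\ref{toet2.1} with the smooth cutoff $\rho(\frac{2}{\var}|Z''|)$ and works only on $|Z|,|Z'|<\var/2$.

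That smaller domain also spares the paper a point your sketch needs: your near region requires the expansion of $P_p$ on $|Z''|<4\var$, beyond the range $2\var$ that Theorem~\ref{tue17} provides. "Shrinking $\var$" would not recover the stated domain. Taking the near region only slightly larger than $2\var$, and using $F_{\var'}$ with small $\var'$ for the far part, fixes this.
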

%===
\begin{proof}
By \eqref{toe2.5} and Lemma~\ref{toet2.1}, for $|Z|,|Z'|<\var/2$ the kernel
$T_{f,p,x_0}(Z,Z')$ is determined up to $\cO(p^{-\infty})$ by the restriction of
$P_p$ and $f$ to a fixed neighborhood of $x_0$.
Choose $\rho\in\cC^\infty(\R)$ even, with
%\begin{equation}\label{alm4.19}
$\rho (v)=1  \  \  {\rm if} \  \  |v|<2$, %\quad 
$\rho (v)=0 \   \   {\rm if} \  |v|>4$.
%\end{equation}
Then for $|Z|,|Z'|<\var/2$,
\begin{equation}\label{toe2.17}
\begin{split}
T_{f,\,p,\,x_0}(Z,Z^{\prime})=\!\!
\int\limits_{{ T_{x_0}X}}&
P_{p,x_0}(Z,Z^{\prime\prime})
\rho\left(\tfrac{2}{\var}|Z^{\prime\prime}|\right)
f_{x_0}(Z^{\prime\prime})P_{p,x_0}(Z^{\prime\prime},Z^{\prime})
 \kappa_{x_0}(Z^{\prime\prime})
\,dv_{TX}(Z^{\prime\prime})
+\cO(p^{-\infty})\,.
\end{split}
\end{equation}
Expand $f_{x_0}$ at $0$ and rescale:
\begin{equation}\label{toe2.18}
\begin{split}
f_{x_0}(Z)
&=\sum_{|\alpha|\leqslant k}
\frac{\partial^\alpha f_{x_0}}{\partial Z^\alpha}(0)
\frac{ Z^\alpha}{\alpha!}+\cO(|Z|^{k+1})\\
&=\sum_{|\alpha|\leqslant k} p^{-|\alpha|/2}
\frac{\partial^\alpha f_{x_0}}{\partial Z^\alpha}(0)
\frac{ (\sqrt{p}Z)^\alpha}{\alpha!}
+p^{-\frac{k+1}{2}}\cO(|\sqrt{p}Z|^{k+1}).
\end{split}
\end{equation}
Insert \eqref{toe2.18} and the Bergman kernel expansion \eqref{toe2.9} into
\eqref{toe2.17}, taking into account the $\kappa_{x_0}$-factors in
\eqref{toe2.71}. After the change of variables $\sqrt{p}\,Z''=W$, the resulting
composition reduces to the model calculus \eqref{toe1.6}, \eqref{toe1.15}, which
gives \eqref{toe2.13} and \eqref{toe2.14}. Finally, \eqref{bk2.33} and
\eqref{toe2.14} yield \eqref{toe2.15}:
$Q_{0,\,x_0}(f)= \cK[1, f_{x_0}(0)] = f_{x_0}(0) = f(x_0)$.
\end{proof}
\begin{corollary}\label{toec2.1}
For any $f\in\cC^\infty_{\rm{const}}(X,\End(E))$, we have
\begin{equation}\label{bk4.2}
T_{f,\,p}(x,x)= \sum_{r=0}^{\infty} \bb_{r,f}(x) p^{n-r}+\cO(p^{-\infty})\,,
\quad \bb_{r,f}\in\cC^\infty(X,\End(E))\,.
\end{equation}
uniformly on compact sets in the $\cC^\infty$ topology, analogous 
to the expansion \eqref{bk2.6}.
\end{corollary}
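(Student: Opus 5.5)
We obtain Corollary~\ref{toec2.1} by restricting the near off-diagonal expansion of Theorem~\ref{toet2.3} to the diagonal $Z=Z'=0$, as Theorem~\ref{bkt2.18} was obtained from Theorem~\ref{tue17}. Fix a compact set $K\subset X$ and $x_0\in K$, and evaluate \eqref{toe2.13}--\eqref{toe2.71} at $Z=Z'=0$. Since $\kappa_{x_0}(0)=1$ and the weight $(1+\sqrt p|Z|+\sqrt p|Z'|)^M e^{-C_0\sqrt p|Z-Z'|}$ equals $1$ there, we get
\[
p^{-n}T_{f,p}(x_0,x_0)=\sum_{r=0}^{k}(Q_{r,x_0}(f)\cP_{x_0})(0,0)\,p^{-r/2}+\mO(p^{-(k+1)/2}),
\]
uniformly in $x_0\in K$.

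The first key step is the parity argument. By Theorem~\ref{toet2.3}, $Q_{r,x_0}(f)$ is a polynomial with the same parity as $r$. Hence for odd $r$ it has no constant term, and $Q_{r,x_0}(f)(0,0)=0$. Only even $r=2j$ contribute, and we set
\[
\bb_{j,f}(x_0):=(Q_{2j,x_0}(f)\cP_{x_0})(0,0)=Q_{2j,x_0}(f)(0,0)\,\cP_{x_0}(0,0).
\]
Here $\cP_{x_0}(0,0)=\prod_i a_i(x_0)/(2\pi)$ by \eqref{toe1.3}. The $a_i(x_0)$ are the eigenvalues of $\dot R^L_{x_0}$, so this factor is smooth and positive in $x_0$; in the present normalization it gives $\det(\dot R^L/2\pi)$. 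Replacing $k$ by $2k+1$ gives \eqref{bk4.2} with remainder $\mO(p^{n-k-1})$ for every $k$, which is the meaning of the $\cO(p^{-\infty})$ statement. Smoothness of $\bb_{j,f}$ follows because $Q_{r,x_0}(f)$ depends smoothly on $x_0$. By \eqref{toe2.14}, its coefficients are finitely many derivatives of $f_{x_0}$ at $0$ combined through $\cK$ with the $J_{r,x_0}$.

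The remaining step, and the only real obstacle, is the uniformity in the $\cC^\ell(K)$ topology rather than just $\cC^0$. Notation~\ref{noe2.7} bounds $\Psi_{p,k,x_0}$ in $\cC^l$ norms that include derivatives in the parameter $x_0$. Restricting to $Z=Z'=0$ commutes with differentiation in $x_0$, since the diagonal point corresponds to $(x_0,x_0)$ in the basic trivialization, and the identification is the identity at the origin. So the $\cC^\ell$ bounds on $x_0\mapsto\Psi_{p,k,x_0}(0,0)$ follow directly from the $\cC^l$ estimate with $l=\ell$, and the $\mO(p^{-\infty})$ term is likewise controlled in every $\cC^m$.

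Finally, the leading coefficient is $\bb_{0,f}=f\,\bb_0$ by \eqref{toe2.15}. This is consistent with \eqref{bk2.6}, since taking $f=1$ gives $T_{1,p}=P_p$.
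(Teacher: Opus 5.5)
Your proposal is correct and follows the paper's proof: set $Z=Z'=0$ in the near off-diagonal Toeplitz kernel expansion \eqref{toe2.13} and take $\bb_{r,f}(x)=(Q_{2r,x}(f)\cP_x)(0,0)$. Your extra details fill in exactly what the paper leaves implicit: odd-parity terms vanish at the origin, $k$ is reindexed to $2k+1$, and the $\cC^\ell(K)$ uniformity is read off from the $x_0$-parameter norms in Notation~\ref{noe2.7}.
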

%--------------------------
\begin{proof}
Take $Z=Z'=0$ in \eqref{toe2.13}. Then \eqref{bk4.2} holds with
$\bb_{r,f}(x)=(Q_{2r,x}(f)\,\mathcal{P}_{x})(0,0)$.
\end{proof}
%--------------------------
Since \eqref{toe2.14} gives $Q_{2r,x}(f)$ explicitly in terms of the $J_{r,x}$,
one can compute the first coefficients $\bb_{r,f}$. In particular, \cite{MM12}
computes $\bb_{1,f}$ and $\bb_{2,f}$; see also applications in K\"ahler geometry
(e.g.\ \cite{Fin10,Fine_Quant_Duke_2012,LM06a}).

%%-------------------------------------
%\begin{theorem}[{\cite[Theorem 0.1]{MM12}}] \label{toet4.6}
%For any $f\in\cC^\infty_{\rm const}(X,\End(E))$, we have{\rm:}
%\begin{equation}\label{bk4.3}
%\bb_{0,\,f}=f, \quad \bb_{1,\,f} = \frac{r^X}{8\pi}  f
%+ \frac{\sqrt{-1}}{4\pi}
%\left(R^E_{\Lambda}  f + fR^E_{\Lambda} \right)
%-  \frac{1}{4\pi} \Delta^{E} f\,.
%\end{equation}
%If $f\in \cC^\infty_{\rm const}(X)$, then
%\begin{equation}\label{abk4.4}
%\begin{split}
%\pi^2 \bb_{2,\,f}  =& \,\pi^2  \bb_{2}  f
%+ \frac{1}{32}\Delta^2 f
%- \frac{1}{32}  r^X \Delta f
%- \frac{\sqrt{-1}}{8} \big\langle \Ric_\om,
%\partial\ov{\partial}f\big\rangle\\
%&+ \frac{\sqrt{-1}}{24} \big\langle df,
%\nabla^E R^E_{\Lambda}\big\rangle_{\om}
% + \frac{1}{24} \big\langle \partial f, \nabla^{1,0*} R^E\big\rangle_{\om}
% - \frac{1}{24} \big\langle \ov{\partial} f,
%\ov{\partial}^{E*} R^E\big\rangle_{\om}\\
%&-  \frac{\sqrt{-1}}{8} (\Delta f)R^E_{\Lambda}
%+ \frac{1}{4}\big\langle \partial \ov{\partial} f, R^E\big\rangle_{\om}\,.
%\end{split}
%\end{equation}
%\end{theorem}

Lemma~\ref{toet2.1} and Theorem~\ref{toet2.3} give the local expansion of the
kernel of $T_{f,p}$. The same method applies to compositions $T_{f,p}T_{g,p}$,
yielding an expansion of the form \eqref{toe2.13}. Conversely, the existence of
such an expansion (with compactly supported kernel) provides a convenient
criterion for a family to be Toeplitz.

\begin{theorem}[Criterion for Toeplitz operators,
compact support version]\label{toet3.1}
Let $$\mT=\{T_p:L^2(X,L^p\otimes E)\longrightarrow L^2(X,L^p\otimes E):p\geq1\}$$
be a family of bounded linear operators with smooth kernels $T_p(\cdot,\cdot)$
satisfying the following conditions:
\\[2pt]
(i) For any $p\in \N$,  $P_p\,T_p\,P_p=T_p$\,.
\\[2pt]
(ii) There exists a compact set $K\subset X$ and a family
$\{S_p:L^2(X,L^p\otimes E)\longrightarrow L^2(X,L^p\otimes E):p\geq1\}$
of operators with smooth
kernels such that
for all $p\in\N$ we have $T_p=P_pS_pP_p$ and
$\supp S_p(\cdot,\cdot)\subset K\times K$ with the following property.
%\\[2pt]
%(iii)
For any $\varepsilon_0>0$ and any $\ell\in\N$,
there exists $C_{\ell,\,\varepsilon_0}>0$ such that
for all $p\geqslant 1$ and all $(x,x')\in K\times K$
with $d(x,x')>\varepsilon_0$,
%---
\begin{equation} \label{toe3.1}
|S_{p}(x,x')|\leqslant C_{\ell,\varepsilon_0}p^{-\ell}.
\end{equation}
%---
(iii) There exists a family of polynomials
$\{\mQ_{r,\,x_0}(\mT)\in\End(E)_{x_0}
[Z,Z^{\prime}]\}_{x_0\in X}$
such that:
\begin{itemize}
\item[(a)]
As a polynomial, each $\mQ_{r,\,x_0}(\mT)$ possesses the same parity as $r$.
\item[(b)] The family is smooth in $x_0\in X$, the
sections $X\ni x_0\longmapsto\mQ_{r,\,x_0}(\mT)(0,0)$
is supported in $K$,
\item[(c)] There exists $0<\var^\prime<a^K/4$ such that for every  $x_0\in K$,
 every $Z,Z^\prime \in T_{x_0}X$ with  $\abs{Z},\abs{Z^{\prime}}<\var^\prime$, 
 and every $k\in\N$ we have
\begin{equation} \label{toe3.2}
p^{-n}T_{p,\,x_0}(Z,Z^{\prime})\cong
\sum^k_{r=0}(\mQ_{r,\,x_0}(\mT)\cP_{x_0})
(\sqrt{p}Z,\sqrt{p}Z^{\prime})p^{-\frac{r}{2}} + \mO(p^{-\frac{k+1}{2}}),
\end{equation}
in the sense of Notation \ref{noe2.7}.
\end{itemize}
Then $\mT=\{T_p:p\geq1\}$ is a Toeplitz operator in the sense of
Definition \ref{toe-def}.
\end{theorem}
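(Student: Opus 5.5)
We follow the strategy of \cite[Theorem 4.9]{MM08b} and \cite[Theorem 7.3.1]{MM07}. The symbols $g_\ell$ are built by induction, each time subtracting a Toeplitz operator and showing that the remainder is $\mO(p^{-1/2})$ smaller. \emph{Step 1: the principal symbol.} Set $g_0(x_0):=\mQ_{0,x_0}(\mT)(0,0)\in\End(E_{x_0})$. By (iii)(b) this is smooth, and it vanishes outside $K$, so $g_0\in\cC^\infty_0(X,\End(E))\subset\cC^\infty_{\rm const}(X,\End(E))$. Next we show that $\mQ_{0,x_0}(\mT)$ is the constant polynomial $g_0(x_0)$. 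The reproducing identities $P_pT_p=T_p=T_pP_p$, combined with the expansions \eqref{toe2.9} and \eqref{toe3.2} and with Lemma~\ref{toet1.1}, give at order $p^0$ the identities $\cP\,(\mQ_0\cP)=\mQ_0\cP=(\mQ_0\cP)\,\cP$. Hence the operator with kernel $\mQ_0\cP$ maps into $\Ker\cL$ and vanishes on $(\Ker\cL)^\perp$. Its kernel $\mQ_0(Z,Z')\cP(Z,Z')$ is therefore holomorphic in $z$ and antiholomorphic in $z'$, once the Gaussian factor is removed. Together with the polynomial structure, this should force $\mQ_0(Z,Z')$ to be a polynomial in $z,\ov z'$ whose operator is $\cP$-invariant. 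To pin it down to a constant we also need (iii)(a), the parity condition, and we will argue as in \cite[Lemma 7.3.3]{MM07}. That argument compares the diagonal values along a family of base points $x_0$: for $x_0$ varying with $Z$ fixed, the expansions centred at $x_0$ and at $\exp_{x_0}(Z)$ must agree. At leading order $p^0$ this kills every non-constant monomial of $\mQ_0$. I expect this identification to be the main obstacle. Then $T_p-T_{g_0,p}$ satisfies the hypotheses (i)--(iii) with $\mQ_{0}=0$. Indeed, Theorem~\ref{toet2.3} gives the expansion for $T_{g_0,p}$ with $Q_0(g_0)=g_0(x_0)$. For $S_p$ we take $S_p-\rho\, g_0$, suitably localized near $K$ using Lemma~\ref{toet2.1}, so that (ii) still holds on an enlarged compact set $K'$.

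\emph{Step 2: norm estimate from the kernel expansion.} If a family satisfies (i)--(iii) with $\mQ_r=0$ for $r<m$, we claim its operator norm is $\mO(p^{-m/2})$. Write $T_p=P_pS_pP_p$ and observe that $\norm{T_p}\le\norm{S_p}$. We bound $\norm{S_p}$ by Schur's test: $S_p$ is supported in $K'\times K'$ and is $\mO(p^{-\infty})$ off the diagonal by \eqref{toe3.1}. Near the diagonal its kernel agrees with that of $T_p$ up to the localization error, and the bound on the remainder $\Psi_{p,k,x_0}$ in Notation~\ref{noe2.7} gives
\[
\sup_x\int|T_p(x,x')|\,dv_X(x')\le Cp^{n}p^{-m/2}\int(1+\sqrt p|Z'|)^Me^{-C_0\sqrt p|Z'|}dZ'\le Cp^{-m/2}.
\]
The same bound holds with the roles of $x$ and $x'$ exchanged.

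\emph{Step 3: the odd coefficients vanish and the induction closes.} Suppose the remainder $R_p^{(1)}=T_p-T_{g_0,p}$ has $\mQ_0=0$. We first show that its $\mQ_1$ also vanishes. The reasoning of Step~1, applied at order $p^{-1/2}$, shows that $\mQ_1\cP$ is a $\cP$-invariant kernel of the same restricted form. By the parity condition (iii)(a) it is odd, while the rigidity argument of Step~1 forces it to be constant; an odd constant is zero, so $\mQ_1=0$. Consequently $p\,R_p^{(1)}$ again satisfies (i)--(iii), with rescaled polynomials $\mQ_{r+2}$. Applying Step~1 to it produces $g_1$, supported in $K'$. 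Iterating, we obtain $g_\ell$ such that $T_p-\sum_{\ell\le k}p^{-\ell}T_{g_\ell,p}$ satisfies the hypotheses with $\mQ_r=0$ for $r\le 2k+1$. Step~2 then yields the bound $\mO(p^{-k-1})$, which is \eqref{toe2.3}.
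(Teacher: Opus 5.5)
Your inductive skeleton is the paper's: $g_0=\mQ_{0,x_0}(\mT)(0,0)$, constancy of $\mQ_0$, vanishing of $F_1=\mQ_1(\mT)-Q_1(g_0)$, then iteration on $p(T_p-T_{g_0,p})$. The gap is in the step that turns the local kernel expansion into an operator-norm bound.

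\textbf{The norm bound in Step 2.} You assert that near the diagonal the kernel of $S_p$ agrees with that of $T_p$, and nothing in the hypotheses gives this. $S_p$ is determined by $T_p=P_pS_pP_p$ only modulo compactly supported operators $A_p$ with $P_pA_pP_p=0$. An example is $A_p=p^N\db^{*}_pB_p\,\db_p$, where $B_p$ is a smoothing operator on $(0,1)$-forms whose kernel lies in $K\times K$ within distance $1/p$ of the diagonal; recall $P_p\db^{*}_p=0=\db_pP_p$. Adding $A_p$ to $S_p$ preserves (i)--(iii) and leaves $T_p$ unchanged, yet makes $\norm{S_p}$ arbitrarily large, so $\norm{T_p}\le\norm{S_p}$ yields nothing. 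Nor can Schur's test be run on $T_p$ itself, since its kernel is controlled only near the diagonal over $K$ on the non-compact $X$.

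The missing ingredient is the spectral gap, which you never use beyond the cited expansions. Since $F_\varepsilon(D_p)P_p=P_p=P_pF_\varepsilon(D_p)$, one has $T_p=P_p\widetilde S_pP_p$ with $\widetilde S_p=F_\varepsilon(D_p)S_pF_\varepsilon(D_p)$. By finite propagation speed, the kernel of $\widetilde S_p$ is supported in a fixed compact neighbourhood of $K\times K$. By \eqref{toe5.3b}--\eqref{toe5.4} it equals $T_p(x,x')$ there up to $\mO(p^{-\infty})$, granted a polynomial bound on $S_p$, which the paper also uses tacitly. Your Schur estimate then applies to $\widetilde S_p$; this is what the paper imports from \cite[Lemma 4.13]{MM08b}.

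The same device is needed when you subtract $T_{g_0,p}$. Multiplication by $g_0$ has no smooth kernel, so ``$S_p-\rho g_0$'' violates (ii). Lemma~\ref{toet2.1} gives only off-diagonal decay of $T_{g_0,p}$, not a compactly supported representative. The paper instead uses $T^{(1)}_p=p\,P_p\big(S_p-F_\varepsilon(D_p)\,g_0\,F_\varepsilon(D_p)\big)P_p$.

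\textbf{Constancy of $\mQ_0$.} Here your route differs from the paper's, which quotes \cite[Proposition 4.11]{MM08b}, proved again through $F_\varepsilon(D_p)S_pF_\varepsilon(D_p)$ and \eqref{toe5.3b}. Your base-point comparison is purely local and does work, but you should carry it out rather than flag it, and parity plays no role in it.

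Take $Z=W/\sqrt p$, $Z'=W'/\sqrt p$, and compare at order $p^0$ the expansion at $x_0$ with the one at $\exp^X_{x_0}(Z)$. The $L^p$-holonomy phase and the $\kappa$-factors are the same as for $P_p$, whose leading coefficient is $J_0=1$. Dividing by the corresponding identity for $P_p$ therefore gives $\mQ_{0,x_0}(W,W')=\mQ_{0,x_0}(0,W'-W)$. From $\cP\circ(\mQ_0\cP)\circ\cP=\mQ_0\cP$ you may write $\mQ_{0,x_0}=F(z,\ov z')$. Setting $w'=w$ then yields $F(w,\ov w)=F(0,0)$ for all $w$. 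Hence $F$ is constant, because the monomials $w^\alpha\ov w^\beta$ are linearly independent.

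Parity enters only at order $p^{-1/2}$. The same comparison applied to $\sqrt p\,(T_p-T_{g_0,p})$ makes $F_1$ constant, and oddness then kills it. This is the vanishing of $F_1$ that the paper takes from \cite[Lemma 4.18]{MM08b}.
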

\begin{proof}
We define inductively a sequence $(g_l)_{l\geqslant 0}$ with
$g_l\in \cC^\infty_0(X,\End(E))$ such that
\begin{equation}\label{toe3.4}
T_{p} = \sum_{l=0}^m P_{p}\, g_l \, P_{p}\,p^{-l} +\mO(p^{-m-1})\,,
\quad\text{for every $m\geqslant 0$}\,.
\end{equation}

\smallskip
\noindent\textbf{Step 1: construction of $g_0$ and the case $m=0$.}
Fix $x_0\in X$ and set
\begin{equation}\label{toe3.5}
g_0(x_0)=\mQ_{0,\,x_0}(\mT)(0,0)\in\End(E_{x_0})\,.
\end{equation}
By assumption (iii)(b), $g_0(x_0)=0$ for $x_0\notin K$.
We claim that
\begin{equation}\label{toe3.60}
p^{-n}\,(T_p-T_{g_0,p})_{x_0}(Z,Z')\cong \mO(p^{-1})\,,
\end{equation}
which implies
\begin{equation}\label{toe3.6}
T_p=P_p\,g_0\,P_p+\mO(p^{-1}).
\end{equation}

The key analytic input is the spectral gap \eqref{bk1.4}, which gives (as in the
proof of Theorem~\ref{tue16}) the identities/estimates
\begin{align}
&F_\varepsilon(D_p)s = P_p\,s,\qquad p\ge p_0,\ s\in H^0_{(2)}(X,L^p\otimes E), \label{toe5.3a}\\
&\|F_\varepsilon(D_p)-P_p\| = \cO(p^{-\infty}), \label{toe5.3b}\\
&|F_\varepsilon(D_p)(x,x')-P_p(x,x')|_{\cC^m(K\times K)}
\le C_{l,m,\var}\,p^{-l}, \qquad x,x'\in K, \label{toe5.4}
\end{align}
together with finite propagation speed
\cite[\S 2.8]{Tay1:96}, \cite[Appendix D.2]{MM07}
(cf.\ also \cite[Proposition 4.1]{DLM06}), implying that $F_\varepsilon(D_p)(x,\cdot)$
depends only on $D_p|_{B^X(x,\var)}$ and vanishes outside $B^X(x,\var)$.

A crucial point is the following (proved in \cite[p.\ 596-597]{MM08b} by working
with the compactly supported kernel of $F_\varepsilon(D_p)S_pF_\varepsilon(D_p)$
and then using \eqref{toe5.3b}):
%-----------------------------------------------------------------------------
\begin{proposition}[{\cite[Proposition 4.11]{MM08b}}]\label{toet3.2}
In the conditions of Theorem \ref{toet3.1} we have
$\mQ_{0,\,x_0}(\mT)(Z,Z^{\prime})=\mQ_{0,\,x_0}(\mT)(0,0)$
for all $x_0\in X$ and all $Z,Z^{\prime}\in T_{x_0}X$.
\end{proposition}
%-----------------------------------------------------------------------------

We now compare the expansions of $T_p$ and $T_{g_0,p}=P_p\,g_0\,P_p$ near the
diagonal. By \eqref{toe2.13} with $k=1$,
\begin{equation}\label{6.33a}
p^{-n}T_{g_0,\,p,\,x_0}(Z,Z')\cong
\big(g_{0}(x_0)\cP_{x_0}
+Q_{1,\,x_0}(g_0)\cP_{x_0}\,p^{-1/2}\big)(\sqrt{p}Z,\sqrt{p}Z')
+\mO(p^{-1})\,,
\end{equation}
since $Q_{0,x_0}(g_0)=g_0(x_0)$ by \eqref{toe2.15}. On the other hand, the assumed
expansion \eqref{toe3.2} with $k=1$ gives
\begin{equation}\label{6.33b}
p^{-n}T_{p,\,x_0}(Z,Z')\cong
\big(g_{0}(x_0)\cP_{x_0}
+\mQ_{1,\,x_0}(\mT)\cP_{x_0}\,p^{-1/2}\big)(\sqrt{p}Z,\sqrt{p}Z')
+\mO(p^{-1})\,,
\end{equation}
where we used Proposition~\ref{toet3.2} and \eqref{toe3.5}.
Subtracting \eqref{6.33a} from \eqref{6.33b} yields
\begin{equation}\label{6.33d}
p^{-n}(T_{p}-T_{g_0,\,p})_{x_0}(Z,Z')\cong
\big((\mQ_{1,\,x_0}(\mT)-Q_{1,\,x_0}(g_0))\cP_{x_0}\big)
(\sqrt{p}Z,\sqrt{p}Z')\,p^{-1/2}+\mO(p^{-1})\,.
\end{equation}
Thus it remains to show
\begin{equation}\label{6.33e}
F_{1,\,x}:=\mQ_{1,\,x}(\mT)-Q_{1,\,x}(g_0)\equiv0,
\end{equation}
which is proved in \cite[Lemma 4.18]{MM08b}. This establishes \eqref{toe3.60},
hence \eqref{toe3.6}, i.e.\ \eqref{toe3.4} for $m=0$.

\smallskip
\noindent\textbf{Step 2: induction.}
Assume \eqref{toe3.4} holds up to some $m\ge 0$. Set $\mT^{(0)}:=\mT$ and define
\begin{equation}\label{toe3.22}
\mT^{(1)}=\{T^{(1)}_{p}=p(T_p-T_{g_0,\,p}):p\geq1\}.
\end{equation}
Using \eqref{toe3.6} we can write
\(
T^{(1)}_{p}=P_p\big(pS_p-F_\varepsilon(D_p)\,g_0\,F_\varepsilon(D_p)\big)P_p,
\)
so (i)–(ii) are immediate. Condition (iii) follows by subtracting the asymptotic
expansions of $T_{p,x_0}$ (from \eqref{toe3.2}) and $T_{g_0,p,x_0}$ (from
\eqref{toe2.13}), and using Proposition~\ref{toet3.2} and \eqref{6.33e} to see
that the coefficients of $p^0$ and $p^{-1/2}$ vanish. Hence $\mT^{(1)}$ again
satisfies the hypotheses of Theorem~\ref{toet3.1}, and admits an expansion
\begin{equation}\label{toe3.2a}
p^{-n}T^{(1)}_{p,\,x_0}(Z,Z')\cong
\sum_{r=0}^k(\mQ_{r,\,x_0}(\mT^{(1)})\cP_{x_0})(\sqrt{p}Z,\sqrt{p}Z')p^{-r/2}
+\mO(p^{-(k+1)/2}).
\end{equation}
Define
\begin{equation}\label{toe3.5a}
g_1(x_0)=\mQ_{0,\,x_0}(\mT^{(1)})(0,0)\in\End(E_{x_0}).
\end{equation}
Then \eqref{toe3.4} holds for $m=1$.
In general, for $l\in\N$ define
\begin{equation} \label{toe3.22a}
\mT^{(l+1)}=\{T^{(l+1)}_{p}=p(T^{(l)}_p-T_{g_l,\,p}):p\geq1\},\quad
T^{(l+1)}_{p}=p^{l+1}T_p-\sum_{j=0}^l p^{l+1-j}T_{g_j,p}\,,
\end{equation}
and repeat the above argument to verify that $\mT^{(l+1)}$ satisfies (i)–(iii),
hence admits an expansion
\begin{equation}\label{toe3.2b}
p^{-n}T^{(l+1)}_{p,\,x_0}(Z,Z')\cong
\sum_{r=0}^k(\mQ_{r,\,x_0}(\mT^{(l+1)})\cP_{x_0})(\sqrt{p}Z,\sqrt{p}Z')p^{-r/2}
+\mO(p^{-(k+1)/2}),
\end{equation}
and define
\begin{equation}\label{toe3.5b}
g_{l+1}(x_0)=\mQ_{0,\,x_0}(\mT^{(l+1)})(0,0)\in\End(E_{x_0}).
\end{equation}
This inductive construction yields \eqref{toe3.4} for all $m$, hence $\mT$ is a
Toeplitz family in the sense of Definition~\ref{toe-def}.
\end{proof}

\subsection{Proof of Theorem \ref{t2.1}}
\begin{proof}
\noindent
(1) We follow \cite[Theorems 4.1.1 and 6.1.1]{MM07}. We first deduce a spectral
gap for the Kodaira Laplacian $\square_p=\frac12 D_p^2$ acting on sections of
$L^p\otimes E$. Let $f\in \Dom(\square_p)\cap L^2(X,L^p\otimes E)$ and set
$s=\ov\partial^E_p f$. Then \eqref{bk1.4} gives
\begin{equation}\label{2.22}
\|2\square_{p}f\|^{2}_{L^{2}}
=2\big\langle\ov\partial_{p}^{E,*}s,\ov\partial_{p}^{E,*}s\big\rangle
=\|D_ps\|_{L^2}^2
\ge (2C_0p-C_L)\|s\|_{L^2}^2
=(2C_0p-C_L)\langle\square_p f,f\rangle .
\end{equation}
Hence
\begin{equation}\label{2.23}
{\rm Spec}(2\square_p)\subset\{0\}\cup[2C_0p-C_L,\infty[\,.
\end{equation}
By \eqref{2.23} we may localize as in Theorem~\ref{tue16}
(cf.\ \cite[\S 4.1.2]{MM07}) and conclude from
\cite[Theorem 4.1.24]{MM07} exactly as in the compact case
\cite[Theorem 4.1.1]{MM07}.

\smallskip
\noindent
(2) Let $g\in\cC^\infty_0(X,\End(E))$. We denote by
$(F_\varepsilon(D_p)\,g\,F_\varepsilon(D_p))(x,x')$ the smooth kernel of
$F_\varepsilon(D_p)\,g\,F_\varepsilon(D_p)$ with respect to $dv_X(x')$.
For any relatively compact open $U\Subset X$ with $\supp(g)\subset U$ we have,
using \eqref{toe5.3a}, \eqref{toe5.3b}, \eqref{toe5.4},
\begin{align}\label{toe5.5}
\begin{split}
&T_{g,p}-F_\varepsilon(D_p)\,g\,F_\varepsilon(D_p)=\mO(p^{-\infty})
\quad\text{in operator norm},\\
&T_{g,p}(x,x')-\big(F_\varepsilon(D_p)\,g\,F_\varepsilon(D_p)\big)(x,x')
=\cO(p^{-\infty}) \quad \text{on } U\times U.
\end{split}
\end{align}

Fix $f,g\in\cC^\infty_0(X,\End(E))$ and choose $U\Subset X$ such that
$\supp(f)\cup\supp(g)\subset U$ and $d(x,y)>2\varepsilon$ for all
$x\in\supp(f)\cup\supp(g)$, $y\in X\setminus U$. Then \eqref{toe5.3a} implies
\begin{equation}\label{toe5.6}
T_{f,p}T_{g,p}=P_p\,F_\varepsilon(D_p)\,f\,P_p\,g\,F_\varepsilon(D_p)\,P_p .
\end{equation}
The kernel of $F_\varepsilon(D_p)fP_p\,gF_\varepsilon(D_p)$ is supported in
$U\times U$, and Lemmas~\ref{toet2.1}, \ref{toet2.3} and \eqref{toe5.4} show that it satisfies
\eqref{toe3.1}. Moreover, as in \eqref{toe2.17}, for $x_0\in U$ and
$|Z|,|Z'|<\var/4$,

\begin{equation}\label{toe4.5}
\begin{split}
(&F_\varepsilon(D_p)f P_p \,g\,F_\varepsilon(D_p))_{x_0}(Z,Z^\prime)=
(T_{f,\,p}\,T_{g,\,p})_{x_0}(Z,Z^\prime)+\cO(p^{-\infty})\\
&=\int_{{ T_{x_0}X}}
T_{f,\,p,\,x_0}(Z,Z^{\prime\prime})
\rho\Big(\frac{4|Z^{\prime\prime}|}{\var}\Big)
T_{g,\,p,\,x_0}(Z^{\prime\prime},Z^{\prime})
\kappa_{x_0}(Z^{\prime\prime})
\,dv_{TX}(Z^{\prime\prime})+\cO(p^{-\infty}).
\end{split}
\end{equation}  
Combining \eqref{toe4.5} with Theorem~\ref{toet2.3} gives, as in the proof of
Theorem~\ref{toet2.3},
\begin{multline}\label{toe5.7}
p^{-n}\big(F_\varepsilon(D_p)fP_p\,gF_\varepsilon(D_p)\big)_{x_0}(Z,Z')
\cong \sum_{r=0}^k (Q_{r,x_0}(f,g)\cP_{x_0})(\sqrt p Z,\sqrt p Z')\,p^{-r/2}
+\mO(p^{-(k+1)/2}),
\end{multline}
with
\begin{equation}\label{toe4.7}
Q_{r,x_0}(f,g)=\sum_{r_1+r_2=r}\cK\big[Q_{r_1,x_0}(f),\,Q_{r_2,x_0}(g)\big],
\end{equation}
where $Q_{r,x_0}(f)$ are given by \eqref{toe2.14}. 
Hence, by
Theorem~\ref{toet3.1} and \eqref{toe5.5}, there exist $C_l(f,g)\in\cC^\infty_0(X,\End(E))$
with $\supp(C_l(f,g))\subset\supp(f)\cap\supp(g)$ such that for any $k\ge1$,
\begin{equation} \label{toe5.8}
\Big\|F_\varepsilon(D_p)f P_p\,g\,F_\varepsilon(D_p) s
- \sum_{l=0}^k F_\varepsilon(D_p) P_{p}\,C_l(f,g) \, p^{-l}\, P_{p} F_\varepsilon(D_p)s\Big\|
\leqslant C_kp^{-k-1}\|s\|.
\end{equation}
The estimates \eqref{toe5.6} and \eqref{toe5.8} imply that 
\begin{equation}\label{toe5.9}
\Big\|T_{f,p}T_{g,p}-\sum_{l=0}^k P_p\,C_l(f,g)\,p^{-l}P_p\Big\|
\le C_k\,p^{-k-1}.
\end{equation}
This proves (i) in Definition~\ref{D:btp} for sections
$f,g$ %\in\cC^\infty_0(X,\End(E))$ 
with compact support. 
In general, for $f,g\in\cC^\infty_{\rm const}(X,\End(E))$
we write $f=f_0+c_f$, $g=g_0+c_g$, with $f_0,g_0$ 
with compact support and $c_f,c_g\in\C$. 
Then $T_{f,p}=T_{f_0,p}+c_fP_p$, $T_{g,p}=T_{g_0,p}+c_gP_p$, hence
\[T_{f,p}T_{g,p}=T_{f_0,p}T_{g_0,p}+c_gT_{f,p}+c_fT_{g,p}+c_fc_gP_p\,.\]
Using the expansion \eqref{toe5.9} for $T_{f_0,p}T_{g_0,p}$
we obtain the expansion for $T_{f,p}T_{g,p}$ taking into account
that $C_0(f_0,g_0)+c_g f+c_f g+c_f c_g=C_0(f,g)$
and $C_r(f_0,g_0)=C_r(f,g)$ f\"ur $r\geq1$.
This completes the proof of preperty (i) from Definition~\ref{D:btp}. 

We note that the coefficients $C_r(f,g)\in\cC^\infty(X,\End(E))$ 
in \eqref{toe4.2}, \eqref{toe5.9},
are constructed inductively in Theorem \ref{toet3.1}.
By setting $\mT_{f,g}=\{T_{f,p}T_{g,p}\}$ we have 
$C_0(f,g)(x)=\mQ_{0,\,x}(\mT_{f,g})(0,0)=Q_{0,x}(f,g)$ by \eqref{toe3.5};
for $l\geq0$, we define inductively 
\begin{equation}\label{toe5.91}
\begin{split}
&\mT_{f,g}^{(l+1)}=\Big\{
p^{l+1}T_{f,p}T_{g,p}-\sum_{j=0}^l p^{l+1-j}T_{C_j(f,g),p}\Big\},\\
&C_{l+1}(f,g)(x)=\mQ_{0,\,x}(\mT_{f,g}^{(l+1)})(0,0)
\end{split}
\end{equation}
as in \eqref{toe3.22a} and \eqref{toe3.5b}.
%
%$\mT_{f,g}^{(l+1)}=\{T^{(l+1)}_p\}$ with
%$T^{(l+1)}_{p}=p^{l+1}T_{f,p}T_{g,p}-\sum_{j=0}^l p^{l+1-j}T_{C_j(f,g),p}$
%as in \eqref{toe3.22a} and set $C_{l+1}(f,g)(x)=\mQ_{0,\,x}(\mT_{f,g}^{(l+1)})(0,0)$,
%see \eqref{toe3.5b}.
%

Property (ii) from Definition~\ref{D:btp} follows as in the compact
case \cite[Theorem 7.4.1]{MM07}, \cite[Theorem 1.1]{MM08b}; in particular
\begin{equation}\label{toe4.8}
C_0(f,g)(x)=Q_{0,x}(f,g)=\cK[Q_{0,x}(f),Q_{0,x}(g)]=f(x)g(x),
\end{equation}
and the commutator relation \eqref{toe4.4} follows from
\begin{equation}\label{toe4.16}
C_1(f,g)(x)-C_1(g,f)(x)=\imat\{f,g\}\,\Id_E.
\end{equation}

Finally, for (iii), we adapt \cite[Theorem 7.4.2]{MM07} and
\cite[Theorem 4.19]{MM08b}. Choose $x_0\in X$ and $u_0\in E_{x_0}$, $|u_0|=1$,
such that $|f(x_0)(u_0)|=\|f\|_\infty$. In normal coordinates at $x_0$, trivialize
$L$ and $E$, and let $e_L$ be the unit frame of $L$. Consider the peak sections
\begin{equation}\label{toe4.18a}
S^p_{x_0}(x)=p^{-n/2}P_p(x,x_0).(e_{L,x_0}^{\otimes p}\otimes u_0)\,.
\end{equation}
Using \eqref{0c7} and \eqref{toe5.5}, $F_\varepsilon(D_p)$ has the same local
expansion as $P_p$, and thus by \eqref{toe2.9},
\begin{equation}\label{toe4.18}
\big\|T_{f,p}S^p_{x_0}-f(x_0)S^p_{x_0}\big\|_{L^2}
\le \frac{C}{\sqrt p}\,\|S^p_{x_0}\|_{L^2}.
\end{equation}
If $f$ is real-valued, then $df(x_0)=0$ and the factor $C/\sqrt p$ improves to
$C/p$. This is precisely \eqref{toe4.17a}, completing the proof.
\end{proof}
%}
%end blue
%---------------------------------------------------------------------------

%-----------------------------------------------------------------------------
%\begin{theorem}[{\cite[Theorem 0.3]{MM12}}]\label{toec1}
Let $f,g\in\cC^\infty_{\rm{const}}(X,\End(E))$
and assume that $\omega=\Theta$.
By \cite[Theorem 0.3]{MM12} we have:
 \begin{equation}\label{toe4.3} \begin{split}
% C_0(f,g)=&fg, \quad
C_1(f,g)&=-  \frac{1}{2\pi}
\big\langle \nabla^{1,0} f, \ov{\partial}^{E} g\big\rangle_{\om}\in 
\cC^{\infty}_0(X,\End(E)),\\
C_2(f,g)&= \, \bb_{2,\,f,\,g} - \bb_{2,\,fg}- \bb_{1,\,C_1(f,\,g)}.
\end{split}\end{equation}
%Here $\langle\cdot\,,\cdot \rangle$ acts $\C$-bilinearly
%(and pointwisely) on $TX$ and not on $E$.
If $f,g\in\cC^\infty_{\rm{const}}(X)$, then
 \begin{equation}\label{toe4.3a}
C_2(f,g)= \,
\frac{1}{8\pi^2 } \left\langle  D^{1,0}\partial f, D^{0,1}\ov{\partial} g\right\rangle
+ \frac{\sqrt{-1}}{4\pi^2 } \left\langle  \Ric_\om, 
\partial f \wedge\ov{\partial} g\right\rangle
 -\frac{1}{4\pi^2} \left\langle\partial f\wedge \ov{\partial} g, R^E\right\rangle_{\om}\,.
\end{equation}
%\end{theorem}
%---
%---
\begin{remark}\label{toer2}
%\textbf{(i)} Relations \eqref{toe4.4} and \eqref{toe4.17a} were first proved in some special cases:
%in \cite{KliLe:92} for Riemann surfaces, in \cite{Cob:92} for $\C^n$, 
%and in \cite{BLU:93} for bounded symmetric domains in $\C^n$,
% using explicit calculations.
%Then Bordemann, Meinrenken, and Schlichenmaier \cite{BMS94}
%%(cf. also Guillemin \cite{Guill:95})
%treated the case of a compact K{\"a}hler manifold (with $E=\C$)
%using the theory of Toeplitz structures (generalized Szeg{\"o} operators) by
%Boutet de Monvel and Guillemin \cite{BdMG81}.
%%Guillemin \cite{Guill:95} notices that the method is implicit in \cite{BoG}.
%Moreover, Schlichenmaier \cite{Schlich:00}
%(cf. also  \cite{KS01}, \cite{Cha03})
%continued this train of thought and showed that for any $f,g\in \cC^\infty(X)$,
% the product $T_{f,\,p}\,T_{g,\,p}$ has an asymptotic expansion
% \eqref{toe4.2} and constructed geometrically an associative star product.
% \\[2pt]
%\textbf{(ii)}
There are two ways to prove \eqref{toe4.16}, which also holds
in the more general symplectic case. 
One is to compute directly the difference, % and to use some of the identities \eqref{toe1.12}. 
as is done \cite[p.\,593-594]{MM08b}, \cite[p.\,311]{MM07},
or one can explicitly compute 
each coefficient
$C_1(f,g)$, which is more involved and was done in
\cite[Theorem 1.1]{Ioo18} in the symplectic case,
and then take the difference. 
%The formula for $C_1(f,g)$ is given in Theorem \ref{toec1}.
\end{remark}
%===

%-------------------------
%%%
\subsection{The Berezin-Toeplitz star product}

Let $(X,\om)$ be a K\"ahler manifold. The following notion was introduced by
Bayen, Flato, Fronsdal, Lichnerowicz, and Sternheimer in \cite{BFFLS78a}
as a perspective on quantum mechanics that abstracts
from Hilbert spaces to focus on observables,
where quantum observables are represented by the space
$\cC^{\infty}(X,\C)[[\hbar]]$ of formal series with coefficients in $\cC^{\infty}(X,\C)$,
for which the quantum of action $\hbar$ plays the role
of a formal variable.
%===
\begin{definition}\label{DQdef}
Let $(X,\omega)$ be a K\"ahler manifold, and let $\{\cdot,\cdot\}$
be the Poisson bracket \eqref{2.17} on $(X,\omega)$.
A \emph{formal deformation} of a Poisson
subalgebra $\cA\subset(\cC^\infty(X,\C),\{\cdot,\cdot\})$
is a linear associative product $\star$ on
$\cA[[\hbar]]$, called star product,
admitting $1\in\cC^{\infty}(X,\C)$ as unity and
whose product $\star$ is given for all
$f,\,g \in\cC^{\infty}(X,\C)$ by
%\begin{equation}\label{DQintro}
\(f\star g= \sum_{r=0}^{\infty}\hbar^r\,C_r(f,g)\),
%\end{equation}
where $\{C_r\}_{r\in\N}$ is a sequence of bidifferential operators acting 
on $\cC^{\infty}(X,\C)$,
and satisfying $C_0(f,g)=fg$ and %\eqref{toe4.16},
%\begin{equation}\label{DQfla}
$C_1(f,g) - C_1(g,f) =\sqrt{-1}\{f,g\}$, where $\{f,g\}$
is defined in \eqref{2.17}.
%
%A \emph{deformation quantization} of a symplectic manifold 
%$(X,\omega)$
%%of the Poisson bracket \eqref{2.17} 
%is a linear associative $\hbar$-algebra
%on the space
%$\cC^{\infty}(X,\C)[[\hbar]]$, admitting $1\in\cC^{\infty}(X,\C)$ as unity and
%whose product $\star$ is given for all
%$f,\,g \in\cC^{\infty}(X,\C)$ by
%\begin{equation}\label{DQintro}
%f\star g= \sum_{r=0}^{\infty}\hbar^r\,C_r(f,g)\;,
%\end{equation}
%where $\{C_r\}_{r\in\N}$ is a sequence of bidifferential operators acting 
%on $\cC^{\infty}(X,\C)$,
%and satisfying $C_0(f,g)=fg$ and \eqref{toe4.16},
%%\begin{equation}\label{DQfla}
%$C_1(f,g) - C_1(g,f) =\{f,g\}$, where $\{f,g\}$ is the Poisson bracket 
%\eqref{2.17} on $(X,\omega)$\,.
%%\end{equation}
\end{definition}
%====
A formal deformation of the Poisson algebra $(\cC^\infty(X,\C),\{\cdot,\cdot\})$
is also called a deformation quantization of the K\"ahler manifold $(X,\omega)$.
Deformation quantization on a compact K\"ahler, or more generally on a 
compact symplectic manifold $(X,\omega)$, is subtle 
since associativity imposes infinitely many constraints on the bidifferential operators 
$\{C_r\}_{r\in\N}$. Existence was proved by De Wilde–Lecomte \cite{DL83}, and
a geometric construction was given by Fedosov \cite{Fed96}. 
Kontsevich extended existence to general Poisson manifolds \cite{Kon03}, 
though explicit computation of the operators $C_r$ remains difficult.
%
%Deformation quantization over a general compact symplectic manifold 
%$(X,\om)$ is challenging due to the infinite constraints of associativity on the 
%sequence of bidifferential operators $\{C_r\}_{r\in\N}$. 
%De Wilde and Lecomte solved this problem in \cite{DL83}, and 
%Fedosov provided a systematic geometric construction in \cite{Fed96} 
%to produce and study deformation quantizations of symplectic manifolds. 
%Kontsevich established the existence of deformation quantization 
%in general Poisson manifolds in \cite{Kon03}, but computing the 
%bidifferential operators remains challenging. 
In this context, we obtain a corollary of Theorem \ref{t2.1} 
regarding the existence and computability of the so-called 
\emph{Berezin-Toeplitz star product} 
over non-compact manifolds (cf.\ \cite{KS01,Schlich:00} for the compact 
K\"ahler case).
%===
\begin{theorem}
Let $(X,\omega)$ be a K\"ahler manifold,		
let $(L,h^L)$ be a holomorphic Hermitian 
line bundle on $X$ such that $\omega=\frac{\sqrt{-1}}{2\pi}R^L$.	
Assume that the Kodaira-Laplacian possesses a spectral gap, 
as stated in \eqref{bk1.4}.
Then the sequence of bidifferential operators $\{C_r\}_{r\in\N}$
given by the asymptotic expansion \eqref{toe4.2} defines a %deformation quantization
$\star$-product on $\cC^\infty_{\rm const}(X,\C)$
on the K\"ahler manifold $(X,\om)$ 
in the sense of Definition \ref{DQdef}, and the calculus of Toeplitz kernels from 
Section \ref{s3.3} provides an algorithm
to compute the sequence $\{C_r\}_{r\in\N}$ recursively in terms 
of local geometric data over $X$.
\end{theorem}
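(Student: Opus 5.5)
The plan is to deduce everything from Theorem~\ref{t2.1}(2) applied with $E=\C$ and $\cA=\cC^\infty_{\rm const}(X,\C)$. We define $f\star g:=\sum_{r\ge0}\hbar^r C_r(f,g)$, extended $\hbar$-bilinearly to $\cC^\infty_{\rm const}(X,\C)[[\hbar]]$. The axioms of Definition~\ref{DQdef} to verify are: (a) each $C_r(f,g)$ lies again in $\cC^\infty_{\rm const}(X,\C)$ and $C_r$ is bidifferential; (b) $C_0(f,g)=fg$ and $C_1(f,g)-C_1(g,f)=\imat\{f,g\}$; (c) $1$ is a unit; (d) associativity. The computability statement follows from the recursion \eqref{toe5.91}: $C_{l+1}(f,g)(x)=\mQ_{0,x}(\mT^{(l+1)}_{f,g})(0,0)$, where the polynomials $\mQ_{r,x}$ are obtained from \eqref{toe4.7}, \eqref{toe2.14} and $\cK[\cdot,\cdot]$ of Lemma~\ref{toet1.1}, with the $J_{r,x}$ coming from the resolvent formulas \eqref{bk2.23}--\eqref{bk2.77} in terms of the $\mO_r$ in \eqref{bk2.22}, i.e.\ local data.

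For (a), we use the decomposition $f=f_0+c_f$ from the proof of Theorem~\ref{t2.1}. It gives $C_r(f,g)=C_r(f_0,g_0)$ for $r\ge1$, which is compactly supported. It also gives $C_0(f,g)=fg$, which is constant outside a compact set. The locality and bidifferential nature follow because $Q_{r,x_0}(f)$ in \eqref{toe2.14} depends only on finitely many Taylor coefficients $\partial^\alpha f_{x_0}(0)$, with $|\alpha|\le r$. Since $\cK$ is bilinear, each $\mQ_{r,x}(\mT^{(l)}_{f,g})$, and hence each $C_r(f,g)(x)$, is a bilinear expression in finitely many jets of $f,g$ at $x$ with smooth coefficients in $x$. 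Property (b) is \eqref{toe4.8} and \eqref{toe4.16}. For (c), $T_{1,p}=P_p$, so $T_{1,p}T_{g,p}=T_{g,p}$ exactly. Uniqueness of the Toeplitz expansion then forces $C_0(1,g)=g$ and $C_r(1,g)=0$ for $r\ge1$, and similarly on the other side.

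The key lemma, which is also needed for associativity, is \emph{uniqueness of symbols}: if $\sum_{\ell\le k}p^{-\ell}T_{h_\ell,p}=\mO(p^{-k-1})$ with $h_\ell\in\cC^\infty_{\rm const}$, then all $h_\ell=0$. We get this from \eqref{toe4.17a}: $\|T_{h_0,p}\|\to\|h_0\|_\infty$ forces $h_0=0$, and then we multiply by $p$ and induct. For associativity, we expand $(T_{f,p}T_{g,p})T_{k,p}$ and $T_{f,p}(T_{g,p}T_{k,p})$ using \eqref{toe4.2} twice. Each expansion holds in operator norm. The remainder $\mO(p^{-N})$ multiplied by a bounded $T_{\cdot,p}$ (norm $\le\|\cdot\|_\infty$) stays $\mO(p^{-N})$. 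Also $T_{C_r(f,g),p}T_{k,p}$ is again expanded by \eqref{toe4.2}, since $C_r(f,g)\in\cC^\infty_{\rm const}$ by (a). This gives
\[
\sum_{m}p^{-m}T_{\sum_{r+s=m}C_s(C_r(f,g),k),p}\quad\text{and}\quad \sum_m p^{-m}T_{\sum_{r+s=m}C_s(f,C_r(g,k)),p}
\]
up to $\mO(p^{-N-1})$. Uniqueness of symbols then yields $\sum_{r+s=m}C_s(C_r(f,g),k)=\sum_{r+s=m}C_s(f,C_r(g,k))$ for all $m$, which is associativity of $\star$.

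The step I expect to be the main obstacle is the uniqueness of symbols in the non-compact setting. It is the only place where the lower bound in \eqref{toe4.17a} is essential. That bound relies on peak sections at a point $x_0$ where $|h_0|$ attains its supremum. For $h_0\in\cC^\infty_{\rm const}$ the supremum might only be approached at infinity, where it equals the constant value $c$, and no peak section is available there. I would handle this by first noting that $\|T_{h,p}\|\ge\|h\|_{\infty}-C/\sqrt p$ still holds at any interior point, which suffices to show that $h_0$ vanishes on $X$. Alternatively, one can argue that the constant $c$ itself must vanish by testing on peak sections at points outside the compact set $K$, using the local expansion of $P_p$ there, which holds on any compact set by Theorem~\ref{tue17}.
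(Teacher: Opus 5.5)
Your proposal is correct and follows the paper's own argument: the coefficients are read off from \eqref{toe4.2}, with $C_0(f,g)=fg$ and \eqref{toe4.16}; associativity comes from expanding both sides of $T_{f,p}(T_{g,p}T_{k,p})=(T_{f,p}T_{g,p})T_{k,p}$; the unit comes from $T_{1,p}=P_p$; and $\cC^\infty_{\rm const}(X,\C)$ is closed under the $C_r$. The differences are minor: you prove the uniqueness of symbols that the paper invokes without proof, you derive the unit property from it rather than from the vanishing of the families $\mT^{(l)}_{f,1}$ in \eqref{toe5.91}, and your normalization $\sum_r\hbar^rC_r$ is the one compatible with Definition~\ref{DQdef} and \eqref{toe4.16}, whereas the paper's extra factor $(-\imat)^r$ turns the first-order bracket into $\{f,g\}$.

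The obstacle you anticipate does not arise. The peak-section estimate \eqref{toe4.18} at any fixed $x_0\in X$ gives $\liminf_{p}\|T_{h,p}\|\geq|h(x_0)|$, which already forces $h_0\equiv0$. In any case, for $h\in\cC^\infty_{\rm const}$ the supremum $\|h\|_\infty$ is always attained, either on the compact set $K$ or at any point of $X\setminus K$, where $|h|=|c|$.
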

%===
\begin{proof}
Let $f,g\in\cC^\infty_{\rm const}(X,\C)$. By Theorem~\ref{t2.1} there
exists a unique sequence of bidifferential operators
\(
C_r:\cC^\infty_{\rm const}(X,\C)\times
\cC^\infty_{\rm const}(X,\C)\longrightarrow
\cC^\infty_{\rm const}(X,\C)\), \(r\in\N\),
such that, as $p\to+\infty$, \eqref{toe4.2} holds,
$T_{f,\,p}T_{g,\,p}=\sum^\infty_{r=0}p^{-r}T_{C_r(f,\,g),\,p}
+\mO(p^{-\infty})
$\,.
%\begin{equation}\label{toe-star1}
%T_{f,p}\,T_{g,p}\sim \sum_{r=0}^{\infty} p^{-r}\,T_{C_r(f,g),p}.
%\end{equation}
Moreover, $C_r$ depends only on a finite
jet of $f,g$ and the geometric data at the point (locality).
Set $\hbar=1/p$ and define, for $f,g\in \cC^\infty_{\rm const}(X,\C)$,
\begin{equation}\label{toe-star2}
f\star g:=\sum_{r=0}^{\infty}\hbar^{r}(-\imat)^r\,C_r(f,g)\in
\cC^\infty_{\rm const}(X,\C)[[\hbar]].
\end{equation}
By \eqref{toe4.8}, we have $C_0(f,g)=fg$, thus
$\star$ deforms the pointwise product.
Associativity of operator composition implies the associativity of $\star$.
Indeed, for $f,g,h\in\cC^\infty_{\rm const}(X,\End(E))$, we have
\(
T_{f,p}\,(T_{g,p}T_{h,p})=(T_{f,p}T_{g,p})\,T_{h,p},
\)
and expanding both sides using \eqref{toe4.2} %{toe-star1} 
yields, for each $k\ge0$,
\begin{equation}\label{toe-star3}
\sum_{r+s=k} C_r(f,C_s(g,h))=\sum_{r+s=k} C_r(C_s(f,g),h).
\end{equation}
Thus $\star$ defines an associative product on
$\cC^\infty_{\rm const}(X,\C)[[\hbar]]$.

By \eqref{toe4.8}, we have $C_0(f,1)=f$.
Since $T_{1,p}=P_p$, we have $T_{f,p}T_{1,p}=T_{f,p}$; thus,
 $T_{f,p}T_{1,p}-T_{C_0(f,1),p}=T_{f,p}-T_{f,p}=0$; so
$\mT_{f,1}^{(1)}=\{0\}$. Hence, $C_1(f,1)=0$ by \eqref{toe5.91}.
By induction, it follows using \eqref{toe5.91} that $\mT_{f,1}^{(r)}=\{0\}$; thus,
 $C_r(f,1)=0$ for $r\ge1$. In the same way, $C_0(1,f)=f$
and $C_r(1,f)=0$ for $r\ge1$. Therefore, $1$ is the unit for
$\star$. 

Finally, because the $C_r$ are local bidifferential operators and our symbols
are constant outside a compact set, each $C_r(f,g)$ again lies in
$\cC^\infty_{\rm const}(X,\C)$.
\end{proof}
%===
\begin{theorem}\label{T:BTstar}
Let $(X,\omega)$ be a compact K\"ahler manifold; let $(L,h^L)$ be a holomorphic Hermitian		
line bundle on $X$ such that	
$\omega=\frac{\sqrt{-1}}{2\pi}R^L$.
Then the sequence of bidifferential operators 
$\{C_r\}_{r\in\N}$
given by the asymptotic expansion \eqref{toe4.2} defines a deformation quantization 
of the K\"ahler
manifold $(X,\om)$ in the sense of
Definition \ref{DQdef}, and the calculus of Toeplitz kernels developed 
in Section \ref{s3.3} provides an algorithm
to compute the sequence $\{C_r\}_{r\in\N}$ recursively in terms 
of local geometric data over $X$.
\end{theorem}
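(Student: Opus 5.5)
The statement is the compact specialization of the preceding non-compact theorem, so I will derive it as a corollary of Theorem~\ref{t2.1comp} and the algebraic argument just given. The first step is to check that on a compact manifold $X$ the hypotheses needed there hold automatically. Every smooth function is constant outside the compact set $X$, so $\cC^\infty_{\rm const}(X,\C)=\cC^\infty(X,\C)$. Moreover, $L^2$ sections coincide with all holomorphic sections, so $H^0_{(2)}(X,L^p)=H^0(X,L^p)$ is finite dimensional. The spectral gap \eqref{bk1.4} is the classical consequence of the Bochner--Kodaira--Nakano formula for a positive line bundle on a compact manifold (cf.\ \cite[Section 2.7]{MM11}, \cite[Theorem 1.5.5]{MM07}). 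Since $\omega=\Theta$ is K\"ahler, the torsion terms vanish, and the formula gives $\|D_ps\|^2\geq(2C_0p-C_L)\|s\|^2$ on $(0,1)$-forms with $E=\C$. Theorem~\ref{t2.1comp}(2) then supplies the full expansion \eqref{toe4.2} for all $f,g\in\cC^\infty(X)$, together with the commutator relation \eqref{toe4.16}.

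The second step is to define the product. I set $\hbar=1/p$ and
\[
f\star g=\sum_{r\ge0}\hbar^r C_r(f,g).
\]
I will keep the normalization of Definition~\ref{DQdef} so that $C_1(f,g)-C_1(g,f)=\imat\{f,g\}$, which is exactly \eqref{toe4.16}. The identity $C_0(f,g)=fg$ is \eqref{toe4.8}. I will show that the $C_r$ are bidifferential operators via the recursive formula \eqref{toe5.91}. There, $C_{r}(f,g)(x)$ is read off from $\mQ_{0,x}$ of a family whose kernel polynomials come from \eqref{toe2.14} and \eqref{toe4.7}. These are finite combinations of Taylor coefficients $\partial^\alpha f_{x}(0)$ and $\partial^\beta g_x(0)$, with coefficients given by the $J_{r,x}$ and hence by local geometric data. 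This simultaneously gives the recursive algorithm claimed in the statement.

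The third step proves associativity and the unit. For associativity I expand $T_{f,p}(T_{g,p}T_{h,p})$ and $(T_{f,p}T_{g,p})T_{h,p}$ using \eqref{toe4.2}, applied first to the inner product and then to each resulting $T_{f,p}T_{C_s(g,h),p}$. Both sides then take the form $\sum_k p^{-k}T_{a_k,p}+\mO(p^{-\infty})$. The key point is uniqueness of symbols: if $\sum_{k\le m}p^{-k}T_{a_k,p}=\mO(p^{-m-1})$, then all $a_k=0$. This follows inductively from the norm estimate \eqref{toe4.17a}, since $\|T_{a_0,p}\|\to\|a_0\|_\infty$. Comparing coefficients gives \eqref{toe-star3}. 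For the unit, $T_{1,p}=P_p$ gives $C_0(f,1)=f$ and, through \eqref{toe5.91}, $C_r(f,1)=C_r(1,f)=0$ for $r\geq1$.

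I expect the main obstacle to be keeping the remainders uniform when I substitute one asymptotic expansion into another in the associativity argument. Here compactness helps: the operator norm bound $\|T_{a,p}\|\le\|a\|_\infty$ and the finiteness of the $\cC^k$-norms of the $C_s(g,h)$ on $X$ control every error term uniformly in $p$.
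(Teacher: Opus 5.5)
Your proposal is correct and follows essentially the paper's route: the compact statement is obtained from Theorem~\ref{t2.1comp} exactly as in the proof of the preceding non-compact theorem. Associativity comes from $T_{f,p}(T_{g,p}T_{h,p})=(T_{f,p}T_{g,p})T_{h,p}$, the unit from $T_{1,p}=P_p$ together with \eqref{toe5.91}, and the algorithm from the recursive kernel calculus of Section~\ref{s3.3}. Your explicit uniqueness-of-symbols argument via \eqref{toe4.17a} fills in a step the paper leaves implicit, and your normalization $f\star g=\sum_r\hbar^rC_r(f,g)$, without the paper's factor $(-\sqrt{-1})^r$, is the one that matches Definition~\ref{DQdef} literally, given \eqref{toe4.16}.
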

%===
In the same way can define a formal deformation of the algebra $\cC^\infty(X,\End(E))$,
by setting for $f,g\in\cC^\infty(X,\End(E))$, 
%\begin{equation}\label{toe4.4c-b}
$f\star g\coloneqq\sum_{k=0}^\infty 
(-\imat)^kC_k(f,g) \hbar^{k}\in\cC^\infty_{\rm{const}}(X,\End(E))[[\hbar]]$\,,
%\end{equation}
where $C_{k}(f,g)$ is determined by \eqref{toe4.2}.
This is the \emph{Berezin-Toeplitz star product} (cf.\ \cite{MM07,MM08b} 
for the symplectic case and arbitrary twisting bundle $E$).

%The construction of the star-product can be carried out even in the presence of a twisting vector bundle $E$.
%Let $f,g\in\cC^\infty(X,\End(E))$. Set
%\begin{equation}\label{toe4.4c-b}
%f*_{\hbar}g:=\sum_{k=0}^\infty C_k(f,g) \hbar^{k}\in\cC^\infty_{\rm{const}}(X,\End(E))[[\hbar]]\,,
%\end{equation}
%where $C_{r}(f,g)$ is determined by \eqref{toe4.2}. 
%Then \eqref{toe4.4c} defines an associative star product on $\cC^\infty_{\rm{const}}(X,\End(E))$ called \emph{the Berezin-Toeplitz star product} (cf.\ \cite{KS01,Schlich:00} for the K\"ahler case with $E=\C$ 
%and \cite{MM07,MM08b} for the symplectic case and arbitrary twisting bundle $E$).
%The associativity of the star product \eqref{toe4.4c} follows immediately 
%from the associativity rule for the composition of Toeplitz operators, 
%$(T_{f,\,p}\circ T_{g,\,p})\circ T_{h,\,p}=
%T_{f,\,p}\circ (T_{g,\,p}\circ T_{h,\,p})$ for any  
%$f,g,h\in\cC^\infty_{\rm{const}}(X,\End(E))$, and from the asymptotic expansion 
%\eqref{toe4.2} applied to both sides of the latter equality.

The coefficients $C_r(f,g)$, $r=0,1,2$, are given by \eqref{toe4.3}. Set 
\begin{equation}\label{toe4.5b}
\{\!\{f,g\}\!\}:=%\frac{1}{\imat}(C_1(f,g)-C_1(g,f))
\frac{1}{2\pi\imat}
\big(\langle \nabla^{1,0} g, \ov{\partial}^{E} f\rangle_{\om}-\langle \nabla^{1,0} f, \ov{\partial}^{E} g\rangle_{\om}\big)\,.
\end{equation}
If $fg=gf$ on $X$ we have 
\begin{equation}\label{toe4.4b-2}
\big[T_{f,\,p}\,,T_{g,\,p}\big]=\tfrac{\sqrt{-1}}{\, p}\,T_{\{\!\{f,g\}\!\},\,p}
+\mO\big(p^{-2}\big)\,,\quad p\to\infty.
\end{equation}
Due to the fact that $\{\!\{f,g\}\!\}=\{f,g\}$ if $E$ is trivial and 
comparing \eqref{toe4.4} to \eqref{toe4.4b-2},  one can regard 
$\{\!\{f,g\}\!\}$ defined in \eqref{toe4.5b} 
as a non-commutative Poisson bracket.% by  \eqref{toe4.3}.

%%%
\subsection{Coherent states}
Let $(X,\om)$ be a Hermitian manifold, and let $(L,h^L)$ be a positive
holomorphic Hermitian line bundle on $X$ satisfying
$\omega=\frac{\sqrt{-1}}{2\pi}R^L$. In this section,
we assume that $E=\C$ is the trivial holomorphic line bundle,
equipped with a possibly non-trivial Hermitian metric.

In quantum measurement theory, physical states are represented by positive rank-$1$ operators acting on the Hilbert spaces that correspond to the quantum physical system. In semiclassical analysis, one 
considers quantum states that best approximate the classical states 
associated with points on the symplectic manifold, the associated classical phase space. 
 In Berezin-Toeplitz quantization, \emph{coherent state} represent this notion.

%
%In the theory of quantum measurements, physical states are represented by positive rank-$1$ operators acting
%on the Hilbert spaces that correspond to the quantum physical system. In the context of semiclassical
%analysis, one must then consider quantum states that best approximate the classical states represented
%by points on the symplectic manifold, which correspond to the associated classical phase space.
%In the context of Berezin-Toeplitz quantization, this is given by the following notion of
%a \emph{coherent state}.
%
\begin{proposition}\label{cohstateprojprop}
For $p\in\N$ and $x\in X$, the coherent state projector $\Pi_p(x)$
is the orthogonal projector acting on $H^0_{(2)}(X,L^p)$ satisfying
%\begin{equation}\label{cohstateprojfla}
$\Ker\Pi_p(x)=\{\,s\in H^0_{(2)}(X,L^p)~|~s(x)=0\,\}$\,.
%\end{equation}
\end{proposition}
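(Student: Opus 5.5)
The plan is to read the statement as an existence-and-uniqueness claim together with an explicit description of $\Pi_p(x)$. An orthogonal projector on a Hilbert space is uniquely determined by its kernel, namely as the projector onto the orthogonal complement of that kernel, provided the kernel is closed. So I would first show that
\[
V_x:=\{s\in H^0_{(2)}(X,L^p): s(x)=0\}
\]
is a closed subspace of $H^0_{(2)}(X,L^p)$. Then I would identify $V_x^\perp$ explicitly.

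\textbf{Step 1 (closedness).} By \eqref{b1.1} applied with $K=\{x\}$, the evaluation map $\mathrm{ev}_x:H^0_{(2)}(X,L^p)\to L^p_x$, $s\mapsto s(x)$, is a bounded linear map. Hence $V_x=\Ker(\mathrm{ev}_x)$ is closed. Setting $\Pi_p(x)$ to be the orthogonal projector onto $V_x^\perp$ gives existence, and $\Ker\Pi_p(x)=V_x$ holds by construction. Uniqueness follows because any orthogonal projector $\Pi$ satisfies $\mathrm{Im}\,\Pi=(\Ker\Pi)^\perp$, so $\Pi$ is determined by its kernel.

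\textbf{Step 2 (explicit description).} Fix a unit vector $e_x\in L^p_x$; here $L^p_x$ is one-dimensional because $E=\C$. I would define the coherent vector $\xi_{p,x}:=P_p(\cdot,x)\,e_x\in H^0_{(2)}(X,L^p)$. It is holomorphic and square-integrable, since by \eqref{bk2.4} it equals $\sum_i \langle e_x, s_i(x)\rangle\, s_i$, which converges in $L^2$ by Bessel's inequality and the boundedness of $\mathrm{ev}_x$. The reproducing property \eqref{lm2.01a} then gives $\langle s(x),e_x\rangle_{h^{L^p}}=\langle s,\xi_{p,x}\rangle$ for all $s\in H^0_{(2)}(X,L^p)$. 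Consequently $V_x=\{\xi_{p,x}\}^\perp$, and so $V_x^\perp=\C\,\xi_{p,x}$. Moreover $\|\xi_{p,x}\|^2=\langle P_p(x,x)e_x,e_x\rangle$.

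Therefore, whenever $P_p(x,x)\neq0$,
\[
\Pi_p(x)=\frac{\langle\,\cdot\,,\xi_{p,x}\rangle\,\xi_{p,x}}{P_p(x,x)}
\]
is a rank-one projector. If $P_p(x,x)=0$, then $\xi_{p,x}=0$ and $\Pi_p(x)=0$. To see that the rank-one case is the relevant one, note that by Theorem~\ref{bkt2.18} we have $P_p(x,x)=\bb_0(x)p^n+\cO(p^{n-1})$ with $\bb_0=\det(\dot R^L/2\pi)>0$. So for $p$ large, uniformly on compact sets, $\Pi_p(x)$ has rank exactly one.

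The only delicate point is Step 2's justification that $\xi_{p,x}$ lies in $H^0_{(2)}$ and satisfies the reproducing identity. This reduces to the convergence of \eqref{bk2.4} in $L^2$ in the second variable, which I would obtain as indicated from Bessel's inequality combined with \eqref{b1.1}.
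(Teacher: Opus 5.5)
Your proposal is correct and matches the paper's treatment. The paper takes this statement essentially as a definition and leaves implicit the well-definedness via boundedness of $s\mapsto s(x)$ from \eqref{b1.1}, which you make explicit. Your Step 2 is the same reproducing-kernel argument the paper gives in Proposition~\ref{sxprop}: $\|s_x\|^2=P_p(x,x)$, $\langle s,s_x\rangle=\langle s(x),e_x\rangle$, and $P_p(x,x)\neq0$ for large $p$ by the Bergman kernel expansion, with $P_p(x,x)=0$ giving $\Pi_p(x)=0$ as noted in the proof of Proposition~\ref{Bpprop}.
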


The link to Berezin-Toeplitz quantization is provided by the following result.
%The basic link between these coherent states
%and the Berezin-Toeplitz quantization presented in Section \ref{BTsec},
%which justifies their expected semiclassical properties, is given by the following result.

\begin{proposition}\label{sxprop}
For any $x\in X$, any $p\in\N$ large enough, 
and any unit vector $e_x\in L^p_x$, the coherent state projector $\Pi_p(x)$
is the rank-$1$ orthogonal projector on the line spanned by the section $s_x\in H^0_{(2)}(X,L^p)$
defined for all $y\in X$ by
\begin{eqnarray}\label{sx}
s_x(y)=P_p(y,x).e_x\,,
\end{eqnarray}
where $e_x\in L^p_x$ is any given unit vector and satisfies
\begin{eqnarray}\label{sxnorm}
\|s_x\|^2_{L^2}=P_p(x,x)\,.    
\end{eqnarray}
\end{proposition}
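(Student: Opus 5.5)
The argument uses only the reproducing property \eqref{lm2.01a} of the Bergman kernel and the on-diagonal expansion of Theorem~\ref{bkt2.18}.

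\textbf{Membership of $s_x$ in the quantum space.} By \eqref{bk2.4}, $P_p(y,x)=\sum_i s_i(y)\otimes s_i(x)^*$. Hence $s_x=\sum_i \langle e_x, s_i(x)\rangle_{h^{L^p}}\, s_i$. The coefficient sequence is square summable, since $\sum_i|s_i(x)|^2=\operatorname{tr}P_p(x,x)<\infty$. So $s_x$ lies in $H^0_{(2)}(X,L^p)$, and by \eqref{bk2.4} it is the Riesz representative of the evaluation functional $s\mapsto\langle s(x),e_x\rangle$. Concretely, \eqref{lm2.01a} together with the Hermitian symmetry $P_p(y,x)^*=P_p(x,y)$ gives
\[
\langle s,s_x\rangle_{L^2}=\langle s(x),e_x\rangle_{h^{L^p}}\qquad\text{for all } s\in H^0_{(2)}(X,L^p).
\]

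\textbf{Identification of $\Pi_p(x)$.} Since $L^p_x$ is one-dimensional and $e_x$ is a unit vector, $s(x)=0$ holds if and only if $\langle s(x),e_x\rangle=0$. By the identity above, this is equivalent to $s\perp s_x$. Thus $\Ker\Pi_p(x)=\{s_x\}^\perp$.

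The orthogonal projector with this kernel is the projector onto $\overline{\C s_x}=\C s_x$, which has rank one provided $s_x\neq0$. This gives the identification of $\Pi_p(x)$. Changing $e_x$ to another unit vector multiplies $s_x$ by a unimodular constant, so the line spanned by $s_x$ is independent of the choice.

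\textbf{The norm identity \eqref{sxnorm}.} Apply the reproducing identity with $s=s_x$:
\[
\|s_x\|^2=\langle s_x(x),e_x\rangle=\langle P_p(x,x)e_x,e_x\rangle=P_p(x,x).
\]
The last equality uses that $P_p(x,x)$ acts on the line $L^p_x$ as multiplication by a nonnegative scalar.

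\textbf{Main point: $s_x\neq0$ for large $p$.} It remains to show that $P_p(x,x)>0$, so that $\Pi_p(x)$ really has rank one. This is where ``$p$ large enough'' enters. By Theorem~\ref{bkt2.18} applied with $K=\{x\}$,
\[
P_p(x,x)=\bb_0(x)p^n+\mO(p^{n-1}),\qquad \bb_0=\det\Big(\frac{\dot R^L}{2\pi}\Big)>0,
\]
where positivity of $\bb_0$ follows from the positivity of $L$. Hence $P_p(x,x)>0$ for $p\ge p_0(x)$. The threshold $p_0$ can be chosen locally uniformly in $x$ by taking a compact $K$ in Theorem~\ref{bkt2.18}.

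Everything else in the argument is formal. The only genuine analytic input is this positivity of $P_p(x,x)$, which the spectral gap supplies through the Bergman kernel expansion.
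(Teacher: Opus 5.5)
Your proof is correct and follows essentially the same route as the paper. The reproducing property gives $\langle s,s_x\rangle=\langle s(x),e_x\rangle$, hence $s_x\perp\{s: s(x)=0\}$ and $\|s_x\|^2=P_p(x,x)$; the on-diagonal expansion (Theorem~\ref{t2.1}, equivalently Theorem~\ref{bkt2.18}, both resting on the spectral gap) then gives $P_p(x,x)\neq0$ for $p$ large. Your use of $\dim L^p_x=1$ to get the exact equality $\Ker\Pi_p(x)=\{s_x\}^\perp$, and your explicit check that $s_x\in H^0_{(2)}(X,L^p)$, are somewhat more explicit than the paper's write-up but do not constitute a different argument.
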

\begin{proof}
Equation \eqref{sxnorm} is a straightforward consequence of the definition \eqref{sx}
of the section $s_x\in H^0_{(2)}(X,L^p)$, together with the standard properties of
Schwartz kernels under composition and the fact that $P_pP_p=P_p$.
Now, for any $x\in X$,
Theorem \ref{t2.1} shows that there exists $p_0\in\N$
such that, for any $p\geq p_0$, we have $P_p(x,x)\neq 0$, so that
$s_x\in H^0_{(2)}(X,L^p)$ does not vanish identically.
This implies that $s_x(x)\neq 0$, so that
in particular $\Pi_p(x)$ is, in fact, a rank one orthogonal projector.
We are thus reduced to showing that for any $s\in H^0_{(2)}(X,L^p)$ satisfying $s(x)=0$,
we have $\langle s,s_x\rangle=0$. But, using basic properties of the Bergman kernel,
for any $s\in H^0_{(2)}(X,L^p)$ satisfying $s(x)=0$, we have
\begin{equation}
\begin{split}
\langle s,s_x\rangle =\int_X\,\langle s(y),P_p(y,x).e_x\rangle_{L^p}
=\int_X\,\langle P_p(x,y)\,s(y),e_x\rangle_{L^p}=s(x).e_x=0\,.\\
\end{split}
\end{equation}
This establishes the result.
\end{proof}

The following result shows that Berezin-Toeplitz quantization coincides with the
\emph{coherent state quantization} associated with Definition \ref{cohstateprojprop}.

\begin{proposition}\label{BTquantdef}
For any $f\in\cC^{\infty}_0(X,\R)$ and any $p\in\N$ large enough,
the Berezin-Toeplitz quantization of $f$ satisfies
\begin{equation}\label{BTmapfla}
T_{f,p}=\int_X f(x)\,\Pi_p(x)\,P_p(x,x)\,dv_X(x)\,.
\end{equation}
\end{proposition}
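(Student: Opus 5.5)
The plan is to compare Schwartz kernels. Both sides of \eqref{BTmapfla} are bounded operators on $L^2(X,L^p)$ whose ranges lie in $H^0_{(2)}(X,L^p)$. For the right-hand side I would interpret it as $0$ on the orthogonal complement of $H^0_{(2)}$, that is, as the operator $P_p\big(\int_X f\,\Pi_p(x)P_p(x,x)\,dv_X\big)P_p$. It therefore suffices to check that the two sides have the same matrix coefficients $\langle\,\cdot\, u,v\rangle$ for all $u,v\in H^0_{(2)}(X,L^p)$.

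First I fix $p\ge p_0$ with $p_0$ as in Proposition~\ref{sxprop}. This is possible uniformly on the compact set $\supp f$, because Theorem~\ref{t2.1}(1) gives $P_p(x,x)=\bb_0(x)p^n+\mO(p^{n-1})$ uniformly on compact sets, with $\bb_0>0$. Hence $P_p(x,x)>0$ for all $x\in\supp f$ once $p$ is large enough. Outside $\supp f$ the integrand vanishes, so the possible degeneracy of $\Pi_p(x)$ there is irrelevant.

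For $x\in\supp f$, Proposition~\ref{sxprop} and \eqref{sxnorm} give
\[
\Pi_p(x)\,P_p(x,x)=\frac{\langle\,\cdot\,,s_x\rangle}{\|s_x\|^2}\,s_x\,P_p(x,x)=\langle\,\cdot\,,s_x\rangle\,s_x .
\]
The reproducing property \eqref{lm2.01a} gives $\langle u,s_x\rangle=\langle u(x),e_x\rangle$ for $u\in H^0_{(2)}$, and likewise $\langle s_x,v\rangle=\langle e_x,v(x)\rangle$. Combining these,
\[
\langle \Pi_p(x)P_p(x,x)u,v\rangle=\langle u(x),e_x\rangle\langle e_x,v(x)\rangle=\langle u(x),v(x)\rangle_{L^p},
\]
where the last equality holds because $L^p_x$ is one-dimensional and $e_x$ is a unit vector. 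This also shows that the choice of $e_x$ does not matter.

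Integrating against $f$ then gives
\[
\int_X f(x)\langle u(x),v(x)\rangle\,dv_X=\langle fu,v\rangle=\langle P_pfP_pu,v\rangle=\langle T_{f,p}u,v\rangle .
\]
The remaining issue is to justify that the operator-valued integral makes sense, weakly or in the Bochner sense, and that it commutes with taking the pairing. I expect this step to be the only delicate point. Since $f$ has compact support, $x\mapsto s_x$ is continuous from $\supp f$ into $H^0_{(2)}$, because $P_p(\cdot,\cdot)$ is smooth and the $L^2$-norm of $s_x-s_{x'}$ can be computed from the kernel. Moreover $\|\langle\,\cdot\,,s_x\rangle s_x\|=P_p(x,x)$ is bounded on $\supp f$. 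So the integral converges in operator norm, and pairing with $u,v$ passes inside it.

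Both sides vanish on $H^0_{(2)}(X,L^p)^\perp$ and have identical matrix coefficients on $H^0_{(2)}$, so they coincide, which proves \eqref{BTmapfla}.
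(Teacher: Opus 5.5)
Your proposal is correct and follows essentially the same route as the paper. Both arguments rest on the reproducing identity $\langle s,s_x\rangle=\langle s(x),e_x\rangle$ together with $\Pi_p(x)=\langle\cdot,s_x\rangle s_x/\|s_x\|^2$ and $\|s_x\|^2=P_p(x,x)$; the paper evaluates $(T_{f,p}s)(y)$ pointwise for $s\in H^0_{(2)}(X,L^p)$ instead of pairing with a second section $v$, and it leaves implicit the meaning of the operator-valued integral and the extension by zero to $H^0_{(2)}(X,L^p)^\perp$, both of which you make explicit.
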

\begin{proof}
%First note that for any $x\in X$ and $p\in\N$ large enough,
%the fact that $P_pP_p=P_p$ shows that the section $s_x\in H^0_{(2)}(X,L^p)$
%satisfies $\|s_x\|^2=P_p(x,x)\neq 0$.
By Definition \ref{Toepdef} of the Berezin-Toeplitz quantization
of $f\in\cC^{\infty}_0(X,\R)$, the reproductive property of the Bergman kernel
and using Proposition \ref{sxprop},
for any $s\in H^0_{(2)}(X,L^p)$ and any $y\in X$, we have
\begin{equation}
\begin{split}
T_{f,p}s(y) &=\int_X f(x)\,P_p(y,x).\left(\int_X\,P_p(x,w).s(w)\,dv_X(w)\right)\,dv_X(x)\\
&=\int_X f(x)\,\frac{\langle s,s_x\rangle\,s_x(y)}{\|s_x\|^2_{L^2}}\,P_p(x,x)\,dv_X(x)\\
&=\int_X f(x)\,\left(\Pi_p(x)s\right)(y)\,P_p(x,x)\,dv_X(x)\,.
\end{split}
\end{equation}
This establishes the result.
\end{proof}

Let us introduce the \emph{Berezin symbol}, 
which associates a classical observable with a quantum observable
by its expectation value at coherent states. Semiclassically, 
it represents the classical observable that best approximates the quantum observable.
%
%Let us now introduce the following notion of 
%\emph{the Berezin symbol},
%which associates a classical observable with 
%a quantum observable,
%given by its expectation value at coherent states. 
%This should be interpreted semiclassically
%as the classical observable that best approximates 
%the given quantum observable.

\begin{definition}\label{Bersymb}
%For any $p\in\N$ and 
The \emph{Berezin symbol} of a linear bounded operator $A$ on $H^0_{(2)}(X,L^p)$
is the function $\sigma(A)\in\cC^{\infty}(X,\R)$
defined for any $x\in X$ by
%\begin{equation}
$\sigma(A)(x)=\textup{Tr}[\Pi_p(x)\,A]$\,.
%\end{equation}
\end{definition}
%%%
%%%
\subsection{The Berezin transform}
Berezin introduced his transform in the context of quantization on 
bounded symmetric domains \cite{Berezin75}. 
Already in this setting, the construction 
was tied to the K\"ahler geometry induced by the Bergman metric. 
Beginning with a function $f$ defined on the base manifold, 
assigning to it its Toeplitz operator $T_{f,p}$, and subsequently computing the 
covariant symbol of the Toeplitz operator will result in a function known as the 
Berezin transform $B_pf$ of $f$.
In \cite{KS01}, it is shown that its asymptotic expansion gives
a formal Berezin transform in the sense of Karabegov, associated with a star
product equivalent to the Berezin-Toeplitz star product.

Let $(X,\om)$ be a Hermitian manifold, and let $(L,h^L)$ be a positive
holomorphic Hermitian line bundle on $X$ satisfying
$\omega=\frac{\sqrt{-1}}{2\pi}R^L$. In this section,
we assume that $E=\C$ is the trivial holomorphic line bundle 
equipped with a possibly non-trivial Hermitian metric.
Following \cite{Englis00}, we introduce the following basic tool in
Berezin-Toeplitz quantization.

\begin{definition}\label{Bpdef}
The \emph{Berezin transform} of $f\in\cC^{\infty}_{0}(X,\R)$, is defined by
\begin{equation}\label{Bpfla1}
B_pf=\sigma(T_{f,p}), \quad \text{for $p\in\N$}\;.
\end{equation}
%if $P_p(x,x)\neq 0$, and by $B_pf(x)=f(x)$ otherwise.
\end{definition}
%===
\noindent
We summarize the properties of the 
Berezin transform seen as a \emph{Markov operator} \cite{IKPS20}.
%
%
%The following result summarizes the basic properties of the 
%Berezin transform as a \emph{Markov operator}, as described 
%in \cite{IKPS20}.
%===
\begin{proposition}\label{Bpprop}
For any $x\in X$ and any $p\in\N$
such that $P_p(x,x)\neq 0$, the Berezin transform of
$f\in\cC^{\infty}_{b}(X,\R)$ satisfies
\begin{equation}\label{Bpfla}
B_pf(x)=\int_X\,\frac{|P_p(x,y)|_{p}^2}{P_p(x,x)} \,f(y)\,
dv_X(y)\;.
\end{equation}
where $|\cdot|_{p}$ is the norm induced by $h^L$ on
$L^p_x\otimes(L^p_y)^*$ for all $x,\,y\in X$.

%Furthemore, for any $p\in\N$, the Berezin transform defines a \emph{Markov operator},
%so that it sends measurable bounded positive functions to measurable bounded positive functions and
%satisfies $B_p1=1$.
Furthermore, for any $p\in\N$, the Berezin transform sends
measurable bounded positive functions to measurable bounded positive functions and
extends to a self-adjoint bounded positive operator
acting on $L^2(X,\R)$.
 \end{proposition}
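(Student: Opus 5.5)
The plan is to compute $\sigma(T_{f,p})(x)$ directly from Proposition~\ref{sxprop} and then read off the positivity and boundedness properties from the resulting integral formula. Fix $x$ with $P_p(x,x)\neq0$. By Proposition~\ref{sxprop}, $\Pi_p(x)$ is the rank-one projector onto $s_x=P_p(\cdot,x).e_x$. Hence, for any bounded $A$ on $H^0_{(2)}(X,L^p)$,
\[
\mathrm{Tr}[\Pi_p(x)A]=\frac{\langle A s_x,s_x\rangle}{\|s_x\|^2_{L^2}},
\qquad \|s_x\|^2_{L^2}=P_p(x,x).
\]
Take $A=T_{f,p}$. Since $s_x\in H^0_{(2)}$, we have $P_ps_x=s_x$, so $\langle T_{f,p}s_x,s_x\rangle=\langle f s_x,s_x\rangle=\int_X f(y)\,|s_x(y)|^2\,dv_X(y)$. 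Moreover $|s_x(y)|^2=|P_p(y,x).e_x|^2_{h^{L^p}}=|P_p(y,x)|_p^2$, because $L^p$ has rank one and $|e_x|=1$. Finally, $P_p(y,x)=P_p(x,y)^*$ by self-adjointness of $P_p$, so $|P_p(y,x)|_p=|P_p(x,y)|_p$. Combining these gives \eqref{Bpfla}. This argument only uses that $f$ is bounded and measurable, so it works for $f\in\cC^\infty_b(X,\R)$ and even for bounded measurable $f$. In that generality we take \eqref{Bpfla} as the definition of $B_pf$ (via $\sigma(T_{f,p})$, with $T_{f,p}$ still bounded). I will also note that $\int_X |P_p(x,y)|_p^2\,dv_X(y)=P_p(x,x)$ by the reproducing property; this is \eqref{sxnorm}.

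Next come positivity and boundedness. The kernel $k_p(x,y)=|P_p(x,y)|_p^2/P_p(x,x)$ is nonnegative, and $\int_X k_p(x,y)\,dv_X(y)=1$. So $f\geq0$ implies $B_pf\geq0$, and $|B_pf|\leq\sup|f|$, i.e.\ $B_p1=1$. On the set where $P_p(x,x)=0$ one defines $B_pf(x)=0$ (or $f(x)$). By Theorem~\ref{t2.1} this set is empty on compact sets for $p$ large; for fixed $p$ it is an analytic subset, hence of measure zero when $H^0_{(2)}\neq0$. Measurability of $B_pf$ follows from Fubini, since the kernel is continuous.

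For the $L^2$ statement, write $\rho_p(x)=P_p(x,x)$ and use the symmetric kernel $|P_p(x,y)|_p^2$. Define the operator $\widetilde B_p g(x)=\int_X |P_p(x,y)|^2_p\,g(y)\,dv_X(y)$. By Schur's test with row and column sums $\int_X |P_p(x,y)|_p^2\,dv(y)=\rho_p(x)$, this operator is bounded in suitable weighted spaces. To get a self-adjoint bounded operator, the natural space is $L^2(X,\rho_p\,dv_X)$, on which $B_p=\rho_p^{-1}\widetilde B_p$. This operator is symmetric because $|P_p(x,y)|$ is symmetric, and it is bounded by $1$ by Schur's test with weight $1$: both marginals of the measure $|P_p(x,y)|^2\,dv\,dv$ equal $\rho_p\,dv$. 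Positivity comes from $|P_p(x,y)|_p^2=\sum_{i,j}s_i(x)\overline{s_j(x)}\,\overline{s_i(y)}s_j(y)$ in an orthonormal basis (in a local frame). This gives $\langle \widetilde B_pg,g\rangle=\sum_{i,j}\bigl|\int \overline{s_i}s_j\,g\bigr|^2\ge0$; equivalently, $\langle \widetilde B_p g,g\rangle=\|T_{g,p}\|^2_{HS}\geq0$.

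The main obstacle is interpreting ``$L^2(X,\R)$'': the space must be the measure $P_p(x,x)\,dv_X$ rather than $dv_X$, and I would state the result this way. Alternatively, on compact sets where $\rho_p\sim p^n$, the two $L^2$ structures are equivalent. Carrying out the Hilbert--Schmidt identification $\langle\widetilde B_pg,g\rangle=\mathrm{Tr}(T_{g,p}T_{g,p}^*)$ rigorously, by justifying the traces through Fubini for $g\in L^1\cap L^\infty$ and then using density, is the technical point I expect to need the most care.
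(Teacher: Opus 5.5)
Your derivation of \eqref{Bpfla} is the same as the paper's: both write $B_pf(x)=\langle T_{f,p}s_x,s_x\rangle/\|s_x\|^2_{L^2}$ and then use $|s_x(y)|^2=|P_p(x,y)|_p^2$. For the Markov properties the paper argues $B_pf(x)=\mathrm{Tr}[\Pi_p(x)T_{f,p}]\le\|T_{f,p}\|\le\|f\|_\infty$, together with positivity of $T_{f,p}$. You instead read both facts off the normalized kernel, using $\int_X|P_p(x,y)|_p^2\,dv_X(y)=P_p(x,x)$. The two arguments are equivalent.

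For the $L^2$ statement your route genuinely differs from the paper's, and yours is the one that actually works.

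\begin{itemize}
\item The paper stays in $L^2(X,dv_X)$ and applies Cauchy--Schwarz (a Schur test) to the kernel $B_p(x,y)=|P_p(x,y)|_p^2/P_p(x,x)$. That step needs a bound on the column integrals $\int_X|P_p(x,y)|_p^2\,P_p(x,x)^{-1}\,dv_X(x)$. Applying \eqref{Markoveq} to $f\equiv1$ does not give this, since it only controls the $y$-integral.
\item Self-adjointness with respect to $dv_X$ would need $B_p(x,y)=B_p(y,x)$, which holds only if $x\mapsto P_p(x,x)$ is constant.
\item Boundedness can also fail. If $H^0_{(2)}$ is spanned by a single unit vector $s$ and $\mathrm{Vol}(X)=\infty$, then $B_pf$ is the constant $\int_X f|s|^2\,dv_X$. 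So $B_p$ is not even bounded on $L^2(X,dv_X)$.
\end{itemize}

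Your passage to $L^2(X,P_p(x,x)\,dv_X)$ fixes all of this. The kernel becomes symmetric, and both Schur marginals equal exactly $P_p(x,x)\,dv_X$, so the norm is at most $1$. Positivity then follows from $\int\!\!\int g(x)g(y)|P_p(x,y)|_p^2\,dv_X(x)\,dv_X(y)=\mathrm{Tr}[T_{g,p}^2]\ge0$. This is also the normalization in the POVM framework of \cite{IKPS20}, where the Berezin transform is self-adjoint on $L^2$ of the trace measure.

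You should drop your fallback remark that the two $L^2$ structures are equivalent on compact sets. It does not give self-adjointness for the $dv_X$ inner product.
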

 \begin{proof}
 %Recall that, since the orthogonal projection $P_p$ is a positive self-adjoint operator with smooth kernel,
 %we have $P_p(x,x)\geq 0$, for all $x\in X$, so that the Berezin transforms sends positive functions
 %to positive functions by Definition \ref{Bpdef}.
 By Definition \ref{Toepdef} of the Berezin-Toeplitz quantization of
$f\in\cC^{\infty}_{b}(X,\R)$, Definition \ref{Bersymb} of the Berezin symbol and using Proposition \ref{sxprop},
 for any $x\in X$,  we have
\begin{equation}
\begin{split}
    B_pf(x)&=\frac{\langle T_{f,p}s_x,s_x\rangle}{\|s_x\|^2_{L^2}}=\int_X\,\frac{\langle s_x(y),s_x(y)\rangle_{L^p}}{P_p(x,x)}\,f(y)\,dv_X(y)\\
    &=\int_X\,\frac{|P_p(x,y)|_{p}^2}{P_p(x,x)} \,f(y)\,dv_X(y)\,.
\end{split}
\end{equation}
This shows \eqref{Bpfla}.
Now for any positive $f\in\cC^{\infty}_{b}(X,\R)$ and any $x\in X$ 
such that $P_p(x,x)\neq 0$,
using the elementary fact that $\|T_{f,p}\|_{op}\leq\|f\|_\infty$ 
%following from Definition \ref{Toepdef} of the Berezin-Toeplitz quantization of
for $f\in\cC^{\infty}_{b}(X,\R)$, and since $\Pi_p(x)$ 
is a rank-$1$ projection by Proposition \ref{sxprop},
we get
\begin{equation}\label{Markoveq}
%\begin{split}
 B_pf(x)=\textup{Tr}[\Pi_p(x)T_{f,p}]\leq \textup{Tr}[\Pi_p(x)]\,\|T_{f,p}\|_{op}
    \leq\|f\|_\infty\,. %\sup_{y\in X}f(y)\,.
 %   \end{split}
\end{equation}
Since the Toeplitz operator $T_{f,p}$ associated to a positive symbol $f$ 
is a positive operator, the Berezin transform maps the set of bounded measurable 
positive functions to itself. %bounded measurable positive functions.
%and the same computation shows that $B_p1=1$.

To show that it extends as a bounded self-adjoint positive operator
acting on $L^2(X,\R)$, let us set $K:=\{x\in X~|~P_p(x,x)\neq 0\}\subset X$, and note
from \eqref{bk2.4} that for any $x\in X$ such that $P_p(x,x)=0$, Definition \ref{cohstateprojprop}
shows that $\Pi_p(x)=0$.
%Thus for any $f\in\cC^{\infty}_0(X,\R)$,
 %    by Definition \ref{Bpdef} we have
  %   \begin{equation}\label{1K}
   %      \|B_pf\|_{L^2}^2=\|\mathbf{1}_K B_pf\|_{L^2}^2\,.
    % \end{equation}
Then for all $x\in K$ and $y\in X$, writing $B_p(x,y)\geq 0$ for the Schwartz kernel of the Berezin transform
as given by formula \eqref{Bpfla}, we get via Cauchy-Schwarz inequality for all $f\in\cC^{\infty}_0(X,\R)$,
\begin{equation}\label{Schurtest}
\begin{split}
\|B_p f\|_{L_2}^2&\leq\int_K\left(\int_X B_p(x,y)\,dv_X(y)\right)
\left(\int_X B_p(x,y)\,|f(y)|^2\,dv_X(y)\right)dv_X(x)\\
&\leq\sup_{x\in X}\left(\int_X B_p(x,y)\,dv_X(y)\right)
\sup_{y\in K}\left(\int_X B_p(x,y)\,dv_X(x)\right)\|f\|_{L_2}^2\,.
\end{split}
\end{equation}
Together with \eqref{Markoveq} applied to $f\equiv 1$,
this implies that $\|B_p f\|_{L_2}^2\leq\|f\|_{L_2}^2$,
%and with $\eqref{1K}$ that $\|B_pf\|_{L^2}^2\leq 2\|f\|_{L_2}^2$,
so that $B_p$ defines in fact
a bounded self-adjoint operator on $L^2(X,\R)$ by density. The fact that it is positive and self-adjoint
follows directly from Definition \ref{Bpdef}.
 \end{proof}

The following result gives the asymptotic expansion of the Berezin transform, extending a result
of Englis in \cite{Englis00} in the case of pseudoconvex domains considered in Section \ref{pseudocvx},
and of Karabegov-Schlichenmaier in \cite{KS01} and Ioos-Kaminker-Polterovich-Shmoish in
\cite[Proposition 3.8]{IKPS20}
in the case of compact manifolds considered in Theorem \ref{t2.1comp}. As explained in
\cite[Remark 3.12]{IKPS20}, the weighted case considered in \cite[(1)]{Englis00} corresponds to
the case $E=\C$ equipped with a non-trivial Hermitian metric.

\begin{theorem}\label{KSexpprop}
Let $(X,\om)$ be a Hermitian manifold and let $(L,h^L)$ be a positive holomorphic Hermitian		
line bundle on $X$ satisfying $\omega=\frac{\sqrt{-1}}{2\pi}R^L$.
Assume $E=\C$ is the trivial holomorphic line bundle,
equipped with a possibly non-trivial Hermitian metric, and
that the Kodaira-Laplacian has a spectral gap in the sense of
Definition \ref{specgapdef}.
Then there exists a sequence of differential operators $\{D_j\}_{j\in\N^*}$
acting on $\cC^{\infty}(X,\R)$ such that
for any compact set $K\subset X$ and any $m,\,r\in\N$, there exist $l\in\N$ and a constant
$C_m>0$, uniform in the
$\cC^m$-norm of the derivatives of $h^L$ and $h^E$ up to order $l$,
such that for any
$f\in\cC_{0}^{\infty}(X,\R)$ and all $p\in\N$ big enough, we have
\begin{equation}\label{KSexp}
\left|\,B_{p}f-f-\sum_{j=1}^{r-1} p^{-j}\,D_j\,f\,\right|_{\cC^m(K)}\leq
C_m p^{-r}|f|_{\cC^{m+2r}(K)}\;.
\end{equation}
Furthermore, we have $D_1=-\frac{\Delta}{4\pi}$,
where $\Delta$ is the Laplace-Beltrami operator of $(X,g^{TX})$.
\end{theorem}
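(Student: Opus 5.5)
We will use the integral formula \eqref{Bpfla}, $B_pf(x)=\int_X |P_p(x,y)|_p^2\,P_p(x,x)^{-1} f(y)\,dv_X(y)$, and localize it with the near and far off-diagonal expansions of the Bergman kernel.

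\textbf{Step 1: localization.} Fix a compact $K\subset X$ and $\varepsilon\in\,]0,a^K/4[$. By Theorem~\ref{bkt2.18}, $P_p(x,x)\geq c\,p^n$ uniformly for $x\in K$ and $p$ large, since $\bb_0=\det(\dot R^L/2\pi)>0$. The same normalization also applies at points of $\supp f$. On the region $d(x,y)>\varepsilon$ with $y\in\supp f$, the far off-diagonal bound \eqref{toe2.6a} (applied on the compact set $K\cup\supp f$) makes the integrand $\cO(p^{-\infty})$ in $\cC^m(K)$. The contribution there is therefore bounded by $C p^{-\ell}\|f\|_{\cC^0}$.

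\textbf{Step 2: rescaled expansion of the integrand.} For the remaining ball $|Z|<\varepsilon$ around $x_0\in K$, we work in the normal coordinates and basic trivialization of Section~\ref{s3.2}. By Theorem~\ref{tue17}, $p^{-n}P_{p,x_0}(0,Z)\cong\sum_r (J_{r,x_0}\cP_{x_0})(0,\sqrt pZ)p^{-r/2}$. Here $|\cP_{x_0}(0,Z)|^2=(\det \tfrac{\dot R^L}{2\pi})^2 e^{-\pi|Z|^2}$ when $\omega=\Theta$; in general $a_i$ replaces $2\pi$. We also expand the diagonal term $P_p(x_0,x_0)^{-1}=p^{-n}\bb_0^{-1}(1+\sum_j p^{-j}\cdots)$. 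With these,
\[
\frac{|P_p(x_0,Z)|_p^2}{P_p(x_0,x_0)}\kappa_{x_0}(Z)=p^n\sum_{r\le 2k}G_{r,x_0}(\sqrt pZ)\,e^{-\pi p|Z|^2}p^{-r/2}+\text{remainder},
\]
where the $G_{r,x_0}$ are polynomials of parity $r$. Next we Taylor expand $f_{x_0}(Z)$ to order $2r$ and substitute $W=\sqrt pZ$. Every odd total-degree term integrates to zero by parity of the Gaussian. The remaining terms give $f(x_0)+\sum_{j\ge1}p^{-j}(D_jf)(x_0)$, where $D_j$ is a differential operator of order $\le 2j$ whose coefficients are smooth in $x_0$. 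The remainder is bounded by $p^{-r}|f|_{\cC^{2r}}$, using the Gaussian factor $(1+|W|)^Me^{-C_0|W|}$ from Notation~\ref{noe2.7}. To get $\cC^m(K)$ bounds, we differentiate in $x_0$. This costs $m$ more derivatives of $f$ and involves only derivatives of the geometric data up to some finite order $l$. That accounts for $|f|_{\cC^{m+2r}}$ and for the uniformity claimed in the constants.

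\textbf{Step 3: normalization and $D_1$.} The zeroth-order term is $f(x_0)$. This follows from the reproducing identity $\int |P_p(x,y)|^2dv_X(y)=P_p(x,x)$, i.e.\ $B_p1=1$, which forces the $W$-integral of the leading Gaussian density to equal $1$. The same identity shows that the constant (zeroth-order) part of every $D_j$ vanishes. For $D_1$ we need the $p^{-1}$ coefficient. It receives two contributions. The first is the second-order Taylor term of $f$, which gives $\tfrac{1}{4\pi}\sum_i\partial_{Z_i}^2 f$ up to sign conventions: the Gaussian $e^{-\pi|W|^2}$ has second moment $\int W_i^2 e^{-\pi |W|^2}dW=1/(2\pi)$, and the factor $\tfrac12$ from Taylor expansion turns this into $\tfrac{1}{4\pi}$. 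The second contribution is the first-order Taylor term paired with the odd part of $G_1$; this vanishes because $J_{1,x_0}=0$ by \eqref{bk2.33}. The $p^{-1}$ corrections coming from $J_2$, $\kappa$ and $\bb_1$ multiply only $f(x_0)$, so they must cancel by $B_p1=1$. This yields $D_1=-\Delta/(4\pi)$, with the sign convention that $\Delta$ is the positive Laplacian.

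I expect the main obstacle to be the $\cC^m$ control with constants depending only on finitely many derivatives of $h^L,h^E$. The kernel expansion of Theorem~\ref{tue17} is stated for fixed data, so we must track how the constants in the spectral-gap localization of Theorem~\ref{tue16} depend on that data. When $\omega\neq\Theta$, we must also make sure the Gaussian and $\kappa$ factors are handled with the correct $\cP_{x_0}$ so that the identification of $D_1$ as the Laplacian of $g^{TX}$ is consistent.
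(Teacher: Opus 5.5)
Your proposal is correct in substance, but it identifies the expansion by a different route than the paper.

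\textbf{Comparison with the paper.} The paper writes $B_pf(x)=T_{f,p}(x,x)/P_p(x,x)$ using \eqref{toe2.5}. It then divides the diagonal Toeplitz expansion of Corollary~\ref{toec2.1} by the Bergman expansion \eqref{bk2.6}, and reads off $D_1=\bb_{1,f}-\bb_1 f$ from the explicit formula \eqref{toe4.131} for $\bb_1$ and the formula for $\bb_{1,f}$ in \cite[Theorem 0.1]{MM12}. The remainder estimate in terms of $|f|_{\cC^{m+2r}}$ is delegated to the computations of \cite[Proposition 3.4]{IP23}.

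You instead expand the normalized integrand directly and use the exact identity $B_p1=1$ to discard every term multiplying $f(x_0)$. As a result, for $D_1$ you only need $J_0=1$ and $J_1=0$ from \eqref{bk2.33}, the exact cancellation of the $\kappa$-factors, and the second moment of $e^{-\pi|W|^2}$. Your Gaussian computation and the sign convention are right. This route is more elementary and self-contained: it bypasses the computation of $\bb_{1,f}$, which involves $J_2$, and the Taylor remainder makes the dependence on $|f|_{\cC^{m+2r}}$ explicit, whereas the fixed-$f$ expansion of Corollary~\ref{toec2.1} does not. The paper's route, in exchange, gives the coefficients as explicit geometric expressions, which is what one needs for $D_2$ and beyond.

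\textbf{A point you still need to supply.} $X$ is non-compact, and \eqref{toe2.6a} only controls $P_p$ on $K'\times K'$ for a compact $K'$. Two consequences follow.
\begin{itemize}
\item Applying \eqref{toe2.6a} on $K\cup\supp f$ makes your Step~1 constant depend on $\supp f$, contrary to the uniformity in $f$ asserted in \eqref{KSexp}.
\item More importantly, your Step~3 uses $B_p1=1$ for the non-compactly supported function $1$. For this you need $\int_{d(x,y)\geq\varepsilon}|P_p(x,y)|^2\,dv_X(y)=\cO(p^{-\infty})$ uniformly for $x\in K$, with $x$-derivatives. Without it, the reproducing identity only gives an inequality between the local integral and $P_p(x,x)$, and the cancellation of the $f(x_0)$-terms is not justified.
\end{itemize}

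This estimate does follow from the spectral gap. For $p$ large, $P_p=F_\varepsilon(D_p)-\phi_p(D_p)$ on $L^2(X,L^p\otimes E)$ as in \eqref{bk2.11}. By finite propagation speed, $F_\varepsilon(D_p)(x,y)=0$ for $d(x,y)\geq\varepsilon$. Hence the far integral is bounded by $\int_X|\phi_p(D_p)(x,y)|^2\,dv_X(y)=(\phi_p^2)(D_p)(x,x)$. This is $\cO(p^{-\infty})$ on $K$ by the elliptic and Sobolev estimates in the proof of Theorem~\ref{tue16}, applied to $\phi_p^2$.

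With this in hand, the far contribution is bounded by $\cO(p^{-\infty})\|f\|_{\cC^0}$ independently of $\supp f$, and your normalization argument goes through.

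Note also what your argument, like the paper's, actually proves: \eqref{KSexp} with the norm of $f$ taken on an $\varepsilon$-neighbourhood of $K$, plus an $\cO(p^{-\infty})\|f\|_{\cC^0}$ term. The norm on $K$ alone cannot suffice, since $B_pf$ on $K$ depends on the values of $f$ off $K$.
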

\begin{proof}
First, recall that for any compact set $K\subset X$,
Theorem \ref{t2.1} implies that there exists $p_0\in\N$ such that for all $x\in K$, we have
$P_p(x,x)\neq 0$ for all $p\geq p_0$, so that for any $f\in\cC_{0}^{\infty}(X,\R)$, formula
\eqref{Bpfla} holds for its Berezin transform $B_pf$.
Now, using \eqref{toe2.5}, we readily get from formula \eqref{Bpfla} that
\begin{equation}\label{BpflaE}
B_pf(x)=\frac{T_{f,p}(x,x)}{P_p(x,x)}\,.
\end{equation}
Thus, following the analogous computations in the proof of \cite[Proposition 3.4]{IP23},
this is a straightforward consequence of the explicit formulas for the coefficients
of the asymptotic expansion of the Bergman kernel and of the Toeplitz operators
along the diagonal given by \eqref{toe4.131} and 
%Theorems \ref{toet4.5} and %\ref{toet4.6} 
\cite[Theorem 0.1]{MM12} in the case $E=\C$.
% , together
% with the following Weitzenb\"ock formula for the Kodaira Laplacian $\Box^E$ acting
% on $f\in\cC_{0}^{\infty}(X,\End(E))$
% \begin{equation}
% \Box^E\,f=\Delta^E f
% -\sqrt{-1}\left[\langle\om,R^E\rangle_{g^{TX}},f\,\right]\,,
% \end{equation}
% where $\Delta^E$ is the B\"ochner Laplacian acting on $\cC^{\infty}(X,\End(E))$ defined in \eqref{Bochner}.
\end{proof}

\section{Non-compact manifolds}
\label{S:noncomp}

\subsection{General framework}
In this section, we give geometric conditions ensuring the spectral gap \eqref{bk1.4}
for the Kodaira Laplacian on $(0,1)$-forms with values in $L^p\otimes E$.
Combined with Theorem~\ref{t2.1}, these hypotheses yield the
Berezin--Toeplitz package for $\cC^\infty_{\rm const}(X,\End(E))$.
We first collect the analytic input on complete Hermitian manifolds used below; see
\cite[Lemma 3.3.1, Corollaries 3.3.3--3.3.4]{MM07}.
%
%We recall a basic fact due to Andreotti-Vesentini about analysis on complete Hermitian manifolds.
%For the proofs, we refer to \cite[Lemma 3.3.1]{MM07}, 
%\cite[Corollary 3.3.3, Corollary 3.3.4]{MM07}.
%--------
\begin{theorem}\label{esa}
Let $(X,\Theta)$ be a complete Hermitian manifold 
and let $(F,h^F)$ be a holomorphic Hermitian vector bundle. 

\smallskip
\noindent
(i) 
Let $\overline{\partial}^{\smash{\scriptscriptstyle F}}_{max}$ and $\overline{\partial}^{\smash{\scriptscriptstyle{F}},*}_{max}$ be the maximal extensions
of\, $\overline{\partial}^{\smash{\scriptscriptstyle F}}$ and $\overline{\partial}^{\smash{\scriptscriptstyle{F}},*}$, respectively.
Then $\Omega_0^{0,\bullet}(X,F)$ is dense in %$\Dom(\db^E_{max})$, 
%$\Dom(\db^{E,*}_{max})$, 
%$\Dom(\db^E_{max})\cap\Dom(\db^{E,*}_{max})$ 
\[\Dom(\overline{\partial}^{\smash{\scriptscriptstyle F}}_{max}),\:\: 
\Dom(\overline{\partial}^{\smash{\scriptscriptstyle{F}},*}_{max}),\:\:
\Dom(\overline{\partial}^{\smash{\scriptscriptstyle F}}_{max})\cap\Dom(\overline{\partial}^{\smash{\scriptscriptstyle{F}},*}_{max}),\]
in the 
graph norms of $\overline{\partial}^{\smash{\scriptscriptstyle F}}_{max}$, $\overline{\partial}^{\smash{\scriptscriptstyle{F}},*}_{max}$, 
and $\overline{\partial}^{\smash{\scriptscriptstyle F}}_{max}+\overline{\partial}^{\smash{\scriptscriptstyle{F}},*}_{max}$, respectively.

\smallskip
\noindent
(ii) The Hilbert space adjoint of the maximal extension and the maximal extension of the 
formal adjoint of $\overline{\partial}^{\smash{\scriptscriptstyle F}}$ coincide.
%The Hilbert space adjoint of the maximal extension
%$\db^E_{max}$ coincides with the maximal extension
%of $\db^{E,*}_{max}$\,, 
that is, $\overline{\partial}^{\smash{\scriptscriptstyle{F}},*}_{H}=\overline{\partial}^{\smash{\scriptscriptstyle{F}},*}_{max}$\,.

\smallskip
\noindent
(iii) The Kodaira-Laplacian 
$\square^F:\Omega^{0,\bullet}_{0}(X,F)\to\Omega^{0,\bullet}_{(2)}(X,F)$
is essentially self-adjoint. 
In particular, its Gaffney and Friedrichs extensions coincide, and their
associated quadratic form is the form $Q$ given by \eqref{ell2,1}.
\end{theorem}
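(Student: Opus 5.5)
The plan is the classical Andreotti--Vesentini argument: use completeness to build cut-off functions with uniformly bounded gradient, and then derive all three statements from a Friedrichs-mollifier approximation.

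\textbf{Step 1: cut-offs.} Since $(X,\Theta)$ is complete, fix $x_0\in X$ and a smooth function $\chi:\R\to[0,1]$ with $\chi=1$ on $]-\infty,1]$ and $\chi=0$ on $[2,\infty[$. Set $\chi_k(x)=\chi(d(x_0,x)/k)$, after smoothing the distance function so that $|d\chi_k|\le C/k$. By Hopf--Rinow the balls are relatively compact, so each $\chi_k$ has compact support and $\chi_k\to1$ pointwise. For $u$ in the graph-norm domain of $\overline{\partial}^{\smash{\scriptscriptstyle F}}_{max}$ the commutator identity gives
\[
\overline{\partial}^{\smash{\scriptscriptstyle F}}(\chi_k u)=\chi_k\overline{\partial}^{\smash{\scriptscriptstyle F}}u+\overline{\partial}\chi_k\wedge u .
\]
The last term has norm at most $(C/k)\|u\|$. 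Hence $\chi_k u\to u$ in the graph norm, and the same holds for $\overline{\partial}^{\smash{\scriptscriptstyle{F}},*}$, whose commutator with $\chi_k$ is the zeroth-order operator $-\iota_{(\overline{\partial}\chi_k)^{\#}}$.

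\textbf{Step 2: compactly supported elements.} It remains to approximate a compactly supported element of the maximal domain by smooth compactly supported forms. Using a partition of unity, this is done by Friedrichs mollifiers $J_\delta$ in charts. Friedrichs' lemma says that for a first-order operator $D$ with smooth coefficients, $[D,J_\delta]u\to0$ in $L^2$. Thus $J_\delta u\to u$ in the graph norm of $D$, for $D=\overline{\partial}^{\smash{\scriptscriptstyle F}}$, for $D=\overline{\partial}^{\smash{\scriptscriptstyle{F}},*}$, and for both simultaneously. This proves (i). I expect the only delicate point to be this simultaneous graph-norm approximation. It works because the same mollified sequence serves both operators, with separate Friedrichs estimates for each.

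\textbf{Step 3: statement (ii).} In general the Hilbert adjoint of the maximal extension is contained in the maximal extension of the formal adjoint: $\overline{\partial}^{\smash{\scriptscriptstyle{F}},*}_H\subset\overline{\partial}^{\smash{\scriptscriptstyle{F}},*}_{max}$, since testing against $\Omega_0$ gives the distributional equation. For the converse, take $v\in\Dom(\overline{\partial}^{\smash{\scriptscriptstyle{F}},*}_{max})$ and $u\in\Dom(\overline{\partial}^{\smash{\scriptscriptstyle F}}_{max})$. By (i), choose $u_j\in\Omega_0$ with $u_j\to u$ in the graph norm. Then
\[
\langle\overline{\partial}^{\smash{\scriptscriptstyle F}}u,v\rangle=\lim_j\langle\overline{\partial}^{\smash{\scriptscriptstyle F}}u_j,v\rangle=\lim_j\langle u_j,\overline{\partial}^{\smash{\scriptscriptstyle{F}},*}_{max}v\rangle=\langle u,\overline{\partial}^{\smash{\scriptscriptstyle{F}},*}_{max}v\rangle .
\]
The middle equality holds because $u_j$ has compact support. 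Hence $v\in\Dom(\overline{\partial}^{\smash{\scriptscriptstyle{F}},*}_H)$.

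\textbf{Step 4: statement (iii).} By (ii), the Gaffney extension \eqref{ell-} is the operator associated with the closed form $Q$ on $\Dom(\overline{\partial}^{\smash{\scriptscriptstyle F}}_{max})\cap\Dom(\overline{\partial}^{\smash{\scriptscriptstyle{F}},*}_{max})$. By (i), $\Omega_0^{0,\bullet}$ is a form core for $Q$. The Friedrichs extension is defined by the closure of $Q$ restricted to $\Omega_0$, which is therefore the same form, so the two extensions coincide.

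For essential self-adjointness, it suffices to show that $\Ker((\square^F)^*\pm\imat)=0$, and it is easiest to show $\Ker((\square^F)^*+1)=0$. Take $w\in L^2$ with $\square^F w+w=0$ in the distributional sense. Elliptic regularity makes $w$ smooth. Pair the equation with $\chi_k^2w$ and integrate by parts, which is allowed since $\chi_k^2w$ has compact support. Using $|d\chi_k|\le C/k$, this gives
\[
\|\chi_k\overline{\partial}^{\smash{\scriptscriptstyle F}}w\|^2+\|\chi_k\overline{\partial}^{\smash{\scriptscriptstyle{F}},*}w\|^2+\|\chi_k w\|^2\le \frac{C}{k}\|w\|\big(\|\chi_k\overline{\partial}^{\smash{\scriptscriptstyle F}}w\|+\|\chi_k\overline{\partial}^{\smash{\scriptscriptstyle{F}},*}w\|\big).
\]
Letting $k\to\infty$ yields $w=0$. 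Hence the closure of $\square^F$ on $\Omega_0$ is self-adjoint and equals the Gaffney and Friedrichs extensions.
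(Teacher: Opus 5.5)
Your proposal is correct and is essentially the standard Andreotti--Vesentini argument. The paper gives no proof of its own here; it cites Lemma 3.3.1 and Corollaries 3.3.3--3.3.4 of \cite{MM07}, whose density result rests on the same two ingredients you use: cut-offs with gradient of order $1/k$ coming from completeness, and Friedrichs mollification of compactly supported elements. From that density, (ii) and the coincidence of the Gaffney and Friedrichs extensions follow just as you show, and your direct cut-off proof that $\Ker\big((\square^F)^*+1\big)=0$ is a valid standard way to obtain the essential self-adjointness in (iii).
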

%--------
We denote by $R^{\det}$ the curvature of the holomorphic Hermitian
connection $\nabla^{\det}$ on $K_X^*=\det (T^{(1,0)}X)$.
We have the following spectral gap result.
%We have the following generalization of Theorems \ref{bkt1.1} and \ref{bkt1.2}.
%-------------------------
%===
\begin{condition}\label{C:specgapgeom1}
Let $(X,J,\Theta)$ be a Hermitian manifold and 
let $(L,h^L)$ and $(E,h^E)$ be 
holomorphic Hermitian vector bundles 
of rank one and $r$, respectively.
We assume that the Riemannian metric $g^{TX}$
induced from $\Theta$ 
is complete and we suppose that there exist 
$C,\varepsilon>0$ such that
\begin{equation}\label{i}
\sqrt{-1}R^L >\varepsilon\Theta\,,
\quad\,\sqrt{-1}(R^{\det}+R^E)> -C\Theta \Id_E\,,\quad\,
|\partial \Theta|_{g^{TX}}< C\,.
\end{equation}
If $L=K_X:=\det(T^{*(1,0)}X)$ is the canonical line bundle on $X$, 
the first two conditions in \eqref{i} are to be replaced by    
\begin{equation*}    
\text{$h^L$ is induced by $\Theta$ and $\sqrt{-1}R^{\det}<-\varepsilon\Theta$,   
$\sqrt{-1}R^E> -C\Theta \Id_E$\,.}   
\end{equation*}
\end{condition}
%===
\begin{theorem}[{\cite[Theorem 6.1.1]{MM07}, \cite[Theorem 3.11]{MM08a}}]
\label{noncompact0}
Assume that Condition \ref{C:specgapgeom1} holds.
%Let $(X,J,\Theta)$ be a Hermitian manifold and 
%let $(L,h^L)$ and $(E,h^E)$ holomorphic Hermitian vector bundles 
%of rank one and $r$, respectively. We assume that Condition 
%\ref{C:specgapgeom1} holds.
%%Assume that $(X,\Theta)$ is a complete Hermitian manifold. 
%%Let $(L,h^L)$ and $(E,h^E)$ holomorphic Hermitian vector bundles 
%%of rank one and $r$, respectively.
%%Suppose that there exist $\varepsilon>0$, $C>0$ such that\,{\rm:}
%%\begin{equation}\label{i}
%%\sqrt{-1}R^L >\varepsilon\Theta\,,
%%\quad\,\sqrt{-1}(R^{\det}+R^E)> -C\Theta \Id_E\,,\quad\,
%%|\partial \Theta|_{g^{TX}}< C,
%%\end{equation}
Then there exist $C_1>0$ and $p_0\in\N$ such that for $p\geqslant p_0$ 
the quadratic form $Q_p$ associated to the Kodaira Laplacian $\square_p$
acting on $\Omega^{0,q}_{(2)}(X,L^p\otimes{E})$ satisfies
\begin{equation}\label{ell4,1}
Q_p(s,s)\geqslant  C_1 p \,\norm{s}^2_{L^2}\,,
\quad \text{for $s\in\Dom(Q_p)\cap\Omega^{0,q}_{(2)}(X,L^p\otimes{E})$, $q>0$}\,.
\end{equation}
Especially, there exists $p_0\in\N$ such that for $p\geqslant p_0$,
\begin{equation}\label{ell4,2}
{H}^{0,\,q}_{(2)}(X,L^p\otimes{E})=0\,,\quad\text{for  $q>0$}
\end{equation}
and the spectrum $\spec(\square_p)$ of $\square_p$
acting on $L^2(X,L^p\otimes E)$
is contained in $\{0\}\cup[\,p\,C_1 ,\infty[$\,.
%\begin{equation}\label{ell4,3}
%\spec(\square_p\vert_{L^2(X,L^p\otimes E)})\subset\{0\}\cup[p\,C_1 ,\infty[\,,
%\end{equation}
\end{theorem}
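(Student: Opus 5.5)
The argument is the Bochner--Kodaira--Nakano method with torsion corrections, made global by completeness. First, by Theorem~\ref{esa}(i), since $g^{TX}$ is complete, $\Omega^{0,q}_0(X,L^p\otimes E)$ is dense in $\Dom(Q_p)=\Dom(\overline\partial_{max})\cap\Dom(\overline\partial^{*}_{max})$ for the graph norm of $\overline\partial+\overline\partial^{*}$. Hence it suffices to prove \eqref{ell4,1} for compactly supported smooth forms $s$ with $q\geq1$; the general case follows by approximation, since both sides of \eqref{ell4,1} are continuous in the graph norm.

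For $s\in\Omega^{0,q}_0(X,L^p\otimes E)$ I would use the Bochner--Kodaira--Nakano formula with torsion (Demailly's form, as in \cite[Theorem 1.4.12]{MM07}). Writing $(0,q)$-forms with values in $F$ as $(n,q)$-forms with values in $F\otimes K_X^*$, with $F=L^p\otimes E$, it gives
\[
\tfrac32\,Q_p(s,s)\;\geq\;\big\langle R^{L^p\otimes E\otimes K_X^*}(w_j,\overline w_k)\,\overline w^k\wedge i_{\overline w_j}s,\,s\big\rangle-\tfrac12\big(\|\mathcal T^*s\|^2+\|\overline{\mathcal T}s\|^2+\|\overline{\mathcal T}^*s\|^2\big),
\]
where $\mathcal T=[\Lambda,\partial\Theta]$ is the torsion operator. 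The torsion terms are bounded by $C'\|s\|^2$ because $|\partial\Theta|<C$, uniformly in $p$. Since $R^{L^p}=pR^L$, on $(0,q)$-forms with $q\geq1$ the curvature term is at least $pq\varepsilon\|s\|^2\geq p\varepsilon\|s\|^2$, using the first assumption of \eqref{i}. The contribution of $R^{\det}+R^E$ is at least $-C''\|s\|^2$, using the second assumption of \eqref{i}. Together these give $Q_p(s,s)\geq(\tfrac23\varepsilon p-C''')\|s\|^2$, so one can take $C_1=\varepsilon/3$ and $p_0$ large. In the case $L=K_X$, the roles are arranged so that $L^p\otimes K_X^*=K_X^{p-1}$, whose curvature is $-(p-1)R^{\det}\geq(p-1)\varepsilon\Theta$. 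The same estimate then follows with $E$ only required to satisfy $\sqrt{-1}R^E>-C\Theta$.

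The consequences follow from the estimate. For $q>0$, \eqref{ell4,1} gives $\Ker\square_p=0$ and a closed range in degree $q$. Using $\Dom(\square_p)\subset\Dom(Q_p)$ and the Hodge decomposition $H^{0,q}_{(2)}\cong\Ker\square_p\cap\Omega^{0,q}_{(2)}$ (which is valid once the range is closed), we get \eqref{ell4,2}. For the spectrum on sections, I would argue as in \eqref{2.22}. For $f\in\Dom(\square_p)$ in degree $0$, set $s=\overline\partial f$. Then $\overline\partial s=0$, so $Q_p(s,s)=\|\overline\partial^*s\|^2$. Applying \eqref{ell4,1} to $s$ gives $\|\square_pf\|^2\geq C_1p\langle\square_pf,f\rangle$. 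Hence $\spec(\square_p)\subset\{0\}\cup[C_1p,\infty[$.

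The main obstacle is the careful handling of the torsion terms in the non-Kähler Bochner--Kodaira--Nakano identity. One must make sure that only $\partial\Theta$ enters, so that the constants are independent of $p$; the rest is bookkeeping.
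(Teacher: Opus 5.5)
Your proposal is correct and follows essentially the same route as the paper: Nakano's inequality with torsion terms \eqref{herm20,11} applied to compactly supported forms with $F=L^p\otimes E$, using $R^{L^p}=pR^L$ and \eqref{i} together with the $p$-independence of the torsion operators, then the density of Theorem~\ref{esa}, and finally the argument of \eqref{2.22} for the spectral gap on sections. You also spell out the $L=K_X$ case and the closed-range argument for the vanishing of $L^2$-cohomology, both of which the paper leaves implicit.
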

%--------------------------------
\begin{proof}
The proof is based on the Bochner-Kodaira-Nakano formula 
\cite[Theorem 1.4.12]{MM07}
and its consequence Nakano's inequality \cite[Corollary 1.4.17]{MM07}.
Let $(F,h^F)$ be a holomorphic Hermitian bundle on $X$.
Set $\widetilde{F}=F\otimes K^*_X$ where
$K^*_X=\Lambda^n(T^{(1,0)}X)=\det (T^{(1,0)}X)$.
Since $K_X\otimes K_X^*\cong\C$, there exists a natural isometry
\begin{equation}\label{lm2.70}
\begin{split}
&\Psi=\thicksim\,:\Lambda^{0,q}(T^*X)\otimes F
\longrightarrow\Lambda^{n,q}(T^*X)\otimes \widetilde F,\\
&\Psi \, s=\widetilde s=(w^1\wedge\ldots\wedge w^n\wedge s)\otimes
(w_1\wedge\ldots\wedge w_n),
\end{split}
\end{equation}
where $\{w_j\}^n_{j=1}$ is a local orthonormal frame of $T^{(1,0)}X$
and $\{w^j\}^n_{j=1}$ is its dual frame.
Let us denote by $\mathcal{T}=[i(\Theta),\partial\Theta]$ the 
Hermitian torsion of the metric $\Theta$.
The Bochner-Kodaira-Nakano formula \cite[Corollary 1.4.17]{MM07} 
shows that 
for any $s\in\Omega^{0,q}_0(X,F)$,
%-----------------------------------------------------------------------
%\begin{equation} \label{herm20,11}
%\begin{split}
%\frac{3}{2}\big(\norm{\overline{\partial}^{\smash{\scriptscriptstyle F}}s}^2_{L^2}+\norm{\overline{\partial}^{\smash{\scriptscriptstyle{F}},*}s}^2_{L^2}\big)
%&\geqslant\big\langle R^{F\otimes K^*_X}(w_j,\ov{w}_k)
%\ov{w}^k\wedge i_{\ov{w}_j}s,s\big\rangle\\
%&-\frac{1}{2}\big(\norm{\mathcal{T}^*\wi{s}}^2_{L^2}
%+\norm{\ov{\mathcal{T}}\wi{s}}^2_{L^2}
%+\norm{\ov{\mathcal{T}}^*\wi{s}}^2_{L^2}\big).
%\end{split}
%\end{equation}
%
\begin{equation} \label{herm20,11}
\frac{3}{2}\big(\norm{\overline{\partial}^{\smash{\scriptscriptstyle F}}s}^2+
\norm{\overline{\partial}^{\smash{\scriptscriptstyle{F}},*}s}^2\big)
\geqslant\big\langle R^{F\otimes K^*_X}(w_j,\ov{w}_k)
\ov{w}^k\wedge i_{\ov{w}_j}s,s\big\rangle
-\frac{1}{2}\big(\norm{\mathcal{T}^*\wi{s}}^2
+\norm{\ov{\mathcal{T}}\wi{s}}^2
+\norm{\ov{\mathcal{T}}^*\wi{s}}^2\big).
\end{equation}

By applying \eqref{herm20,11} for $F=L^p\otimes E$ and taking into account
that $R^{L^p}=pR^L$ and \eqref{i} we immediately obtain that for 
$Q_p(s,s)\geqslant  C_1 p \,\norm{s}^2_{L^2}$ for any 
$s\in\Omega^{0,q}_0(X,L^p\otimes E)$ and $q>0$.
By the density result of Theorem \ref{esa} we obtain \eqref{ell4,1}.
Then as in the proof of Theorem \ref{t2.1} we obtain the 
spectral gap of $\square_p$ on $L_{0,0}^2(X,L^p\otimes E)$.
\end{proof}
%--------------------------------

Theorems \ref{t2.1} and \ref{noncompact0} imply the following result.
%-----------------------------------------------------------
\begin{theorem}\label{t2.12}
Let 
$(X,\Theta)$ be a complete Hermitian manifold of dimension $n$ and $L$ and 
$E$ be two holomorphic vector bundles on $X$, where $\rank L=1$, 
such that condition \eqref{i} is fulfilled. Set $\omega=\frac{\sqrt{-1}}{2\pi}R^L$.
Then:

\noindent
(1) The Bergman kernel asymptotics for $H^0_{(2)}(X,L^p\otimes E,\Theta^n/n!)$
holds on compact sets of $X$. 

\noindent
(2) The Berezin-Toeplitz quantization package holds for the K\"ahler manifold
$(X,\omega)$, the algebra
$\cC^\infty_{\rm const}(X,\End(E))$ and quantum spaces
$H^0_{(2)}(X,L^p\otimes E,\Theta^n/n!)$.
\end{theorem}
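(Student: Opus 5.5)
The plan is to reduce Theorem~\ref{t2.12} directly to Theorem~\ref{t2.1} by checking that its single analytic hypothesis, the spectral gap \eqref{bk1.4}, follows from Theorem~\ref{noncompact0}. The geometric assumptions of Theorem~\ref{t2.12} are exactly Condition~\ref{C:specgapgeom1}: completeness of $g^{TX}$ together with \eqref{i}. Theorem~\ref{noncompact0} therefore applies and gives $C_1>0$ and $p_0\in\N$ such that for $p\geqslant p_0$,
\[
Q_p(s,s)\geqslant C_1 p\,\norm{s}^2_{L^2}
\]
for all $s\in\Dom(Q_p)\cap\Omega^{0,1}_{(2)}(X,L^p\otimes E)$. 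The quadratic-form domain $\Dom(Q_p)$ coincides with the domain appearing in \eqref{bk1.4}, because on a complete manifold the Hilbert space adjoint equals the maximal extension of the formal adjoint (Theorem~\ref{esa}(ii)). Since $\norm{D_ps}^2=2\big(\norm{\db s}^2+\norm{\db^{*}s}^2\big)=2Q_p(s,s)$, we obtain $\norm{D_ps}^2\geqslant 2C_1p\norm{s}^2$ for $p\geqslant p_0$.

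Next I convert this into the uniform form required by \eqref{bk1.4}, valid for every $p\in\N^*$. Set $C_0=C_1$ and choose $C_L\geqslant 2C_1p_0$. For $p<p_0$ the right-hand side $(2C_0p-C_L)\norm{s}^2$ is then nonpositive, so the inequality holds trivially. For $p\geqslant p_0$ it follows from the estimate above. This is the only place where care is needed, and it is routine bookkeeping.

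With the spectral gap established, Theorem~\ref{t2.1}(1) gives the Bergman kernel asymptotics on compact subsets for $H^0_{(2)}(X,L^p\otimes E)$. Here the $L^2$ structure is defined with $dv_X=\Theta^n/n!$ and $h^L,h^E$, which is precisely the space $H^0_{(2)}(X,L^p\otimes E,\Theta^n/n!)$ in the statement. Theorem~\ref{t2.1}(2) gives the Berezin--Toeplitz package for $(X,\omega)$ with $\omega=\frac{\sqrt{-1}}{2\pi}R^L$ and the algebra $\cC^\infty_{\rm const}(X,\End(E))$. The positivity of $L$ assumed in Theorem~\ref{t2.1} is supplied by the first inequality in \eqref{i}.

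I do not expect any step to be a real obstacle. The only subtleties are the domain identification via completeness and the adjustment of $C_L$ to cover the finitely many small $p$. In the case $L=K_X$, the modified hypotheses in Condition~\ref{C:specgapgeom1} are handled in the same way, since Theorem~\ref{noncompact0} covers them as well.
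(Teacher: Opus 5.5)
Your proposal is correct and matches the paper's argument. The paper simply says that Theorem~\ref{t2.12} follows from Theorems~\ref{noncompact0} and~\ref{t2.1}: the fundamental estimate yields the spectral gap \eqref{bk1.4}, and then Theorem~\ref{t2.1} applies. Your extra bookkeeping fills in details the paper leaves implicit: the identity $\norm{D_ps}^2=2Q_p(s,s)$, which holds because $\db s$ and $\db^{*}s$ lie in different degrees, and the enlargement of $C_L$ to absorb $p<p_0$.
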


%--------------------------------
Next, we consider an interesting analogue of Theorem \ref{noncompact0}
for $(n,0)$-forms with values in $L^p\otimes E$, where $n=\dim X$.
Let us first note that for such forms there exists a canonical $L^2$ condition. 
Indeed, let $(F,h^F)$ be a holomorphic Hermitian vector bundle over the
manifold $X$ and let $\Theta$ be any Hermitian metric on $X$.
Let $\Omega^{0,0}_{(2)}(X,F\otimes K_X)$ be the space of $L^2$
sections in $F\otimes K_X$, where the $L^2$ norm is calculated
with respect to the metrics $h^L$, $h^{K_X}$ induced by $\Theta$
and the volume form of $\Theta$. Of course,
$\Omega^{0,0}_{(2)}(X,F\otimes K_X)$ equals the space 
$\Omega^{n,0}_{(2)}(X,F)$ of square integrable $(n,0)$-forms with values in $F$.
For any $(n,0)$-form $s$ with values in $F$, and any metrics 
$g^{TX}$, $g^{TX}_1$ on $X$, 
with Riemannian volume forms $dv_X$, $dv_{X,1}$, respectively, 
we have $\abs{s}_{g^{TX}}^2dv_X=\abs{s}_{g^{TX}_1}^2dv_{X,1}$ pointwise.
%
%
%We have 
%\[
%\Omega^{n,0}_{(2)}(X,F)=%\{\text{measurable sections $s$ of $F\otimes K_X$}:\}
%\left\{\text{measurable $(n,0)$-forms $s$ of with values in $F$}:
%\int_X s\wedge\overline{s}<\infty\right\}
%\]
%and
%\[
%\|s\|^2_{L^2}=\int_X s\wedge\overline{s},\quad
%\text{for $s\in \Omega^{n,0}_{(2)}(X,F)$}.
%\]
This shows that the $L^2$ condition for $(n,0)$-forms is independent of
the choice of Hermitian metric $\Theta$ on $X$.
Secondly, if we work with $(n,0)$-forms, it is not necessary to impose any condition
regarding $R^{\det}$.
%===
\begin{condition}\label{C:specgapgeom2}
Let $(X,J,\Theta)$ be a Hermitian manifold and 
let $(L,h^L)$ and $(E,h^E)$ be holomorphic Hermitian vector bundles 
of rank one and $r$, respectively.
We assume that the Riemannian metric $g^{TX}$ induced from $\Theta$ 
is complete and we suppose that there exist $C,\varepsilon>0$ such that
\begin{equation}\label{ii}
\sqrt{-1}R^L >\varepsilon\Theta\,,
\quad\,\sqrt{-1}R^E> -C\Theta \Id_E\,,\quad\,
|\partial \Theta|_{g^{TX}}< C\,.
\end{equation}
%where $R^{\det}$ be the curvature of the holomorphic connection 
%$\nabla^{\det}$ on $K_X^*=\det(T^{(1,0)}(X))$.
\end{condition}
%===
\begin{theorem} \label{noncompact1}
Assume that Condition \ref{C:specgapgeom2} holds.
%Assume that $(X,\Theta)$ is a complete Hermitian manifold of dimension $n$. 
%Let $(L,h^L)$ and $(E,h^E)$ holomorphic Hermitian vector bundles 
%of rank one and $r$, respectively.
%Suppose that there exist $\varepsilon>0$, $C>0$ such that\,{\rm:}
%\begin{equation}\label{i1}
%\sqrt{-1}R^L >\varepsilon\Theta\,,
%\quad\,\sqrt{-1}R^E> -C\Theta \Id_E\,,\quad\,
%|\partial \Theta|_{g^{TX}}< C,
%\end{equation}
Then there exist $C_1>0$ and  $p_0\in\N$ such that for $p\geqslant p_0$ 
the quadratic form $Q_p$ associated with the Kodaira Laplacian 
$\square_p$ acting on $\Omega^{n,q}_{(2)}(X,F)$ satisfies
\begin{equation}\label{ell4,12}
Q_p(s,s)\geqslant  C_1 p \,\norm{s}^2%_{L^2}
\,,\quad \text{for $s\in\Dom(Q_p)\cap\Omega^{n,q}_{(2)}(X,L^p\otimes{E})$, $q>0$}\,.
\end{equation}
Especially, ${H}^{n,\,q}_{(2)}(X,L^p\otimes{E})=0$
for all $p\geqslant p_0$, $q>0$,
%}
%\end{equation}
and the spectrum $\spec(\square_p)$ of $\square_p$
acting on $L^2(X,L^p\otimes E)$
is contained in $\{0\}\cup[\,p\,C_1 ,\infty[$\,.
\end{theorem}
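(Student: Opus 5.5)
The plan is to follow the proof of Theorem~\ref{noncompact0}, replacing the isometry \eqref{lm2.70} by the tautological identification $\Omega^{n,q}(X,L^p\otimes E)=\Omega^{0,q}(X,L^p\otimes E\otimes K_X)$. Concretely, I will apply the Bochner--Kodaira--Nakano formula for $(n,q)$-forms with values in $F=L^p\otimes E$ directly, in its Nakano inequality form \cite[Corollary 1.4.17]{MM07}. For any $s\in\Omega^{n,q}_0(X,F)$ this gives
\begin{equation*}
\frac{3}{2}\big(\norm{\overline{\partial}^{\smash{\scriptscriptstyle F}}s}^2+
\norm{\overline{\partial}^{\smash{\scriptscriptstyle{F}},*}s}^2\big)
\geqslant\big\langle R^{F}(w_j,\ov{w}_k)
\ov{w}^k\wedge i_{\ov{w}_j}s,s\big\rangle
-\frac{1}{2}\big(\norm{\mathcal{T}^*s}^2
+\norm{\ov{\mathcal{T}}s}^2
+\norm{\ov{\mathcal{T}}^*s}^2\big).
\end{equation*}
The essential point is that on $(n,\bullet)$-forms the curvature term involves only $R^F=pR^L\otimes\Id_E+R^E$ and no $R^{\det}$. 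This is exactly why Condition~\ref{C:specgapgeom2} does not need to control $R^{\det}$, in contrast with \eqref{i}.

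Next I estimate the two terms pointwise. By $\sqrt{-1}R^L>\varepsilon\Theta$, the operator $\sum_{j,k}R^L(w_j,\ov w_k)\ov w^k\wedge i_{\ov w_j}$ acts on $(n,q)$-forms with $q\geq1$ with lower bound $q\varepsilon\geq\varepsilon$. It suffices to diagonalize $\sqrt{-1}R^L$ with respect to $\Theta$ at a point, where the operator becomes the sum of eigenvalues over the indices in the form. Similarly, $\sqrt{-1}R^E>-C\Theta\Id_E$ bounds the $E$-contribution from below by $-nC$. The torsion operators $\mathcal T=[i(\Theta),\partial\Theta]$ are zeroth order with pointwise norm controlled by $|\partial\Theta|_{g^{TX}}<C$, so the last bracket is bounded by $C'\norm{s}^2$. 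Combining these, $Q_p(s,s)\geqslant \frac{2}{3}(p\varepsilon-nC-C'/2)\norm{s}^2\geqslant C_1p\norm{s}^2$ for $p\geqslant p_0$ and $s$ compactly supported.

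To pass to all $s\in\Dom(Q_p)$, I use the completeness of $g^{TX}$ and Theorem~\ref{esa}(i),(iii) applied to the bundle $F\otimes K_X$ in degree $q$. This shows that $\Omega^{n,q}_0$ is dense in $\Dom(\overline\partial_{max})\cap\Dom(\overline\partial^*_{max})$ in the graph norm, and the inequality extends by continuity to give \eqref{ell4,12}. Vanishing of $H^{n,q}_{(2)}$ for $q>0$ then follows from the standard argument. A harmonic form $s$ satisfies $Q_p(s,s)=0$, hence $s=0$. Moreover, $\square_p\geqslant C_1p$ on degree $q>0$ makes the range of $\overline\partial$ closed, and the closed-range property together with the Hodge decomposition yields the vanishing of reduced, and hence non-reduced, $L^2$ cohomology.

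For the spectrum in degree $0$, I repeat \eqref{2.22}. For $f\in\Dom(\square_p)$ in degree $(n,0)$, set $s=\overline\partial f$, which has degree $(n,1)$ and lies in $\Dom(Q_p)$. Then
\begin{equation*}
\norm{\square_p f}^2=\norm{\overline\partial^*s}^2=Q_p(s,s)\geqslant C_1p\norm{s}^2=C_1p\langle\square_pf,f\rangle,
\end{equation*}
using $\overline\partial s=0$. This gives $\spec(\square_p)\subset\{0\}\cup[C_1p,\infty[$. The main care point is the pointwise computation of the curvature operator on $(n,q)$-forms and the torsion bound. The most delicate step is checking that the version of Nakano's inequality in \cite{MM07} is stated for $(n,q)$-forms with exactly these torsion terms and without $R^{\det}$. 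If it is not, I would instead apply \eqref{herm20,11} to $F'=F\otimes K_X$ via $\Psi$, which cancels $K_X\otimes K_X^*$ and leads to the same conclusion.
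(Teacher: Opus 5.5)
Your proposal is correct and follows essentially the same route as the paper. Both use Nakano's inequality on $(n,q)$-forms: the paper writes the curvature term as $\langle[\sqrt{-1}R^F,\Lambda]s,s\rangle\geqslant q\,a_1(x)|s|^2$, which on $(n,q)$-forms is exactly your operator $R^F(w_j,\ov w_k)\ov w^k\wedge i_{\ov w_j}$, so no $R^{\det}$ appears; then the bounds of Condition~\ref{C:specgapgeom2} on $R^L$, $R^E$ and the torsion, and density via Theorem~\ref{esa}. You also give more detail than the paper on the vanishing of $H^{n,q}_{(2)}$ and on the degree-zero spectral gap, and your fallback through $\Psi$ applied to $F\otimes K_X$ and \eqref{herm20,11} is an equally valid way to obtain the inequality.
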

%--------------------------------
\begin{proof}
In this case, we can use the following form of Nakano's inequality 
(cf.\ \cite[Theorem 1.4.14]{MM07}), we have for any 
$s\in\Omega^{\bullet,\bullet}_0(X,F)$,
%\begin{equation} \label{herm20,1}
%\frac{3}{2}\langle\square^Fs,s\rangle\geqslant\big\langle[\sqrt{-1}R^F,
%\Lambda]s,s\big\rangle
%-\frac{1}{2}(\norm{\mathcal{T}s}^2_{L^2} +\norm{\mathcal{T}^*s}^2_{L^2}
%+\norm{\overline{\mathcal{T}}s}^2_{L^2}
%+\norm{\overline{\mathcal{T}}^*s}^2_{L^2})\,.
%\end{equation}
\begin{equation} \label{herm20,1}
\frac{3}{2}\langle\square^Fs,s\rangle\geqslant\big\langle[\sqrt{-1}R^F,
\Lambda]s,s\big\rangle
-\frac{1}{2}(\norm{\mathcal{T}s}^2 +\norm{\mathcal{T}^*s}^2
+\norm{\overline{\mathcal{T}}s}^2
+\norm{\overline{\mathcal{T}}^*s}^2)\,.
\end{equation}
For an $(n,q)$-form $s\in\Omega^{n,q}_0(X,F)$ we have 
\(\big\langle[\sqrt{-1}R^F,\Lambda]s,s\big\rangle\geqslant
qa_1(x)|s|^2\)
at every point $x\in X$, where $a_1(x)$
is the smallest eigenvalue of $R^F_x$ with respect to $\Theta$.
Setting $F=L^p\otimes E$, \eqref{ell4,12} follows immediately from \eqref{ii}
and Theorem \ref{esa}.
\end{proof}

%-----------------------------------------------------------
By Theorems \ref{t2.1} and \ref{noncompact1} we have:
%-----------------------------------------------------------
\begin{theorem}\label{t2.11}
Let $(X,\omega)$ be a complete K\"ahler manifold and $(L,h^L)$ 
be a prequantum line bundle, that is, $\omega=\frac{\sqrt{-1}}{2\pi}R^L$.
Let $(E,h^E)$ be a holomorphic Hermitian vector bundle.
Assume that there exists $C>0$ such that $\sqrt{-1}R^E>-C\omega\Id_E$.
Then:

\noindent
(1) The Bergman kernel asymptotics for $H^{n,0}_{(2)}(X,L^p\otimes E)$
holds on compact sets of $X$. 

\noindent
(2) The Berezin-Toeplitz quantization package holds 
for the K\"ahler manifold $(X,\omega)$, the algebra
$\cC^\infty_{\rm const}(X,\End(E))$, and quantum spaces
$H^{n,0}_{(2)}(X,L^p\otimes E)$.
\end{theorem}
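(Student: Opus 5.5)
The plan is to reduce Theorem \ref{t2.11} to Theorem \ref{t2.1}, applied with the twisting bundle $E$ replaced by $E':=E\otimes K_X$. Sections of $L^p\otimes E'$ are exactly $(n,0)$-forms with values in $L^p\otimes E$. We equip $K_X$ with the metric $h^{K_X}$ induced by $\omega$, and take $dv_X=\omega^n/n!$. Then
\[
H^{n,0}_{(2)}(X,L^p\otimes E)=H^0_{(2)}(X,L^p\otimes E',\omega^n/n!)
\]
isometrically. Moreover $\End(E')=\End(E)$ canonically, so the algebra $\cC^\infty_{\rm const}(X,\End(E))$ acts on both sides and the Toeplitz operators coincide. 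The ambient metric is $\Theta=\omega$, which is K\"ahler, so $\partial\Theta=0$.

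\textbf{Step 1: the spectral gap.} We verify Condition \ref{C:specgapgeom2} with $\Theta=\omega$.
\begin{itemize}
\item $g^{TX}$ is complete by hypothesis.
\item $\sqrt{-1}R^L=2\pi\omega>\varepsilon\omega$ for any $\varepsilon<2\pi$.
\item $\sqrt{-1}R^E>-C\omega\Id_E$ by assumption.
\item $|\partial\Theta|=0$.
\end{itemize}
Theorem \ref{noncompact1} then yields $C_1>0$ and $p_0$ such that $Q_p(s,s)\ge C_1p\|s\|^2$ for $s\in\Dom(Q_p)\cap\Omega^{n,1}_{(2)}(X,L^p\otimes E)$ and $p\ge p_0$.

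Under the isometry $\Psi$ of \eqref{lm2.70} (read backwards), $\Omega^{n,1}_{(2)}(X,L^p\otimes E)$ corresponds to $\Omega^{0,1}_{(2)}(X,L^p\otimes E')$. The Dolbeault operators and their adjoints correspond as well, since the metric on $K_X$ is the one induced by $\omega$. Hence $\|D_ps\|^2=2Q_p(s,s)\ge 2C_1p\|s\|^2$ on $(0,1)$-forms with values in $L^p\otimes E'$, for $p\ge p_0$.

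For the finitely many $p<p_0$ we only need $2Q_p\ge0$. So \eqref{bk1.4} holds with $C_0=C_1$ and $C_L=2C_1p_0$, because $2C_1p-C_L\le 0$ for $p<p_0$. The domains also match: by Theorem \ref{esa} and completeness, maximal and Hilbert-space extensions agree and compactly supported forms are dense. The density statement is what justifies passing from $\Omega_0^{n,1}$ to $\Dom(Q_p)$, and it is already built into Theorem \ref{noncompact1}.

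\textbf{Step 2: conclusion.} Apply Theorem \ref{t2.1} to $(X,\omega)$, $(L,h^L)$ and $(E',h^{E'})$. Part (1) gives the Bergman kernel asymptotics for $H^0_{(2)}(X,L^p\otimes E')$ on compact sets, which is statement (1). Part (2) gives the Berezin--Toeplitz package for $\cC^\infty_{\rm const}(X,\End(E'))=\cC^\infty_{\rm const}(X,\End(E))$, which is statement (2). Since $\omega=\frac{\sqrt{-1}}{2\pi}R^L$ already, the K\"ahler form in Definition \ref{D:btp} is $\omega$ itself.

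\textbf{Main obstacle.} The only delicate point is the identification in Step 1, namely that the isometry $\Psi$ intertwines $\overline\partial$, its Hilbert adjoint and the Gaffney domains. With $h^{K_X}$ induced by $\omega$ and the same volume form, this is a pointwise isometry commuting with $\overline\partial$ on smooth forms. The adjoints and maximal domains then transfer formally, so I expect this step to be routine bookkeeping.
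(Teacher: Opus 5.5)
Your proposal is correct and follows the paper's argument. The paper states Theorem \ref{t2.11} as a direct consequence of Theorems \ref{noncompact1} and \ref{t2.1}: it verifies Condition \ref{C:specgapgeom2} with $\Theta=\omega$ (so $\partial\Theta=0$), then applies Theorem \ref{t2.1} after identifying $(n,q)$-forms with values in $L^p\otimes E$ with $(0,q)$-forms with values in $L^p\otimes E\otimes K_X$. Your choice of constants $C_0=C_1$, $C_L=2C_1p_0$ and your account of the unitary equivalence $\Psi$ supply bookkeeping that the paper leaves implicit.
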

%-----------------------------------------------------------
The advantage of this result is that we do not need 
any condition on $R^{\det}$, and the Hilbert quantum spaces
do not depend on the chosen Hermitian metric on $X$. 

%-----------------------------------------------------------
%\begin{theorem}\label{t2.11}
%Let 
%$(X,\Theta)$ be a complete Hermitian manifold and $L$ and 
%$E$ be two holomorphic vector bundles on $X$, where $\rank L=1$,
%such that condition \eqref{i1} is fulfilled.
%Then:
%
%(1) The Bergman kernel asymptotics for $H^{n,0}_{(2)}(X,L^p\otimes E)$
%holds on compact sets of $X$. 
%
%(2) The Berezin-Toeplitz quantization package holds for the algebra
%$\cC^\infty_{\rm const}(X,\End(E))$ and quantum spaces
%$H^{n,0}_{(2)}(X,L^p\otimes E)$.
%\end{theorem}
%-----------------------------------------------------------

%---------------------------------------------------------
%-----------------------------------------------------------
\begin{corollary}\label{c2.11}
Let $(X,\omega)$ be a complete K\"ahler-Einstein manifold 
with $\Ric\omega=-\omega$. Then:

\noindent
(1) The Bergman kernel asymptotics for $H^{0}_{(2)}(X,K_X^p)$
is valid on compact subsets of $X$. 

\noindent
(2) The Berezin-Toeplitz quantization package holds 
for the K\"ahler manifold $(X,\omega)$, the algebra
$\cC^\infty_{\rm const}(X)$, and quantum spaces
$H^{0}_{(2)}(X,K_X^p)$.
\end{corollary}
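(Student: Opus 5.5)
The plan is to reduce Corollary~\ref{c2.11} to Theorem~\ref{t2.11} (equivalently, to Theorem~\ref{t2.12} with $L=K_X$ via the modified form of Condition~\ref{C:specgapgeom1}). Take $L=K_X$ with the metric $h^{K_X}$ induced by $\omega$. Its Chern curvature is $R^{K_X}=-R^{\det}$. The K\"ahler--Einstein condition $\Ric\omega=-\omega$ means $\frac{\sqrt{-1}}{2\pi}R^{\det}=\Ric_\omega$ in the normalization where $\Ric_\omega$ is the Ricci form representing $c_1(X)$. Hence
\[
\frac{\sqrt{-1}}{2\pi}R^{K_X}=-\Ric_\omega=\omega .
\]
Thus $(K_X,h^{K_X})$ is a prequantum line bundle for $(X,\omega)$, and the Kähler form $\frac{\sqrt{-1}}{2\pi}R^L$ appearing in the package is exactly $\omega$. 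If the paper's normalization of $\Ric$ differs by a factor $2\pi$, we instead rescale $\omega$ by a positive constant. This changes neither completeness nor the positivity arguments, only constants.

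Next, write $K_X^p=K_X\otimes K_X^{p-1}$, so that $H^0_{(2)}(X,K_X^p)=H^{n,0}_{(2)}(X,L^{p-1}\otimes E)$ with $L=K_X$ and $E=\C$. Here the $L^2$ norms agree because the metric on $K_X$ is the one induced by $\omega$ and $dv_X=\omega^n/n!$. Now apply Theorem~\ref{t2.11} with $L=K_X$ and $E$ trivial. Completeness is assumed, and the hypothesis $\sqrt{-1}R^E=0>-C\omega$ holds trivially. Since $\omega$ is Kähler, $\partial\Theta=0$, so the torsion term in Condition~\ref{C:specgapgeom2} vanishes. Theorem~\ref{noncompact1} then gives the spectral gap for $\square_{p-1}$ on $(n,q)$-forms, $q>0$. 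Through the identification of $(n,q)$-forms with values in $L^{p-1}$ with $(0,q)$-forms with values in $L^{p-1}\otimes K_X=K_X^p$, this is exactly \eqref{bk1.4} for the Kodaira Laplacian on $(0,1)$-forms with values in $K_X^p$, with constants shifted since $p-1$ replaces $p$. Theorem~\ref{t2.1} then yields both the Bergman kernel asymptotics (1) and the Toeplitz package (2) for $\cC^\infty_{\rm const}(X)$. Alternatively, one can verify the modified Condition~\ref{C:specgapgeom1} directly, since $\sqrt{-1}R^{\det}=-2\pi\omega<-\varepsilon\omega$, and invoke Theorem~\ref{t2.12}.

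The main point needing care is bookkeeping rather than analysis. First, the shift $p\mapsto p-1$ (equivalently, taking $E=K_X$ and $L=K_X$ in Theorem~\ref{t2.12}) must be checked to affect only lower-order constants in \eqref{bk1.4}, and to be compatible with the rescaling in the expansion; with $E=K_X$ there is no shift at all, which is cleaner. Second, the sign and $2\pi$ normalization linking $\Ric\omega=-\omega$ to $\frac{\sqrt{-1}}{2\pi}R^{K_X}$ must be consistent. I would settle this by choosing $E=K_X$, $L=K_X$, which needs only $\sqrt{-1}R^E=2\pi\omega>-C\omega$ and $\sqrt{-1}R^L>\varepsilon\omega$, both immediate.
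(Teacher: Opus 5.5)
Your proof is correct and follows essentially the paper's route. The paper states the corollary without a separate proof, as a direct consequence of the spectral-gap theorems combined with Theorem~\ref{t2.1} for $L=K_X$, $E=\C$. The gap comes either from Theorem~\ref{noncompact0} via the canonical-bundle clause of Condition~\ref{C:specgapgeom1}, or from Theorem~\ref{noncompact1} at level $p-1$ as you do. Both use only that $\omega$ is complete K\"ahler and that $\sqrt{-1}R^{\det}$ is a negative constant multiple of $\omega$.

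There is one slip in your last paragraph: taking $E=K_X$ does not remove the index shift, it enlarges it.
\begin{itemize}
\item Theorem~\ref{t2.12} with $L=E=K_X$ quantizes $H^0_{(2)}(X,K_X^{p+1})$.
\item Theorem~\ref{t2.11} with $L=E=K_X$ gives $H^{n,0}_{(2)}(X,K_X^{p+1})\cong H^0_{(2)}(X,K_X^{p+2})$.
\end{itemize}
The shift-free choices are $E=\C$ in Theorem~\ref{t2.12}, or $E=K_X^*$ in Theorem~\ref{t2.11}; for the latter, $\sqrt{-1}R^E=\sqrt{-1}R^{\det}>-C\omega$, so the hypothesis still holds. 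In any case, your main argument feeds the gap for $(0,1)$-forms with values in $K_X^p$ directly into Theorem~\ref{t2.1}, so it needs neither choice.
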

%-----------------------------------------------------------

\begin{example}
Let us recall some classes of complete
K\"ahler-Einstein manifolds.

\noindent
(1) The Bergman metric $\omega_B$ on an 
irreducible bounded symmetric domain in $D\subset\C^n$
is a complete K\"ahler-Einstein metric with negative Ricci curvature; see e.g., 
\cite[Proposition 3, p.\,59]{Mok89b}.
%The invariance of the Bergman metric under the transitive action of 
%the automorphism group implies the invariance of its Ricci tensor. 
%In the case of irreducibility, there is only one invariant $(1,1)$-tensor 
%up to a scale factor, thereby forcing the Ricci tensor to be a multiple 
%of the metric. In the context of bounded domains, this multiple is negative.
The canonical line bundle $K_D$ endowed with the metric
induced by $\omega_D$ is positive.

\noindent
(2) Cheng-Yau \cite{CY80} showed that every bounded strictly pseudoconvex 
domain in $\C^n$ admits a unique complete K\"ahler-Einstein metric 
with negative Ricci curvature, which is a biholomorphic invariant.
Mok-Yau \cite{MY:83} extended this result to bounded 
pseudoconvex domains in $\C^n$.
For general strictly pseudoconvex domains, the Bergman metric is not equal to 
the Cheng-Yau metric.

\noindent
(3)  if $X$ is a compact projective manifold, $\Sigma$ is an effective divisor of $X$,
such that $L=K_X\otimes\cO(\Sigma)$ is ample. 
Then by \cite{KobR84} there exists a unique complete K\"ahler-Einstein
metric $\omega$ with $\Ric\omega=-\omega$ on $X\setminus\Sigma$.

%Examples: bounded symmetric domains in $\C^n$ with 
%the Bergman metric, strictly pseudoconvex domains in $\C^n$ with 
%Cheng-Mok-Yau canonical metric, quotients of such domains
\end{example}
%-------------------------------------------------------\subsection{Stein, \texorpdfstring{$1$}{1}-convex, and weakly 1-complete manifolds}		
In this section, we consider very important classes of complex manifolds 		
for the theory of several complex variables. These classes satisfy certain		
complex convexity conditions.

Stein manifolds are the natural framework for the complex function theory of several variables
\cite{CAR51/52,Stein51}. 
The classical results of complex analysis in one variable, 
such as the Mittag-Leffler and Weierstrass theorems, generalize to 
Stein manifolds.
Stein manifolds are characterized by the existence of enough 
holomorphic functions that provide an embedding in Euclidean space. 
They are the non-compact analogs of projective manifolds. 
On a Stein manifold, any holomorphic line bundle becomes a prequantum 
line bundle since we can endow it with a positively curved Hermitian metric.
%------------		
\begin{definition}
Let $X$ be a complex manifold and let $\cO_X(X)$ be the algebra 
of holomorphic functions on $X$. We say that:

\noindent
(a) $X$ is \emph{holomorphically separable}
if for any $x,y\in X$, $x\neq y$, there
exists $f\in\cO_X(X)$ with $f(x)\neq f(y)$.

\noindent
(b) $X$ is \emph{holomorphically convex}
if for any compact set $K\subset X$, the holomorphic hull $\widehat K
:=\{x\in X : |f(x)|\leqslant\sup_K|f|\,\,\text{for all}\,f\in\cO_X(X)\}$
is compact.

\noindent
(c) $X$ is a \emph{Stein manifold}
if $X$ is holomorphically separable and convex.
\end{definition}		
%------------
\begin{examples}
(i) Every non-compact Riemann surface is a Stein manifold, 
by a theorem of Behnke-Stein.
\\[2pt]
(ii) $\C^n$ is Stein. A domain in $\C^n$ is Stein if and only if it is 
a domain of holomorphy.
\\[2pt]
(iii) A product of Stein manifolds is Stein.
\\[2pt]		
(iv) Any closed complex submanifold of $\C^n$ is a Stein manifold.		
Conversely, by a theorem of Remmert-Bishop-Narasimhan \cite{Bi:61,Na:60,Rem:56a}, any		
Stein manifold of dimension $n$ can be properly embedded in $\C^{2n+1}$		
and is thus biholomorphic to a closed submanifold of $\C^{2n+1}$. 
\end{examples}
%-----------------------------------------------------------			
An important characterization of Stein manifolds is provided through 
the use of plurisubharmonic functions. 
Let us introduce some convexity concepts for complex manifolds. 		
%------------		
\begin{definition}		
Let $X$ be a complex manifold. A smooth function $\varphi:X\to\R$		
is called \emph{ (strictly) plurisubharmonic}, for short \emph{(strictly) psh}, if for any		
local holomorphic chart $(U,z)$ on $X$ the matrix		
$(\partial^2\varphi/\partial z_j\partial\overline{z}_k)_{j,k}$		
is positive semidefinite (definite), that is, if the $(1,1)$-form		
$\imat\partial\db\varphi$ is semipositive (positive).		
The manifold $X$ is called:		
\\[2pt]		
(a) \emph{$1$-complete} if it admits		
a strictly plurisubharmonic exhaustion function,		
\\[2pt]		
(b) \emph{weakly $1$-complete} if it admits		
a plurisubharmonic exhaustion function,		
\\[2pt]		
(c) \emph{$1$-convex} if it admits		
an exhaustion function that is strictly plurisubharmonic outside		
a compact set of $X$.		
\end{definition}		
%------------		
We note that a smooth function $\varphi:X\to\R$ is strictly plurisubharmonic		
if and only if $\imat\partial\db\varphi$ defines a K\"ahler metric on $X$.		
The notions of $1$-complete and $1$-convex manifolds		
were introduced in the seminal paper by Andreotti-Grauert \cite{AG:62}		
and the notion of weakly $1$-complete manifold was introduced by Nakano 		
\cite{Nak:73}.

A complex manifold is Stein if and only if it is $1$-complete, as shown by Grauert’s solution to the Levi problem \cite{Gr:58}; cf. also \cite[Theorem 5.2.10]{Hor:66}.
%This is proven using $L^2$-estimates for $\db$ in \cite[Theorem 5.2.10]{Hor:66}. 
Any Hermitian metric of a holomorphic line bundle $(L,h^L)$ can be 
modified to a positively curved metric $h^L_\chi=h^Le^{-\chi(\varphi)}$, with $\varphi$ a strictly psh exhaustion function and $\chi$ a rapidly increasing convex function. Therefore, on a Stein manifold, any line bundle is a prequantum line bundle.		
%Hence, on a Stein manifold, the trivial line bundle i
%If $\varphi$ is a strictly plurisubharmonic exhaustion function, then 		
%we define a Hermitian metric on the trivial line bundle $L=\C$ by 		
%defining the norm of the global holomorphic frame		
%$1$ to be $|1|_h:=e^{-\varphi}$.		
%Hence $(L,h^L)=(\C,e^{-\varphi})$ is a prequantum line bundle		
%for the K\"ahler form $\omega=\frac{\imat}{2\pi}R^L=		
%\frac{\imat}{\pi}\partial\db\varphi$. 		

On a Stein manifold, we can construct complete metrics as follows.
Let $\lambda:\R\to\R$ be a smooth, convex, increasing function such that
\begin{equation}\label{eq:lambda''}
\int_0^\infty\!\!\!\sqrt{\lambda''(t)}\,dt=\infty\,.
\end{equation}
Then for any Hermitian metric $\Theta$ on the $X$, the metric
\begin{equation}\label{eq:omega-lambda}
\Theta_\lambda=\Theta+\imat\,\partial\db\lambda(\varphi)
=\Theta+\imat\,\lambda^{\,\prime}(\varphi)\,\partial\db\varphi		
+\imat\,\lambda^{\,\prime\prime}(\varphi)\,\partial\varphi\wedge\db\varphi\,.
\end{equation}
is a complete Hermitian metric.
We will denote in the following by $\cC$ the		
cone of smooth convex increasing functions on $\R$.		

%-----------------------------------------------------------		
\begin{theorem}\label{T:Stein1}		
Let $X$ be a Stein manifold and let $\varphi$ be a 		
strictly plurisubharmonic exhaustion function. 
Let $(L,h^L)$, $(E,h^E)$ be holomorphic Hermitian vector bundles,
with $L$ of rank one. 		
Set $\omega=\frac{\imat}{\pi}\partial\db\varphi$.	 %Then the following holds:

\noindent		
(a) Then there exists $\lambda\in\cC$ such that $\omega_\lambda$ is a complete
K\"ahler metric and 
\begin{equation}\label{eq:RE}
\imat R^E\geq-\omega_\lambda\otimes\Id_E.
\end{equation}
\noindent		
(b) There exists $\chi_0\in\cC$ such that for any $\chi\in\chi_0+\cC$, 
we have with $h^L_\chi\coloneqq h^L e^{-\chi(\varphi)}$:

\noindent		
(1) The Bergman kernel asymptotics for 
$H^{n,0}_{(2)}(X,L^p\otimes E,(h^L_\chi)^p\otimes h^E)$		
holds on compact sets of $X$. 		
	
\noindent	
(2) The Berezin-Toeplitz quantization package holds 		
for the K\"ahler manifold $(X,\omega_\chi)$, the algebra 
$\cC^\infty_{\rm const}(X,\End(E))$, and quantum spaces		
$H^{n,0}_{(2)}(X,L^p\otimes E,(h^L_\chi)^p\otimes h^E)$.		
\end{theorem}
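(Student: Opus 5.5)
The plan is to reduce the statement to Theorem~\ref{t2.11}, or more precisely to Theorems~\ref{noncompact1} and \ref{t2.1}. We need a complete K\"ahler metric equal to $\omega_\chi=\frac{\imat}{2\pi}R^{L}_{\chi}$ (the curvature form of $h^L_\chi$) such that $\imat R^E\geq -C\omega_\chi\Id_E$.

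\textbf{Part (a).} Fix any Hermitian metric on $E$. The curvature $\imat R^E$ is continuous, so we can choose a continuous increasing function $c(t)$ with $\imat R^E\geq -c(t)\,\omega\otimes\Id_E$ on the sublevel set $\{\varphi\le t\}$. This is possible because $\omega=\frac{\imat}{\pi}\partial\db\varphi$ is positive and each sublevel set is compact.

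Next choose $\lambda\in\cC$ with $\lambda'(t)\ge c(t)$ and with $\lambda''$ large enough that \eqref{eq:lambda''} holds; for instance, take $\lambda'$ to grow at least linearly. Then
\[
\omega_\lambda=\omega+\tfrac{\imat}{\pi}\partial\db\lambda(\varphi)\ge(1+\lambda'(\varphi))\,\omega\ge c(\varphi)\,\omega,
\]
which gives \eqref{eq:RE}.

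Completeness follows because $\omega_\lambda\ge\frac{\imat}{\pi}\lambda''(\varphi)\partial\varphi\wedge\db\varphi$. Hence the function $\psi=\int_0^{\varphi}\sqrt{\lambda''}$ has bounded gradient (up to a constant) with respect to $\omega_\lambda$. Since $\psi$ is an exhaustion, by \eqref{eq:lambda''} and $\varphi$ exhausting, the Hopf--Rinow criterion gives completeness.

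\textbf{Part (b).} First choose $\chi_0\in\cC$ so that $\frac{\imat}{2\pi}R^{L}_{\chi_0}=\frac{\imat}{2\pi}R^L+\frac{\imat}{2\pi}\partial\db\chi_0(\varphi)$ dominates $\omega_\lambda$. One way to do this is $\chi_0=2\lambda+\mu$, where $\mu\in\cC$ is chosen so that $\frac{\imat}{2\pi}\mu'(\varphi)\partial\db\varphi$ absorbs both $\omega$ and the possibly negative term $\frac{\imat}{2\pi}R^L$. This uses the same compact-exhaustion argument as in part (a), applied to $\imat R^L$ instead of $\imat R^E$.

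Now take any $\chi\in\chi_0+\cC$. Then $\omega_\chi\ge\omega_{\chi_0}$, because the extra term $\frac{\imat}{2\pi}\partial\db(\chi-\chi_0)(\varphi)$ is semipositive. It follows that $\omega_\chi\ge\omega_\lambda$.

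The metric $\omega_\chi$ is K\"ahler. It is also complete, since it dominates the complete metric $\omega_\lambda$. Moreover,
\[
\imat R^E\ge-\omega_\lambda\Id_E\ge-\omega_\chi\Id_E .
\]
Thus $(X,\omega_\chi)$, the prequantum bundle $(L,h^L_\chi)$ and $(E,h^E)$ satisfy the hypotheses of Theorem~\ref{t2.11} with $C=1$. Theorem~\ref{t2.11} then yields (1) and (2) for $H^{n,0}_{(2)}$. The $L^2$ condition for $(n,0)$-forms does not depend on the metric on $X$, so the quantum spaces are exactly the stated spaces.

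\textbf{Main obstacle.} The delicate step is the domination $\omega_{\chi_0}\ge\omega_\lambda$. The term $\frac{\imat}{\pi}\lambda''(\varphi)\partial\varphi\wedge\db\varphi$ in $\omega_\lambda$ has to be matched by the corresponding term $\frac{\imat}{2\pi}\chi_0''(\varphi)\partial\varphi\wedge\db\varphi$. This requires $\chi_0''\ge 2\lambda''$ as well as $\chi_0'\ge 2\lambda'+(\text{correction})$, and that is why I would take $\chi_0=2\lambda+\mu$ with $\mu\in\cC$. Everything else is a direct application of the earlier theorems.
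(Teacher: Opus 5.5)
Your proof is correct and follows essentially the same route as the paper: you pick $\lambda$ so that $\omega_\lambda$ is complete with $\imat R^E\geq-\omega_\lambda\Id_E$, pick $\chi_0$ so that $\imat R^{(L,h^L_{\chi_0})}$ dominates $\omega_\lambda$, use that adding an element of $\cC$ only increases the curvature, and conclude from the spectral gap for $(n,q)$-forms. The only minor difference is the choice of base metric: the paper checks Condition~\ref{C:specgapgeom2} with $\omega_\lambda$ and applies Theorems~\ref{noncompact1} and \ref{t2.1}, whereas you take the curvature form $\omega_\chi$ itself (complete since $\omega_\chi\geq\omega_\lambda$) and invoke Theorem~\ref{t2.11}, where you should take $C=2$ rather than $C=1$ to get the strict inequality $\imat R^E>-C\omega_\chi\Id_E$.
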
		
%-----------------------------------------------------------
\begin{proof}
(a) We first choose $\lambda_1\in\cC$ such that \eqref{eq:lambda''}
is satisfied for $\lambda=\lambda_1$. 
Given that $\imat\,\partial\db\varphi$ is a Kähler form and $\varphi$ is an 
exhaustion function, we use \eqref{eq:omega-lambda} to
determine $\lambda_2\in\cC$ with $\lambda_2^{\prime}$ so rapidly increasing that
\eqref{eq:RE} holds for $\lambda=\lambda_2$ 
(as done in e.g., \cite[Theorem 4.2.2]{Hor:66}).
Then $\lambda=\lambda_1+\lambda_2$ satisfies both conditions 
\eqref{eq:lambda''} and \eqref{eq:RE}.

\noindent 
(b) As in (a),
%Since $\imat\,\partial\db\varphi$ is a K\"ahler metric and $\varphi$ and exhaustion
%function 
we can find $\chi_0$ such that
\begin{equation}\label{eq:RLchi}
\imat R^{(L, h^L_{\chi_0})}=\imat R^{(L, h^L)}+
\imat\partial\db\chi_{0}(\varphi)\geq\omega_\lambda.
\end{equation}
 We check next that Condition \ref{C:specgapgeom2}
is satisfied in the present context for the K\"ahler manifold $(X,\omega_\lambda)$,
and the bundles $(L,h^L_\chi)$ and $(E,h^E)$.
Since $\chi\in\chi_0+\cC$,
we have $\imat\partial\db\chi(\varphi)\geq\imat\partial\db\chi_0(\varphi)$;
hence
\begin{equation}\label{eq:ii2}
\imat R^{(L, h^L_\chi)}=\imat R^{(L, h^L)}+
\imat\partial\db\chi(\varphi)
%\geq\imat R^{(L, h^L)}+\imat\partial\db\chi_0(\varphi)
\geq\omega_\lambda.
\end{equation}
By \eqref{eq:RE}, \eqref{eq:ii2} and the fact that $\omega_\lambda$
is K\"ahler, we deduce that Condition \ref{C:specgapgeom2}
is satisfied, thereby 
ensuring that (1) and (2) follow from Theorem \ref{noncompact1}.
\end{proof}
%===
Let us consider now the case of $1$-convex manifolds.
Since the exhaustion function of a $1$-convex manifold
is strictly psh only outside a compact set, one cannot use it to
construct a Hermitian metric of positive curvature on any line bundle.
We will thus assume the existence of a positive line bundle. 

On a $1$-convex manifold, we can construct a very natural
exhaustion function.
For this purpose, we recall the following
analytic characterization of $1$-convex manifold $X$ 
(see e.\,g.\ \cite{AG:62}): There exists a Stein space $Y$, 
a proper holomorphic surjective map $\rho:X\to Y$ 		
satisfying $\rho_*\mathcal{O}_X = \mathcal{O}_Y$, 		
and a finite set $A\subset Y$ such that the induced map 		
$X\setminus\rho^{-1}(A) \to Y\setminus A$ is biholomorphic. 		
The Stein space $Y$ is called the Remmert reduction of $X$ and 		
$\Sigma\coloneqq\rho^{-1}(A)$ the exceptional analytic set of $X$.
Consider a strictly psh smooth exhaustion function $\varphi_Y$ of $Y$, 
such that $\varphi_Y\geq0$ and $\{\varphi_Y=0\}=A$ (see \cite[p. 563]{Col98}). This is constructed by embedding $Y$ into a Euclidean space
$\C^N$ and constructing such a strictly psh exhaustion function
on $\C^N$.
Then $\varphi=\varphi_Y\circ\rho$ is a smooth psh exhaustion 
function of $X$, such that $\varphi\geq0$, $\{\varphi=0\}=\Sigma$ 
and $\varphi$ is strictly psh on $X\setminus\Sigma$.
%If $\psi:Y\to\R$ is a strictly psh exhaustion function, then
%$\varphi\coloneqq\psi\circ\rho:X\to\R$
%is an exhaustion function which is psh on $X$ and strictly psh outside the 
%compact analytic set $\Sigma$.	

%-----------------------------------------------------------		
\begin{theorem}\label{T:1convex}		
Let $X$ be a $1$-convex manifold and let $\varphi$ be an 
exhaustion function as above. 
Let $(L,h^L)$ be a positive line bundle on $X$, and $(E,h^E)$ 
be a holomorphic Hermitian vector bundle.
Let $\omega$ be a K\"ahler form on $X$.
Then the following holds:

\noindent		
(a) There exist $\lambda\in\cC$ and $C>0$ 
such that $\omega_\lambda$ is a complete
K\"ahler metric and 
\begin{equation}\label{eq:RE2}
\imat R^E\geq-C\omega_\lambda\otimes\Id_E.
\end{equation}
\noindent		
(b) There exists $\chi_0\in\cC$ such that for any $\chi\in\chi_0+\cC$, 
we have for
$h^L_\chi\coloneqq h^L e^{-\chi(\varphi)}${\rm{:}}

\noindent		
(1) The Bergman kernel asymptotics for 
$H^{n,0}_{(2)}(X,L^p\otimes E,(h^L_\chi)^p\otimes h^E)$		
holds on compact sets of $X$. 		
	
\noindent	
(2) The Berezin-Toeplitz quantization package holds 		
for the K\"ahler manifold $(X,\omega_\chi)$, the algebra 
$\cC^\infty_{\rm const}(X,\End(E))$, and quantum spaces		
$H^{n,0}_{(2)}(X,L^p\otimes E,(h^L_\chi)^p\otimes h^E)$.		
\end{theorem}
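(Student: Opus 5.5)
The plan follows the proof of Theorem~\ref{T:Stein1}: construct a complete K\"ahler metric $\omega_\lambda$ and modified weights $h^L_\chi$ so that Condition~\ref{C:specgapgeom2} holds, and then invoke Theorem~\ref{noncompact1} together with Theorem~\ref{t2.1}.

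\textbf{Part (a).} Set $\omega_\lambda=\omega+\imat\partial\db\lambda(\varphi)$. Since $\varphi$ is psh, for any $\lambda\in\cC$
\[
\imat\partial\db\lambda(\varphi)=\lambda'(\varphi)\,\imat\partial\db\varphi+\lambda''(\varphi)\,\imat\partial\varphi\wedge\db\varphi\geq0 .
\]
Hence $\omega_\lambda\geq\omega$ and $\omega_\lambda$ is K\"ahler. For completeness, choose $\lambda$ with \eqref{eq:lambda''}. The term $\lambda''(\varphi)\partial\varphi\wedge\db\varphi$ makes $\sqrt{\lambda''}\circ\varphi$ a function whose primitive along $\varphi$ has bounded $\omega_\lambda$-gradient. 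The function $\psi=\int_0^{\varphi}\sqrt{\lambda''(t)}\,dt$ is then an exhaustion with $|d\psi|_{\omega_\lambda}\leq1$, so $\omega_\lambda$ is complete by the Hopf--Rinow criterion.

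For \eqref{eq:RE2}, first note that on the compact set $\{\varphi\leq c\}$ the inequality $\imat R^E\geq -C\omega\,\Id_E\geq -C\omega_\lambda\Id_E$ holds for some $C$, simply by compactness. Outside a neighborhood of $\Sigma$, $\imat\partial\db\varphi$ is positive definite. On each shell $\{k\leq\varphi\leq k+1\}$, $k\geq c$, there is therefore $M_k$ with $\imat R^E\geq -M_k\,\imat\partial\db\varphi\,\Id_E$. We then choose $\lambda_2\in\cC$ with $\lambda_2'(t)\geq M_k$ on $[k,k+1]$, and take $\lambda=\lambda_1+\lambda_2$ with $\lambda_1$ satisfying \eqref{eq:lambda''}. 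Both properties survive because adding elements of $\cC$ only increases $\omega_\lambda$ and preserves \eqref{eq:lambda''}.

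\textbf{Part (b).} The same recipe produces $\chi_0$. Positivity of $(L,h^L)$ gives $\imat R^{L}\geq c_K\omega$ on compact sets, hence $\imat R^L\geq c\,\omega_\lambda$ on $\{\varphi\leq c'\}$, where $\omega_\lambda$ is comparable to $\omega$. Outside this set, $\omega_\lambda\leq C_k\,\imat\partial\db\varphi+\lambda''(\varphi)\,\imat\partial\varphi\wedge\db\varphi$ on each shell. We choose $\chi_0$ with $\chi_0'\geq C_k+\lambda'$ and $\chi_0''\geq\lambda''$ on these shells, so that
\[
\imat R^{(L,h^L_{\chi_0})}=\imat R^L+\imat\partial\db\chi_0(\varphi)\geq\varepsilon\,\omega_\lambda .
\]
For $\chi\in\chi_0+\cC$ the extra term $\imat\partial\db(\chi-\chi_0)(\varphi)$ is $\geq0$, because $\varphi$ is psh, so the inequality persists. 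Since $\omega_\lambda$ is K\"ahler we have $\partial\omega_\lambda=0$. Together with \eqref{eq:RE2}, this means Condition~\ref{C:specgapgeom2} holds for $(X,\omega_\lambda)$, $(L,h^L_\chi)$, $(E,h^E)$. Theorem~\ref{noncompact1} then yields the spectral gap on $(n,q)$-forms. Finally, Theorem~\ref{t2.1}, applied with $E\otimes K_X$ as twisting bundle exactly as in Theorem~\ref{t2.11}, gives (1) and (2). Here $\omega_\chi$ is the K\"ahler form $\frac{\imat}{2\pi}R^{(L,h^L_\chi)}$.

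\textbf{Main obstacle.} The delicate step is near the exceptional set $\Sigma$, where $\imat\partial\db\varphi$ degenerates. The growth of $\lambda',\chi_0'$ cannot compensate there, so positivity has to come from $L$ itself. The way around this is to require that the shell construction start only at a level $c$ with $\Sigma\subset\{\varphi<c\}$, and to use compactness of $\{\varphi\leq c\}$. One must also check that the choices of $\lambda$ and $\chi_0$ are compatible, since $\chi_0$ depends on $\lambda$. The order is therefore to fix $\lambda$ first and then choose $\chi_0$.
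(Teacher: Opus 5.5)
Your proposal is correct and follows the paper's proof. You fix $\lambda$ first, using two inputs: on the compact set $\{\varphi\le c\}\supset\Sigma$, compactness together with $\omega_\lambda\ge\omega$; and outside it, rapid growth of $\lambda'$ where $\varphi$ is strictly psh. You then choose $\chi_0$ in the same way so that Condition~\ref{C:specgapgeom2} holds for $(X,\omega_\lambda)$, $(L,h^L_\chi)$, $(E,h^E)$, and conclude via Theorem~\ref{noncompact1} and Theorem~\ref{t2.1} with twisting bundle $E\otimes K_X$.

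You also supply details the paper leaves implicit. These are the exhaustion $\int_0^{\varphi}\sqrt{\lambda''}$ with bounded gradient, which gives completeness, and the domination of the $\lambda''(\varphi)\,\imat\partial\varphi\wedge\db\varphi$ part of $\omega_\lambda$ by $\imat\partial\db\chi_0(\varphi)$.
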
		
%-----------------------------------------------------------
\begin{proof}
(a) We first observe that the $(1,1)$-form $\omega_\lambda$
in \eqref{eq:omega-lambda} is a K\"ahler form since $\varphi$
is psh on $X$. Moreover, the same argument as above shows
that  $\omega_\lambda$ is complete provided $\lambda$ satisfies
\eqref{eq:lambda''}.
For $a\in\R$, we denote by $X_a=\{x\in X:\varphi(x)<a\}\Subset X$.
Let us consider $c<d$ such that $\Sigma\subset X_c\subset X_d$.
There exists $C>0$ such that $\imat R^E\geq-C\omega\otimes\Id_E$
on $X_d$, and thus also $\imat R^E\geq-C\omega_\lambda\otimes\Id_E$.
Given that $\varphi$ is strictly psh outside $\Sigma$, we can select 
$\lambda$ to be rapidly increasing such that 
$\imat R^E \geq -\omega_\lambda \otimes \Id_E$ on $X \setminus X_c$, 
and $\omega_\lambda$ is complete.

\noindent
(b) We fix $\lambda\in\cC$ as in (a).
There exists $\varepsilon>0$ such that
$\imat R^{(L,h^L)}\geq\varepsilon\omega_\lambda$ on $X_d$.
Since$\imat\partial\db\chi(\varphi)$ is semi-positive on $X$ for any $\chi\in\cC$,
we have $\imat R^{(L,h^L_{\chi})}\geq\varepsilon\omega_\lambda$
on $X_d$.
Given that $\varphi$ is strictly psh outside $\Sigma$, there exists
$\chi_0\in\cC$ such that $\imat\partial\db\chi_0(\varphi)\geq\varepsilon\omega_\lambda$
on $X\setminus X_c$.
Since $\imat R^{(L,h^L)}$ is positive, we have 
$\imat R^{(L,h^L_{\chi_0})}\geq\varepsilon\omega_\lambda$
on $X\setminus X_c$. Hence, 
\begin{equation}\label{eq:ii3}
\imat R^{(L,h^L_{\chi_0})}\geq\varepsilon\omega_\lambda \quad
\text{on $X$}.
\end{equation}
By \eqref{eq:RE2}, \eqref{eq:ii3}, and the fact that $\omega_\lambda$
is K\"ahler, we deduce that Condition \ref{C:specgapgeom2}
is satisfied, thereby 
ensuring that (1) and (2) follow from Theorem \ref{noncompact1}.
\end{proof}
We consider now weakly $1$-complete manifolds. To give examples, note that: 		
%------------		
%\begin{examples}		
\\[2pt]		
(i) Any $1$-convex		
manifold is weakly $1$-complete.		
\\[2pt]		
(ii) If $X$ and $Y$ are complex manifolds		
and there exists a proper holomorphic map $\pi: X\to Y$ and $Y$ is weakly $1$-complete,		
then $X$ is weakly $1$-complete, too. Indeed, if $\varphi$ is a smooth 		
psh exhaustion function on $Y$,		
Then $\varphi\circ\pi$ is a smooth 		
psh exhaustion function on $X$.		
%\end{examples}		
%------------		
\begin{theorem}		
Let $X$ be a weakly $1$-complete manifold of dimension $n$		
and let $\varphi:X\to\R$ be a smooth psh exhaustion function.		
Let $(L,h^L)$ be a positive line bundle. We consider the K\"ahler metric
$\omega=\imat R^{(L,h^L)}$ on $X$.
Then the following holds:

\noindent		
(a) There exists $\lambda\in\cC$ 
such that $\omega_\lambda$ is a complete
K\"ahler.

\noindent		
(b) For any $\chi\in\lambda+\cC$, 
we have with 
$h^L_\chi\coloneqq h^L e^{-\chi(\varphi)}${\rm{:}}

\noindent		
(1) The Bergman kernel asymptotics for 
$H^{n,0}_{(2)}(X,L^p,(h^L_\chi)^p)$		
holds on compact sets of $X$. 		
	
\noindent	
(2) The Berezin-Toeplitz quantization package holds 		
for the K\"ahler manifold $(X,\omega_\chi)$, the algebra		
$\cC^\infty_{\rm const}(X)$ and quantum spaces		
$H^{n,0}_{(2)}(X,L^p,(h^L_\chi)^p)$.		
\end{theorem}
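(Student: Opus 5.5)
The plan is to reduce everything to Theorem~\ref{noncompact1} by checking Condition~\ref{C:specgapgeom2} for the K\"ahler manifold $(X,\omega_\chi)$ and the bundles $(L,h^L_\chi)$, $(E,h^E)=(\C,\text{trivial metric})$. This follows the pattern of Theorems~\ref{T:Stein1} and~\ref{T:1convex}, but here the reference metric is built from the curvature of $L$ itself. The twisting bundle is trivial, so the condition $\imat R^E>-C\Theta$ holds automatically. Since the quantum spaces are spaces of $(n,0)$-forms, no condition on $R^{\det}$ is needed.

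For (a), I would take $\omega_\lambda=\omega+\imat\partial\db\lambda(\varphi)$ as in \eqref{eq:omega-lambda}, with $\lambda\in\cC$ satisfying \eqref{eq:lambda''}. Because $\varphi$ is psh and $\lambda$ is convex and increasing, $\imat\partial\db\lambda(\varphi)=\lambda'(\varphi)\imat\partial\db\varphi+\lambda''(\varphi)\imat\partial\varphi\wedge\db\varphi\geq0$. Hence $\omega_\lambda\geq\omega$, and $\omega_\lambda$ is K\"ahler, being closed and positive.

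Completeness is the one point requiring care. I would use the term $\lambda''(\varphi)\imat\partial\varphi\wedge\db\varphi$: it gives $|d(\mu\circ\varphi)|_{\omega_\lambda}\le C$ for $\mu(t)=\int_0^t\sqrt{\lambda''(s)}\,ds$. Condition \eqref{eq:lambda''} makes $\mu\circ\varphi$ an exhaustion with bounded gradient, and then Hopf--Rinow, via the standard criterion, gives completeness. This is the same argument asserted after \eqref{eq:omega-lambda}, and it does not use strict psh-ness of $\varphi$.

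For (b), the key step is the curvature computation for $\chi\in\lambda+\cC$. Write $\chi=\lambda+\eta$ with $\eta\in\cC$. Then
\[
\imat R^{(L,h^L_\chi)}=\omega+\imat\partial\db\lambda(\varphi)+\imat\partial\db\eta(\varphi)\geq\omega_\lambda,
\]
since $\imat\partial\db\eta(\varphi)\geq0$ by the same convexity computation. So $\imat R^{(L,h^L_\chi)}>\tfrac12\omega_\lambda$ with the complete K\"ahler metric $\Theta=\omega_\lambda$, whose torsion satisfies $\partial\Theta=0$.

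The remaining check is that the prequantum form $\omega_\chi:=\imat R^{(L,h^L_\chi)}$ (up to the normalization $\frac{1}{2\pi}$) is itself complete, because the statement uses $(X,\omega_\chi)$. Here $\omega_\chi\geq\omega_\lambda$, and a metric dominating a complete metric is complete, so $\omega_\chi$ is complete as well. Thus Condition~\ref{C:specgapgeom2} holds, either with $\Theta=\omega_\lambda$ or with $\Theta=\omega_\chi$, where $\imat R^L=\Theta$ exactly.

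Theorem~\ref{noncompact1} then gives the spectral gap on $(n,q)$-forms, $q>0$. Since $H^{n,0}_{(2)}$ is independent of the choice of Hermitian metric, Theorem~\ref{t2.1} (equivalently Theorem~\ref{t2.11}, with $E=K_X$ treated as twisting) yields (1) and (2). I expect the only delicate point to be matching the metric in the statement with the one used for the gap. Choosing $\Theta=\frac{1}{2\pi}\omega_\chi$ directly in Theorem~\ref{t2.11} resolves this, since there $\sqrt{-1}R^E=0>-C\omega_\chi$ trivially.
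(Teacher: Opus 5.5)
Your proposal is correct and follows the paper's proof. Part (a) comes from choosing $\lambda$ with \eqref{eq:lambda''}, and part (b) from $\imat R^{(L,h^L_\chi)}=\omega+\imat\partial\db\chi(\varphi)\geq\omega_\lambda$, which verifies Condition~\ref{C:specgapgeom2} for the complete K\"ahler metric $\omega_\lambda$ (trivial $E$, no torsion) so that Theorems~\ref{noncompact1} and~\ref{t2.1} apply. Your extra details only make explicit what the paper leaves implicit: the bounded-gradient exhaustion $\mu\circ\varphi$ for completeness, and the observation that $\omega_\chi\geq\omega_\lambda$ is itself complete, so Theorem~\ref{t2.11} can be used directly.
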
		
%-----------------------------------------------------------
\begin{proof} (a) If $\lambda\in\cC$ satisfies \eqref{eq:lambda''}
then $\omega_\lambda$ is a complete K\"ahler metric.

\noindent
(b) For $\chi\in\lambda+\cC$ we have 
$\imat R^{(L,h^L_\chi)}\geq\omega_\lambda$
so Condition \ref{C:specgapgeom2}
is satisfied. 
\end{proof}
%
%%-----------------------------------------------------------
%\begin{theorem}
%Let $X$ be a Stein manifold and let $\varphi:X\to\R$ be a smooth psh 
%exhaustion function.
%let $L$ and $E$ be two holomorphic vector bundles on $X$, 
%where $\rank L=1$.
%Then there exists a convex increasing function $\chi_0:\R\to\R$ 
%such that for any convex increasing function $\chi:\R\to\R$ with 
%$\chi\geqslant\chi_0$ we have:
%
%\noindent
%(1) The Bergman kernel asymptotics for 
%$H^0_{(2)}(X,L^p\otimes E)\subset L^2(X,L^p\otimes E,
%h^Le^{-\chi(\varphi)})$ holds on compact sets of $X$.
%
%\noindent
%(2) The Berezin-Toeplitz quantization package holds for the algebra 
%$\cC^\infty_{\rm const}(X,E)$ and quantum spaces 
%$H^0_{(2)}(X,L^p\otimes E)$.
%\end{theorem}
%%---
\subsection{Big line bundles and quasiprojective manifolds}
Let $X$ be a compact complex manifold $X$ of dimension $n$.
A holomorphic line bundle $L$ on $X$ is called big
if its Kodaira-Iitaka dimension equals the dimension of $X$,
equivalently if 
\[\limsup_{p\to\infty} p^{-n}\dim H^0(X,L^p)>0.\]
If a compact manifold $X$ admits a big line bundle then $X$ 
is Moishezon and $L$ admits a singular metric $h^L$, 
smooth outside a proper analytic subset $\Sigma$ 
of $X$, and with strictly positive curvature
current $\imat R^{h^L}$ (see e.\,g.\ \cite[Lemma 2.3.6]{MM07}).
The main result of this section is the Berezin-Toeplitz
quantization of this Zariski open set 
endowed with the generalized Poincar\'e metric.

Let $X$ be a compact connected complex manifold of dimension $n$. 
Let $\Sigma$ be a closed analytic subset of $X$.
Let $\pi:\widetilde{X}\longrightarrow X$ be a resolution of 
singularities %(cf.\ Theorem \ref{hiro1}) 
such that $\pi: 
\widetilde{X}\setminus\pi^{-1}(\Sigma)\longrightarrow 
X\setminus \Sigma$ is biholomorphic and $\pi^{-1}(\Sigma)$ 
is a divisor with normal crossings. 
More precisely, there exists a finite sequence of blow-ups 
\begin{equation} \label{bnm2.1}
\widetilde{X}=X_m \stackrel{\tau_m}{\longrightarrow}{X_{m-1}} 
\stackrel{\tau_{m-1}}{\longrightarrow}\dotsm 
\stackrel{\tau_2}{\longrightarrow}{X_1} 
\stackrel{\tau_1}{\longrightarrow}{X_0}=X 
\end{equation} 
such that 
\begin{description} 
\item[(a)] $\tau_i$ is the blow-up along a non-singular center $Y_{i-1}$  
contained in the strict transform of $\Sigma$ in $X_{i-1}$, $i\geqslant{1}$,
\item[(b)] the strict transform of $\Sigma$ in $\wi{X}=X_{m}$ through  
$\pi=\tau_1\circ\tau_{2}\circ\dotsm\circ\tau_m$ is smooth 
and $\pi^{-1}(\Sigma)$ is a divisor with normal crossings.
\end{description}

Let $g^{T\wi{X}}_0$ be an arbitrary smooth $J$-invariant  metric  
on $\widetilde X$ and $\Theta '(\cdot,\cdot)=g^{T\wi{X}}_0(J\cdot,\cdot)$ 
the corresponding $(1,1)$-from. The \textit{generalized Poincar\'e metric}
on 
$X\setminus \Sigma=\widetilde X\setminus\pi^{-1}(\Sigma)$ 
is defined by the Hermitian form
\begin{equation}\label{poin} 
\Theta_{\var_0}=\Theta ' +\varepsilon_0\sqrt{-1}{\textstyle\sum}_i
\db \partial\log\left((-\log(\|\sigma_i\|^2_i))^2\right)\,, 
\quad \text{$0<\varepsilon_0\ll 1$ fixed}, 
\end{equation}   
where $\pi^{-1}(\Sigma)= \cup_i \Sigma_i$ is the decomposition into 
irreducible components $\Sigma_i$ of $\pi^{-1}(\Sigma)$
and each $\Sigma_i$ is non-singular; $\sigma_i$ are holomorphic sections 
of the associated  holomorphic line bundle $\cO_{\wi{X}}(\Sigma_i)$ 
which vanish to first order on $\Sigma_i$, and 
$\|\sigma_i\|_i$ is the norm for a smooth Hermitian metric $\|\cdot\|_i$ 
on $\cO_{\wi{X}}(\Sigma_i)$ such that $\|\sigma_i\|_i<1$.
 Let $R^{\cO_{\wi{X}}(\Sigma_i)}$ be the curvature of 
$(\cO_{\wi{X}}(\Sigma_i), \|\cdot\|_i)$.
%===
\begin{lemma}[{\cite[Lemma 6.2.1]{MM07}}]\label{lem-poin}
{\rm(i)} The generalized Poincar\'e metric \eqref{poin} 
is a complete Hermitian metric of finite volume. Its Hermitian torsion 
$\mathcal{T}_{\varepsilon_0}
=[i(\Theta_{\varepsilon_0}),\partial\Theta_{\varepsilon_0}]$  and 
the curvature $R^{\det}=R^{K^*_X}$ is also bounded.
%which dominates the Euclidean metric near $\pi^{-1}(\Sigma)$.

\noindent
{\rm(ii)} If $(E,h^E)$ is a holomorphic Hermitian vector bundle over $X$, set
\begin{equation}\label{bnm2.2} 
H_{(2)}^0(X\setminus\Sigma,E)
=\big\lbrace u\in L_{0,0}^2(X\setminus \Sigma, E\,,\,
\Theta_{\var_0}\,,h^E): \db^{E}u=0\big\rbrace ,
\end{equation}   
then 
\begin{equation}\label{bnm2.3} 
H_{(2)}^0(X\setminus\Sigma,E)=H^0(X,E).
\end{equation}   
\end{lemma}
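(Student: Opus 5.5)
We work in local coordinates near $\pi^{-1}(\Sigma)$ on $\wi X$: near a point of $\pi^{-1}(\Sigma)$ lying on exactly $k$ components, pick holomorphic coordinates $(z_1,\dots,z_n)$ on a polydisc with $\pi^{-1}(\Sigma)=\{z_1\cdots z_k=0\}$ and $\|\sigma_i\|_i^2=|z_i|^2 e^{-\psi_i}$ with $\psi_i$ smooth. Since $X\setminus\Sigma=\wi X\setminus\pi^{-1}(\Sigma)$, every statement is local on $\wi X$ near such points, and away from $\pi^{-1}(\Sigma)$ everything is smooth on a compact set.

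\emph{(i)} Expand $\imat\db\partial\log((-\log\|\sigma_i\|^2)^2)$. Writing $\ell_i=-\log\|\sigma_i\|_i^2$, one has $\imat\db\partial\log \ell_i^2 = 2\imat\,\frac{\partial\ell_i\wedge\db\ell_i}{\ell_i^2}-2\imat\frac{\partial\db\ell_i}{\ell_i}$, and $\imat\partial\db\ell_i = -\imat\partial\db\psi_i$ is smooth (this is $c_1$ of $\cO(\Sigma_i)$ up to sign). Thus $\Theta_{\var_0}$ is quasi-isometric, for $\var_0$ small (to absorb the term $\var_0 \ell_i^{-1}\cdot$smooth into $\Theta'$), to the model product metric
\[
\sum_{i\le k}\frac{\imat\,dz_i\wedge d\ov z_i}{|z_i|^2\log^2|z_i|^2}+\sum_{j>k}\imat\,dz_j\wedge d\ov z_j ,
\]
which is the product of punctured-disc Poincaré metrics and Euclidean discs. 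From the model, completeness follows since $\int_0^{1/2} dr/(r|\log r|)=\infty$, and finite volume since $\int_0^{1/2} r\,dr/(r^2\log^2 r)<\infty$; globally this follows by covering $\wi X$ by finitely many such charts. For the torsion and $R^{\det}$ bounds, the cleanest route is to use the quasi-coordinates of Cheng--Yau/Kobayashi: on the punctured disc, the maps $w\mapsto z=\exp(\frac{1+w}{1-w}\log\eta)$, $\eta\to 0$, from a fixed disc $|w|<1/2$ cover a neighbourhood of $0$, and in these coordinates the pulled-back $\Theta_{\var_0}$ together with all its derivatives is uniformly bounded, with a uniform lower bound. Since $\partial\Theta_{\var_0}$ and $R^{\det}$ are tensorial, their $\Theta_{\var_0}$-norms can be computed in quasi-coordinates, where they are uniformly bounded. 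The main obstacle is checking that the correction terms $\var_0\,\partial\db\psi_i/\ell_i$ and $\Theta'$ stay bounded with all derivatives in quasi-coordinates. This holds because $d\ell_i/\ell_i$ is bounded there, and derivatives of smooth functions of $z$ in the $w$-variables decay (they carry factors $|z|$).

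\emph{(ii)} One inclusion is clear: a section of $E$ on $X$ restricts to one that is bounded near $\Sigma$, and $\Theta_{\var_0}$ has finite volume, so it lies in $L^2$ (with $h^E$ smooth on $X$). For the converse, let $u\in H^0_{(2)}$. Pull back to $\wi X$ via $\pi$; it suffices to show that $\pi^*u$ extends holomorphically across $\pi^{-1}(\Sigma)$, and then that it descends to $X$. In the model chart the volume form of $\Theta_{\var_0}$ dominates $c\prod_{i\le k}\frac{dv_{\rm eucl}}{|z_i|^2\log^2|z_i|^2}$, which in turn is $\geq c'\,dv_{\rm eucl}$. Hence $u$ is $L^2$ with respect to Lebesgue measure on the polydisc minus a divisor, so by the Riemann extension theorem for $L^2$ holomorphic functions (Laurent coefficients with negative index are not $L^2$) it extends holomorphically. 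Finally, $\pi_*\cO_{\wi X}=\cO_X$ and $\pi$ is biholomorphic off $\Sigma$, so the extended section of $\pi^*E$ is of the form $\pi^*s$ for some $s\in H^0(X,E)$. Alternatively, one can invoke Hartogs when $\operatorname{codim}\Sigma\ge2$ and the same $L^2$ extension argument on $X$ when it is a divisor.
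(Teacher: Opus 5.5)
Your argument is correct and matches the standard proof that the paper cites from \cite[Lemma 6.2.1]{MM07}. There, completeness and finite volume come from comparing $\Theta_{\var_0}$ with the product of punctured-disc Poincar\'e metrics. For (ii), the domination of a smooth volume form near $\pi^{-1}(\Sigma)$ makes $L^2$ sections locally $L^2$ for Lebesgue measure, so they extend by the $L^2$ Riemann extension theorem and then descend to $X$.

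One simplification: the correction term $\var_0\imat\db\partial\log(\ell_i^2)$ is $d$-closed, so $\partial\Theta_{\var_0}=\partial\Theta'$; together with $\Theta_{\var_0}\geq c\,\Theta'$ this bounds the torsion directly, and quasi-coordinates are needed only for $R^{\det}$.
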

%===
\begin{lemma}[{\cite[Lemma 6.2.2]{MM07}}]\label{bnmt2.2}  
There exists a singular Hermitian line bundle  
$(\widetilde{L},h^{\widetilde L})$ on $\widetilde{X}$ which is strictly  
positive and 
$\widetilde{L}|_{\widetilde{X}\setminus\pi^{-1}(\Sigma)}\cong 
\pi^*(L^{k_0})$, for some $k_0\in\N$.
\end{lemma}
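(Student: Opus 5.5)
We need a singular Hermitian metric on $\widetilde L:=\pi^*(L^{k_0})\otimes\cO_{\wi X}(-\sum_i m_i\Sigma_i)$, for suitable integers $k_0$, $m_i$, which is smooth away from $\pi^{-1}(\Sigma)$ and has strictly positive curvature current. On $\wi X\setminus\pi^{-1}(\Sigma)$ the divisor part is canonically trivialized by $\prod\sigma_i^{m_i}$, which gives the required identification $\widetilde L\cong\pi^*(L^{k_0})$ there.

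\emph{Step 1: a positive line bundle on $\wi X$.} Since $L$ is big, $X$ is Moishezon, and after enlarging the resolution \eqref{bnm2.1} we may assume $\wi X$ is projective. The pull-back $\pi^*L$ is still big on $\wi X$. By Kodaira's lemma there are $k_0\in\N$ and an effective divisor $D$ with $\pi^*(L^{k_0})=A\otimes\cO_{\wi X}(D)$, where $A$ is ample. We need $D$ supported in $\pi^{-1}(\Sigma)$, i.e.\ $D=\sum m_i\Sigma_i$. To get this, take $\Sigma$ to contain the non-ample locus of $L$. By \cite[Lemma 2.3.6]{MM07}, $L$ carries a singular metric with strictly positive curvature current, smooth outside $\Sigma$. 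After the resolution, we may assume the augmented base locus of $\pi^*L$ lies in $\pi^{-1}(\Sigma)$; this locus is a union of components $\Sigma_i$ of the normal crossings divisor.

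\emph{Step 2: the metric.} Choose a smooth metric $h^A$ on $A$ with positive curvature, and let $\|\cdot\|_i$ be the metrics on $\cO(\Sigma_i)$. On $A=\pi^*(L^{k_0})\otimes\cO(-D)$ this corresponds to the singular metric on $\pi^*(L^{k_0})$ with weight $h^A\prod_i\|\sigma_i\|_i^{-2m_i}$. We set $\widetilde L=\pi^*(L^{k_0})$ on $\wi X$ with this metric $h^{\wi L}$. Its curvature current is
\[
\imat R^{h^{\wi L}}=\imat R^{A}+2\pi\sum_i m_i[\Sigma_i]\geq \imat R^A>0 .
\]
So it is strictly positive, and the metric is smooth outside $\pi^{-1}(\Sigma)$. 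If one instead wants $\widetilde L$ to mean $A$ itself, then $h^A$ is even smooth, and $A|_{\wi X\setminus\pi^{-1}(\Sigma)}\cong\pi^*(L^{k_0})$ via the nonvanishing section $\prod\sigma_i^{m_i}$.

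\emph{Main obstacle.} The delicate point is Step 1: placing the Kodaira-lemma divisor $D$ inside $\pi^{-1}(\Sigma)$, or equivalently choosing $\Sigma$ and the resolution compatibly. I would first try to define $\Sigma$ as the base locus of a suitable $|L^{k}|$ union the exceptional locus of the associated map. Resolving this locus via Hironaka gives $\pi^*L^{k}=M\otimes\cO(F)$, where $M$ is base-point free and big and $F$ is supported on $\pi^{-1}(\Sigma)$. Further blow-ups along centers inside $\pi^{-1}(\Sigma)$ then make $M$ differ from an ample bundle by a divisor $-\sum\epsilon_i\Sigma_i$ with small rational $\epsilon_i$; this is the standard fact that a pull-back of an ample bundle minus a small combination of the exceptional divisors is ample. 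Clearing denominators produces $k_0$ and $m_i$.
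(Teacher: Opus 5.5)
There is a genuine gap, and it sits exactly at the step you call the main obstacle.

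\textbf{Where the proposal breaks.} In the lemma, $\Sigma$ is the given analytic set outside which the given strictly positive metric $h^L$ is smooth, and $\pi:\wi X\to X$ is the resolution \eqref{bnm2.1} of that $\Sigma$. The same data define the Poincar\'e metric \eqref{poin} and the space \eqref{ell8}, and Theorem~\ref{moi} is a statement about $X\setminus\Sigma$. So you may not redefine or enlarge $\Sigma$ (``take $\Sigma$ to contain the non-ample locus'', ``define $\Sigma$ as the base locus of $|L^k|$ union the exceptional locus''). Nor may you blow up further. Either move proves a statement about a different $\Sigma$ and a different $\wi X$.

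On the given $\wi X$, nothing in the hypotheses makes it projective, so an ample $A$ need not exist. Lemma 2.3.6 of [MM07] does not help: it produces \emph{some} metric with \emph{some} singular set, and says nothing about $\mathbb{B}_+(\pi^*L)$ relative to the given $\Sigma$.

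Even when $\wi X$ is projective and $\mathbb{B}_+(\pi^*L)\subset\pi^{-1}(\Sigma)$, Step 1 can fail, because $\mathbb{B}_+$ need not be a union of components $\Sigma_i$. Here is an example.
\begin{itemize}
\item Let $X$ be a projective threefold and $L$ the pull-back of an ample bundle by a small contraction of a smooth curve $C$. Then $L\cdot C=0$, and $L$ carries strictly positive singular metrics smooth off $C$.
\item Let $\Sigma=S\supset C$ be a smooth surface in a very ample system $|H|$.
\item Then $\pi=\Id$, and the lemma holds trivially with $\wi L=L$, $h^{\wi L}=h^L$.
\item Yet $(k_0L-mS)\cdot C=-m\,H\cdot C\le 0$ for every $m\ge 0$, so no decomposition $L^{k_0}=A\otimes\cO(mS)$ with $A$ ample exists.
\end{itemize}
Your Step 2 is fine once such a decomposition is available; the failure is entirely in Step 1.

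\textbf{The missing idea.} Use the given metric $h^L$ directly; neither projectivity nor Kodaira's lemma is needed.
\begin{itemize}
\item Since $\imat R^{(L,h^L)}\ge\varepsilon\Theta$ as currents, the local weights of $\pi^*h^L$ are psh. On $\wi X$ one gets $\imat R^{(\pi^*L,\pi^*h^L)}\ge\varepsilon'\pi^*\Theta$, which fails to be strictly positive only along directions contracted by $\pi$.
\item For a single blow-up $\tau:X_1\to X_0$ with exceptional divisor $E$, one has $\cO(-E)|_E\cong\cO_{\mathbb{P}(N)}(1)$. Hence $\cO(-E)$ carries a metric with $\tau^*\Theta_0+\delta\,\imat R^{\cO(-E)}>0$ for small $\delta>0$.
\item Iterate this along \eqref{bnm2.1}. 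All exceptional divisors and their total transforms are supported in $\pi^{-1}(\Sigma)$, because the centers lie in the strict transforms of $\Sigma$.
\item Choosing the $\delta_j$ rational and clearing denominators gives $N\in\N$, integers $m_i\ge 0$, and a smooth metric on $\cO_{\wi X}(-\sum_i m_i\Sigma_i)$ with $N\pi^*\Theta+\imat R^{\cO(-\sum_i m_i\Sigma_i)}>0$.
\item Take $k_0\ge N/\varepsilon'$ and set $\wi L=\pi^*L^{k_0}\otimes\cO_{\wi X}(-\sum_i m_i\Sigma_i)$ with $h^{\wi L}=(\pi^*h^L)^{k_0}\otimes h^{\cO(-\sum_i m_i\Sigma_i)}$. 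This is strictly positive, smooth off $\pi^{-1}(\Sigma)$, and isomorphic to $\pi^*L^{k_0}$ there.
\item The induced metric on $\pi^*L^{k_0}$ has psh weights, which is what the later argument after the lemma uses.
\end{itemize}
This is the standard argument behind the cited lemma. It works for the fixed resolution and does not make $\wi X$ projective, consistent with the lemma asserting only a singular metric.
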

%===
We introduce on $L|_{X\setminus\Sigma}$ the metric 
$(h^{\widetilde L})^{1/k_0}$ whose curvature extends to 
a strictly positive $(1,1)$--current on $\widetilde X$. Set
\begin{subequations}
\begin{align}\label{ell7}  
&h^L_{\varepsilon}:=(h^{\widetilde L})^{1/k_0}\,
\prod_i(-\log(\|\sigma_i\|^2_i))^\var\,, \quad 0<\varepsilon\ll 1\,,\\
&\label{ell8} 
H^0_{(2)}(X\setminus \Sigma,L^p)
:=\big\lbrace u\in L_{0,0}^2(X\setminus \Sigma, 
L^p\,,\,\Theta_{\var_0}\,,h^L_{\varepsilon}):\db^{L^p}u=0\big\rbrace .
\end{align} 
\end{subequations}
The space $H^0_{(2)}(X\setminus \Sigma,L^p)$ is the space of
 $L^2$-holomorphic sections relative to the metrics $\Theta_{\var_0}$ on 
$X\setminus \Sigma$ and 
$h^L_\varepsilon$ on $L|_{X\setminus \Sigma}$. 
Since $(h^{\widetilde L})^{1/k_0}$ is bounded away from zero 
(having plurisubharmonic weights), and its curvature extends 
a strictly positive $(1,1)$-current on $\wi{X}$, 
the elements of this space are $L^2$ integrable 
with respect to the Poincar\'e metric and a smooth metric $h^L_{*}$ 
of $L$ over the whole $X$. By the proof of Lemma \ref{lem-poin}
given in \cite[Proof of Lemma 6.2.1.(ii)]{MM07},
we have 
%\begin{align}\label{bnm2.14}  
$H^0_{(2)}(X\setminus\Sigma,L^p)\subset H^0(X,L^p)$.
%\end{align} 
The space  $H^0_{(2)}(X\setminus \Sigma,L^p)$ 
is our space of polarized sections. 
%===
\begin{theorem}\label{moi} 
Let $X$ be a compact complex manifold with an integral K\"ahler 
current $\omega$. Let $(L,h^L)$ be a singular polarization of $[\omega]$ 
with strictly positive curvature current having 
singular support contained in a proper analytic set $\Sigma$. 
Then the following statements hold:

\noindent
(1) 
The Bergman kernel of the space of polarized sections \eqref{ell8} 
has the asymptotic expansion on compact sets of $X\setminus\Sigma$.

\noindent
(2) The Berezin-Toeplitz quantization package holds for the K\"ahler manifold
$(X\setminus\Sigma,\omega_\varepsilon)$, with
$\omega_\varepsilon=\sqrt{-1}R^{h^L_{\varepsilon}}$.
\end{theorem}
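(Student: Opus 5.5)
The plan is to reduce Theorem~\ref{moi} to Theorem~\ref{t2.12}, or rather to Theorem~\ref{t2.1} via Theorem~\ref{noncompact0}. We work on the manifold $M:=X\setminus\Sigma=\wi X\setminus\pi^{-1}(\Sigma)$. The data are the Hermitian form $\Theta_{\var_0}$ from \eqref{poin}, the line bundle $(L|_M,h^L_\varepsilon)$ from \eqref{ell7}, and $E=\C$ with the trivial metric. The quantum spaces are exactly $H^0_{(2)}(X\setminus\Sigma,L^p)$ from \eqref{ell8}. It therefore suffices to verify Condition~\ref{C:specgapgeom1} for $(M,\Theta_{\var_0})$ and $(L,h^L_\var)$, with $\var$ small relative to $\var_0$. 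Then Theorem~\ref{noncompact0} gives the spectral gap \eqref{bk1.4}. Theorem~\ref{t2.1} then yields (1) on compact subsets of $M$ and (2) for the K\"ahler form $\omega_\var=\frac{\imat}{2\pi}R^{h^L_\var}$, up to the normalization used in the statement.

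First, completeness, boundedness of the torsion $|\partial\Theta_{\var_0}|$ and boundedness of $R^{\det}$ come directly from Lemma~\ref{lem-poin}(i). Since $E$ is trivial, the condition $\imat(R^{\det}+R^E)>-C\Theta_{\var_0}$ also follows. It remains to prove the positivity $\imat R^{h^L_\var}>c\,\Theta_{\var_0}$ on $M$ for some $c>0$. Compute
\[
\imat R^{h^L_\var}=\tfrac1{k_0}\,\imat R^{h^{\wi L}}+\var\,\imat{\textstyle\sum}_i\db\partial\log\big(-\log\|\sigma_i\|_i^2\big).
\]
The second term equals $\frac{\var}{2\var_0}$ times the Poincar\'e part of \eqref{poin}. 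That part dominates $\Theta_{\var_0}-\Theta'$ near $\pi^{-1}(\Sigma)$, up to terms of the form $\var\,\imat R^{\cO(\Sigma_i)}/\log(\|\sigma_i\|^{-2})$, which are bounded by a small multiple of $\Theta'$. By Lemma~\ref{bnmt2.2}, the first term is a strictly positive current that is smooth on $M$. Hence it dominates $c_1\Theta'$ on $M$; choosing $h^{\wi L}$ as in \cite[Lemma 6.2.2]{MM07}, with analytic singularities along $\pi^{-1}(\Sigma)$, this domination holds uniformly up to $\pi^{-1}(\Sigma)$. Summing the two contributions gives $\imat R^{h^L_\var}\ge c(\Theta'+\text{Poincar\'e part})=c\,\Theta_{\var_0}$, provided the error terms are absorbed. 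This absorption works for $0<\var\ll1$ and a suitable $\var_0$, exactly as in \cite[Theorem 6.2.3]{MM07}.

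The main obstacle is precisely this uniform positivity estimate near the normal crossings divisor. Two issues arise there. The first is that the strictly positive current $\imat R^{h^{\wi L}}$ must be bounded below by a \emph{smooth} metric $\Theta'$ uniformly up to the boundary. The second is that the mixed terms $\partial\log\|\sigma_i\|^2\wedge\db\log\|\sigma_i\|^2/(\log\|\sigma_i\|^2)^2$ must be compared with the same terms in $\Theta_{\var_0}$. I will handle this by working in local coordinates in which $\pi^{-1}(\Sigma)=\{z_1\cdots z_k=0\}$ and estimating the model Poincar\'e metric $\sum_{j\le k}\frac{\imat\, dz_j\wedge d\ov z_j}{|z_j|^2\log^2|z_j|^2}+\sum_{j>k}\imat\,dz_j\wedge d\ov z_j$ explicitly. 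With positivity in hand, the remaining steps are formal applications of the earlier theorems.
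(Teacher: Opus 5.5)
Your proposal is correct and follows the paper's route. The paper cites \cite[Theorem 6.2.3]{MM07}, which establishes the spectral gap by verifying Condition~\ref{C:specgapgeom1} for $(X\setminus\Sigma,\Theta_{\var_0})$ and $(L,h^L_\var)$ through Lemmas~\ref{lem-poin} and \ref{bnmt2.2} (i.e.\ through Theorem~\ref{noncompact0}), and then applies Theorem~\ref{t2.1}; you simply write that verification out. The ``main obstacle'' you flag needs no local model computation: with $u_i=-\log\|\sigma_i\|_i^2\geq c>0$, both $\imat R^{h^L_\var}$ and $\Theta_{\var_0}$ are combinations of the same forms $\imat\partial u_i\wedge\db u_i/u_i^2$, $\imat R^{\cO(\Sigma_i)}/u_i$ and smooth forms on $\wi X$, so the uniform bound follows algebraically from $\tfrac1{k_0}\imat R^{h^{\wi L}}\geq c_1\Theta'$ on $X\setminus\Sigma$ for small $\var,\var_0$.
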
  
%===
\begin{proof}
It was shown in \cite[Theorem 6.2.3]{MM07}, by using
Lemmas \ref{lem-poin} and \ref{bnmt2.2}, that the spectral gap
\eqref{bk1.4} holds in the situation at hand.
Thus, both statements follow from Theorem \ref{t2.1}.
\end{proof}
%===
\subsection{Manifolds of bounded geometry}
The purpose of this section is to establish
the Berezin-Toeplitz quantization of manifolds with bounded geometry.
The result itself already appears in \cite[Lemma 4.6]{Fin22b} (derived there from our
method \cite{MM11,MM15});
see also \cite{Kordyukov_Mat_Zametki_2022} for a related statement.
We provide here a short, self-contained proof.
Let us first introduce the notion of bounded geometry in the form we will need it.
\begin{definition}\label{bndedgeomdef}
Let $(X,J,\Theta)$ be a Hermitian manifold and 
let $g^{TX}=\Theta(\cdot, J\cdot)$ be the associated Riemannian metric.
Let $(F,h^F)$ be a holomorphic Hermitian vector bundle.
We say that $(X,J,\Theta)$ and $(F,h^F)$ 
have bounded geometry if the derivatives of any 
order of $R^F$, $J$, $g^{TX}$ are uniformly bounded on $X$ 
in the norm induced by $g^{TX}$, $h^{F}$, and the injectivity radius of 
$(X,g^{TX})$ is positive.
\end{definition}
Let us denote by 
\begin{equation}\label{eq:C_b}
\cC^\infty_b(X,F):=\left\{f\in \cC^\infty(X,F):
\sup_{x\in X}|(\nabla^{F})^ks|_{g^{TX},h^F}<\infty
\text{ for any }k\in \N
\right\},
\end{equation}
where $\nabla^{F}$ is the connection induced by the Chern 
connection $\nabla^F$
and the Levi-Civita connection $\nabla^{TX}$ on the tensor algebra of $T^*X$,
and the norm $|\cdot|_{g^{TX},h^F}$ is induced by $g^{TX}$ and $h^F$.

\begin{assumption}\label{A:bdedgeom}
Let $(X,J,\Theta)$ be a Hermitian manifold of dimension $n$ 
with associated Riemannian metric $g^{TX}=\Theta(\cdot, J\cdot)$.
Let $(L,h^L)$, $(E,h^E)$ be holomorphic Hermitian vector bundles,
with $L$ of rank one.
Suppose that $(X, g^{TX})$ is complete and
$(X,J,\Theta)$, $(L,h^L)$, $(E,h^E)$
have bounded geometry.
\end{assumption}

We recall the following result about the 
exponential decay of the Bergman kernel.
%===
\begin{theorem}[{\cite[Theorem 1]{MM15}}]\label{thm:3.2new23}
In the situation of Assumption \ref{A:bdedgeom} assume 
that there exists $\varepsilon>0$ such that on $X$,
%\begin{align}\label{eq:0.6}
$\sqrt{-1}R^L> \varepsilon\Theta$\,.
%\end{align}
Then there exist $\boldsymbol{c} >0$, $\boldsymbol{p}_{0}>0$, 
which can be determined explicitly
from the geometric data %{\rm(}cf.\ \eqref{bk3.30}{\rm)} 
such that  for any $k\in \N$, there exists $C_k>0$ such that 
for any $p\geqslant \boldsymbol{p}_{0}$\,, $x,x'\in X$, we have 
\begin{equation}\label{eq:0.7}
\left| P_p(x,x')\right|_{\cC^k} \leqslant C_k \, p^{n+\frac{k}{2}}
\, \exp\!\left(- \boldsymbol{c} \,\sqrt{p}\, d(x,x')\right).
\end{equation}
\end{theorem}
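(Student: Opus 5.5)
The estimate \eqref{eq:0.7} follows from a spectral gap argument combined with weighted (Agmon-type) estimates made uniform over $X$ by bounded geometry. The proof has four steps.

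\textbf{Step 1: spectral gap.} The metric $g^{TX}$ is complete. Bounded geometry bounds $|\partial\Theta|$ and $R^{\det}$, and $R^E$ is bounded below. Together with $\sqrt{-1}R^L>\varepsilon\Theta$, Condition~\ref{C:specgapgeom1} holds. Theorem~\ref{noncompact0} then gives
\[
\spec(\square_p)\subset\{0\}\cup[C_1p,\infty[\quad\text{for } p\geq p_0 .
\]
Applying the argument in the proof of Theorem~\ref{t2.1}(1) to the $(0,1)$-part, we get
\[
\|D_ps\|^2\geq (2C_0p-C_L)\|s\|^2
\]
on $(0,1)$-forms. Since $g^{TX}$ is complete, $D_p$ is essentially self-adjoint on compactly supported forms by Theorem~\ref{esa}.

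\textbf{Step 2: weighted $L^2$ estimate.} Fix $x'\in X$ and let $\psi$ be a smoothed distance function to $x'$ with $|d\psi|\leq 2$. Bounded geometry gives such a $\psi$ uniformly, with bounded derivatives; take $\psi$ $1$-Lipschitz and uniformly close to $d(\cdot,x')$. For $\alpha=\boldsymbol c\sqrt p$, conjugate: $D_{p,\alpha}=e^{\alpha\psi}D_pe^{-\alpha\psi}=D_p-\alpha\, c(d\psi)$, where $c(\cdot)$ is Clifford multiplication. The perturbation has norm $O(\alpha)=O(\boldsymbol c\sqrt p)$. By the gap, for $\lambda$ on a small circle $|\lambda|=\delta\sqrt p$ in the spectrum-free region, the resolvent satisfies $\|(\lambda-D_p)^{-1}\|\leq C/\sqrt p$. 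Choosing $\boldsymbol c$ small, a Neumann series shows that $(\lambda-D_{p,\alpha})^{-1}$ exists with the same bound.

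The projection is $P_p=\frac1{2\pi i}\oint(\lambda-D_p)^{-1}d\lambda$, restricted to degree $0$. Hence $e^{\alpha\psi}P_pe^{-\alpha\psi}$ is bounded on $L^2$ uniformly in $p$. This is the main obstacle: the circle must avoid the nonzero spectrum of $D_p$, which lies in $|\lambda|\geq\sqrt{2C_0p-C_L}$. The gap at scale $\sqrt p$ is what permits a weight $e^{\boldsymbol c\sqrt p\,\psi}$, since the perturbation size $\boldsymbol c\sqrt p$ must stay below the gap size.

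\textbf{Step 3: from $L^2$ to pointwise $\cC^k$ bounds.} Work on balls $B(x,1)$ and $B(x',1)$ with uniform radius, available by the injectivity radius bound, in the rescaled coordinates $Z\mapsto Z/\sqrt p$. Bounded geometry gives elliptic estimates for $D_p^2$ with constants independent of the center. Each derivative costs a factor $\sqrt p$, and Sobolev embedding in $2n$ real dimensions costs $p^{n/2}$ on each side. The reproducing identity $P_p=P_p^2$ lets us write
\[
P_p(x,x')=\langle P_p\,\delta\text{-approximants}\rangle,
\]
localized by smooth bump functions of radius $p^{-1/2}$ at $x$ and $x'$. We apply these bounds to $e^{\alpha\psi}P_pe^{-\alpha\psi}$ composed with these cutoffs, using that $\psi$ varies by $O(1)$ on the balls. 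This gives
\[
|P_p(x,x')|_{\cC^k}\leq C_kp^{n+k/2}e^{-\boldsymbol c\sqrt p\,\psi(x)}.
\]
Replacing $\psi$ by $d(\cdot,x')$ changes the bound only by a bounded factor.

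\textbf{Step 4: uniformity.} All constants depend only on the bounded geometry data. Therefore $\boldsymbol c$ and $\boldsymbol p_0$ are explicit in terms of $\varepsilon$, $C_0$, $C_L$ and the geometry bounds.
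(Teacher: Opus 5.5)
The paper gives no proof of this statement; it is imported from [MM15, Theorem 1]. Your outline follows the standard route to that result: a spectral gap of order $p$ for $D_p^2$, exponential weights $e^{\boldsymbol c\sqrt p\,\psi}$ treated as an $O(\boldsymbol c\sqrt p)$ perturbation of the resolvent of $D_p$ on a circle of radius $\sim\sqrt p$, and then uniform local elliptic and Sobolev estimates at scale $p^{-1/2}$ supplied by bounded geometry. Steps 1 and 2 are sound.

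\textbf{The gap.} The last sentence of Step 3 fails, and the cause is your choice of $\psi$ in Step 2. You take $\psi$ smooth with uniformly bounded derivatives and uniformly close to $d(\cdot,x')$, i.e.\ $|\psi-d(\cdot,x')|\le\eta$ with $\eta>0$ independent of $p$. What your argument actually gives is $|P_p(x,x')|_{\cC^k}\le C_kp^{n+k/2}e^{-\boldsymbol c\sqrt p\,(\psi(x)-\psi(x'))}$. Converting $\psi(x)-\psi(x')$ into $d(x,x')$ costs a factor up to $e^{2\boldsymbol c\sqrt p\,\eta}$, because the discrepancy is multiplied by $\alpha=\boldsymbol c\sqrt p$. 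So this is not a bounded factor, and shrinking $\boldsymbol c$ does not help. For $d(x,x')\ge 4\eta$ you still get decay $e^{-\boldsymbol c\sqrt p\,d(x,x')/2}$. But in the range $p^{-1/2}\ll d(x,x')\le 2\eta$ the weighted bound gives nothing beyond $|P_p(x,x')|\le Cp^n$, which is weaker than \eqref{eq:0.7} there. The near-diagonal expansion cannot fill this range either, since its $\mO(p^{-\infty})$ remainder is not exponentially small.

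\textbf{The repair.} Smoothness of $\psi$ is never actually used, so you can drop it. Take $\psi_R=\min\{d(\cdot,x'),R\}$, which is bounded and $1$-Lipschitz.
\begin{itemize}
\item The weights $e^{\pm\alpha\psi_R}$ are bounded and preserve $\Dom(D_p)$. This also settles the domain problem of conjugating by the unbounded weight $e^{\alpha\psi}$, which Step 2 passes over.
\item You have $e^{\alpha\psi_R}D_pe^{-\alpha\psi_R}=D_p-\alpha\,c(d\psi_R)$, with $c(d\psi_R)$ bounded in $L^\infty$.
\item Your Neumann-series argument then gives $\|e^{\alpha\psi_R}P_pe^{-\alpha\psi_R}\|\le C$ uniformly in $p$ and $R$. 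Taking adjoints gives the same bound for $e^{-\alpha\psi_R}P_pe^{\alpha\psi_R}$, which is what handles the second variable.
\item In Step 3 you only need that $\alpha\psi_R$ varies by at most $\boldsymbol c$ on balls of radius $p^{-1/2}$, which holds for any $1$-Lipschitz function.
\item Apply the rescaled interior estimates to the holomorphic sections $P_pw$ near $x$ and near $x'$, and use that $\psi_R(x')=0$ exactly. For $R>d(x,x')$ this gives \eqref{eq:0.7}.
\end{itemize}
Alternatively, you may smooth $d(\cdot,x')$ at scale $p^{-1/2}$, so that $|\psi-d(\cdot,x')|\le p^{-1/2}$ and $|\nabla^j\psi|\le C_jp^{(j-1)/2}$. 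These bounds are exactly compatible with the rescaled Sobolev norms.
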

%===
In the context of bounded geometry, we have
the following:
%===
\begin{theorem}[{\cite[Lemma 4.6]{Fin22b}}]\label{thm:3.2new23b}
Under the hypotheses of Theorem \ref{thm:3.2new23},
%In the situation of Assumption \ref{A:bdedgeom} assume 
%that there exists $\varepsilon>0$ such that on $X$,
%\begin{align}\label{eq:0.6b}
%\sqrt{-1}R^L> \varepsilon\Theta\,.
%\end{align}
the Berezin-Toeplitz package holds for
the algebra $\cC^\infty_b(X,\End(E))$.
\end{theorem}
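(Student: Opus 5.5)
We prove the statement by redoing the proof of Theorem~\ref{t2.1}(2), with the local constants made uniform in $x_0\in X$, and with the exponential decay \eqref{eq:0.7} replacing compact support arguments.

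\textbf{Step 1: uniform spectral gap and uniform expansions.} Under Assumption~\ref{A:bdedgeom} with $\sqrt{-1}R^L>\varepsilon\Theta$, we first check that the torsion $|\partial\Theta|$ and $R^{\det}$ are bounded; this is bounded geometry. We also check that $\sqrt{-1}R^E\geq -C\Theta$. Hence Condition~\ref{C:specgapgeom1} holds, and Theorem~\ref{noncompact0} gives the spectral gap \eqref{bk1.4}.

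Bounded geometry means the normal-coordinate data at every $x_0$ lie in a bounded subset of $\cC^\infty$, and $a^X>0$. So the localization argument of Theorem~\ref{tue16}, the rescaling \eqref{bk2.21}--\eqref{bk2.22} and Theorem~\ref{tue17} all run with constants independent of $x_0\in X$. The expansion \eqref{toe2.9} therefore holds uniformly on all of $X$, with $J_{r,x_0}$ bounded together with all derivatives. For $f\in\cC^\infty_b(X,\End(E))$ the Taylor coefficients $\partial^\alpha f_{x_0}(0)$ are uniformly bounded. Thus Theorem~\ref{toet2.3} holds uniformly in $x_0$, and the diagonal coefficients $\bb_{r,f}$, $Q_{r,x_0}(f)$ lie in $\cC^\infty_b$.

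\textbf{Step 2: composition.} We write
\[
(T_{f,p}T_{g,p})(x,x')=\int_X T_{f,p}(x,y)\,T_{g,p}(y,x')\,dv_X(y).
\]
Theorem~\ref{thm:3.2new23} applied to $P_p$ in \eqref{toe2.5} gives the bound $C p^{n+k/2}e^{-c\sqrt p\, d(x,x')}$ for the kernels of $T_{f,p}$, $T_{g,p}$ and their products. The volume of balls grows at most exponentially under bounded geometry (Bishop--Gromov). Hence the contribution of $d(x,y)>\varepsilon$ is $\cO(p^{-\infty})$ uniformly in $(x,x')$. Near the diagonal we get the local model expansion \eqref{toe5.7} with coefficients \eqref{toe4.7}, uniformly in $x_0$.

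\textbf{Step 3: a uniform Toeplitz criterion.} This is the main obstacle. Theorem~\ref{toet3.1} assumes compactly supported $S_p$, and we must replace this by a global version. Its hypotheses would be: $T_p=P_pT_pP_p$, an exponential off-diagonal kernel bound as in \eqref{eq:0.7}, and the near-diagonal expansion \eqref{toe3.2} holding uniformly in $x_0\in X$ with $\cC^\infty_b$ coefficients. Its conclusion would be that $\{T_p\}$ is Toeplitz with symbols $g_l\in\cC^\infty_b$.

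The inductive construction \eqref{toe3.5}--\eqref{toe3.5b} is purely local, so the algebraic identities (Proposition~\ref{toet3.2} and $F_{1,x}\equiv0$) carry over verbatim. The new point is to pass from kernel estimates to operator-norm estimates. For a kernel $R_p$ with $|R_p(x,y)|\le Cp^{n-(k+1)/2}(1+\sqrt p\,d)^Me^{-c\sqrt p\,d(x,y)}$, the Schur test gives
\[
\|R_p\|\le \sup_x\int_X|R_p(x,y)|\,dv_X(y),
\]
and by Step 1 the right-hand side is $\cO(p^{-(k+1)/2})$ uniformly in $x$. The same bound handles the remainders $\Psi_{p,k,x_0}$ in \eqref{toe2.71}, provided those remainders are arranged to carry the exponential factor globally. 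I expect this to be the delicate point: it requires Theorem~\ref{tue17}'s remainder estimate to hold uniformly in $x_0$ with global (not just compact-set) exponential control. I would obtain it from the uniform heat/resolvent estimates of \cite{MM15}.

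Given this criterion, Step 2 yields \eqref{toe4.2} with $C_r(f,g)\in\cC^\infty_b$ computed by the same formulas \eqref{toe4.7}, \eqref{toe5.91}. This gives properties (i) and (iv) of Definition~\ref{D:btp}, and (ii) follows from \eqref{toe4.16}.

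For (iii), the upper bound $\|T_{f,p}\|\le\|f\|_\infty$ is trivial. For the lower bound, the supremum of $|f|$ need not be attained. We therefore pick $x_0$ with $|f(x_0)u_0|\ge\|f\|_\infty-p^{-1/2}$ and use the peak sections \eqref{toe4.18a}. The estimate \eqref{toe4.18} holds with a constant uniform in $x_0$, which gives \eqref{toe4.17a}.
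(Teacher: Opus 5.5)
Your proposal follows essentially the same route as the paper. The steps match: exponential decay of $T_{f,p}(x,x')$ from Theorem~\ref{thm:3.2new23} together with Bishop's exponential volume bound; a Toeplitz criterion (Lemma~\ref{teot3.23}) in which compact support of $S_p$ is replaced by exponential off-diagonal decay and uniformly bounded $\mathcal{Q}_{r,x_0}(\mT)$, proved by rerunning the induction of Theorem~\ref{toet3.1} with a Schur-test estimate; and peak sections for the norm. One correction: Proposition~\ref{toet3.2} does not carry over verbatim, since the paper uses the Schur test precisely to redo \cite[Lemma 4.13]{MM08b}, on which that proposition rests. Your treatment of a non-attained supremum, via a $p^{-1/2}$-near-maximizer with $x_0$-uniform constants, is a small improvement: it recovers the rate $\|f\|_\infty - C/\sqrt{p}$, whereas the paper only deduces $\|f\|_\infty-\varepsilon\le\|T_{f,p}\|$ for $p\geq p_0(\varepsilon)$.
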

%===
\begin{proof}
%Argument why this works for bounded geometry, based on
%Theorem \ref{thm:3.2new23}. 
Let $0<4\varepsilon<a^X$, where $a^X>0$ is the injectivity radius
of $X$. 
At first, for any $0<c_1<\boldsymbol{c}$, $k\in \N$, 
$f\in \cC^\infty_b(X,\End(E))$, there exists $C_k>0$ such that 
for any $p\geqslant \boldsymbol{p}_{0}$, $x,x'\in X$, we have
\begin{equation}\label{eq:3.24}
\left| T_{f,p}  (x,x')\right|_{\cC^k} \leqslant C_k \, p^{n+\frac{k}{2}}
\, \exp\!\left(- c_1 \,\sqrt{p}\, d(x,x')\right).
\end{equation}
In fact, by  \eqref{eq:0.7}, we have 
\begin{equation}\label{eq:3.25}
\begin{split}
 | P_p(x,y)f(y)P_p(y,x')|_{\cC^k \text{on } x,x'} 
&\leqslant  C p^{2n+\frac{k}{2}}
e^{-\boldsymbol{c}\sqrt{p} (d(x,y)+d(y,x') ) }\\
&\leqslant       C    p^{2n+\frac{k}{2}}
e^{-c_1 \sqrt{p} d(x,x')- (\boldsymbol{c}-c_1)\sqrt{p} d(x,y) }.
\end{split} 
\end{equation}
Now, under our assumption of bounded geometry, there exists $K>0$ such that 
the Ricci curvature of $(X, g^{TX})$ is bounded below by $-(2n-1) K^2 g^{TX}$. By Bishop's inequality, this implies that the volume of $B^{X}(x,r)\subset X$
is smaller than or equal to the volume
of a geodesic ball of radius $r>0$ in the space of constant
curvature $-K$ (see, for example, \cite[Lemma\,7.1.3]{Pet16}).
Then, by a classical estimate
for the volume of large balls in the space of constant curvature
$-K$, which can be found for example in \cite[p.\,3]{Mil68},
there exists a universal constant $C_{n,K}>0$, depending only on
$K$ and on the dimension $n$ of $ X$, such that for any $x\in X$ and $r>0$,
\begin{equation}\label{Bishop}
\text{Vol}\,B^{X}(x,r)\leq C_{n,K}\,e^{(2n-1)Kr}\;.
\end{equation}
 Then by \eqref{BTmapfla}, \eqref{eq:3.25}, \eqref{Bishop},  we get
\begin{multline}\label{eq:3.27}
\left| T_{f,p}  (x,x')\right|_{\cC^k} \leqslant 
\sum_{k=0}^\infty\, \int\limits_{B^X(x,(k+1)\varepsilon)\setminus B^X(x,k
\varepsilon)} | P_p(x,y)f(y)P_p(y,x')|_{\cC^k \text{on } x,x'} dv_X(y)\\
\leqslant  \sum_{k=0}^\infty C 
e^{(2n-1)K(k+1)\varepsilon} p^{2n+\frac{k}{2}}
e^{-(\boldsymbol{c}-c_1)\sqrt{p} k \varepsilon}
e^{-c_1\sqrt{p} k d(x,x')}.
\end{multline}
From \eqref{eq:3.27}, for any $p > \left(\frac{(2n-1)K}{\boldsymbol{c}-c_1}\right)^2$, 
the above series is bounded by
$$C p^{2n+\frac{k}{2}}e^{-c_1\sqrt{p} k d(x,x')},$$
and by slightly increasing $c_1$, we obtain \eqref{eq:3.24} if $d(x,x') > \varepsilon$.
If $d(x,x')\leqslant \varepsilon$, then the summation $\sum_{k=2}^\infty$ 
in \eqref{eq:3.27} can be estimated by 
\begin{equation}\label{eq:3.28}\sum_{k=2}^\infty C 
e^{(2n-1)K(k+1)\varepsilon} p^{2n+\frac{k}{2}}
e^{-\boldsymbol{c}\sqrt{p} (2k-1) \varepsilon}
\leqslant C e^{- \boldsymbol{c}\sqrt{p} \varepsilon}.
\end{equation}
The term $\sum_{k=0}^{1}$ can be estimated 
on the normal coordinate centered at $x$ using \eqref{eq:3.25}:
\begin{multline}\label{eq:3.29}
\sum_{k=0}^1 \cdots
\leqslant C\int_{B^X(x, 2\varepsilon)} 
p^{2n+\frac{k}{2}}
e^{-(\boldsymbol{c}-c_1)\sqrt{p} d(x,y) }
e^{-c_1\sqrt{p} d(x,x')} dv_X(y)
\leqslant C p^{n+\frac{k}{2}}
e^{-c_1\sqrt{p} d(x,x')} .
\end{multline}
From \eqref{eq:3.27}-\eqref{eq:3.29}, we get \eqref{eq:3.24}.
Subsequently, we establish the following criterion for Toeplitz operators, 
which serves as an analog of Theorem \ref{toet3.1}. 
\begin{lemma}[{\cite[Theorem 3.18]{Fin22b}}]\label{teot3.23} For a family of operators in Theorem \ref{toet3.1},
	we replace the conditions ii) and iii) with ii)' : 
For any $\varepsilon_0>0$, there exist  $p_0>0$, $C>0$, and $c_1>0$ such that 
for $p>p_0$ and $x,x'\in X$ with $d(x,x')\geq \varepsilon_0$.
\begin{align}\label{eq:3.31}
	\left| T_{p}  (x,x')\right| \leqslant C p^n
 \exp\!\left(- c_1 \,\sqrt{p}\, d(x,x')\right).
\end{align}
 We assume in iv) that for each $r$, the polynomial $\mathcal{Q}_{r,x_0}(\mT)$
as a section of $\End(E)$ twisted with tensor algebras of $T^*X$
 is uniformly bounded with derivatives and its degree on $x_0\in X$.
 Then $\{T_p\}_p$ is a Toeplitz operator.
\end{lemma}
\begin{proof} We follow step by step the proof of Theorem \ref{toet3.1}.
At first, from our assumption iv) on polynomial $\mathcal{Q}_{r,x_0}(\mT)$,
we know that $g_0$ in \eqref{toe3.5} is in $\cC^\infty_b(X, \End(E))$. 
To obtain Proposition \ref{toet3.2}, we need to modify the argument 
in the proof of \cite[Lemma 4.13]{MM08b} as follows by using assumption ii)': 
We use the notation in  \cite[Lemma 4.13]{MM08b}; by \eqref{eq:3.31},
note that $R_{r,p}(x,y)=0$ for $d(x,y)\geq \varepsilon'$,
 and for any $k>0$, there exist $c_1>0$ and $C>0$ such that 
\begin{align}\label{eq:3.32}
	\begin{array}{ll}
		\Big|\big(\cT_p-\sum_{r=1}^k p^{-r/2}R_{r,\,p}\big)(x,y)\Big|
	&\leqslant  C  \exp(-c_1\sqrt{p} d(x,y)) \text{ if } d(x,y)\geq \varepsilon',\\
& \hspace{-1.5cm} C p^{n -(k+1)/2}  \exp(- c_1 \sqrt{p} d(x,y)) + \mO(p^{-\infty})
\text{ if } d(x,y)< \varepsilon'.
\end{array}\end{align}
Now, by the argument in the proof of \eqref{eq:3.24}, i.e., we decompose the integral 
$\int_X$ by the sum of the integrals on $B^X(x,(k+1)\varepsilon)\setminus B^X(x,k
\varepsilon)$, and using \eqref{eq:3.32}, we obtain
\begin{align}\label{eq:3.33}
\begin{array}{ll}
&\displaystyle\int_X \Big|\big(\cT_p-\sum_{r=1}^k p^{-r/2}R_{r,\,p}\big)(y,x)\Big| dv_X (y)
=\mO(p^{-1}),\\
&\displaystyle\int_X \Big|\big(\cT_p-\sum_{r=1}^k p^{-r/2}R_{r,\,p}\big)(x,y)\Big| dv_X (y)
= \mO(p^{-1}).
\end{array}\end{align}	
From \eqref{eq:3.33}, we obtain \cite[(4.48)]{MM08b}
\begin{multline}\label{b6.381}
\big\|\big(\cT_p-\sum_{r=1}^k p^{-r/2}R_{r,\,p}\big)\,s \|_{L^2} ^2
\leqslant \int_{X} \Big(\int_{X}
\Big|\big(\cT_p-\sum_{r=1}^k p^{-r/2}R_{r,\,p}\big)(x,y)\Big| dv_{X}(y)\Big)\\
\times \Big(\int_{X}
\Big|\big(\cT_p-\sum_{r=1}^k p^{-r/2}R_{r,\,p}\big)(x,y)\Big| |s(y)|^2
dv_{X}(y)\Big)dv_{X}(x)
\leq C  p^{-2}\|s\|_{L^2}^2.
\end{multline}
Thus \cite[Lemma 4.13]{MM08b} holds in our situation. 
\end{proof}

Now we fix $f,g\in \cC^{\infty}_b(X,\End(E))$. From \eqref{eq:3.24}, 
and by using the same trick as in \eqref{eq:3.27}-\eqref{eq:3.29},
we get that  $(T_{f,p}T_{g,p})(x,x')$ satisfies conditions ii)' 
and iv) in Lemma \ref{teot3.23}. this implies that 
$T_{f,p}T_{g,p}$ is a Toeplitz operator by Lemma \ref{teot3.23}. 

We adapt the proof of \eqref{toe4.17a} presented in Theorem \ref{t2.1}. 
Suppose the supremum of $f$ is attained at a specific point $x_0$. 
In that case, this proof remains unchanged, as the expansion of the 
peak section \eqref{toe4.18a}, 
derived from the asymptotic expansion of the Bergman kernel, 
continues to hold on manifolds characterized by bounded geometry, 
as shown by Theorem \ref{thm:3.2new23}.
If the supremum is not attained, then
the same proof gives us that for any $\varepsilon > 0$, there exists 
$p_0>0$, such that for every $p\geq p_0$, we have
%\begin{equation}\label{toe4.18b}
$\|f\|_\infty-\varepsilon\leqslant\|T_{f,p}\|$,
%\end{equation}
and this entails \eqref{toe4.17a} in the present case.
\end{proof}
%===
\begin{example}
(1) Let $(X,J,\Theta)$ be a compact Hermitian manifold and 
with associated Riemannian metric $g=\Theta(\cdot, J\cdot)$.
Let $(L,h^L)$, $(E,h^E)$ be holomorphic Hermitian vector bundles,
with $L$ of rank one. Let $\rho: \widetilde{X} \to X$ be a Galois covering. 
Let us decorate by $\sim$ pullbacks of objects on $X$ by $\rho$. 
Then $(\widetilde{X}, \widetilde{g})$ is complete and
$(X,\widetilde{J},\widetilde{\Theta})$, $(\widetilde{L}, \widetilde{h^L})$, 
$(\widetilde{E},\widetilde{h^E})$ have bounded geometry.
Moreover there exists $\varepsilon>0$ such that 
$\imat R^{\widetilde{L}}>\varepsilon\,\widetilde{\Theta}$.

(2) Let $D$ be a smoothly bounded strictly pseudovonvex domain
in $\C^n$ or in a Stein manifold.
Then, for each fixed $\lambda<0$, there exists a unique complete
K\"ahler metric $\omega$ on $D$ 
satisfying $\Ric(\omega)=\lambda\omega$.
For the unique complete K\"ahler-Einstein metric $\omega_{\rm{CY}}$ 
with $\Ric(\omega_{\rm{CY}})=-\omega_{\rm{CY}}$ (the Cheng-Yau metric), 
the canonical bundle is positive and polarizes the metric.
It is known to have bounded geometry, 
as a result of its asymptotically complex hyperbolic nature, 
proven through the work of Cheng-Yau \cite{CY80}
and the boundary regularity analysis of Lee-Melrose \cite{LeMe82}.
Hence the results in this section apply to $(D,\omega_{\rm{CY}})$
and the canonical bundle $K_D$ endowed with the metric induced by $\omega_{\rm{CY}}$.

\end{example}
%===
\subsection{Pseudoconvex domains}
\label{pseudocvx}
%===
In this section, we consider relatively compact pseudoconvex domains
with smooth boundary in a complex manifold. They are endowed with an incomplete Hermitian
metric, and we will thus work with the Kodaira Laplacian with
$\db$-Neumann boundary conditions. 
This generalizes the results of Englis for strictly pseudoconvex domains
in $\C^n$.

Let $M$ be a complex manifold and 
let $X$ be a smooth, relatively compact domain in $M$.
We set $X=\{x\in M\,:\,\varrho(x)<0\}$
, where $\varrho\in\cC^\infty(M)$ is a defining function that satisfies
$|d\varrho|=1$ on $\partial X$. 
The \emph{Levi form} of $\partial X$ is the restriction
of $\partial\db\varrho$ to the holomorphic tangent bundle of $\partial X$.
The domain $X$ is called \emph{strictly pseudoconvex (pseudoconvex)} if the Levi form
is positive definite (semi-definite) at each point of $\partial X$.
Let us consider a holomorphic Hermitian vector bundle $(F,h^F)$ on $M$.
The complex manifold $M$ is endowed with a Hermitian metric
with $(1,1)$-form $\Theta$, and we consider its restriction
to $X$ with volume form $dv_X=\Theta^n/n!$.
We construct, as in \eqref{lm2.0}, \eqref{lm2.02a}, the spaces $L^{2}(X,F)$
and $H^{0}_{(2)}(X,F)$. 
Let $\overline{\partial}^{\smash{\scriptscriptstyle F}}:
\Omega^{0,\bullet}(M,F)\to\Omega^{0,\bullet\,+1}(M,F)$ be 
the Dolbeault operator; we denote by 
$\overline{\partial}^{\smash{\scriptscriptstyle{F}},*}$ its formal adjoint. Let 
$\overline{\partial}^{\smash{\scriptscriptstyle F}}:
\Dom(\overline{\partial}^{\smash{\scriptscriptstyle F}})\subset 
L^2_{0,\bullet}(X,F)\to L^2_{0,\bullet\,+1}(X,F)$ 
be its maximal extension on $L^2_{0,\bullet}(X,F)$. % (cf.\ \eqref{gm1.2}).
Let $\overline{\partial}^{\smash{\scriptscriptstyle{F}},*}_H$ be 
the Hilbert space adjoint of $\overline{\partial}^{\smash{\scriptscriptstyle F}}$ 
on $X$. In order to describe the domain of 
$\overline{\partial}^{\smash{\scriptscriptstyle{F}},*}_H$ 
we now present the following concepts.
Let $-e_\mathfrak{n}\in TM$ be the metric dual of $d\varrho$.
Then $e_\mathfrak{n}\in TM$
is the inward pointing unit normal at $\partial X$.
We decompose $e_\mathfrak{n}$ as
$e_\mathfrak{n}= e_\mathfrak{n}^{(1,0)}+e_\mathfrak{n}^{(0,1)}
\in T^{(1,0)}M\oplus T^{(0,1)}M$.
We introduce the space
%-----
\begin{equation}\label{lm2.85}
B^{0,q}(X,F)=\left\{s\in\Omega^{0,q}(\overline{X},F):
i_{e_\mathfrak{n}^{(0,1)}}s=0
%\sigma(\db^{E,*},d\varrho)s=0\,
\, \, \text{on}\,\,\partial X\right\}.
\end{equation}
%-----
It is then well-known that
$B^{0,q}(X,F)=\Dom(\overline{\partial}^{\smash{\scriptscriptstyle{F}},*}_H)
\cap\Omega^{0,q}(\overline X,F)$ and
$\overline{\partial}^{\smash{\scriptscriptstyle{F}},*}_H=\overline{\partial}^{\smash{\scriptscriptstyle{F}},*}$ on $B^{0,q}(X,F)$ (cf.\ \cite{FK:72,Hor:65}, 
\cite[Proposition 1.4.19]{MM07}).
%
%Let $\db^E:\Omega^{0,\bullet}(X,E)\to\Omega^{0,\bullet\,+1}(X,E)$ be 
%the Dolbeault operator; we denote by $\db^{E,*}$ its formal adjoint. Let 
%$\db^E:\Dom(\db^E)\subset L^2_{0,\bullet}(M,E)\to L^2_{0,\bullet\,+1}(M,E)$ 
%be its maximal extension on $L^2_{0,\bullet}(M,E)$ (cf. Lemma \ref{glol1}).
%Let $\db^{E,*}_H$ be the Hilbert space adjoint of $\db^{E}$ on $M$.
%We introduced the space $B^{0,q}(M,E)$ in \eqref{lm2.85}.
%By Proposition \ref{lmt2.18}, we have
%we know that $B^{0,q}(M,E)=\Dom(\db^{E,*}_H)\cap\Omega^{0,q}(\overline M,E)$ 
%and $\db^{E,*}_H=\db^{E,*}$ on $B^{0,q}(M,E)$. 
Thus
\begin{equation}\label{ell-sym}
\big\langle\db^{F}s_1,s_2\big\rangle
=\big\langle s_1,\overline{\partial}^{\smash{\scriptscriptstyle{F}},*}s_2\big\rangle\,,\quad 
\text{ for } s_1\in\Omega^{0,q}(\ov{X},F)\,,s_2\in B^{0,q+1}(X,F).
\end{equation}
We consider the operator
\begin{equation} \label{neu-rest}
\begin{split}
\Dom(\square^F):=\big\{s\in B^{0,q}(X,F)\,:\,\db^{\smash{\scriptscriptstyle F}} s\in B^{0,q+1}(X,F)\big\},\\
\square^Fs=\overline{\partial}^{\smash{\scriptscriptstyle F}}\overline{\partial}^{\smash{\scriptscriptstyle{F}},*} s+\overline{\partial}^{\smash{\scriptscriptstyle{F}},*}\overline{\partial}^{\smash{\scriptscriptstyle F}} s\,,
\quad\text{for $s\in\Dom(\square^F)$}\,,
\end{split}
\end{equation}
which by \eqref{ell-sym} is positive. 
%
%Let $e_\mathfrak{n}$ be the inward pointing unit normal at $\partial M$. 
%We decompose $e_\mathfrak{n}$ as 
%$e_\mathfrak{n}= e_\mathfrak{n}^{(1,0)}+e_\mathfrak{n}^{(0,1)}
%\in T^{(1,0)}X\oplus T^{(0,1)}X$. 
Then the boundary conditions of $\Dom(\square^E)$ in \eqref{neu-rest} 
are called 
\emph{$\db$-Neumann boundary conditions} \cite{FK:72,Hor:65} is given by :
\begin{equation}\label{gm4.2}
\Dom(\square^F)=\big\{s\in \Omega^{0,\bullet}(\ov{X},F);\,\,
 i_{e_\mathfrak{n}^{(0,1)}}s= i_{e_\mathfrak{n}^{(0,1)}}\overline{\partial}^{\smash{\scriptscriptstyle F}} s=0
\quad \text{on  } \partial X\big\}.
\end{equation}
An extension of the associated quadratic form $Q$ is
\begin{equation}\label{gm4.3}
\Dom(Q):= B^{0,q}(X,F), \, \, \,
Q(s_1,s_2):=\langle \db^{\smash{\scriptscriptstyle F}} s_1,
\db^{\smash{\scriptscriptstyle F}} s_2\rangle 
+\langle \db^{\smash{\scriptscriptstyle{F}},*} s_1,
\db^{\smash{\scriptscriptstyle{F}},*} s_2\rangle \,.
\end{equation}
It is easy to see that $Q$ is closable, so there 
exists a self-adjoint operator associated 
with the closure $\ov{Q}$
 (the Friedrichs extension of $\square^F$) called, in this context
, \emph{Kodaira Laplacian with $\db$-Neumann boundary conditions}. 
We still denote this operator by $\square^F$.
We have an analog of the Andreotti-Vesentini density result (Theorem \ref{esa}).
%-----------------------------------------------------------------------------
\begin{lemma}[{\cite{FK:72,Hor:65}}]\label{aprox}
$\Omega^{0,\bullet}(\ov{X},F)$ is dense in $\Dom(\db^{\smash{\scriptscriptstyle F}})$ in the graph-norm
of $\db^{\smash{\scriptscriptstyle F}}$, and $B^{0,q}(M,F)$ is dense in $\Dom(\db^{\smash{\scriptscriptstyle{F}},*}_H)$ and in 
$\Dom(\db^{\smash{\scriptscriptstyle F}})\cap\Dom(\db^{\smash{\scriptscriptstyle{F}},*}_H)$ in the graph-norms of 
$\db^{\smash{\scriptscriptstyle{F}},*}$ and $\db^E+\db^{\smash{\scriptscriptstyle{F}},*}$, respectively.
\end{lemma}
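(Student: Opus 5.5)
The statement is Lemma~\ref{aprox}, the Friedrichs--H\"ormander density result for the $\db$-Neumann problem on the relatively compact smooth domain $X\Subset M$. The approach is local: use a partition of unity on $\ov X$ and Friedrichs mollifiers, adapted near the boundary so that the boundary condition $i_{e_\mathfrak{n}^{(0,1)}}s=0$ defining $B^{0,q}(X,F)$ is preserved.

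First I cover $\ov X$ by finitely many coordinate charts $U_j$ on which $F$ is trivialized. Interior charts satisfy $U_j\Subset X$. Boundary charts have coordinates $(t,\varrho)$ with $X\cap U_j=\{\varrho<0\}$. I take a subordinate partition of unity $\{\psi_j\}$. Multiplication by $\psi_j$ preserves $\Dom(\db^F)$ and $\Dom(\db^{F,*}_H)$, since the commutators $[\db^F,\psi_j]$ and $[\db^{F,*},\psi_j]$ are bounded zeroth-order operators. Hence it suffices to approximate $\psi_j s$ separately in each chart.

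In interior charts, ordinary mollification $s_\delta=\chi_\delta* (\psi_js)$ converges in $L^2$. By Friedrichs' lemma, $[\db^F,\chi_\delta*]$ and $[\db^{F,*},\chi_\delta*]$ tend to $0$ strongly, because they are first-order operators with smooth coefficients. This gives graph-norm convergence.

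In boundary charts for $\Dom(\db^F)$, where no boundary condition is imposed, I translate inward before mollifying. Set $s^{\tau}(t,\varrho)=(\psi_js)(t,\varrho-\tau)$; it is defined on a neighborhood of $\{\varrho\le0\}$. I then mollify with radius $\delta<\tau$ and let $\tau,\delta\to0$. The result is smooth up to $\partial X$, and Friedrichs' lemma again controls the commutator with $\db^F$. This yields density of $\Omega^{0,\bullet}(\ov X,F)$.

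For $\Dom(\db^{F,*}_H)$ and $\Dom(\db^F)\cap\Dom(\db^{F,*}_H)$ near the boundary, I decompose $s=s_{\mathfrak n}+s_\tau$ into the part containing $\ov\partial\varrho$ (normal) and the tangential part. Membership in $\Dom(\db^{F,*}_H)$ means, weakly, that the normal component vanishes on $\partial X$; this is the $L^2$-meaning of $i_{e_\mathfrak{n}^{(0,1)}}s=0$. I then use tangential mollification only, convolving in the $t$ variables, which commutes with the boundary condition. In the normal direction I would use a one-sided argument:
\begin{itemize}
\item extend the normal component by zero across $\varrho=0$, which stays in the domain because of the vanishing trace;
\item extend the tangential component by reflection or by inward translation.
\end{itemize}
H\"ormander's technique \cite{Hor:65} handles this. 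The operator $\db^F+\db^{F,*}$ is elliptic in the normal direction only in combination. One therefore shows that $\partial_\varrho$ of the normal component is controlled by $\db^{F,*}s$ plus tangential derivatives, and $\partial_\varrho$ of the tangential components by $\db^F s$ plus tangential derivatives. With tangential mollifiers, whose commutators are bounded by Friedrichs' lemma, this gives convergence in the graph norm. Finally I restore smoothness up to the boundary while keeping $i_{e_\mathfrak{n}^{(0,1)}}s=0$ exactly.

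The main obstacle is this last boundary step for the intersection domain. The mollification must preserve the boundary condition while the commutator with the normal derivative stays controlled. Here I would rely on the structure identified in \cite{Hor:65} and \cite[Proposition 1.4.19]{MM07}: tangential mollification, then the normal-direction regularization described above.
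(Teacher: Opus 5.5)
Your sketch is the standard Friedrichs--H\"ormander argument that the paper does not reprove but only cites from \cite{FK:72,Hor:65}: a partition of unity, Friedrichs mollifiers (with a normal translation before mollifying in boundary charts for $\Dom(\db^F)$), and, for the intersection domain, tangential regularization combined with the fact that $\partial X$ is non-characteristic for $\db^F+\db^{F,*}$, so you are following essentially the same route. One small correction: your control of $\partial_\varrho$ of the tangential components uses $\db^F s\in L^2$, so that step only applies to $\Dom(\db^F)\cap\Dom(\db^{F,*}_H)$; density in $\Dom(\db^{F,*}_H)$ alone is in fact immediate, because $\db^{F,*}_H=(\db^F_{\max})^*$ is the closure of $\db^{F,*}$ on $\Omega^{0,q}_0(X,F)\subset B^{0,q}(X,F)$.
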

%-----------------------------------------------------------------------------
From this we deduce immediately the following (see e.\,g.\, \cite[Proposition 3.5.2]{MM07}).%-----------------------------------------------------------------------------
\begin{proposition}\label{Gaff=Fried}
The Kodaira Laplacian with $\db$-Neumann conditions on $X$
%(i.e. the Friedrichs extension of \eqref{neu-rest})
coincides with the Gaffney extension \eqref{ell-} of the Kodaira Laplacian. 
\end{proposition}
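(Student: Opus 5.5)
The plan is to identify both operators as the self-adjoint operator attached to one and the same closed quadratic form, and then conclude by maximality of self-adjoint operators.

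\emph{The Gaffney form.} Write $\db=\db^{\smash{\scriptscriptstyle F}}$ for the maximal extension on $X$ and $\db^*_H$ for its Hilbert space adjoint. On $\Dom(Q_G):=\Dom(\db)\cap\Dom(\db^*_H)$ consider
\[
Q_G(s_1,s_2)=\langle\db s_1,\db s_2\rangle+\langle\db^*_H s_1,\db^*_H s_2\rangle .
\]
Both $\db$ and $\db^*_H$ are closed operators, the latter because it is a Hilbert space adjoint. Hence $Q_G$ is a closed, nonnegative, densely defined form, and its form norm is the graph norm of $\db+\db^*_H$.

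\emph{The Neumann form.} On $B^{0,q}(X,F)$ we have $\db^*_H=\db^{\smash{\scriptscriptstyle F},*}$, by the description of $\Dom(\db^*_H)\cap\Omega^{0,q}(\ov X,F)$ recalled before \eqref{ell-sym}. So the form $Q$ of \eqref{gm4.3} is the restriction of $Q_G$ to $B^{0,q}(X,F)$. By Lemma~\ref{aprox}, $B^{0,q}(X,F)$ is dense in $\Dom(\db)\cap\Dom(\db^*_H)$ for the graph norm of $\db+\db^*_H$, which is exactly the form norm of $Q_G$. Therefore the closure satisfies $\ov Q=Q_G$. Consequently the Friedrichs extension, i.e.\ the $\db$-Neumann Laplacian $\square^F$, is the unique self-adjoint operator $A$ associated with $Q_G$ by the representation theorem:
\[
u\in\Dom(A),\ Au=w \iff u\in\Dom(Q_G)\ \text{and}\ Q_G(u,v)=\langle w,v\rangle\ \text{for all } v\in\Dom(Q_G).
\]

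\emph{Comparison with the Gaffney extension.} Let $\square_G$ be the Gaffney extension \eqref{ell-}, with $\db^{\smash{\scriptscriptstyle F},*}$ there meaning $\db^*_H$. It is self-adjoint by \cite[Proposition 3.1.2]{MM07}. Take $u\in\Dom(\square_G)$ and $v\in\Dom(Q_G)$. Since $\db u\in\Dom(\db^*_H)$ and $\db^*_H u\in\Dom(\db)$, the definition of the adjoint gives
\[
\langle\square_G u,v\rangle=\langle\db^*_H\db u,v\rangle+\langle\db\,\db^*_H u,v\rangle=\langle\db u,\db v\rangle+\langle\db^*_H u,\db^*_H v\rangle=Q_G(u,v).
\]
By the characterization of $A$ above, this shows $\square_G\subset A$. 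A self-adjoint operator has no proper self-adjoint extension, since $\square_G\subset A$ implies $A=A^*\subset\square_G^*=\square_G$. Hence $\square_G=A=\square^F$.

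The only substantive input is the density statement of Lemma~\ref{aprox}, in particular the density of $B^{0,q}(X,F)$ in $\Dom(\db)\cap\Dom(\db^*_H)$ in the graph norm; this is the step I expect to be the main obstacle, and it is quoted from \cite{FK:72,Hor:65}. The rest is abstract form theory. One minor point needs checking: that the closure $\ov Q$ is really taken in the form norm induced by $\langle\cdot,\cdot\rangle_{L^2}+Q$. This holds because the $L^2$ norm is included in the graph norm.
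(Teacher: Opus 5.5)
Your argument is correct and is essentially the paper's: Lemma~\ref{aprox} shows that the closure of the form $Q$ on $B^{0,q}(X,F)$ is the form $Q_G$ on $\Dom(\db)\cap\Dom(\db^{*}_H)$, so the $\db$-Neumann Laplacian and the Gaffney extension are the self-adjoint operator associated with the same closed form. The only difference is that you verify by hand, via $\square_G\subset A$ and the maximality of self-adjoint operators, that the Gaffney extension is the operator of $Q_G$, whereas the paper takes this from \cite[Proposition 3.1.2]{MM07}.
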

%----------------------------------------------------------------------------
%---------------
We recall now the Bochner-Kodaira-Nakano formula with boundary term
\cite[Theorem 1.4.21]{MM07}.
For $s\in \Omega^{0,q}(\ov{X},F)$ and $y\in \partial X$, set
\begin{align}\label{lm2.86}
\cL_{\varrho}(s,s)
= (\partial\db\varrho) (w_k,\ov{w}_j)\langle \ov{w}^j\wedge i_{\ov{w}_k} s,
s\rangle_{\Lambda^{\bullet,\bullet}\otimes E,y}.
\end{align}
By \cite[Theorem 1.4.21]{MM07} we have 
for any $s\in B^{0,\bullet}(X,F)$,
\begin{equation}\label{bkn-bdy}
\begin{split}
\norm{\overline{\partial}^{\smash{\scriptscriptstyle F}}s}^2_{L^2}+\norm{\overline{\partial}^{\smash{\scriptscriptstyle{F}},*}s}^2_{L^2}
=\norm{(\nabla^{\wi{F}})^{1,0*}\wi{s}}^2_{L^2}+\big\langle
R^{F\otimes K^*_X}(w_j,\overline w_k)
\overline w^k\wedge i_{\overline w_j}s,s\big\rangle\\
-\big\langle\overline{\partial}^{\smash{\scriptscriptstyle F}}s, \Psi^{-1}\overline{\mathcal{T}}\,\wi{s}\big\rangle
-\big\langle\Psi^{-1}\overline{\mathcal{T}}^*\wi{s},\overline{\partial}^{\smash{\scriptscriptstyle{F}},*}s\big\rangle
+\big\langle\mathcal{T}^*\wi{s},(\nabla^{\wi{F}})^{1,0*}\wi{s}\big\rangle\\
+\int_{\partial X}\cL_{\varrho}(s,s)\,dv_{\partial X}\,.
\end{split}
\end{equation}
Especially, we obtain the following Bochner-Kodaira-Nakano inequalitiy
\cite[Corollary 1..4.22]{MM07}. For any $s\in B^{0,q}(X,F)$,
%-----------------------------------------------------------------------
\begin{equation} \label{herm20,111}
\begin{split}
\frac{3}{2}\big(\norm{\overline{\partial}^{\smash{\scriptscriptstyle F}}s}^2_{L^2}+\norm{\overline{\partial}^{\smash{\scriptscriptstyle{F}},*}s}^2_{L^2}\big)
\geqslant\frac{1}{2}\,\norm{(\nabla^{\wi{F}})^{1,0*}\wi{s}}^2_{L^2}
+\big\langle R^{F\otimes K^*_X}(w_j,\ov{w}_k)\ov{w}^k\wedge i_{\ov{w}_j}s,
s\big\rangle\\
+\int_{\partial X}\cL_{\varrho}(s,s)\,dv_{\partial X}
-\frac{1}{2}\big(\norm{\mathcal{T}^*\wi{s}}^2_{L^2}
+\norm{\ov{\mathcal{T}}\wi{s}}^2_{L^2}
+\norm{\ov{\mathcal{T}}^*\wi{s}}^2_{L^2}\big).
\end{split}
\end{equation}

%-----------------------------------------------------------
\begin{theorem}
Let $X$ be a relatively compact pseudoconvex domain with smooth 
boundary in a complex manifold $M$. Let $L$ and $E$ be 
two holomorphic vector bundles on $M$, where $\rank L=1$. 
Assume that $(L,h^L)$ is positive on a neighbourhood of $\overline X$.
Then we have

(1) The Bergman kernel asymptotics for $H^0_{(2)}(X,L^p\otimes E)$
holds on compact sets of $X$.

%{\color{red}We should be able to prove it for $\ov{X}$?}

(2) The Berezin-Toeplitz quantization package holds for the K\"ahler manifold
$(X,\omega)$, where $\omega=\frac{\imat}{2\pi}R^{(L,h^L)}$, the algebra 
$\cC^\infty_{\rm const}(X,\End(E))$ and quantum spaces 
$H^0_{(2)}(X,L^p\otimes E)$.

%{\color{red}Maybe $C^\infty(\ov{X})$?}
\end{theorem}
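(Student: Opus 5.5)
Both statements follow from Theorem~\ref{t2.1} once we show that the Kodaira Laplacian with $\db$-Neumann boundary conditions on $X$ has a spectral gap in the sense of Definition~\ref{specgapdef}. Here $\Dom(Q_p)$ is given by \eqref{gm4.3}, and by Proposition~\ref{Gaff=Fried} this operator is the Gaffney extension. So we have to prove that for $p$ large and all $s\in\Dom(\db^{L^p\otimes E})\cap\Dom(\db^{L^p\otimes E,*}_H)\cap\Omega^{0,1}_{(2)}(X,L^p\otimes E)$,
\begin{equation*}
\norm{\db^{L^p\otimes E}s}^2+\norm{\db^{L^p\otimes E,*}s}^2\geqslant (C_0p-C_L)\norm{s}^2 .
\end{equation*}
By Lemma~\ref{aprox} it suffices to prove this for $s\in B^{0,1}(X,L^p\otimes E)$.

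\textbf{The boundary-term inequality.} For such $s$ we apply \eqref{herm20,111} with $F=L^p\otimes E$. The boundary integral $\int_{\partial X}\cL_\varrho(s,s)\,dv_{\partial X}$ is nonnegative. Indeed, the condition $i_{e_\mathfrak{n}^{(0,1)}}s=0$ means $s$ has no normal component on $\partial X$. Consequently, in \eqref{lm2.86} only the restriction of $\partial\db\varrho$ to $T^{(1,0)}\partial X$ contributes, and this is the Levi form, which is semipositive by pseudoconvexity. We drop the term $\frac12\norm{(\nabla^{\wi F})^{1,0*}\wi s}^2\geqslant0$ as well.

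\textbf{Uniform constants on $\overline X$.} Since $\overline X$ is compact and $(L,h^L)$ is positive on a neighbourhood of $\overline X$, there is $\varepsilon>0$ with $\imat R^L\geqslant\varepsilon\Theta$ on $\overline X$. The quantities $R^{\det}$, $R^E$ and the torsion $\mathcal T$ are bounded on $\overline X$ by some constant $C$. The curvature term then gives at least $(p\varepsilon-C)\norm{s}^2$ on $(0,1)$-forms, and the torsion terms cost at most $C\norm{s}^2$. This yields the required inequality with $C_0=\varepsilon/3$ and a suitable $C_L$.

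\textbf{Transferring Theorem~\ref{t2.1}.} The main obstacle is that $X$ has a boundary, whereas the proof of Theorem~\ref{t2.1} uses finite propagation speed for $F_\varepsilon(D_p)$ and interior elliptic estimates. We only need the asymptotics on compact sets $K\Subset X$, with $\varepsilon<d(K,\partial X)/4$. For $x\in K$, the wave operator $\cos(tD_p)$ with $|t|<\varepsilon$ does not reach the boundary. Hence $F_\varepsilon(D_p)(x,\cdot)$ is supported in $B^X(x,\varepsilon)\Subset X$ and coincides with the kernel for any extension of $D_p$; the boundary condition never enters.

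To justify this, we would check finite propagation speed for the self-adjoint operator $D_p$ with Neumann conditions. The energy estimate for the wave equation works for initial data supported away from $\partial X$ up to time $d(\operatorname{supp},\partial X)$. Alternatively, we invoke \cite[Appendix D.2]{MM07}, which gives the same conclusion for the Friedrichs extension.

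With this localization, Theorems~\ref{tue16} and \ref{tue17}, Lemma~\ref{toet2.1} and Theorem~\ref{toet2.3} hold verbatim on compact subsets of $X$. Theorem~\ref{toet3.1} applies to symbols $f\in\cC^\infty_{\rm const}(X,\End(E))$, since $f-c$ has compact support in $X$. Hence parts (1) and (2) follow as in the proof of Theorem~\ref{t2.1}.
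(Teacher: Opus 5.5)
Your proposal is correct and follows the paper's proof. Pseudoconvexity makes the boundary term $\int_{\partial X}\cL_\varrho(s,s)\,dv_{\partial X}$ in \eqref{herm20,111} nonnegative on $B^{0,1}(X,L^p\otimes E)$, and boundedness of the torsion, $R^{\det}$ and $R^E$ on $\overline X$ together with $\imat R^L\geqslant\varepsilon\Theta$ gives the spectral gap \eqref{bk1.4}, extended to the Gaffney domain by Lemma~\ref{aprox}, so Theorem~\ref{t2.1} applies. Your extra check, that finite propagation speed for $D_p$ holds on compact subsets until the wave reaches $\partial X$ (so the boundary condition never enters the localization), is a valid justification of a step the paper leaves implicit when it invokes Theorem~\ref{t2.1}.
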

%-----------------------------------------------------------
%The $L^2$ condition here is understood with respect to 
%a smooth Riemannian metric on $M$. 
\begin{proof}
Since $X$ is pseudoconvex we have $\cL_{\varrho}(s,s)\geq0$ pointwise
on $\partial X$. Moreover the torsion of the the metric $\Theta$ 
and $R^{K^*_X}$ are bounded on $\ov{X}$. Hence \eqref{herm20,111} yields
immediately the spectral gap \eqref{bk1.4} and we can apply Theorem \ref{t2.1}.
%
%For the proof of the spectral gap see also \cite[Theorem 3.5.10]{MM07}, 
%where it was used to prove holomorphic Morse inequalities. 
%In this case the Gaffney extension of the Kodaira Laplacian coincides 
%with the Kodaira Laplacian with $\db$-Neumann conditions 
%\cite[Prop.\,3.5.2]{MM07}.
\end{proof}

\section{Szeg\H{o}-type limit formulas}
\label{S:Szego}

Boutet de Monvel and Guillemin \cite{BdMG81,Guill79} obtained complex variable 
analogues of the classical Szeg\H{o} theorem \cite{Sz:20}.
The analogous result for projective manifolds endowed with the restriction
of the hyperplane bundle was originally proved in \cite[Theorem 13.13]{BdMG81},
\cite[Theorem 1, p.\,248]{Guill79} and for arbitrary positive line bundles in 
\cite{Bern03},
see also \cite{Lind01}. In \cite[Theorem 1.6]{HM17b}
the asymptotics 
are proved for a semi-classical spectral function of the Kodaira Laplacian
on an arbitrary manifold.
%===
\begin{lemma}
Let $(X,\Theta)$ be a Hermitian manifold,	
$(L,h^L)$ and $(E,h^E)$ be holomorphic Hermitian		
vector bundles on $X$ of rank one and $r$, respectively.
Let $f\in\boldsymbol{L}^\infty(X)$ 
be non-negative and have compact
support.
Then the Toeplitz operator $T_{f,p}$ is a compact
operator on $H^0_{(2)}(X, L^p\otimes E)$. 
If the set where $f$ does not vanish has a non-empty interior,
then $T_{f,p}$ is injective.
\end{lemma}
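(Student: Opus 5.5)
The plan is to write $T_{f,p}=A^*A$ with $A:=\sqrt f\,P_p$, viewed as an operator from $H^0_{(2)}(X,L^p\otimes E)$ to $L^2(X,L^p\otimes E)$, and to show that $A$ is Hilbert--Schmidt. Here $f\geq0$ is bounded, so $\sqrt f\in\boldsymbol{L}^\infty(X)$ has the same compact support $K:=\supp f$. Then $T_{f,p}=P_p\sqrt f\sqrt f P_p=A^*A$ is trace class, in particular compact.

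For the Hilbert--Schmidt property, note that the Schwartz kernel of $A$ is $\sqrt{f(x)}\,P_p(x,x')$. By \eqref{bk2.4} and the reproducing property $P_p^2=P_p$, we have $\int_X|P_p(x,x')|^2\,dv_X(x')=\operatorname{Tr}P_p(x,x)$. This holds for each fixed $p$, with no asymptotics needed. Hence
\[
\|A\|_{HS}^2=\int_X f(x)\operatorname{Tr}P_p(x,x)\,dv_X(x)\leq \|f\|_{\infty}\,\mathrm{vol}(K)\sup_{x\in K}\operatorname{Tr}P_p(x,x)<\infty .
\]
The supremum is finite because $P_p(\cdot,\cdot)$ is smooth, which comes from \eqref{bk2.4} and the local uniform bound \eqref{b1.1}; alternatively, $\sup_{x\in K}|s(x)|^2\le C_K^2\|s\|^2$ gives $\operatorname{Tr}P_p(x,x)\le r\,C_K^2$ on $K$. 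Equivalently, one can argue directly: $\operatorname{Tr}(T_{f,p})=\sum_i\langle fs_i,s_i\rangle=\int_X f\operatorname{Tr}P_p(x,x)\,dv_X<\infty$ for an orthonormal basis $\{s_i\}$, and a positive operator with finite trace is trace class.

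For injectivity, take $s\in H^0_{(2)}(X,L^p\otimes E)$ with $T_{f,p}s=0$. Then
\[
0=\langle T_{f,p}s,s\rangle=\langle f s,s\rangle=\int_X f\,|s|^2\,dv_X ,
\]
so $f|s|^2=0$ almost everywhere. The hypothesis is that the set $\{f\neq0\}$ has non-empty interior. This has to be read measure-theoretically, as $f>0$ a.e.\ on some nonempty open set $U$, since $f$ is only an $\boldsymbol{L}^\infty$ class; under that reading $s=0$ a.e.\ on $U$. Because $s$ is holomorphic and hence continuous, $s\equiv0$ on $U$. By the identity theorem for holomorphic sections, $s\equiv0$ on the connected component of $X$ containing $U$.

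The main obstacle is precisely this last step. It needs $X$ connected, which is tacitly assumed as usual. It also needs the interpretation of ``non-empty interior'' for an $\boldsymbol{L}^\infty$ function just described. I would state both explicitly. The compactness part is routine once the Hilbert--Schmidt bound via the diagonal of the Bergman kernel is in place.
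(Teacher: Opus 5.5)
Your proposal is correct and follows the paper's proof. The paper also uses positivity of $T_{f,p}$ and $\mathrm{Tr}[T_{f,p}]=\int_X f(x)\,\mathrm{Tr}[P_p(x,x)]\,dv_X(x)<\infty$ to conclude that $T_{f,p}$ is trace class; your $A^*A$ Hilbert--Schmidt factorization is a repackaging of this. For injectivity, the paper likewise uses $\langle T_{f,p}s,s\rangle=\int_X f\,|s|^2\,dv_X>0$ for $s\neq0$, by holomorphicity. Your caveats about connectedness of $X$ and the measure-theoretic reading of ``non-empty interior'' are assumptions the paper leaves tacit.
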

%===
\begin{proof}
$T_{f, p}$ is a positive operator and
	\begin{align}\label{6.20a}
	{\rm Tr}\big[T_{f, p}\big]=\int_{X}f(x){\rm Tr}\big[P_{p}(x, x)\big]
	dv_{X}(x)< +\infty.
	\end{align}
By \eqref{6.20a}, $T_{f, p}$ is of trace class and is therefore compact
(cf.\ \cite[Theorem VI.\,21]{RS})).
On the other hand, if the set where $f$
does not vanish has a non-empty interior, then
for any $s\in H^0_{(2)}(X, L^p\otimes E)$,
we have by definition
\begin{equation}
\langle T_{f,p}s,s\rangle=\int_X\,f\,|s|_{L^p\otimes E}^2\,dv_X>0\,,
\end{equation}
by holomorphicity of $s$ and since $f$ is non-negative by assumption.
This implies that $T_{f,p}$ is injective, hence
concluding the proof of the Lemma.
\end{proof}
%===
We denote by $\spec(T)$ the spectrum of an operator $T$.
Then $\spec(T_{f, p})\subset\big[0,\|f\|_\infty\big]$ and
$\spec(T_{f, p})\cap\big]0,\|f\|_\infty\big]$ consists of at most a 
countable set of eigenvalues of finite multiplicity that can cluster only at $0$.
If $H^0_{(2)}(X, L^p\otimes E)$ is infinite dimensional we have
$0\in\spec(T_{f, p})$.
We denote the positive eigenvalues of $T_{f, p}$ counted with multiplicity by 
\begin{equation}\label{e:specTp}
\lambda_{p, 1}\geq\lambda_{p, 2}\geq\ldots\geq\lambda_{p,j}\geq\ldots,
\end{equation}
so $\spec(T_{f, p})\cap\big]0,\|f\|_\infty\big]=\{\lambda_{p,j}:j\in J_p\}$.
%We have $\lambda_{p,j}\to0$, $j\to\infty$.
The spectral density measure of $T_{f, p}$ on the interval $[0,\infty[$
is 
%\begin{equation}\label{e:specdf1}
$\mu_{f,p}=\sum_{j\in J_p}\delta_{\lambda_{p,j}}$\,.
%\end{equation}
%The normalized spectral density measure of $T_{f, p}$ is given by 
%\begin{equation}\label{e:specdf2}
%v_{f,p}=p^{-n}\mu_{f,p}\,.
%\end{equation}
We define the spectral counting function of $T_{f, p}$ as
follows:
\begin{align}
N_{p}(u)=\#\big\{j; \lambda_{p, j}>u\big\}
=\int_{]u,\|f\|_\infty]}d\mu_{f,p}\,.
\end{align}
%===
\begin{theorem}\label{t6.1}
Under the hypotheses of Theorem \ref{t2.1},
let $f$ be a continuous nonnegative function with compact support.
%Then the Toeplitz operator $T_{f,p}$ is a compact
%operator on $H^0_{(2)}(X, L^p\otimes E)$ and we
%denote by $\lambda_{p, 1}\geq\lambda_{p, 2}\geq\ldots$ its eigenvalues
%counted with multiplicity. 
Then the sequence of normalized spectral measures $p^{-n}\mu_{f,p}$
converges weakly on $]0,\|f\|_\infty]$ to the pushforward of the Liouville
measure $\rank(E)c_1(L,h^L)^{n}/n!$ by $f$.
\begin{equation}\label{e:sz1}
p^{-n}\mu_{f,p}\to \rank(E) f_{\ast}\left(\frac{1}{n!}c_1(L,h^L)^{n}\right)\,,
\:\:\text{on $]0,\|f\|_\infty]$ as $p\to\infty$.}
\end{equation}
The counting function of the spectrum
	of $T_{f,p}$ has the following asymptotics:
	for any $\lambda>0$ as $p\to +\infty$.
	\begin{equation}\label{6.1}
	N_{p}(\lambda)=
	\rank(E) \frac{p^n}{n!}\!\int\limits_{\{f>\lambda\}}\!\!
	\left(\frac{\sqrt{-1}}{2\pi}R^L\right)^{\!\!n}+o(p^n)\,.
	\end{equation}
	%Let $M\subset X$ be a relatively compact open set and denote by
	%$T_{M,p}$ the Toeplitz operator with symbol the characteristic
	%function of $M$. Then we have
If $X$ is compact, the asymptotics in \eqref{e:sz1} hold on $[0,\|f\|_\infty]$
for the full spectral measures $\widetilde{\mu}_{f,p}=
\sum_{\lambda\in\spec{T_{f,p}}}\delta_\lambda$ (multiplicities counted), 
and in \eqref{6.1} also for $\lambda=0$.
\end{theorem}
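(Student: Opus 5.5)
Our approach is the method of moments: we compute $\operatorname{Tr}[T_{f,p}^k]$ for all $k\geq1$, then pass from polynomials to continuous test functions on $]0,\|f\|_\infty]$ by Weierstrass approximation. Since $\lambda\mapsto\lambda^k$ vanishes at $0$ for $k\ge1$, the traces only see $\mu_{f,p}$, not the possibly infinite-dimensional kernel. This is why the statement is restricted to $]0,\|f\|_\infty]$ in the non-compact case. The goal is
\begin{equation*}
\lim_{p\to\infty}p^{-n}\operatorname{Tr}\big[T_{f,p}^k\big]=\rank(E)\int_X f^k\,\frac{\omega^n}{n!},\qquad k\geq1 .
\end{equation*}

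\textbf{Step 1 (smooth symbols, $k=1$).} Take first $f\in\cC^\infty_0(X)$ with $f\ge0$. By \eqref{6.20a} and Theorem~\ref{bkt2.18},
\[
p^{-n}\operatorname{Tr}[T_{f,p}]=\int_X f\,\operatorname{Tr}[p^{-n}P_p(x,x)]\,dv_X\to\rank(E)\int_X f\det(\dot R^L/2\pi)\,dv_X .
\]
This equals $\rank(E)\int f\,\omega^n/n!$, with $\omega=\frac{\imat}{2\pi}R^L$. The convergence is uniform because $\supp f$ is compact.

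\textbf{Step 1 (smooth symbols, $k\geq2$).} By Theorem~\ref{t2.1}(2) applied iteratively, $T_{f,p}^k=T_{f^k,p}+R_p$, where $R_p$ is a Toeplitz operator $p^{-1}\sum_{l\ge0}p^{-l}T_{g_l,p}$ with $g_l$ supported in $\supp f$, up to an error that is $\mO(p^{-N})$ in operator norm. The difficulty is that operator-norm errors do not control traces. I would therefore write $T_{f,p}^k=T_{f,p}^{1/2}\,(\cdots)\,T_{f,p}^{1/2}$, or more simply use $T_{f,p}^k=T_{f,p}A_pT_{f,p}$ with $\|A_p\|\le\|f\|_\infty^{k-2}$. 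Then $|\operatorname{Tr}[T_{f,p}BT_{f,p}]|\le\|B\|\operatorname{Tr}[T_{f,p}^2]\le\|B\|\,\|f\|_\infty\operatorname{Tr}[T_{f,p}]=\mO(p^n)\|B\|$, so any operator-norm error $o(1)$ sandwiched between two copies of $T_{f,p}$ contributes $o(p^n)$ to the trace. Concretely, I compare $T_{f,p}T_{f^{k-2},p}T_{f,p}$ with $T_{f^{k},p}$. Their difference is $p^{-1}$ times Toeplitz operators with compactly supported symbols plus $\mO(p^{-2})$ in norm. The same trick applies after sandwiching with $P_p\chi$ for a cutoff $\chi\in\cC^\infty_0$ equal to $1$ near $\supp f$, using $\operatorname{Tr}[T_{\chi,p}]=\mO(p^n)$. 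Alternatively, one can integrate the kernel expansion of Theorem~\ref{toet2.3} over the diagonal directly: by Lemma~\ref{toet2.1} and \eqref{toe2.13}, $T_{f,p}^k(x,x)=p^n f(x)^k\operatorname{Tr}\bb_0+\mO(p^{n-1})$ uniformly on compacts, with rapid decay off a neighbourhood of $\supp f$. This gives $p^{-n}\operatorname{Tr}[T_{f,p}^k]\to\rank(E)\int f^k\omega^n/n!$.

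\textbf{Step 2 (continuous $f$).} For continuous $f\ge0$ with compact support, choose smooth $f_\pm$ supported in a fixed compact set $K'$ with $\|f-f_\pm\|_\infty<\delta$. Then, for $k\ge1$,
\[
|\operatorname{Tr}T^k_{f,p}-\operatorname{Tr}T^k_{f_+,p}|\le k\|f\|_\infty^{k-1}\,\big\|\,|T_{f,p}-T_{f_+,p}|\,\big\|_{1} .
\]
The trace norm here is at most $\operatorname{Tr}T_{|f-f_+|,p}\le\delta\operatorname{Tr}T_{\mathbf 1_{K'},p}=\mO(\delta p^n)$. This gives the moment limits for $f$. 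Since all measures $p^{-n}\mu_{f,p}$ live on $[0,\|f\|_\infty]$ and have bounded first moment, the moments $\int\lambda^k d(p^{-n}\mu_{f,p})$, $k\ge1$, determine the limit of $\lambda\,p^{-n}\mu_{f,p}$. The first moment controls the mass near $0$ for test functions supported away from $0$, so Weierstrass approximation of $\phi(\lambda)/\lambda$ yields \eqref{e:sz1}.

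\textbf{Step 3 (counting function and compact case).} Formula \eqref{6.1} follows from \eqref{e:sz1} by approximating $\mathbf 1_{]\lambda,\infty[}$ from above and below by continuous functions. This works for those $\lambda$ with $\operatorname{vol}_\omega\{f=\lambda\}=0$. For general $\lambda$ I expect the stated formula to need exactly this non-atom condition, or a monotonicity argument, and I would check that carefully; this, together with the trace bookkeeping for the Toeplitz remainders in Step 1, is the main obstacle. In the compact case, $\dim H^0=\rank(E)\int\omega^n/n!\,p^n+o(p^n)$ by Riemann--Roch or integrating $P_p(x,x)$, so the $k=0$ moment is also available. Hence the full measure converges on $[0,\|f\|_\infty]$, including $\lambda=0$.
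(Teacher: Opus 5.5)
Your strategy is the paper's: prove convergence of the moments $p^{-n}\operatorname{Tr}[T^m_{f,p}]$ for $m\ge1$, apply Weierstrass approximation to $\varphi(\lambda)/\lambda$ to get \eqref{e:sz1}, and approximate $\mathbf 1_{]\lambda,\|f\|_\infty]}$ to get \eqref{6.1}. Your Step~2 reduces to smooth symbols via $\|T_{h,p}\|_1\le\operatorname{Tr}T_{|h|,p}$. That is a useful explicit version of what the paper only attributes to the locality of the compact-case argument.

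In Step~1, however, your first variant does not close. The norm-small remainder in $T_{f,p}T_{f^{k-2},p}T_{f,p}-T_{f^k,p}$ is not of the form $T_{f,p}BT_{f,p}$. Replacing an operator $A$ by $T_{\chi,p}AT_{\chi,p}$ also costs a norm-small error whose trace you do not control. Your second variant needs a bound on $\int_{X\setminus K'}\operatorname{Tr}T^k_{f,p}(x,x)\,dv_X(x)$, which Lemma~\ref{toet2.1} does not give, since it is stated on $K\times K$.

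The paper sidesteps this with the identity in \eqref{6.23a}, $\operatorname{Tr}[T^m_{f,p}]=\int_X f(x)\operatorname{Tr}[T^{m-1}_{f,p}(x,x)]\,dv_X(x)$, which localizes the trace to the compact set $\supp f$. Combine it with the elementary bound $|A(x,x)|\le\|A\|\,|P_p(x,x)|$ for $A=P_pAP_p$, obtained by pairing $A$ with the sections $P_p(\cdot,x)u$. This turns your operator-norm estimate $T^{m-1}_{f,p}-T_{f^{m-1},p}=\mO(p^{-1})$ into an $\mO(p^{n-1})$ error uniformly on $\supp f$. Corollary~\ref{toec2.1} then gives the moments.

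Your hesitation in Step~3 is justified, and no monotonicity argument can remove it: \eqref{6.1} can fail at any level $\lambda$ with $\int_{\{f=\lambda\}}\omega^n>0$. Take $X$ compact and connected, $\lambda_0>0$, and $f=\lambda_0+g$ with $g\ge0$ continuous, $g\equiv0$ on a nonempty open set, and $g\not\equiv0$. For $0\neq s\in H^0(X,L^p\otimes E)$,
\[
\langle T_{f,p}s,s\rangle=\lambda_0\|s\|^2+\int_X g\,|s|^2\,dv_X>\lambda_0\|s\|^2,
\]
because $s$ cannot vanish on the open set $\{g>0\}$. Hence every eigenvalue exceeds $\lambda_0$, and $N_p(\lambda_0)=\dim H^0(X,L^p\otimes E)=\rank(E)\,p^n\int_X\omega^n/n!+o(p^n)$. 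This exceeds the right-hand side of \eqref{6.1} by $\rank(E)\,p^n\int_{\{g=0\}}\omega^n/n!$. Taking $\lambda_0=0$, so that $f=g$ and $T_{f,p}$ is injective, the same example refutes the compact-case claim at $\lambda=0$.

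The paper's proof hides this in the step ``letting $k\to+\infty$'', which interchanges $\lim_k$ and $\lim_p$ without justification. Weak convergence on $]0,\|f\|_\infty]$ only yields
\[
\rank(E)\int_{\{f>\lambda\}}\frac{\omega^n}{n!}\le\liminf_{p\to\infty}p^{-n}N_p(\lambda)\le\limsup_{p\to\infty}p^{-n}N_p(\lambda)\le\rank(E)\int_{\{f\ge\lambda\}}\frac{\omega^n}{n!}.
\]
So \eqref{6.1} holds whenever $\{f=\lambda\}$ is $\omega^n$-null, hence for all but countably many $\lambda>0$, and the same caveat applies at $\lambda=0$ in the compact case. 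With the localization identity in Step~1 and this corrected form of \eqref{6.1}, your argument is complete.
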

\begin{proof}
We follow the proof of \cite[Theorem 3.1]{MM08a}
(cf.\ also \cite[Theorem 32]{MS24a}).
We denote the normalized spectral density 
measure on $]0, \|f\|_{\infty}]$ by $v_{p}=p^{-n}\mu_{f,p}$. Then
	\begin{align}\label{6.4}
	v_{p}=-p^{-n}\frac{d}{du}N_{p}(u),\ \ \ u\in \,]0, \|f\|_{\infty}].
	\end{align}
%Note that $\dim H^{0}_{(2)}(X, L^{p}\otimes E)$ can be infinite; thus,
%we only consider on $]0, \|f\|_{\infty}]$. 
Clearly, $v_{p}$ is a sum of Dirac measures
supported on ${\rm Spec}(T_{f, p})\,\cap \,]0, \|f\|_{\infty}]$.
	
We claim that the weak limit of the sequence $\{v_{p}\}_{p\geqslant 1}$
is the direct image measure
${\rm rk}(E)f_{\ast}(\frac{\omega^{n}}{n!})$ with
$\omega=\frac{\sqrt{-1}}{2\pi}R^{L}$,
that is, for every continuous function
$\varphi\in \mathscr{C}_{0}\big(]0,\|f\|_{\infty}]\big)$, we have
\begin{align}\label{6.22a}
\lim_{p\rightarrow +\infty}\int^{\|f\|_{\infty}}_{0}\varphi\,dv_{p}
=\int_{X}(\varphi\circ f)\,\frac{\omega^{n}}{n!}.
\end{align}
	
	\noindent
	By the argument of the proof of \cite[Theorem 3.5]{BMMP14}, which is local, and our assumption
	that $f\in \mathscr{C}^{0}(X, [0, \infty[)$, we have for
	any $m\geqslant 1$,
	\begin{align}\label{6.23a}
	\int^{\|f\|_{\infty}}_{0}x^{m}dv_{p}
	&= p^{-n}{\rm Tr}\big[T^{m}_{f, p}\big]
	=p^{-n}\int_{X}{\rm Tr}\big[\underbrace{T_{f, p}\cdots
		T_{f, p}}_{m\ {\rm times}}(x, x)\big]dv_{X}(x)
	\nonumber \\ &=
	p^{-n}\int_{X}f(x){\rm Tr}\big[P_{p}
	\underbrace{T_{f, p}\cdots T_{f, p}}_{m-1\ {\rm times}}(x, x)\big]
	dv_{X}(x).
	\\& =
	{\rm rk}(E)\int_{X}f(x)^{m}\frac{\omega^{n}}{n!}+o(1).
	\nonumber
	\end{align}
	Now we apply the Weierstrass approximation theorem for
	$\frac{1}{x}\varphi \in \mathscr{C}_{0}\big(]0, \|f\|_{\infty}]\big)$,
	we know that
	any $\varphi\in \mathscr{C}_{0}\big(]0, \|f\|_{\infty}]\big)$
	can be approximated uniformly
	by polynomials without a constant term. Now from (\ref{6.23a}),
	we get (\ref{6.22a}).
	By approximating the characteristic function $1_{]\lambda, \|f\|_{\infty}]}$
	for $\lambda>0$ by continuous functions $f_{k}$, we obtain
	\begin{align}
	\lim_{p\rightarrow +\infty}\int^{\|f\|_{\infty}}_{0}f_{k} dv_{p}
	=\int_{X}(f_{k}\circ f)\,\frac{\omega^{n}}{n!}.
	\end{align}
	Letting $k\rightarrow +\infty$ yields
	\begin{align}\label{6.8a}
	\lim_{p\rightarrow +\infty}\int^{\|f\|_{\infty}}_{0}1_{]\lambda, \|f\|_{\infty}]} dv_{p}
	=\int_{X}(1_{]\lambda, \|f\|_{\infty}]}\circ f)\,\frac{\omega^{n}}{n!}
	=\int\limits_{\{f>\lambda\}}\frac{\omega^{n}}{n!}.
	\end{align}
	By \eqref{6.4}, we find
	\begin{align}\label{6.8b}
	\int^{\|f\|_{\infty}}_{0}1_{]\lambda, \|f\|_{\infty}]} dv_{p}=p^{-n}N_{p}(\lambda).
	\end{align}
	Then (\ref{6.1}) follows from (\ref{6.8a}) and (\ref{6.8b}).
	The proof of Theorem \ref{t6.1} is completed.
	\end{proof}

\bibliographystyle{siam}
%\bibliography{mmbook,bbstat}

\def\cprime{$'$}

\end{document}